\documentclass[11pt,oneside,english]{amsart}

\usepackage{hyperref}
\hypersetup{
    colorlinks=true,
    linkcolor=blue,
    filecolor=magenta,      
    urlcolor=cyan,
}

\usepackage{amsfonts}
\usepackage{amsmath}
\usepackage{amssymb}
\usepackage{amsthm,color}
\usepackage{enumerate}
\usepackage{mathtools}
\usepackage{enumitem}
\mathtoolsset{showonlyrefs}
 \usepackage{transparent}
 \usepackage{stackengine,tikz}

\usepackage{breqn}
\usepackage{tikz}
\usepackage{tikz-3dplot}

\usepackage{longtable}

\usepackage{soul}

\tdplotsetmaincoords{70}{110}

\usepackage{color,graphicx,enumerate,wrapfig,amssymb,mathtools}
\usepackage{epsfig}
\usepackage{pxfonts}
\usepackage{graphicx}
\usepackage{eucal}
\newcommand{\R}{{\mathbb R}}

\newcommand{\N}{{\mathbb N}}

\newcommand{\cM}{{\mathcal M}}
\newcommand{\cL}{{\mathcal L}}
\newcommand{\cS}{{\mathcal S}}
\newcommand{\cK}{{\mathcal K}}
\newcommand{\cA}{{\mathcal A}}

\newcommand{\cH}{{\mathcal H}}

\newcommand{\cF}{{\mathcal F}}

\newcommand{\e}{\varepsilon}
\newcommand{\al}{\alpha}
\newcommand{\be}{\beta}
\newcommand{\de}{\delta}

\newcommand{\la}{\lambda}

\newcommand{\vp}{\varphi}

\newcommand{\dist}{\operatorname{dist}}

\newcommand{\loc}{\operatorname{loc}}

\newcommand{\supp}{\operatorname{supp}}
\newcommand{\D}{\nabla}

\numberwithin{equation}{section}

\def\Xint#1{\mathchoice
  {\XXint\displaystyle\textstyle{#1}}%
  {\XXint\textstyle\scriptstyle{#1}}%
  {\XXint\scriptstyle\scriptscriptstyle{#1}}%
  {\XXint\scriptscriptstyle\scriptscriptstyle{#1}}%
  \!\int}
\def\XXint#1#2#3{{\setbox0=\hbox{$#1{#2#3}{\int}$}
    \vcenter{\hbox{$#2#3$}}\kern-.5\wd0}}

\def\dashint{\Xint-}

\newtheorem{theorem}{Theorem}
\theoremstyle{plain}

\newtheorem{corollary}{Corollary}

\newtheorem{lemma}{Lemma}

\newtheorem{proposition}{Proposition}
\newtheorem{remark}{Remark}

\numberwithin{equation}{section}

\newcommand{\mean}[1]{\langle{#1}\rangle}
\usepackage{amsmath}
\usepackage{amsfonts}
\usepackage{amssymb}
\usepackage{caption}
\usepackage{subcaption}
\usepackage{enumerate}

\usepackage{geometry}
\makeatletter
\@namedef{subjclassname@2020}{%
  \textup{2020} Mathematics Subject Classification}
\makeatother

\author{Layan El Hajj}
\address[Layan El Hajj]{Carnegie Mellon University in Qatar, Doha, Qatar}
\email{lelhajj@andrew.cmu.edu}

\author{Seongmin Jeon}
\address[Seongmin Jeon]
{Department of Mathematics Education \newline 
\indent Hanyang University \newline 
\indent 222 Wangsimni-ro, Seongdong-gu, Seoul 04763, Republic of Korea} 
\email[Seongmin Jeon]{seongminjeon@hanyang.ac.kr}

\author{Henrik Shahgholian\,
}
\address[Henrik Shahgholian]{Department of Mathematics, KTH Royal Institute of Technology, SE-10044 Stockholm, Sweden  
} \email{henriksh@kth.se}

\thanks{The work of L.E. Hajj was  supported by the Qatar Foundation through Carnegie Mellon University in Qatar’s Seed Research program. S. Jeon was supported by the National Research Foundation of Korea(NRF) grant funded by the Korea government(MSIT) (RS-2025-24803159 and RS-2026-25474502).  H. Shahgholian was supported by Swedish Research Council (grant nr. 2025-03740).}

\begin{document}

\title[systems  with free boundaries  ]
{Existence theory for non-variational  systems \\ with free boundaries }

\date{\today}

\keywords{Free boundary, weakly coupled  system, Singularities} 
\subjclass[2020]{Primary 35R35, 35J47, 35K40} 
\begin{abstract}
We study the existence of solutions for systems of both elliptic and parabolic partial differential equations with potentially singular right-hand sides and free boundaries, posed in general smooth domains.

Our results are established within a framework of "meta-theorems." This approach hinges on specific strong properties of the operators and their solutions (in approximate smooth settings) to guarantee the existence of a limit as the approximation parameter tends to zero. The primary challenge lies in applying these meta-theorems to prototype cases, which requires verifying that the necessary strong properties hold. For our analysis, we focus on fully nonlinear and $p$-Laplacian operators  and a mixing of these operators, in both elliptic and parabolic contexts.
While we focus on these specific cases, the meta-theorems remain valid for any other operators that satisfy the required properties.

Beyond the complex proofs of our meta-theorems, and their applications to specific operators, a major challenge is the technical handling of the $p$-parabolic case, which requires proving  the regularity of  solutions of the $p$-parabolic equation with singular or degenerate right-hand side--addressed in the Appendix--along with  several (new) properties, which is missing in the literature.

\end{abstract}

\maketitle
\setcounter{tocdepth}{2}

\tableofcontents

\section{Introduction}

\subsection{Background}

Free boundary problems for elliptic systems have in recent years been at the center of considerable attention. 
Their origins can be traced back to the pioneering work of Giaquinta and Giusti \cite{GiaGiu83, GiaGiu84}, 
and likely even earlier contributions. 
In a variational setting, the minimization of energy functionals for vector-valued functions with non-analytic forcing terms naturally leads to the formation of free boundaries.

As a representative example, consider the functional
\[
 \int_\Omega (|\nabla \mathbf{u}|^2 + |\mathbf{u}|^\gamma) \, dx,
\]
where $0 < \gamma < 2$ and $\mathbf{u} = (u^1, \dots, u^m)$. This functional naturally gives rise to free boundaries when boundary values are sufficiently small. Specifically, for small values of $|\mathbf{u}|$, the potential term $|\mathbf{u}|^\gamma$ decays more slowly than the quadratic gradient energy $|\nabla \mathbf{u}|^2$.\footnote{Formally, this can be verified by comparing the energy of a non-zero solution on a ball $B \subset \Omega$ with that of the zero solution, showing that the energy of the latter is strictly lower for sufficiently small configurations.}

To see this, consider the scaling $\mathbf{u}_\varepsilon(x) = \varepsilon \mathbf{u}(x)$. The energy components scale as:
\[
\int |\nabla \mathbf{u}_\varepsilon|^2 = \varepsilon^2 \int |\nabla \mathbf{u}|^2 \quad \text{and} \quad \int |\mathbf{u}_\varepsilon|^\gamma = \varepsilon^\gamma \int |\mathbf{u}|^\gamma.
\]
Since $\gamma < 2$, the potential term dominates for small amplitudes as $\varepsilon \to 0$. Consequently, maintaining a small but strictly positive solution is energetically more expensive than setting $\mathbf{u} = 0$ over a larger portion of the domain, thereby eliminating both the gradient and potential contributions. Thus, it is energetically favorable for the minimizer to develop regions where $\mathbf{u}$ vanishes identically.

These zero regions are separated from the positive phase by a \emph{free boundary}, whose location is not prescribed a priori but determined by energy balance. These examples show that singular reactions naturally produce free boundaries at the scalar level. One purpose of the present paper is to develop an existence theory for coupled systems in which analogous free boundary behavior may occur, even in non-variational settings. In coupled systems, however, the mechanism is more delicate, since the source term in one equation depends on the unknown from the other equation, and may itself become singular or degenerate near the free boundary. To prove an existence theorem for systems in general, one would need to take into account the presence of a free boundary; especially when the right hand side of an equation becomes singular. For example, for the above functional when $0 < \gamma < 1$, the functional is not convex, and the corresponding Euler-Lagrange equation has a singular right hand side. Nevertheless, even when $1 \leq \gamma <2$ the right hand side of the Euler-Lagrange equation acts as an absorbing term, creating dead cores, whose boundary is a free boundary. 
Things become more complicated when we consider equations that are non-variational  or do not admit a functional that represents the Euler–Lagrange equation; As a prototype non-variational model, one may consider the coupled system
\begin{equation}\label{eq:system0}
    \Delta u = v^a, \qquad \Delta v= u^b,
\end{equation}
in a bounded domain with prescribed boundary values, where the exponents are chosen in a range for which the right-hand sides may be singular or strongly degenerate near zero.
Even for such simple-looking systems, the interaction between the two components and the possible presence of free boundaries make the existence theory highly nontrivial.
In such settings, when the free boundary is absent and the equations are reasonably smooth, one may consider iteration techniques or fixed point theory. Both approaches require certain uniform estimates to achieve the goal. Such estimates are usually very hard to obtain, as the norms may accumulate in the iteration and eventually blow up in the limit. In singular regimes, this becomes
particularly delicate, since the right-hand side may deteriorate precisely near the set where
one expects a free boundary. This difficulty motivates the abstract framework developed in the present paper.
Recent work has highlighted several mechanisms by which coupled equations may generate dead cores and interacting free boundaries; see, for instance, \cite{AraTey24, AraTey25}. On the other hand, the available existence theory remains largely model-dependent and often relies on a special structure. In particular, the work \cite{ELSH25} treats the spherically symmetric  case for a related class of systems. The aim of the present paper is to provide a more flexible existence framework, covering both elliptic and parabolic equations and allowing for fully nonlinear as well as $p$-Laplacian type operators.

\subsection{Main results in this paper}

In this paper, we consider general non-variational systems with a partially singular right-hand side and free boundaries. We prove existence of solutions for   the two-component case of a system of the form:\footnote{In this paper, we do not specify the notion of solution for the partial differential equation part of the system. This allows us to adapt our theorems to various notions of solutions, provided that certain properties (solvability, comparison, convergence) are fulfilled. These requirements will be clarified as we proceed. See \cite{AllenKriventsovShahgholian2023} for a similar type of ideas.
}
    \begin{align}\label{eq:sol-2}
        \begin{cases}
            \cL_1 u=f_1k_1(v)h_1(u)&\text{in }\Omega\cap\{u>0\},\\
            \cL_2 v=f_2k_2(u)h_2(v)&\text{in }\Omega\cap\{v>0\},\\
            u=g_1, v=g_2&\text{on }\partial\Omega,\\
            u=|\D u|=0&\text{on }\Omega\cap\partial\{u>0\},\\
            v=|\D v|=0&\text{on }\Omega\cap\partial\{v>0\},
        \end{cases}
    \end{align}
and its parabolic counterpart. Here the nonlinearities $h_i$ may be singular at the origin, while the coupling functions $k_i$ are merely assumed to be non-decreasing and continuous. We also assume throughout the paper that for $j=1,2$
$$
g_j \in C(\partial\Omega; \mathbb{R}_+), \qquad 
\| g_j \|_{L^\infty (\overline \Omega ) } \leq M_0, 
$$
for a constant $M_0 > 0$. For technical reasons, 
in some special cases,  we may assume $g_j$ are Lipschitz; see Remark \ref{rem:DirD}.

Our first main contribution is an abstract existence theorem formulated in both elliptic and parabolic settings. Rather than proving the existence directly for each operator separately, we  prove a pair of meta-theorems, one elliptic (see Theorem \ref{thm:meta}) and one parabolic (see Theorem \ref{thm:meta-par}) that guarantee the existence of a solution pair $(u, v)$ to the system under the condition that the right-hand side behaves in a reasonably controlled way (as stated in equations \eqref{eq:rhs-1},\eqref{eq:rhs-2}) and three  properties of the operators $L_i$ : 
\begin{enumerate}
\item a solvability property; \ref{cond-sol}/\ref{cond-sol-par} ;
\item a comparison principle; \ref{cond-comp}/\ref{cond-comp-par};
\item a convergence property under uniform limits; \ref{cond-conv}/\ref{cond-conv-par}.
\end{enumerate}
The key technical device is a monotone iteration scheme. Monotone sequences of regular scalar solutions are constructed and arranged so that one component is monotone increasing and the other monotone decreasing. This yields pointwise convergence of the full sequences. The uniform regularity  then gives locally uniform convergence, which allows us to pass to the limit by the stability property.
Our second contribution is to verify the abstract hypotheses for two important classes of
operators: fully nonlinear uniformly elliptic/parabolic operators and the $p$-Laplacian and $p$-parabolic operators.\footnote{In section \ref{sec:mix} we shall discuss mixing of these operators.} 
In this way, the meta-theorems become concrete existence results for broad families of singular free boundary systems.
We also study the geometry of the coincidence sets. Under additional non-degeneracy and Hopf-type assumptions, we prove that their connected components (or their time slices in the parabolic case)
either coincide or are disjoint. This is carried out in both the elliptic and parabolic frameworks.
The regularity obtained in our construction is inherited from the corresponding scalar theory used in the approximation scheme. The solutions $(u,v)$ inherit from the scalar theory 
H\"older regularity up to the free boundary in the elliptic case, and local parabolic
H\"older regularity $p$-Laplacian case
(see Appendix \ref{appen:par-p-Lapl-reg}).

\medskip

\begin{remark}\label{rem:DirD}{\bf The Dirichlet data:} 
Since our focus is primarily on the free boundary problem—specifically the last two lines in \eqref{eq:sol-2}—the Dirichlet data on $\partial \Omega$ is of secondary concern, as it constitutes a standard PDE problem, though its well-posedness must still be verified. Nevertheless, the nonlinearity on the right-hand side of our system introduces technical challenges; in many instances involving Dirichlet data, barrier arguments are required. Such barriers are difficult to construct in the fully nonlinear regime when using a viscosity approach.
Here, we utilize the distance function to the boundary, which necessitates $C^{1,1}$-smoothness of the boundary within a small tubular neighborhood. A similar assumption is required for the parabolic case. 

For the p-Laplacian and p-parabolic cases, however, our approach is variational, where Lipschitz boundary and data suffice to construct the necessary barrier arguments.

These assumptions will be specified in each case, when we state the results.

 \end{remark}

\subsection{Structure of the paper}

The  paper is organized as follows. Section~\ref{sec:ell} is dedicated to the elliptic case, beginning with the formulation and proof of a general elliptic meta-theorem in Section~\ref{sec:el-meta}, as well as a result on the relation between the supports of the components of the system. 
This framework is subsequently applied to fully nonlinear operators in Section~\ref{sec:FN-elliptic} and to the $p$-Laplacian case in Section~\ref{sec:p-lap-elliptic}. In Section~\ref{sec:par}, we extend our analysis to the parabolic setting, establishing a parabolic meta-theorem in Section~\ref{sec:par-meta} and exploring its applications to fully nonlinear and $p$-parabolic operators in Sections \ref{sec:FN-par} and \ref{sec:p-par}, respectively. In Section~\ref{sec:mix}, we discuss mixing operators and present several open questions.
Finally, Appendix~\ref{appen:par-p-Lapl-reg} provides detailed regularity results for solutions in the $p$-parabolic case, covering both singular and degenerate regimes.

\subsection{Notation}\label{sec:notation1}

For the reader's convenience, we gather here a list of notation to be used throughout this paper.

\begin{longtable}[l]{l l}

\,\,\,\,\,$\Omega$ &\qquad A bounded  domain  in $\mathbb{R}^n$ ($n\geq 1$) \\[2mm]

\,\,\,\,\,$\mathbb{R}_+$ &\qquad $\{t\in\mathbb{R}:\ t\ge0\}$ \\[2mm]
\,\,\,\,$W^{1,p}(\Omega)$ &\qquad Sobolev space of functions with weak derivatives in $L^p$ \\[2mm]
\,\,\,\,\,$B_r(x_0)$ &\qquad Euclidean ball of radius $r$ centered at $x_0$ \\[2mm]
\,\,\,\,\,$S(n)$ &\qquad The space of $n\times n$ symmetric matrices \\[2mm]
\,\,\,\,\,$e_i$&\qquad Eigenvalues of $M\in S(n)$\\[2mm]

\,\,\,\,\,$\mathcal{M}^+_{\lambda,\Lambda}(M)$ &\qquad Pucci maximal operator, $\displaystyle \Lambda \sum_{e_i > 0} e_i + \lambda \sum_{e_i < 0} e_i$,\quad($\Lambda\geq\lambda> 0$ are constants.)  \\[2mm]

\,\,\,\,\,$\mathcal{M}^-_{\lambda,\Lambda}(M)$ &\qquad Pucci minimal operator,  $\displaystyle \lambda \sum_{e_i > 0} e_i + \Lambda \sum_{e_i < 0} e_i$,\quad ($\Lambda\geq\lambda> 0$ are constants.) \\[2mm]

\,\,\,\,\,$\nabla u$ &\qquad Spatial gradient of $u$ (also in the parabolic setting) \\[2mm]
\,\,\,\,\,$\Delta_p u$ &\qquad  $p$-Laplacian, $\operatorname{div}(|\nabla u|^{p-2}\nabla u)$, $p\in(1,\infty)$ \\[2mm]

\,\,\,\,\,$K_{g,p,\Omega}$ &\qquad $\{w\in W^{1,p}(\Omega):\ w=g \text{ on }\partial\Omega\}$ \\[2mm]

\,\,\,\,\,$u\vee v$ &\qquad $\max\{u,v\}$ \\[2mm]

\,\,\,\,\,$u\wedge v$ &\qquad $\min\{u,v\}$ \\[2mm]

\,\,\,\,\,$X $ &\qquad $(x ,t )\in\mathbb{R}^{n+1}$ \\[2mm]

\,\,\,\,\,$\Omega_T$ &\qquad $\Omega\times(-T,0]$, a space-time cylinder \\[2mm]

\,\,\,\,\,$\partial_p\Omega_T$ &\qquad $(\partial\Omega\times[-T,0])\cup (\Omega\times\{-T\})$  (parabolic boundary) \\[2mm]

\,\,\,\,\,$Q_r^\theta(X_0)$ &\qquad $B_r(x_0)\times(t_0-r^\theta,t_0]$, for $r>0$, $\theta>0$ \\[2mm]

\,\,\,\,\,$Q_r^2(X_0)$ &\qquad $B_r(x_0)\times(t_0-r^2,t_0]$ \\[2mm]

\,\,\,\,\,$d(X,Y)$
&\qquad
$\max\{|x-y|,\ |t-s|^{1/2}\}$; \quad  $X=(x,t)$ and $Y=(y,s)$ (parabolic distance)\\[2mm]
\,\,\,\,\,$A_t$ &\qquad $\{x\in\Omega:\ (x,t)\in A\}$, time slice of $A\subset\Omega_T$ \\[2mm]

\,\,\,\,\,$\partial_t u$ &\qquad Time derivative of $u$ \\[2mm]

\,\,\,\,\,$K_{g,p,\Omega_T}$ &\qquad $\{w\in W^{1,p}(\Omega_T):\ w=g \text{ on }\partial_p\Omega_T\}$ \\[2mm]

\end{longtable}


\section{The Elliptic Case}\label{sec:ell}
To state our main result in the elliptic case, we first introduce the notation and definitions that will be used throughout the paper. The meta-theorem in Section~\ref{sec:el-meta} provides the conceptual framework for the proof strategy. While the logic of the proof is simple and relies on the assumed structural properties of the operators, applying the meta-theorem to specific cases requires substantial work, namely verifying that these properties hold. The verification of these properties constitutes one of the main contributions of this paper. In Section~\ref{sec:FN-elliptic}, we apply the meta-theorem to fully nonlinear elliptic operators, while in Section~\ref{sec:p-lap-elliptic}, we treat the $p$-Laplacian case.

\subsection{An elliptic meta-theorem}\label{sec:el-meta}

Let $\mathcal H$ denote the class of functions
$h:\mathbb R\to\mathbb R_+$ satisfying, for some constants $-1<a<1$, $C^0>0$, 
\begin{align}
\label{eq:rhs-1}
\begin{cases}
h=0 & \text{on } (-\infty,0],\\
h \text{ is positive and continuous} & \text{on } (0,\infty),\\
h(t)\le C^0 t^a & \text{for } 0<t<M_0,\\
h \text{ is bounded}&\text{on }[M_0,\infty).
\end{cases}
\end{align}
Here $M_0$ is the supremum bound for the boundary values, as defined earlier.

\medskip

Let $\mathcal K$ denote the class of functions
$k:\mathbb R\to\mathbb R_+$ satisfying
\begin{align}
\label{eq:rhs-2}
\begin{cases}
k=0 & \text{on } (-\infty,0],\\
k \text{ is non-decreasing and continuous} & \text{on } \mathbb R.  
\end{cases}
\end{align}

\medskip

Since the nonlinearity $h$ may be singular near the origin, we introduce a class
of admissible regularization functions that will be used in the approximation scheme. We define $\widetilde{\mathcal H}$ to be the class of functions
$\tilde h:\mathbb R\to\mathbb R_+$ satisfying
\begin{align}
\label{eq:rhs-1-1}
\begin{cases}
\tilde h=0 & \text{on } (-\infty,\tilde c],\\
\tilde h \text{ is bounded and Lipschitz} & \text{on } \mathbb R,\\
0 < \tilde h(t)\le C^0t^a+2 & \text{for } t\in
[\tilde c ,\infty),
\end{cases}
\end{align}
for some constant $\tilde c>0$.

\begin{remark}
Since $h$ satisfies $h(t)\le C^0t^a$ for $t<M_0$, the condition $\tilde h(t)\le C^0t^a+2$
should be viewed as a compatibility condition between the singular nonlinearity $h$
and its regularization $\tilde h$.
It  is used in the verification of the solvability property \ref{cond-sol} introduced below, for the
concrete operators considered later in the paper. In particular, this domination
condition is employed in the construction of barriers and in the derivation of uniform
scalar estimates for the regularized problems.
\end{remark}

\medskip

We denote by $\mathcal F(\Omega)$ a class of non-negative functions in $\Omega$, which will be specified depending on the operator under consideration. 
Given an operator $\cL:\cF(\Omega)\to \mathcal{A}(\Omega;\R_+)$, where $\cA(\Omega;\R_+)$ is the space of nonnegative measurable functions defined in $\Omega$,  we define the following structural properties on the operator $\cL$.

\begin{enumerate}[label={\bf (PE\arabic*)}]
    \item\label{cond-sol} {\bf Solvability:}  For any $ g\in C(\partial\Omega;\R_+)$ , $f\in L^\infty(\Omega;\R_+)$ and $\tilde h\in\widetilde{\mathcal H}$, there exists a solution $w\in \cF(\Omega)$ of the Dirichlet problem 
    \begin{align}\label{eq:sol-Dir}
        \begin{cases}
            \cL w=f\tilde h(w)&\text{in }\Omega,\\
            w=g&\text{on }\partial\Omega,
        \end{cases}
    \end{align}
    which satisfies, for some  constant $0<\al<1$,  
    the following regularity estimates: for any $\Omega'\Subset\Omega$,
    \begin{align}\label{eq:mod-conti}
        |w(x_1)-w(x_2)|\le\rho(|x_1-x_2|)\quad\text{for any }x_1,x_2\in\Omega'
    \end{align}
    and\footnote{The one sided estimate will mainly be used for points $x$ on the free boundary, i.e., the boundary of the support of $w$. 
    Since $w\geq 0$ in our paper, we expect  that $w$ is reasonably smooth at the boundary of its support. It is also noteworthy that the inequality may seems ad-hoc if $|\nabla w (x) | > 0$. However, in this case one uses  the fact that $x \in \Omega'$, and  
    $\sup_{B_r(x)\cap\Omega'}w\le Cw(x)$, where $C$ is large enough, depending on the distance to the boundary and the maximum value of $w $ on $\Omega$. }
    \begin{align}
        \label{eq:ftn-space}
        \sup_{B_r(x)\cap\Omega'}w\le C(r^{1+\al}+w(x))\quad\text{for any }x\in \Omega'\text{ and }r>0,
    \end{align}  
        where  $\rho$ is a modulus of continuity and $C>0$ is a constant, both depending only on $\Omega,\Omega',\cL, C^0,a,\|f\|_\infty$ and $\|g\|_\infty$. Moreover, there exists a function $w_*\in \text{Lip}(\Omega)\cap C(\overline{\Omega})$, depending only on $h,\|f\|_\infty, g$ and $\Omega$, such that
        \begin{align}\label{eq:lower-bound}
            \begin{cases}
                w_*\le w&\text{in }\Omega,\\
                w_*=g&\text{on }\partial\Omega.
            \end{cases}
        \end{align}

    \medskip

    \item\label{cond-comp} {\bf Comparison:} Let $g\in C(\partial\Omega;\R_+)$ and $f_1,f_2\in L^\infty(\Omega;\R_+)$, and assume that $\tilde h$ satisfies \eqref{eq:rhs-1-1}. Suppose $w_1$ and $w_2$ are solutions of
    \begin{align*}
        \begin{cases}
            \cL w_1=f_1\tilde h(w_1)&\text{in }\Omega,\\
            w_1=g&\text{on }\partial\Omega,
        \end{cases}\qquad
        \begin{cases}
            \cL w_2=f_2\tilde h(w_2)&\text{in }\Omega,\\
            w_2=g&\text{on }\partial\Omega,
        \end{cases}
    \end{align*}
    respectively. 
   If $f_1<f_2$ in $\Omega$, then $w_1\ge w_2$ in $\Omega$.

   \medskip

    \item\label{cond-conv} {\bf Convergence:} Let $g\in C(\partial\Omega;\R_+)$ and for each $j\in \N$, let    $\tilde h^j\in \widetilde{\mathcal H}, \,f^j\in L^\infty(\Omega;\mathbb R_+), $ and $w_j\in\mathcal F(\Omega),$
satisfy
    $$
    \begin{cases}
        \cL w_j=f^j\tilde h^j(w_j)&\text{in }\Omega\cap\{w_j>0\},\\
        w_j=g&\text{on }\partial\Omega.
    \end{cases}
    $$ 
    Suppose that
$\sup_{j\in\mathbb N}\|f^j\|_{L^\infty(\Omega)}<\infty,$  $f^j\to f$ and $w_j\to w$ locally uniformly in $\Omega$ for some $f\in L^\infty(\Omega)$ and $w\in C(\Omega;\R_+)$ and that $\tilde h^j\to  \hat  h $ locally uniformly on $(0,\infty)$ for some $\hat h\in  \cH\cup\widetilde\cH$.
Then
    \begin{align*}
            w\in \cF(\Omega)\quad\text{and}\quad  \cL w=f\hat h(w)\,\,\text{ in }\Omega\cap\{w>0\}.
    \end{align*}
\end{enumerate}

While we generally expect the above comparison \ref{cond-comp} to hold for $f_1 \leq f_2$, the necessity of the strict inequality  is primarily dictated by the current proof structure of \ref{cond-comp}, which, in the context of the $p$-Laplacian, appears to rely essentially on this condition.

Our meta-theorem for the elliptic case is the following.

\begin{theorem}\label{thm:meta}[Elliptic Meta-theorem]
    Let $\Omega$ be a bounded domain in $\R^n$. For $i=1,2$ and $C^1>c^1>0$, let $g_i\in C(\partial\Omega;\R_+)$ with $c^1<g_i<C^1$ and $f_i\in L^\infty(\Omega;\R_+)$ and assume that  $h_i \in \mathcal{H}$,  $k_i \in \mathcal{K},$ and $\cL_i$ satisfies \ref{cond-sol}-\ref{cond-conv} in a function space $\cF^i(\Omega)$. Then there exists a solution pair $(u,v)\in \cF^1(\Omega)\times \cF^2(\Omega)$ of the system \eqref{eq:sol-2}.
\end{theorem}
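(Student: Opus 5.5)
We will construct the solution by a monotone iteration on regularized scalar problems, in which one component increases and the other decreases, and then pass to the limit using \ref{cond-conv}.

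\textbf{Regularization.} Fix a sequence $\tilde c_j\downarrow 0$. For $i=1,2$ choose $\tilde h_i^j\in\widetilde{\mathcal H}$ with $\tilde h_i^j=0$ on $(-\infty,\tilde c_j]$ and $\tilde h_i^j\to h_i$ locally uniformly on $(0,\infty)$. For instance, take a Lipschitz truncation of $h_i$ cut off below $\tilde c_j$ and capped above by $C^0t^a+2$. Since $h_i\le C^0t^a$ on $(0,M_0)$, these can be chosen inside $\widetilde{\mathcal H}$.

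\textbf{Iteration at fixed $j$.} To make the coefficients $f_1k_1(v)$ admissible in \ref{cond-sol}, we work with $v$ bounded. A comparison with the solutions having $f\equiv 0$ or large $f$ gives $0\le u,v\le C^1$ for the whole family, so $k_i(\cdot)$ stays bounded.

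We start from $v_0:=\bar v$, the solution of $\cL_2\bar v=0\cdot\tilde h_2^j(\bar v)$ with data $g_2$; this is the largest candidate. Then, for $m\ge0$, we solve successively
\[
\cL_1 u_{m}=f_1k_1(v_{m})\tilde h_1^j(u_{m}),\qquad \cL_2 v_{m+1}=f_2k_2(u_{m})\tilde h_2^j(v_{m+1}),
\]
each with its boundary data, using \ref{cond-sol}.

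The monotonicity runs as follows. Since $k_1$ is non-decreasing, $v_{m+1}\le v_m$ gives $f_1k_1(v_{m+1})\le f_1k_1(v_m)$, which should yield $u_{m+1}\ge u_m$ by \ref{cond-comp}. In turn $k_2(u_{m+1})\ge k_2(u_m)$, which gives $v_{m+2}\le v_{m+1}$. So $u_m\uparrow$ and $v_m\downarrow$ by induction.

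The strict inequality required in \ref{cond-comp} is the main technical nuisance. I would handle it by perturbation: replace the coefficients by $f_1k_1(v_m)+\delta_m$ with a strictly decreasing sequence $\delta_m\downarrow\delta_\infty$, so that every comparison is strict. Once the limits are taken, we let the perturbation parameter tend to $0$ along a diagonal subsequence, again using \ref{cond-conv}.

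\textbf{Limits.} Monotone bounded sequences converge pointwise, to $u^j$ and $v^j$. The estimate \eqref{eq:mod-conti} has a modulus depending only on $\|f\|_\infty$, $C^0$, $a$ and the data, and is therefore uniform in $m$ and $j$. By Arzelà–Ascoli the convergence is then locally uniform.

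The coefficients also converge. The functions $k_i$ are continuous, so $k_1(v_m)\to k_1(v^j)$ locally uniformly. Applying \ref{cond-conv} with fixed $\tilde h^j$ shows that $(u^j,v^j)$ solves the regularized system.

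Next we let $j\to\infty$. The equicontinuity is again uniform, so along a subsequence $u^j\to u$ and $v^j\to v$ locally uniformly. Then $\tilde h_i^j\to h_i\in\mathcal H$, and \ref{cond-conv} gives the equations in $\{u>0\}$ and $\{v>0\}$.

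\textbf{Boundary values and free boundary conditions.} Boundary attainment is the step I expect to be the main obstacle, since local estimates alone do not control the boundary. I would use the lower barrier $w_*$ from \eqref{eq:lower-bound}: it is independent of $j$ and $m$, because it depends only on $h$, $\|f\|_\infty$, $g$ and $\Omega$. Together with the upper bound from comparison with the $f\equiv0$ solution, which is continuous up to $\partial\Omega$ with data $g$, this squeezes $u=g_1$ and $v=g_2$ on $\partial\Omega$.

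For the conditions on the free boundary, take $x\in\partial\{u>0\}$, so that $u(x)=0$. Estimate \eqref{eq:ftn-space}, uniform along the approximations, passes to the limit and gives $\sup_{B_r(x)}u\le Cr^{1+\alpha}$. Hence $u$ is differentiable at $x$ with $\nabla u(x)=0$, and similarly for $v$.
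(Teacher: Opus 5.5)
Your proposal is essentially the paper's proof. You regularize $h_i$ inside $\widetilde{\mathcal H}$ and run the alternating monotone iteration starting from the homogeneous solution, with perturbed coefficients so that \ref{cond-comp} applies strictly. You then pass to the limit three times (iteration, perturbation, regularization) via Arzel\`a--Ascoli and \ref{cond-conv}. Finally, you squeeze the boundary values between $w_*$ and the homogeneous solution, and read off the vanishing gradient on the free boundary from \eqref{eq:ftn-space}.

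The one point to correct is the perturbation. You perturb only the first coefficient, by a strictly decreasing sequence $\delta_m$. However, the comparisons $v_1\le \bar v$ and $v_{m+2}\le v_{m+1}$ also require the strict inequality demanded by \ref{cond-comp}, and this fails in general for the unperturbed coefficients.
\begin{itemize}
\item For $v_1\le\bar v$: the coefficient $f_2k_2(u_m)$ can vanish where $f_2=0$ or $u_m=0$, so $0<f_2k_2(u_m)$ may fail.
\item For $v_{m+2}\le v_{m+1}$: the inequality $k_2(u_{m+1})\ge k_2(u_m)$ is only non-strict.
\end{itemize}
The same issue affects your bound $v_m\le \bar v$.

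You must therefore add to the second coefficient a \emph{positive, strictly increasing} perturbation. This is exactly what the paper does: it uses $\varepsilon\sigma_j$ with $\sigma_j\downarrow 1$ in the first equation and $\varepsilon\tau_j$ with $\tau_j\uparrow 1$ in the second. Reusing your decreasing $\delta_m$ in the second equation would push that comparison the wrong way. With this fix, the rest of your argument goes through as written.
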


Next, we state a result on the relation between the coincidence sets, namely the region where the solutions vanish. 
For this purpose, we impose the following additional conditions on $f,k, h \text{ and } \tilde h$:
\begin{align}
    \label{eq:rhs-add}
    \begin{cases}
        \text{$\inf_\Omega f>0$  },\\
        \text{$k>0$ on $(0,\infty)$, }\\
        \text{For some $0<b<1$ and $c^0>0$, we have $h(t), \tilde h(t)\ge c^0t^b$ for $0<t<M_0$.   }
    \end{cases}
\end{align}
In addition, we need to  define the following further properties on the operator $\cL$: 
\begin{enumerate}[label={\bf (PE\arabic*')}]
    \item\label{cond-Hopf} {\bf Hopf's boundary principle:} For any ball $B_r(x^0)\Subset \Omega$, if $w\in C^1(\overline{B_r(x^0)})$ satisfies that $\cL w=0$ and $w>0$ in $B_r(x^0)$ and that $w(z^0)=0$ for some $z^0\in \partial B_r(x^0)$, then $|\D w(z^0)|\neq0$.
    \item\label{cond-nondeg}{\bf Nondegeneracy:} Let $w$ be a solution of \eqref{eq:sol-Dir} in $\Omega$. If $x^0\in \overline{\{w>0\}}$ and $B_r(x^0)\subset\Omega$, then
    $$
    \sup_{B_r(x^0)}w\ge cr^{b_0} \quad \hbox{for some } b_0 > 0, 
    $$
    where $c>0$ is a constant depending only on $b,c^0,n,\cL,\inf_\Omega f$. Here, $c\to\infty$ as $\inf_\Omega f\to\infty$.

\item\label{cond-conv-2}{\bf Regularity:} For $g\in C(\partial\Omega;\R_+)$, let $w^*$ be a solution of 
\begin{align*}
    \begin{cases}
        \cL w^*=0&\text{in }\Omega,\\
        w^*=g&\text{on }\partial\Omega.
    \end{cases}
\end{align*}
    Then $w^*\in C(\overline{\Omega})$ with a modulus of continuity depending only on $\Omega,\cL$ and $\|g\|_\infty$. Moreover,  $\|w^*\|_{L^\infty(\Omega)}\to0$ as $\|g\|_{L^\infty(\partial\Omega)}\to0$.
\end{enumerate}

\begin{theorem}\label{thm:support}Let $(u,v)$ be a solution of the system \eqref{eq:sol-2} given by Theorem \ref{thm:meta}.
Assume in addition that for  $i=1,2$, $f_i, k_i,h_i$ satisfy \eqref{eq:rhs-add} and $\cL_i$ satisfies the further conditions \ref{cond-Hopf}-\ref{cond-conv-2}. 
Let $C_u$ and $C_v$ be connected components of $\{u=0\}^\circ$ and $\{v=0\}^\circ$, respectively. Then
\[
\text{either } C_u = C_v \quad \text{or} \quad C_u \cap C_v = \emptyset.
\]
Moreover, for any $k_i,h_i$ and $\cL_i$, there exist $(f_i,g_i)$ such that for a solution $(u,v)$, one nonempty connected component of $\{u=0\}^{\mathrm{o}}$ and one of 
$\{v=0\}^{\mathrm{o}}$ are disjoint.
\end{theorem}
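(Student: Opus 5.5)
Both claims rest on the structure of the system: in $\{u=0\}^\circ$ the right-hand side of the $v$-equation carries the factor $k_2(u)=k_2(0)=0$. Symmetrically, in $\{v=0\}^\circ$ the $u$-equation has vanishing right-hand side.

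\textbf{Dichotomy.} Suppose $C_u\cap C_v\neq\emptyset$ but $C_u\neq C_v$. Without loss of generality there is a point of $C_v\setminus C_u$ or of $C_u\setminus C_v$; by symmetry assume $C_u\not\subset C_v$. Since $C_u$ is connected and meets $C_v$, it meets $\partial C_v$. Pick $y\in C_u\cap\partial C_v$. Then $y\in\overline{\{v>0\}}$: otherwise $v\equiv0$ near $y$, and that neighbourhood would lie in $\{v=0\}^\circ$ and join $C_v$, contradicting $y\in\partial C_v$.

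Take a small ball $B$ around $y$ inside $C_u$. There $\cL_2 v=f_2k_2(u)h_2(v)=0$ on $\{v>0\}$. Since $v\ge0$ attains the value $0$ at interior points adjacent to $\{v>0\}\cap B$, I will find an interior touching ball: choose $x\in\{v>0\}\cap B$ closest to $C_v$ and take the largest ball $B_r(x)\subset\{v>0\}\cap B$, whose boundary touches $\{v=0\}$ at some $z^0$. On $B_r(x)$ we have $v>0$ and $\cL_2 v=0$, with $v(z^0)=0$.

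At $z^0$ the free boundary condition gives $|\nabla v(z^0)|=0$, while \ref{cond-Hopf} gives $|\nabla v(z^0)|\ne0$, a contradiction. For this I need $v\in C^1(\overline{B_r(x)})$; I will obtain it from \eqref{eq:ftn-space}, which passes to the limit, gives $v=O(r^{1+\alpha})$ at zeros and hence differentiability with zero gradient there, together with interior regularity for $\cL_2 v=0$.

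\textbf{Example with disjoint components.} Take boundary data $g_i$ small and $f_i$ with large infimum. The regularity property \ref{cond-conv-2} gives a comparison barrier: since $\cL_i u\ge0$, $u$ is a subsolution, so $u\le w^*$ with $\|w^*\|_\infty$ small. Nondegeneracy \ref{cond-nondeg}, with constant $c\to\infty$ as $\inf f\to\infty$, then forces zero sets: any point of $\overline{\{u>0\}}$ at distance $r$ from $\partial\Omega$ would give $\sup u\ge cr^{b_0}$, exceeding the small bound. Here I use \eqref{eq:rhs-add} so that $k_1(v)h_1(u)\gtrsim$ a power of $u$ wherever $v>0$.

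To make the components disjoint rather than equal, I will take $\Omega$ to be a union of two regions joined by a thin channel, or choose $f_1$ large only on one subregion $D_1$ and $f_2$ large only on a disjoint subregion $D_2$. Then $u$ vanishes on a core inside $D_1$ while $v>0$ there, and conversely in $D_2$. By the dichotomy, the component of $\{u=0\}^\circ$ inside $D_1$ is disjoint from every component of $\{v=0\}^\circ$ it does not equal; it cannot equal one, since $v>0$ somewhere in it.

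\textbf{Main obstacle.} The hard step is the localized version of this example. Nondegeneracy is stated with $\inf_\Omega f$, so I would instead use a local comparison with a radial barrier in $D_1$, which requires $k_1(v)$ to be bounded below there. I expect to secure this lower bound from the lower function $w_*$ of \eqref{eq:lower-bound} or from positivity of $v$ on $D_1$.
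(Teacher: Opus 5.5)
Your dichotomy argument is correct. It is essentially the paper's Step~1 with the roles of $u$ and $v$ exchanged: the paper places a ball in $C_v\setminus\overline{C_u}$, where $k_1(v)=0$, and applies \ref{cond-Hopf} to $u$. Your maximal ball inside $\{v>0\}$ even justifies the positivity in the ball, which the paper only asserts. One caveat: your route to $v\in C^1(\overline{B_r(x)})$ uses interior gradient estimates for $\cL_2w=0$, and these are not among the abstract hypotheses.

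The genuine gap is the second claim, exactly where you flag it.
\begin{enumerate}
\item The theorem fixes $\Omega$, so the dumbbell option is not available.
\item The assertion that $k_1(v)h_1(u)$ dominates a power of $u$ ``wherever $v>0$'' is false as a uniform bound. \ref{cond-nondeg} needs the coefficient $f_1k_1(v)$ bounded below, i.e.\ a positive lower bound for $v$, not mere positivity.
\item In the localized version nothing supplies $\inf_{D_1}v>0$. The lower function $w_*$ of \eqref{eq:lower-bound} only controls a neighbourhood of $\partial\Omega$. On a $u$-core $v$ merely solves $\cL_2v=0$, so ``positivity of $v$ on $D_1$'' is what must be proved, not an input.
\end{enumerate}
As written, no component of $\{u=0\}^\circ$ meeting $\{v>0\}$ is ever produced, so the dichotomy has nothing to act on.

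The idea you are missing for that half is the paper's: put the $u$-core in the boundary layer, where a lower bound for $v$ comes for free. Near $x^0\in\partial\Omega$ one has $v\ge v_*\ge g_2(x^0)/2$ on $B_{\e_0}(x^0)\cap\Omega$. Here $\e_0$ depends on $h_2,\|f_2\|_\infty,k_2(\|u^*\|_\infty),g_2$ but not on $f_1$, and $u\le u^*$ is $f_1$-independent as well. Hence $\cL_1u\ge(\inf_\Omega f_1)\,k_1(g_2(x^0)/2)\,h_1(u)$ there. Nondegeneracy with $\inf_\Omega f_1$ large then forces $u\equiv0$ on an open subset of the layer, where $v>0$.

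Be aware, however, that the paper's other half does not close as written. That half fixes $g_1,f_2$, then takes $\|g_2\|_\infty$ small, then $\inf_\Omega f_1$ large, and obtains $\{v=0\}^\circ\neq\emptyset$ from $u\ge u_*\ge c^1/2$ near a second boundary point. The radius $\rho_0$ of that region depends on $u_*$, hence on $\|f_1\|_\infty$, so the order of choices is circular.

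Concretely, take $\Omega=(-1,1)$, $\cL_i=d^2/dx^2$, $k_i(s)=s_+$, $h_i(s)=\min\{s_+,M_0\}^b$, $g_1\equiv1$, $g_2\equiv\eta$, $f_2\equiv1$, $f_1\equiv M$.
\begin{itemize}
\item The constructed solution is even and convex, so any cores are centred intervals.
\item A $u$-core and a $v$-core would then be nested. Step~1 excludes this unless they coincide. Coincidence is impossible, because $E=u+v$ satisfies $E''\le(M+1)E^{1+b}$ near the common free boundary point with $E=E'=0$ there.
\item If instead $v$ had a core while $u>0$, then $v''\le\eta^b$ and $v'=0$ at the edge of the $v$-core give $v\ge\eta/2$ on $[1-\eta^{1-b}/2,1]$. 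Integrating $u''\ge\frac{M\eta}{2}u^b$ on that interval then forces $M\lesssim\eta^{2b-3}$.
\end{itemize}
Hence $\{v=0\}^\circ=\emptyset$ for $M\gg\eta^{2b-3}$, even though $g_2$ was chosen first. The same computation shows that even data on $(-1,1)$ can never produce both cores. So an asymmetric choice in the spirit of your $D_1$/$D_2$ idea is genuinely needed. It has to be carried out with local lower barriers and a local form of \ref{cond-nondeg}, which your proposal does not yet supply.
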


It should be remarked that unlike in \cite{ELSH25}, where the coincidence sets of solutions coincide, in our setting the sets $\{u=0\}^{\mathrm{o}}$ and $\{v=0\}^{\mathrm{o}}$ may  occur as nonempty and disjoint sets. This difference is mainly due to the behavior of the right-hand side of the first equation in \eqref{eq:sol-2}. In \cite{ELSH25}, this term blows up as $v\to0$; see \cite[Assumption 1 (A2)]{ELSH25}. In contrast, in our case the right-hand side remains bounded near $v=0$.

For the proof of Theorem \ref{thm:meta}, we need an existence theory in the case where the right hand sides are reasonably smooth.
This is shown in the following proposition.

\begin{proposition}\label{prop:meta}
    For $i=1,2$, let $\Omega$,  $g_i$, $f_i$, $k_i$, $h_i$,  $\mathcal{F}^i(\Omega)$, and
$\mathcal{L}_i$ be as in Theorem~\ref{thm:meta}, and assume that 
$\tilde h_i\in \widetilde{\mathcal{H}}$.
    Then there exists a solution $(u,v)\in\cF^1(\Omega)\times\cF^2(\Omega)$ of the system
    \begin{align*}
        \begin{cases}
            \cL_1u=f_1k_1(v)\tilde h_1(u)&\text{in }\Omega\cap\{u>0\},\\
            \cL_2v=f_2k_2(u)\tilde h_2(v)&\text{in }\Omega\cap\{v>0\},\\
            u=g_1,\,\,\, v=g_2&\text{on }\partial\Omega.
        \end{cases}
    \end{align*}
    Moreover,  $(u,v)$ satisfies the estimates \eqref{eq:mod-conti} and \eqref{eq:ftn-space}.
\end{proposition}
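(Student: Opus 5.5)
We will construct the pair $(u,v)$ by a monotone iteration in which one component increases and the other decreases. The comparison property \ref{cond-comp} reverses order: a larger right-hand side coefficient gives a smaller solution. The coupling functions $k_i$ are non-decreasing. So if $v$ decreases, then $f_1k_1(v)$ decreases and $u$ increases. Then $f_2k_2(u)$ increases and $v$ decreases again, which keeps the scheme consistent.

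\emph{Initialization.} Let $v_0$ be the solution given by \ref{cond-sol} of $\cL_2 v_0 = f_2\tilde h_2(v_0)$ with $v_0=g_2$, that is, with coefficient $f_2k_2(0)=0$ replaced by $f_2\cdot 0$, or more safely by a small constant coefficient. This $v_0$ should serve as an upper bound for all later $v_j$. Given $v_j$, we take $u_{j+1}$ to be the solution from \ref{cond-sol} of $\cL_1u_{j+1}=f_1k_1(v_j)\tilde h_1(u_{j+1})$ with boundary data $g_1$. Given $u_{j+1}$, we take $v_{j+1}$ to solve $\cL_2v_{j+1}=f_2k_2(u_{j+1})\tilde h_2(v_{j+1})$ with boundary data $g_2$. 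The coefficients are admissible because $0\le v_j\le \max g_2< C^1$ (comparison with the solution having zero right-hand side, or the lower/upper barriers). Hence $f_1k_1(v_j)\in L^\infty$, with a bound independent of $j$.

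\emph{Monotonicity.} We argue by induction: $v_{j+1}\le v_j$ implies $k_1(v_{j+1})\le k_1(v_j)$, hence $u_{j+2}\ge u_{j+1}$, hence $k_2(u_{j+2})\ge k_2(u_{j+1})$, hence $v_{j+2}\le v_{j+1}$. The main obstacle is that \ref{cond-comp} is stated only for \emph{strict} inequality $f_1<f_2$, whereas the coefficients here are only ordered non-strictly, and $k$ may even vanish. I would handle this with a strictly ordered perturbation. Replace the coefficient at step $j$ by $f_1k_1(v_j)+\varepsilon_j$, where $\varepsilon_j$ is strictly decreasing in $j$ (e.g.\ $\varepsilon_j=\delta(1+2^{-j})$), and similarly $f_2k_2(u_j)+\eta_j$ with $\eta_j$ strictly increasing. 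Then $v_{j+1}\le v_j$ gives strictly ordered coefficients in the $u$-equation, and likewise for $v$, so \ref{cond-comp} applies at every step. The base case $v_1\le v_0$ also follows from \ref{cond-comp}, by choosing $v_0$ with the smallest perturbed coefficient. At the end we send $\delta\to0$ by a second limiting argument using \ref{cond-conv}.

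\emph{Passage to the limit.} The sequences $(u_j)$ and $(v_j)$ are monotone and bounded, so they converge pointwise to limits $u$ and $v$. The estimate \eqref{eq:mod-conti} gives equicontinuity on each $\Omega'\Subset\Omega$, with $\rho$ uniform in $j$ because $\|f_1k_1(v_j)+\varepsilon_j\|_\infty$ is uniformly bounded. By Arzel\`a--Ascoli together with monotonicity, the convergence is locally uniform. Consequently $f_1k_1(v_j)+\varepsilon_j\to f_1k_1(v)+\delta$ locally uniformly, since $k_1$ is continuous. We then apply \ref{cond-conv} with $\tilde h^j=\tilde h_1$ fixed; the equation holds on $\{u_j>0\}$ a fortiori. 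This yields $\cL_1u=(f_1k_1(v)+\delta)\tilde h_1(u)$ on $\{u>0\}$, and similarly for $v$.

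The boundary values are preserved by the uniform barriers: $w_*\le u_j$ from \eqref{eq:lower-bound}, and $u_j$ lies below the $\cL_1$-harmonic-type upper solution with data $g_1$. Both are continuous up to $\partial\Omega$ with boundary value $g_1$, so the limit attains $g_1$. Estimates \eqref{eq:mod-conti} and \eqref{eq:ftn-space} hold uniformly in $j$ and pass to pointwise limits. Finally we let $\delta\to0$: the family $(u^\delta,v^\delta)$ enjoys the same uniform estimates and barriers, so a subsequence converges locally uniformly. Applying \ref{cond-conv} once more removes $\delta$ and gives the desired pair in $\cF^1(\Omega)\times\cF^2(\Omega)$.
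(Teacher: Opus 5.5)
Your proposal is correct and follows essentially the paper's own proof. The shared steps are:
\begin{itemize}
\item a monotone iteration with $u_j$ increasing and $v_j$ decreasing, whose coefficients are perturbed in a strictly ordered way so that the strict hypothesis of \ref{cond-comp} applies (the paper uses $\e\sigma_j$ with $\sigma_j\downarrow1$ and $\e\tau_j$ with $\tau_j\uparrow1$);
\item the limit in $j$ via \eqref{eq:mod-conti}, Arzel\`a--Ascoli and \ref{cond-conv};
\item the boundary values from the lower barrier $w_*$ and the homogeneous solution;
\item a final limit as the perturbation parameter tends to zero.
\end{itemize}
The only difference is in the start of the iteration: the paper begins from the homogeneous solution $v^*$, while you begin from a $v_0$ with a small constant coefficient. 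That constant-coefficient choice (or coefficient $0$) is what makes the base case $v_1\le v_0$ work. Your first sentence of the initialization, which uses coefficient $f_2$, should be dropped in its favour.
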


\begin{proof}
\medskip\noindent\emph{Step 1.}
We claim that for any $0<\e<1$, there is  a solution $(u^\e,v^\e)\in \cF^1(\Omega)\times \cF^2(\Omega)$ of the $\e-$
system
\begin{align}
    \label{eq:sol-eps}
    \begin{cases}
        \cL_1 u^\e=(f_1k_1(v^\e)+\e)\tilde h_1(u^\e)&\text{in }\Omega\cap\{u^\e>0\},\\
            \cL_2 v^\e=(f_2k_2(u^\e)+\e)\tilde h_2(v^\e)&\text{in }\Omega\cap\{v^\e>0\},\\
            u^\e=g_1, v^\e=g_2&\text{on }\partial\Omega.
    \end{cases}
\end{align}
To prove this, let $u^*\in \cF^1(\Omega)$ and $v^*\in \cF^2(\Omega)$ be solutions of
\begin{align}\label{eq:sol-hom}
    \begin{cases}
        \cL_1u^*=0&\text{in }\Omega,\\
        u^*=g_1&\text{on }\partial\Omega,
    \end{cases}
    \qquad \begin{cases}
        \cL_2v^*=0&\text{in }\Omega,\\
        v^*=g_2&\text{on }\partial\Omega,
    \end{cases}
\end{align}
respectively. 
We let $\{\sigma_j\}_{j=0}^\infty$ be a strictly decreasing sequence of real numbers in the open interval $(1,2)$ converging to $1$ and $\{\tau_j\}_{j=0}^\infty$ be a strictly increasing sequence in the interval $(1/2,1)$ converging to $1$. We first use $v^*$ to find  $u_0$, which in turn is used to find $v_0$.
 Indeed, we let  $(u_0,v_0)\in \cF^1(\Omega)\times \cF^2(\Omega)$ solve  two separate  scalar problems
\begin{align*}
    \begin{cases}
        \cL_1 u_0=(f_1k_1(v^*)+\e \sigma_0)\tilde h_1(u_0)&\text{in }\Omega,\\
        \cL_2 v_0=(f_2k_2(u_0)+\e \tau_0)\tilde h_2(v_0)&\text{in }\Omega,\\
        u_0=g_1,v_0=g_2&\text{on }\partial\Omega.
    \end{cases}
\end{align*}
For the existence, we first find $u_0\in \cF^1(\Omega)$ using \ref{cond-sol}, and then find $v_0\in \cF^2(\Omega)$ in the same way.

In a similar way, we find $(u_1,v_1)\in \cF^1(\Omega)\times \cF^2(\Omega)$ which solves
\begin{align*}
    \begin{cases}
        \cL_1 u_1=(f_1k_1(v_0)+\e \sigma_1)\tilde h_1(u_1) &\text{in }\Omega,\\
        \cL_2 v_1=(f_2k_2(u_1)+\e \tau_1)\tilde h_2(v_1) &\text{in }\Omega,\\
        u_1=g_1,v_1=g_2&\text{on }\partial\Omega.
    \end{cases}
\end{align*}

By repetition, we get a sequence $\{(u_j,v_j)\}_{j=1}^\infty\subset \cF^1(\Omega)\times \cF^2(\Omega)$ solving 
\begin{align}\label{eq:sol-seq}
\begin{cases}
        \cL_1 u_j=(f_1k_1(v_{j-1})+\e \sigma_j)\tilde h_1(u_j) &\text{in }\Omega,\\
        \cL_2 v_j=(f_2k_2(u_j)+\e \tau_j)\tilde h_2(v_j) &\text{in }\Omega,\\
        u_j=g_1,v_j=g_2&\text{on }\partial\Omega.
    \end{cases}
\end{align}

We claim that
\begin{align}
    \label{eq:mon}
    0\le u_0\le u_1\le\ldots\le u^*\quad\text{and}\quad v^*\ge v_0\ge v_1\ge\ldots\ge0\quad\text{in }\Omega.
\end{align} 
Indeed, 
by \ref{cond-comp} we have $u_j\le u^*$ and $v_j\le v^*$ in $\Omega$ for any $j\ge0$. 
In particular, we have $v^*\ge v_0$, thus $f_1k_1(v^*)+\e \sigma_0>f_1k_1(v_0)+\e \sigma_1$ in $\Omega$ due to the monotonicity of $k_1$ and $\sigma_j$ and the nonnegativity of $f_1$, and therefore $u_0\le u_1$ in $\Omega$ by \ref{cond-comp}. In turn, this gives $f_2k_2(u_0)+\e \tau_0<f_2k_2(u_1)+\e \tau_1$ in $\Omega$, thus $v_0\ge v_1$ in $\Omega$ by \ref{cond-comp}. Using this inequality, we can argue as above to get $u_2\ge u_1$ in $\Omega$, which will imply by a similar reason $v_1\ge v_2$ in $\Omega$. Repeating this process, we obtain \eqref{eq:mon}.

By \eqref{eq:mon}, we have that $u_j\to u^\e$ and $v_j\to v^\e$ pointwise in $\Omega$ for some $u^\e,v^\e:\Omega\to \R_+$. Moreover,  $f_1k_1(v_{j-1})+\e \sigma_j\le f_1k_1(v^*)+2\e$ and $f_2k_2(u_j)+\e \tau_j\le f_2k_2(u^*)+\e$ in $\Omega$ for any $j\in \N$. These, along with \eqref{eq:mod-conti}, imply by Arzelà–Ascoli theorem that over a subsequence, $u_j$ and $v_j$ converge locally uniformly to $u^\e$ and $v^\e$, respectively. 
Furthermore, since $k_1$ and $k_2$ are continuous on $\R$, $f_1k_1(v_{j-1})+\e \sigma_j\to f_1k_1(v^\e)+\e$ and $f_2k_2(u_j)+\e \tau_j\to f_2k_2(u^\e)+\e$ locally uniformly in $\Omega$. Thus, by \ref{cond-conv}, $(u^\e,v^\e)$ belongs to $\cF^1(\Omega)\times \cF^2(\Omega)$ and solves the system
\begin{align*}
    \begin{cases}
        \cL_1 u^\e=(f_1k_1(v^\e)+\e)\tilde h_1(u^\e) &\text{in }\Omega\cap\{u^\e>0\},\\
        \cL_2 v^\e=(f_2k_2(u^\e)+\e)\tilde h_2(v^\e) &\text{in }\Omega\cap\{v^\e>0\}.
    \end{cases}
\end{align*}
To prove the boundary conditions $u^\e=g_1$ and $v^\e=g_2$ on $\partial\Omega$, we note that for any $j\in \mathbb{N}$,
$$
f_1k_1(v_{j-1})+\e\sigma_j\le \|f_1\|_\infty k_1(\|v^*\|_\infty)+2\quad\text{in }\Omega.
$$
By \eqref{eq:lower-bound}, there exists a function $u_*$ in $\Omega$ which satisfies that for any $j\in \mathbb{N}$
\begin{align*}
    \begin{cases}
        u_*\le u_j&\text{in }\Omega,\\
        u_*=g_1&\text{on }\partial\Omega.
        \end{cases}
\end{align*}
Thus $u_*\le u^\e$ in $\Omega$. From $u_*\le u^\e\le u^*$ in $\Omega$ and $u_*=u^*=g_1$ on $\partial\Omega$, we infer $u^\e=g_1$ on $\partial\Omega$. Similarly, we can find $v_*$ that satisfies $v_*\le v^\e$ in $\Omega$ and obtain $v^\e=g_2$ in $\partial\Omega$.

\medskip\noindent\emph{Step 2.} Let $u^*$ and $v^*$ be as defined in \eqref{eq:sol-hom}. For each $0<\e<1$, let $(u^\e,v^\e)$ be as in \eqref{eq:sol-eps}. Note that $u^\e\le u^*$ and $v^\e\le v^*$ in $\Omega$. These inequalities, combined with the monotonicity of $k_1$ and $k_2$, imply that we  have  uniform bounds on the right hand side: $f_1k_1(v^\e)+\e\le f_1k_1(\|v^*\|_\infty)+1$ and $f_2k_2(u^\e)+\e\le f_2k_2(\|u^*\|_\infty)+1$ in $\Omega$. Due to \eqref{eq:mod-conti}, we have by Arzelà–Ascoli theorem that over a subsequence, $u^\e\to u$ and $v^\e\to v$ locally uniformly in $\Omega$ for some $u,v\in C(\Omega)$.
From the continuity of $k_i$ and the convergence $v^\e \to v$ and $u^\e\to u$, we see that
\[
f_1 k_1(v^\e) + \e \to f_1 k_1(v)\quad\text{and}\quad f_2k_2(u^\e)+\e\to f_2k_2(u)
\]
locally uniformly in $\Omega$. 
Therefore, by \ref{cond-conv}, we get
\begin{align*}
    \begin{cases}
        (u,v)\in \cF^1(\Omega)\times \cF^2(\Omega),\\
        \cL_1u=f_1k_1(v)\tilde h_1(u)&\text{in }\Omega\cap\{u>0\},\\
        \cL_2v=f_2k_2(u)\tilde h_2(v)&\text{in }\Omega\cap\{v>0\}.
    \end{cases}
\end{align*}
Regarding the boundary conditions, we observe that for any $0<\e<1$
$$
\text{$u_*\le u^\e\le u^*$ and $v_*\le v^\e\le v^*$ in $\Omega$},
$$
thus
$$
\text{$u_*\le u\le u^*$ and $v_*\le v\le v^*$ in $\Omega$},
$$
hence $u=g_1$ and $v=g_2$ on $\partial\Omega$.
\end{proof}

\begin{proof}[Proof of Theorem~\ref{thm:meta}]
Let $\{\tilde h_1^i\}_{i\in\mathbb N}\subset\widetilde\cH$ and $\{\tilde h_2^i\}_{i\in\mathbb N}\subset \widetilde\cH$
be sequences of admissible regularizations such that
\[
\tilde h_1^i \to h_1,
\qquad
\tilde h_2^i \to h_2
\quad\text{locally uniformly on }(0,\infty).
\]
In view of Proposition~\ref{prop:meta}, there exists a solution $(u^i,v^i)\in \cF^1(\Omega)\times \cF^2(\Omega)$ of
\begin{align*}
    \begin{cases}
        \cL_1u^i=f_1k_1(v^i)\tilde h_1^i(u^i)&\text{in }\Omega\cap\{u^i>0\},\\
        \cL_2v^i=f_2k_2(u^i)\tilde h_2^i(v^i)&\text{in }\Omega\cap\{v^i>0\},\\
        u^i=g_1, v^i=g_2&\text{on }\partial\Omega.
    \end{cases}
\end{align*} 
It is easily seen that $(u^i,v^i)$ satisfies the estimates \eqref{eq:mod-conti} and \eqref{eq:ftn-space} with constants $C>0$ and $0<\al<1$ and a modulus of continuity $\rho$, which are independent of $i\in\N$. 
By these uniform estimates and \ref{cond-conv}, we have that over a subsequence, $(u^i,v^i)\to (u,v)\in \cF^1(\Omega)\times\cF^2(\Omega)$ and
\begin{align*}
    \begin{cases}
        \cL_1u=f_1k_1(v)h_1(u)&\text{in }\Omega\cap\{u>0\},\\
        \cL_2v=f_2k_2(u)h_2(v)&\text{in }\Omega\cap\{v>0\},\\
        u=|\D u|=0&\text{on }\Omega\cap\partial\{u>0\},\\
        v=|\D v|=0&\text{on }\Omega\cap\partial\{v>0\}.
    \end{cases}
\end{align*}

Note that the free-boundary gradient condition follows from the growth estimate \eqref{eq:ftn-space}.
It remains to prove the boundary conditions $u=g_1$ and $v=g_2$ on $\partial\Omega$. To prove this we  let $u^*,v^*,u_*$ and $v_*$ be as in the proof of Proposition~\ref{prop:meta}. For every $i\in\mathbb{N}$,
$$
\text{$u_*\le u^i\le u^*$ and $v_*\le v^i\le v^*$ in $\Omega$,}
$$
thus
$$
\text{$u_*\le u\le u^*$ and $v_*\le v\le v^*$ in $\Omega$},
$$
hence $u=g_1$ and $v=g_2$ on $\partial\Omega$. This completes the proof.
\end{proof}

For the proof of Theorem \ref{thm:support}, we need the following  elementary topological lemma.

\begin{lemma}
    \label{lem:top}
For closed sets $F$ and $G$ in $\R^n$, let $A$ be a connected component of $F^{\mathrm{o}}$ and $B$ be that of $G^{\mathrm{o}}$. If $A\subset\overline B$ and $B\subset\overline A$, then $A=B$.    
\end{lemma}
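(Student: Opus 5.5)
Since $A$ and $B$ are connected components of the open sets $F^{\mathrm{o}}$ and $G^{\mathrm{o}}$, both are open and connected. I would show that $A\subset G^{\mathrm{o}}$. Then, $A$ being connected and $A\cap B\neq\emptyset$, maximality of the component $B$ gives $A\subset B$. By symmetry $B\subset A$, and so $A=B$.

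\emph{Step 1: $A\cap B\neq\emptyset$.} Assume $A$ is nonempty; otherwise $B\subset\overline A=\emptyset$ and there is nothing to prove. Pick $x\in A$. Since $A\subset\overline B$ and $A$ is an open neighbourhood of $x$, the set $A$ meets $B$. So $A\cap B\neq\emptyset$.

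\emph{Step 2: $A\subset G^{\mathrm{o}}$.} Let $x\in A$ and choose a ball $B_r(x)\subset A$. Then $B_r(x)\subset A\subset\overline B\subset\overline{G^{\mathrm{o}}}\subset G$, where the last inclusion uses that $G$ is closed. Hence $B_r(x)\subset G$, so $x\in G^{\mathrm{o}}$. This step is where closedness of $G$ enters.

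\emph{Step 3: conclusion.} The set $A\cup B$ is a union of two connected sets with nonempty intersection (Step 1), so it is connected. It is contained in $G^{\mathrm{o}}$ by Step 2. Since $B$ is a maximal connected subset of $G^{\mathrm{o}}$, we get $A\cup B=B$, i.e. $A\subset B$. Exchanging the roles of $(F,A)$ and $(G,B)$, and using $B\subset\overline A$ with $F$ closed, gives $B\subset A$. Therefore $A=B$.

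I do not expect any real obstacle. The only delicate point is Step 2, upgrading "$A$ lies in the closure of $B$" to "$A$ lies in the interior of $G$". This relies on $A$ being open and $G$ being closed, via $\overline{G^{\mathrm{o}}}\subset G$.
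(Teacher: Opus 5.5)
Your proof is correct, but it takes a different route from the paper's.

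\textbf{The paper's route.} It first shows that each component is regular open, i.e.\ $A=\overline{A}^{\mathrm{o}}$. The chain $A\subset\overline{A}^{\mathrm{o}}\subset\overline A\subset F$ exhibits $\overline{A}^{\mathrm{o}}$ as a connected open subset of $F$ containing $A$, and maximality then forces equality. Next, the two hypotheses give $\overline A=\overline B$. Hence $A=\overline{A}^{\mathrm{o}}=\overline{B}^{\mathrm{o}}=B$.

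\textbf{Your route.} You apply maximality across the two sets instead. Openness of $A$ together with $A\subset\overline B\subset G$ gives two facts: $A\subset G^{\mathrm{o}}$, and $A\cap B\neq\emptyset$. So $A\cup B$ is a connected subset of $G^{\mathrm{o}}$, and $A\subset B$ follows directly.

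\textbf{Comparison.} Your argument is slightly more economical. It never needs the connectedness of $\overline{A}^{\mathrm{o}}$, which the paper asserts without comment. That claim is true: every point of $\overline{A}^{\mathrm{o}}$ has a ball inside $\overline{A}^{\mathrm{o}}$ that meets $A$. Still, it needs this small extra argument. The paper's route gives, in return, the structural fact that components of interiors of closed sets are regular open.

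Both arguments actually show more than the lemma states: each hypothesis alone yields one inclusion. For instance, $A\subset\overline B$ by itself already gives $A\subset B$. Also, your Step~2 can be shortened. Since $B\subset G$ and $G$ is closed, $\overline B\subset G$, so the open set $A$ lies in $G$ and hence in $G^{\mathrm{o}}$.
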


\begin{proof}
Since $A$ is open, we have $A=A^{\mathrm{o}}\subset\overline A^{\mathrm{o}}$. Moreover, from the inclusion of $A\subset F$ and the closedness of $F$, we get $\overline A^{\mathrm{o}}\subset\overline A\subset \overline F=F$. Thus, $\overline A^{\mathrm{o}}$ is a connected open set containing $A$ contained in $F$, hence $\overline A^{\mathrm{o}}=A$ by the definition of $A$. Similarly, we have $\overline B^{\mathrm{o}}=B$. In addition, from $A\subset \overline B$ and $B\subset\overline A$, we have $\overline A\subset \overline B$ and $\overline B\subset\overline A$, hence $\overline A=\overline B$. Therefore, $A=\overline A^{\mathrm{o}}=\overline B^{\mathrm{o}}=B$.    
\end{proof}

\begin{proof}[Proof of Theorem~\ref{thm:support}]
We divide the proof into two steps.

\medskip\noindent\emph{Step 1.}
We first prove a dichotomy, namely  either $C_u=C_v$ or $C_u\cap C_v=\emptyset$. Indeed, towards a contradiction, suppose $C_u \cap C_v\neq \emptyset$ but $C_u\neq C_v$. From Lemma~\ref{lem:top}, we see that at least one of $C_v\setminus\overline{C_u}\neq\emptyset$ and $C_u\setminus\overline{C_v}\neq\emptyset$ occurs. Without loss of generality, we may assume that the open set $C_v\setminus\overline{C_u}$ is nonempty. By this and $C_u\cap C_v\neq\emptyset$, we can take a ball $B_r(x^0)\subset C_v\setminus \overline{C_u}$ such that $\overline{B_r(x^0)}\cap\overline{C_u}$ contains a point, say $z^0$. Note that $z^0\in \partial\overline{C_u}$. We further have
$$
z^0\in \partial\overline{C_u}\subset \partial C_u\subset\partial\{u=0\}^{\mathrm{o}}\subset\partial\{u=0\}=\partial\{u>0\}.
$$

Moreover, since $u=g_1>0$ on $\partial\Omega$, the free boundary $\partial\{u>0\}$ is contained in $\Omega$, and hence $z^0\in \Omega\cap\partial\{u>0\}$.
In particular, by the free boundary condition, $|\nabla u(z^0)|=0$. On the other hand, since $B_r(x^0)\subset C_v\subset\{v=0\}$, we have $v=0$ in $B_r(x^0)$,
and therefore $k_1(v)=0$ in $B_r(x^0)$. It follows that $
\mathcal L_1 u = f_1 k_1(v) h_1(u)=0 \quad \text{in } B_r(x^0).$
Since $u>0$ in $B_r(x^0)$ and $u(z^0)=0$, Hopf's lemma (condition \ref{cond-Hopf}) yields $|\nabla u(z^0)|\neq 0$, which is a contradiction.

\medskip\noindent\emph{Step 2.} In this step, we prove the second statement of Theorem~\ref{thm:support}. 
Using the result of step 1, it suffices to construct $(f_i,g_i)$ such that $\{u=0\}^\circ$ and $\{v=0\}^\circ$ are nonempty and distinct. Let $g_i$ and $f_i$, $i=1,2$, be given that will be determined later. We focus on the relations between them. In view of the proofs of Proposition~\ref{prop:meta} and Theorem~\ref{thm:meta},   we can find upper and lower barriers $u^*,v^*,u_*,v_*$. To be more specific, $u^*$ and $v^*$ are  solutions of \eqref{eq:sol-hom}
satisfying $u\le u^*$ and $v\le v^*$ in $\Omega$.  Moreover, $u_*,v_*  \in \text{Lip}(\Omega)\cap C(\overline{\Omega})$ are obtained from \eqref{eq:lower-bound},  with $u_*$ depending  only on $h_1,\|f_1\|_\infty,g_1,k_1(\|v^*\|_\infty)$, and $ v_* $ depending only on  on $h_2,\|f_2\|_\infty, k_2(\|u^*\|_\infty),g_2$, and satisfying 
\begin{align*}
    \begin{cases}
        u_*\le u&\text{in }\Omega,\\
        u_*=g_1&\text{on }\partial\Omega,
    \end{cases}\qquad
    \begin{cases}
        v_*\le v&\text{in }\Omega,\\
        v_*=g_2&\text{on }\partial\Omega.
    \end{cases}
\end{align*}

For a fixed point $x^0\in \partial\Omega$, we have $v\ge v_*\ge \frac{g_2(x^0)}2$ in $B_{\e_0}(x^0)\cap \Omega$ for a small $\e_0>0$, depending only on $h_2,\|f_2\|_\infty,  k_2(\|u^*\|_\infty),g_2$. It follows that 
$$
\cL_1u\ge \inf_{\Omega}f_1\,k_1\left(\frac{g_2(x^0)}2\right)h_1(u)\quad\text{in }\Omega\cap B_{\e_0}(x^0)\cap\{u>0\}.
$$
Since $u\le u^*$ in $\Omega$ and $\e_0$ and $u^*$ are independent of $f_1$, if $\inf_{\Omega}f_1\ge C(f_2,g_1,g_2)$, then we have by \ref{cond-nondeg} that $u=0$ in an open set inside $B_{\e_0}(x^0)\cap \Omega$. This proves that $\{u=0\}^{\mathrm{o}}\neq \emptyset$ with $\{u=0\}^{\mathrm{o}}\neq \{v=0\}^{\mathrm{o}}$ since $v>0$ in $B_{\e_0}(x^0)\cap \Omega$.

Next, we show that $\{v=0\}^{\mathrm{o}}\neq\emptyset$. We assume to the contrary $v>0$ in $\Omega$. Then, by its continuity, $\inf_\Omega v>0$. We take a point $y^0\in \partial\Omega\setminus B_{\e_0}(x^0)$. From $g_1(y^0)>c^1$, we see that $u\ge u_*\ge c^1/2$ in $B_{\rho_0}(y^0)\cap \Omega$ for small $\rho_0>0$, depending on $\|g_1\|_\infty$, by \ref{cond-sol}. Thus
$$
\cL_2v\ge\inf_{\Omega}f_2\,k_2\left(c^1/2 \right)h_2(v)\quad\text{in }B_{\rho_0}(y^0)\cap\Omega\cap \{v>0\}.
$$
From this and $\inf_{\Omega}v>0$, we have by \ref{cond-nondeg} that $\sup_\Omega v\ge \hat c$. Here, $\hat c>0$ depends on $\|g_1\|_\infty$ and $\inf_\Omega f_2$, but independent of $g_2$. Then $\sup_\Omega v^*\ge \sup_\Omega v\ge \hat c$. In view of \ref{cond-conv-2}, this is a contradiction if $\|g_2\|_\infty$ is small.

It remains to choose $f_i$ and $g_i$ appropriately. Recall that we required two conditions
$\inf_\Omega f_1\ge C(f_2,g_1,g_2)$ and $\|g_2\|_\infty\le c(g_1,f_2)$.
We first fix any $g_1$ and $f_2$ satisfying the the assumption stated in the theorem. We then select $g_2$ to satisfy the second condition, and finally choose $f_1$ so that the first condition holds.
\end{proof}


\subsection{The fully nonlinear elliptic case}
\label{sec:FN-elliptic}

We assume that a fully nonlinear operator $F:S(n)\to \R$ satisfies
\begin{align}
    \label{eq:assump-fully-nonlinear}
    \begin{cases}
    \text{$F$ is uniformly elliptic, i.e., there are constants $\Lambda\ge\la>0$ such that}\\
    \qquad \cM^-_{\la,\Lambda}(M-N)\le F(M)-F(N)\le \cM^+_{\la,\Lambda}(M-N)\text{ for every } M,N\in S(n),\\
    F(0)=0,\\
    \text{$F$ is concave},\\
    \text{$F$ is homogeneous of degree $1$, i.e., }    \  F(rM)=rF(M) \ \ \text{for all }   r>0, \ M\in S(n).
    \end{cases}
\end{align}
For $0<\al<1$, let $\cF_\al(\Omega;\R_+)$ be the space of functions $w\in C(\Omega;\R_+)$ satisfying \eqref{eq:mod-conti} and \eqref{eq:ftn-space}.

The following existence result is a slight modification of \cite[Theorem 2.1]{RicTei11}. We omit the proof as the generalization to our setting is straightforward.

\begin{theorem}(Compare \cite[Theorem 2.1]{RicTei11})
    \label{thm:exist-FN}
    Suppose $\Omega$ is a bounded domain in $\R^n$ and $\tilde h:\R\to\R$ be a bounded Lipschitz function. Assume that $F$ satisfies \eqref{eq:assump-fully-nonlinear} and that for $f\in L^\infty(\Omega;\R_+)$ and $g\in C(\partial\Omega)$, the equation 
    \begin{align}
        \label{eq:vis-sol}
        \begin{cases}
            F(D^2w)=f\tilde h(w)&\text{in }\Omega,\\
            w=g&\text{on }\partial\Omega
        \end{cases}
    \end{align}
    admits a viscosity subsolution $w_\sharp$ and a supersolution $w^\sharp$. Define
    \begin{align*}
        \mathcal{S}:=\{v\in C(\overline\Omega)\,:\, w_\sharp\le v\le w^\sharp\text{ and }v \text{ is a subsolution of }\eqref{eq:vis-sol}\}.
    \end{align*}
    Then 
    $$
    w(x):=\sup_{v\in \cS} v(x)
    $$
    is a viscosity solution of \eqref{eq:vis-sol}.
\end{theorem}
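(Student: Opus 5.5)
This is Perron's method (in the spirit of Ishii's), applied to $G(x,w,M):=F(M)-f(x)\tilde h(w)$. We show that the upper envelope $w$ of $\cS$ is a viscosity subsolution, that its lower semicontinuous envelope $w_*$ is a supersolution, that both attain $g$ on $\partial\Omega$, and then use comparison to conclude $w=w_*$ is continuous.

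First, $\cS\neq\emptyset$, since $w_\sharp\in\cS$, and every $v\in\cS$ satisfies $w_\sharp\le v\le w^\sharp$. Let $w^*$ be the upper semicontinuous envelope of $w$. The standard stability lemma says that the supremum of a family of viscosity subsolutions, after taking the u.s.c. envelope, is again a subsolution. Its proof tests at a strict local maximum of $w^*-\varphi$ and extracts a sequence $v_k\in\cS$ with touching points converging. It uses only the continuity of $G$ in $(w,M)$ and the lower semicontinuity in $x$ that a nonnegative $L^\infty$ $f$ gives in the $L^\infty$-viscosity sense. Here $\tilde h$ being Lipschitz and bounded makes $G$ continuous and bounded in $w$. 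Hence $w^*$ is a subsolution.

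The boundary behaviour comes from the barriers. If $w_\sharp,w^\sharp$ are continuous on $\overline\Omega$ with value $g$ on $\partial\Omega$, then $w_\sharp\le w_*\le w^*\le w^\sharp$ forces $w_*=w^*=g$ on $\partial\Omega$.

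The supersolution property for $w_*$ is the main step, via the bump construction. Suppose $w_*$ fails to be a supersolution at $x_0$. Then there is a $C^2$ function $\varphi$ touching $w_*$ from below at $x_0$ with $F(D^2\varphi(x_0))>f\tilde h(\varphi(x_0))$, which persists in a small ball $B_r(x_0)$ by continuity. Consider $\varphi_\delta=\varphi+\delta-\eta|x-x_0|^2$ for small $\delta,\eta$. It is still a strict classical subsolution in $B_r$, because uniform ellipticity gives $F(D^2\varphi-2\eta I)\ge F(D^2\varphi)-2\eta n\Lambda$ and $\tilde h$ is Lipschitz. Also $\varphi_\delta<w_*\le w$ near $\partial B_r$. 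We must have $w_*(x_0)<w^\sharp(x_0)$: if equality held, $w^\sharp-\varphi$ would attain a local minimum at $x_0$ and the supersolution property of $w^\sharp$ would give a contradiction. So, for small $\delta$, $\varphi_\delta\le w^\sharp$ near $x_0$. Set $U=\max(w,\varphi_\delta)$ in $B_r$ and $U=w$ outside. The maximum of two subsolutions is a subsolution, and $U$ lies between the barriers. Since $U(x_k)>w(x_k)$ along points $x_k\to x_0$ with $w(x_k)\to w_*(x_0)$, $U$ contradicts maximality. One technical point is that $U$ need not be continuous, while $\cS$ requires continuity. I would define $\cS$ up to u.s.c. envelopes, or note that the supremum over u.s.c. subsolutions coincides with the given one.

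Finally, I apply the comparison principle for $G$. This is the main obstacle, because $\tilde h$ is nondecreasing only in special cases, so $G$ need not be proper. Lipschitz continuity of $\tilde h$ alone does not give comparison in general. I would first try to handle it as in the cited result of Ricarte–Teixeira: use that $\tilde h$ is bounded and Lipschitz, together with $f\ge0$, concavity and homogeneity of $F$, and do a perturbation/scaling argument (replace the subsolution by $(1-\theta)w^*$ plus a strict-subsolution correction) to run the Caffarelli–Cabré comparison on the difference. If full comparison fails, the fallback is that $w^*$ and $w_*$ are $C^{0,\beta}$ by Krylov–Safonov estimates for bounded right-hand sides. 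Applied to the sub- and super-solution inequalities, these estimates give $w^*=w_*$ by a squeezing argument, and hence continuity of $w$. Either way $w=w^*=w_*$ is a continuous viscosity solution of \eqref{eq:vis-sol}.
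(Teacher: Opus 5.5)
The paper does not prove this statement; it only defers to the Perron argument of Ricarte--Teixeira. Your envelope/bump outline is the expected route, but the last step has a genuine gap, and it is the only non-routine step. Without it you obtain only a discontinuous viscosity solution ($w^*$ a subsolution, $w_*$ a supersolution), not a continuous one. Neither of your two options closes it.

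\emph{Comparison.} A comparison principle for $F(D^2u)=f\tilde h(u)$ is false when $\tilde h$ is not monotone, because uniqueness fails. For example, take $\tilde h\ge0$ a bump supported in $[0,\delta]$ with $\delta<1$, and $\lambda$ large. Then $u''=\lambda\tilde h(u)$ on $(-1,1)$ with $u(\pm1)=1$ has the solution $u\equiv1$ and also a second solution dipping below $\delta$. So no perturbation or scaling argument can produce a general comparison, and you give nothing specific to the pair $w^*,w_*$.

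\emph{Krylov--Safonov.} H\"older estimates need the two-sided condition $\mathcal M^+(D^2u)\ge -C$ and $\mathcal M^-(D^2u)\le C$ for one and the same function. You only know $\mathcal M^+(D^2w^*)\ge -C$ and $\mathcal M^-(D^2w_*)\le C$. Upper semicontinuous functions satisfying only the first condition can be discontinuous (bounded discontinuous subharmonic functions when $n\ge2$). Even continuity of both envelopes would not force $w^*=w_*$.

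The missing idea is to use the Lipschitz bound on $\tilde h$ to replace comparison for the non-proper equation by comparison for a proper, frozen one.
\begin{itemize}
\item Set $L:=\|f\|_\infty\,\mathrm{Lip}(\tilde h)$, so that $r\mapsto f(x)\tilde h(r)-Lr$ is nonincreasing.
\item For bounded $z$ with $w_\sharp\le z\le w^\sharp$, let $Tz$ solve $F(D^2u)-Lu=f\tilde h(z)-Lz$ in $\Omega$, $u=g$ on $\partial\Omega$. This equation is proper. $w^\sharp$ is a supersolution of it, and every $v\in\mathcal S$ with $v\le z$ (in particular $w_\sharp$) is a subsolution. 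Hence $Tz$ exists, is continuous, and satisfies $w_\sharp\le Tz\le w^\sharp$.
\item Comparison for the frozen equation shows that $T$ is order preserving and that $v\le Tv$ for every $v\in\mathcal S$.
\item Whenever $z\le Tz$,
\[
F(D^2 Tz)=f\tilde h(z)+L(Tz-z)\ge f\tilde h(Tz),
\]
so $Tz\in\mathcal S$.
\item Therefore $v\le Tv\le Tw$ for every $v\in\mathcal S$, that is, $w\le Tw$. This gives $Tw\in\mathcal S$, hence $Tw\le w$. So $w=Tw$ is continuous and solves the equation.
\end{itemize}
The frozen right-hand sides are only measurable, as they already are since $f\in L^\infty$. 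So this, like your stability and bump steps, must be run in the $L^p$-viscosity framework; an $L^\infty$ function $f$ gives no lower semicontinuity in $x$. With this argument the bump construction is unnecessary. If you keep it, fix the continuity of $U$ by compactness: a finite maximum of elements of $\mathcal S$ already exceeds $\varphi_\delta$ on the annulus, and you glue $\varphi_\delta$ to that. Do not fix it by redefining $\mathcal S$, which changes the statement.
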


\begin{theorem}
    \label{thm:FN}
    Let $\Omega$ be a bounded $C^{1,1}$
      domain in $\R^n$, $n\ge1$. For $i=1,2$ and the constants $C^1>c^1>0$, suppose that $g_i\in \text{Lip}(\partial\Omega)$ with $c^1<g_i<C^1$, $f_i\in L^\infty(\Omega;\R_+)$,  $k_i\in \cK$ and $h_i\in \cH$. Suppose $F_1$ and $F_2$ satisfy \eqref{eq:assump-fully-nonlinear}. Then there exists a solution $(u,v)\in \cF_{\al_1}(\Omega;\R_+)\times  \cF_{\al_2}(\Omega;\R_+)$ of the system
    \begin{align}\label{system:fullynonlinear}
        \begin{cases}
            F_1(D^2u)=f_1k_1(v)h_1(u)&\text{in }\Omega\cap\{u>0\},\\
            F_2(D^2v)=f_2k_2(u)h_2(v)&\text{in }\Omega\cap\{v>0\},\\
            u=g_1,v=g_2&\text{on }\partial\Omega,\\
            u=|\D u|=0&\text{on }\Omega\cap\partial\{u>0\},\\
            v=|\D v|=0&\text{on }\Omega\cap\partial\{v>0\},
        \end{cases}
    \end{align}
    where $\al_i\in(0,1)$ is a constant depending only on $a_i$.
\end{theorem}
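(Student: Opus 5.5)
The plan is to apply Theorem~\ref{thm:meta} with $\cL_i w:=F_i(D^2w)$, understood in the viscosity sense, and $\cF^i(\Omega):=\cF_{\al_i}(\Omega;\R_+)$. With this choice the theorem reduces to checking \ref{cond-sol}, \ref{cond-comp} and \ref{cond-conv} for a single operator $F$ satisfying \eqref{eq:assump-fully-nonlinear}. The gradient condition on the free boundary then follows from \eqref{eq:ftn-space}, exactly as in the proof of Theorem~\ref{thm:meta}.

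\emph{Convergence \ref{cond-conv}.} This is the standard stability of viscosity solutions under locally uniform limits. In any open set $\{w>\delta\}$ we eventually have $w_j>\delta/2$, and there $\tilde h^j(w_j)\to\hat h(w)$ uniformly, because $\hat h$ is continuous on $(0,\infty)$. For $f^j$ that are only $L^\infty$, I would use the $L^n$-viscosity framework of Caffarelli--Crandall--Kocan--\'Swi\c{e}ch. The estimates \eqref{eq:mod-conti} and \eqref{eq:ftn-space} pass to the limit pointwise, so $w\in\cF_\al$.

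\emph{Comparison \ref{cond-comp}.} Suppose $f_1<f_2$ and $w_1<w_2$ somewhere. On the open set $D=\{w_2>w_1\}$ we have $\tilde h(w_2)\ge \tilde h(w_1)$ only if $\tilde h$ is nondecreasing, which is not assumed. So a direct argument fails. Instead I would use the strict inequality and build the solutions in \ref{cond-sol} as extremal ones: take the maximal solution given by Perron's method (Theorem~\ref{thm:exist-FN}), and compare the Perron classes. If $f_1<f_2$, every subsolution of the $f_2$-problem is a subsolution of the $f_1$-problem, since $\tilde h\ge0$ gives $F(D^2v)\ge f_2\tilde h(v)\ge f_1\tilde h(v)$. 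The two problems share the same supersolution $w^\sharp$ (the $F$-harmonic function) and can share a common subsolution $w_\sharp$. Hence the Perron supremum for $f_1$ dominates that for $f_2$. This relies on interpreting ``solution'' in \ref{cond-comp} as the Perron solution, and I expect this interpretational point to be where care is needed. I would phrase it so that the scheme in Proposition~\ref{prop:meta} always selects Perron solutions, which is all that is used there.

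\emph{Solvability \ref{cond-sol}.} For existence, take the supersolution $w^\sharp=u^*$, the $F$-harmonic function with data $g$. It exists and is continuous up to $\partial\Omega$ for $C^{1,1}$ domains with Lipschitz data. For the subsolution, use the barrier
\[
w_\sharp=w_*:=\bigl(\bar g-K d(x)\bigr)^+ \wedge\ldots,
\]
built from the distance function $d$ to $\partial\Omega$ and a Lipschitz extension $\bar g$ of $g$. More precisely, I would take $w_*=\max\bigl(0,\bar g-Kd-\text{correction}\bigr)$ with a concave power of $d$, so that $\cM^-$ of the profile is large. Using $\tilde h\le C^0t^a+2$ and $\|f\|_\infty$, one gets $F(D^2w_*)\ge \|f\|_\infty\sup\tilde h\ge f\tilde h(w_*)$ on $\{w_*>0\}$, while $w_*=0$ is trivially a subsolution where it vanishes. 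The $C^{1,1}$ regularity of $d$ near $\partial\Omega$ is used here. The lower bound \eqref{eq:lower-bound} then holds by construction, and the interior modulus \eqref{eq:mod-conti} follows from Krylov--Safonov/Caffarelli $C^{\beta}$ estimates with bounded right-hand side.

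\emph{Main obstacle.} The hardest step is the uniform one-sided growth \eqref{eq:ftn-space}, $\sup_{B_r(x)}w\le C(r^{1+\al}+w(x))$, with constants independent of the regularization $\tilde h$. The right-hand side satisfies $|f\tilde h(w)|\le C(w^a+1)$, and is thus bounded only by a possibly singular power of $w$ when $a<0$. For this I would follow the Ricarte--Teixeira sharp-regularity argument for dead-core problems with $F$ homogeneous. First, normalize with the scaling $w_r(y)=w(x+ry)/r^{\gamma}$, where $\gamma=2/(1-a)>1$. Then run a flatness-improvement/compactness argument: a contradiction sequence, combined with the homogeneity and concavity of $F$ and Evans--Krylov $C^{1,1}$ limits, yields the decay $\sup_{B_r}w\le Cr^{\gamma}$ at points where $w(x)$ is small relative to $r^\gamma$. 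Setting $1+\al=\min(\gamma,1+\bar\al)$, the remaining case, where $w(x)$ is comparable to $r^{\gamma}$, is handled by the Harnack inequality. The dependence of $\al_i$ only on $a_i$ comes from $\gamma$.
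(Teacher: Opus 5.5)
Your overall plan matches the paper's proof. You reduce to \ref{cond-sol}--\ref{cond-conv} via Theorem~\ref{thm:meta}, use Perron's maximal solution for both existence and \ref{cond-comp} (the paper argues comparison exactly this way), and use viscosity stability for \ref{cond-conv}. For \eqref{eq:ftn-space} you propose a direct scaling/compactness argument. The paper instead substitutes $\hat w=w^{(1-a_-)/2}$, checks that the new right-hand side $\hat f\hat w^{-1}$ has $\hat f\le C\|f\|_\infty$, and quotes \cite{AraTei13}; your route is a legitimate alternative there.

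\textbf{The gap: the lower barrier $w_*$ when $-1<a<0$.} Condition \ref{cond-sol} requires $w_*$ to depend only on $h,\|f\|_\infty,g,\Omega$, not on the regularization $\tilde h$. This matters because in the proof of Theorem~\ref{thm:meta} a single $u_*$ must lie below $u^i$ along the whole sequence $\tilde h_1^i\to h_1$; otherwise the boundary values are lost in the final limit.

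\begin{itemize}
\item Your inequality $F(D^2w_*)\ge\|f\|_\infty\sup\tilde h$ is not uniform. $\sup\tilde h^i$ can be of order $C^0\tilde c_i^{\,a}\to\infty$, for example when $h(t)=C^0t^a$ near $0$.
\item A profile of the type $\bar g-Kd-(\text{correction})$ cannot be made uniform either. It vanishes transversally on the boundary of its support, so $\cM^-_{\la,\Lambda}(D^2w_*)$ stays bounded there. Uniformity in $\tilde c$ would require $F(D^2w_*)\ge\|f\|_\infty C^0w_*^a$ on all of $\{w_*>0\}$, and the right-hand side blows up at that boundary.
\end{itemize}

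The missing idea is to choose the profile so that its Hessian scales like $w_*^a$. The paper sets $w_*=\tilde g\,\bigl((r-d_{\partial\Omega})/r\bigr)^{\be}$ in $\Omega_r$ and $w_*=0$ elsewhere, with $\be=2/(1-a)$. Then $\be-2=\be a$, and for small $r$,
\[
F(D^2w_*)\ge c\,r^{-2}\bigl((r-d_{\partial\Omega})/r\bigr)^{\be a}\ge f(C^0w_*^a+2)\ge f\tilde h(w_*),
\]
independently of $\tilde c$.

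The paper also uses the solution $\tilde g$ of $\cM^-_{\la,\Lambda}(D^2\tilde g)=0$, $\tilde g=g$, rather than a Lipschitz extension $\bar g$. The Hessian of a merely Lipschitz $\bar g$ is uncontrolled in the viscosity sense; a concave kink destroys the subsolution property. With $\tilde g$, superadditivity of $\cM^-_{\la,\Lambda}$ removes the term $(r-d_{\partial\Omega})^{\be}D^2\tilde g$ exactly. The global Lipschitz bound on $\tilde g$, from the $C^{1,1}$ domain and Lipschitz data, controls the cross term $\D\tilde g\otimes\D d_{\partial\Omega}$.

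The same non-uniformity affects your derivation of \eqref{eq:mod-conti} from Krylov--Safonov with a bounded right-hand side. For $a<0$ the uniform modulus has to come from the singular theory (your compactness argument or \cite{AraTei13}), not from $\|f\tilde h(w)\|_\infty$.
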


\begin{proof}
It suffices to show that the properties \ref{cond-sol}-\ref{cond-conv} hold with the operator $\cL w=F(D^2w)$.

\medskip

\noindent
\underline{Proof of Property  \ref{cond-sol}:}
We split the proof of \ref{cond-sol} into two steps.

\medskip\noindent\emph{Step 1.} The aim of this step is to construct $w_*$ for \eqref{eq:lower-bound} which serves as subsolution  of \eqref{eq:sol-Dir}.  For this purpose let $\tilde g$ be the solution of the homogeneous Pucci problem
\begin{align}\label{eq:g-tilde}
    \begin{cases}
        \cM^-_{\la,\Lambda}(D^2\tilde g)=0&\text{in }\Omega,\\
        \tilde g=g&\text{on }\partial\Omega.
    \end{cases}
\end{align}
Since $g$ is Lipschitz and $\Omega$ is $C^{1,1}$, the standard elliptic theory gives that $\tilde g\in \text{Lip}(\overline{\Omega})$ with
$$
\|\D\tilde g\|_{L^\infty(\Omega)}\le C(n,\la,\Lambda,\Omega).
$$
Note that $c^1<\tilde g<C^1$ in $\Omega$ by maximum and minimum principles.

For small $r\in(0,1)$ to be chosen later, we define
\begin{align*}
    &d_{\partial\Omega}(x):=\dist(x,\partial\Omega),\quad x\in \Omega,\\
    &\Omega_r:=\{x\in\Omega\,:\,0<d_{\partial\Omega}(x)<r\}.
\end{align*}
We consider
\begin{align*}
    w_*(x):=\begin{cases}
        \tilde g(x)\left(\frac{r-d_{\partial\Omega}(x)}{r}\right)^\be,& x\in \Omega_r,\\
        0,&x\in \Omega\setminus\Omega_r,
    \end{cases}
\end{align*} 
where $\be:=\frac2{1-a}\in(1,\infty)$. It is easily seen that $w_*=g$ on $\partial\Omega$ and $w_*>0$ in $\Omega_r$.
Moreover, a direct computation gives that in $\Omega_r$,
\begin{align}\label{eq:subsol-comp-1}\begin{split}
    &r^\be D^2w_*\\
    &= (r-d_{\partial\Omega})^\be D^2\tilde g-2\be(r-d_{\partial\Omega})^{\be-1}(\D \tilde g\otimes\D d_{\partial\Omega})+\be(\be-1)\tilde g(r-d_{\partial\Omega})^{\be-2}\D (d_{\partial\Omega}\otimes\D d_{\partial\Omega})\\
    &\quad-\be\tilde g(r-d_{\partial\Omega})^{\be-1}D^2d_{\partial\Omega}.
\end{split}\end{align}
By using the equation $\cM_{\la,\Lambda}^-(D^2\tilde g)=0$, we further have in $\Omega_r$
\begin{align}
    \label{eq:subsol-comp-2}\begin{split}
     r^\be F(D^2w_*)\ge& r^\be \cM_{\la,\Lambda}^-(D^2w_*)\\
     \ge&\be(\be-1)\tilde g(r-d_{\partial\Omega})^{\be-2}\cM_{\la,\Lambda}^-(\D d_{\partial\Omega}\otimes\D d_{\partial\Omega})-2\be(r-d_{\partial\Omega})^{\be-1}\cM_{\la,\Lambda}^+(\D \tilde g\otimes \D d_{\partial\Omega})\\
    &-\be\tilde g(r-d_{\partial\Omega})^{\be-1}\cM_{\la,\Lambda}^+(D^2d_{\partial\Omega}).
\end{split}\end{align}
Since the eigenvalues of $\D d_{\partial\Omega}\otimes\D d_{\partial\Omega}$ are $|\D d_{\partial\Omega}|^2,0,\cdots,0$ and $|\D d_{\partial\Omega}|\ge1$, we have $\cM_{\la,\Lambda}^-(\D d_{\partial\Omega}\otimes\D d_{\partial\Omega})\ge\la$. Moreover, $|\D d_{\partial\Omega}|\le C(n,\Omega)$ and $|D^2d_{\partial\Omega}|\le C(n,\Omega)$ in $\Omega_r$ if $r=r(n,\Omega)>0$ is small enough. This, along with the Lipschitz regularity of $\tilde g$, gives $\cM_{\la,\Lambda}^+(D^2d_{\partial\Omega})\le C(n,\la,\Lambda,\Omega)$ and $\cM_{\la,\Lambda}^+(\D g\otimes\D d_{\partial\Omega})\le C(n,\la,\Lambda,\Omega)$ in $\Omega_r$. Therefore, we have in $\Omega_r$
\begin{align*}
    r^\be F(D^2w_*)&\ge c(c^1,a,n,\la,\Lambda,\Omega)(r-d_{\partial\Omega})^{\be-2}-C(C^1,a,n,\la,\Lambda,\Omega)(r-d_{\partial\Omega})^{\be-1}\\
    &\ge c(c^1,C^1,a,n,\la,\Lambda,\Omega)(r-d_{\partial\Omega})^{\be-2}
\end{align*}
if $r=r(c^1,C^1,a,n,\la,\Lambda,\Omega)$ is small enough. Thus, we have in $\Omega_r$
\begin{align*}
    F(D^2w_*)\ge c\frac{(r-d_{\partial\Omega})^{\be-2}}{r^\be}=\frac{c}{r^2}\left(\frac{r-d_{\partial\Omega}}{r}\right)^{\be-2}\ge\frac{c}{r^2}
\end{align*}
and 
\begin{align*}
    F(D^2w_*)\ge c\frac{(r-d_{\partial\Omega})^{\be-2}}{r^\be}=cr^{\be(a-1)}\frac{(r-d_{\partial\Omega})^{\be-2}}{r^{\be a}}=cr^{\be(a-1)}\left(\frac{r-d_{\partial\Omega}}r\right)^{\be a}\ge cr^{\be(a-1)}w_*^a.
\end{align*}
Thus, 
choosing $r>0$ sufficiently small so that
$\frac{c}{r^2}\ge 4\|f\|_\infty,$
and using that $\beta(a-1)=-2$, we obtain
\begin{align*}
    F(D^2w_*)\ge f(C^0w_*^a+2)\ge f\tilde h(w_*)\quad\text{in }\Omega_r,
\end{align*} 
and hence  $w_*$ is a subsolution of \eqref{eq:sol-Dir} in $\Omega$.

\medskip\noindent\emph{Step 2.} In this step, we prove \ref{cond-sol} by utilizing the subsolution property of $w_*$. Taking the solution of the homogeneous Dirichlet problem as a supersolution for \eqref{eq:sol-Dir}, we obtain a solution $w$ of \eqref{eq:sol-Dir} by applying Theorem~\ref{thm:exist-FN}. Moreover, since Theorem~\ref{thm:exist-FN} tells us that $w$ is the largest subsolution of \eqref{eq:sol-Dir}, we infer $w\ge  w_*$ in $\Omega$.

It remains to prove the regularity properties \eqref{eq:mod-conti} and \eqref{eq:ftn-space}. Recall that $\tilde h(w)=0$ if $w\le\tilde c$ and let
$$
\hat w:=w^{\frac{1-a_-}2},\quad \text{where }a_-:=\min\{a,0\}\le0.
$$
By following computations (11) - (16) in \cite{AraTei13}, we get
$$
F\left(D^2\hat w+\frac{1+a_-}{1-a_-}\hat w^{-1}\D\hat w\otimes\D\hat w \right)=\hat f\hat w^{-1}\quad\text{in }\Omega,\quad\text{where } \hat f:=\left(\frac{1-a_-}2\right)fw^{-a_-}\tilde h(w).
$$
We claim that
$$
\hat f\le C(C^0,a,\|g\|_\infty)\|f\|_\infty.
$$
Indeed, in view of the definition of $\hat f$ and that $\hat f=0$ when $\hat w\le \tilde c^{\frac2{1-a_-} }$, it suffices to show that $\hat f\le C(C^0,a,\|g\|_\infty)$ in $\Omega\cap\{w>0\}$. From \eqref{eq:rhs-1-1}, we infer
$$
w^{-a_-}\tilde h(w)\le w^{-a_-}(C^0w^a+2) \quad\text{in }\Omega\cap\{w>0\}.
$$
If $a\le 0$, then we use $w\le \|g\|_\infty$ in $\Omega$, which follows from the maximum principle, to get
$$
w^{-a_-}\tilde h(w)\le C^0+2w^{-a}\le C^0+2\|g\|_\infty^{-a}.
$$
On the other hand, if $a>0$, then 
$$
w^{-a_{-}}\tilde h(w)\le C^0w^a+2\le C^0\|g\|_\infty^a+2.
$$

Now, with the above equation for $\hat w$ and the uniform boundedness of $\hat f$, we can proceed as in \cite{AraTei13}, in particular \cite[Proposition~1 and Theorem~3]{AraTei13}, to obtain \eqref{eq:mod-conti} and \eqref{eq:ftn-space}. In \eqref{eq:ftn-space}, we can take $\al=\frac{1+a}{1-a}\in(0,1)$ when $a\in(-1,0)$. When $a\in[0,1)$, $\al$ can be chosen as any positive number less than $1$.

\medskip

\noindent
\underline{Proof of Property \ref{cond-comp}:} 
 Suppose $w_1$ and $w_2$ satisfy
    \begin{align*}
        \begin{cases}
             F(D^2 w_1)=f_1\tilde h(w_1)&\text{in }\Omega,\\
            w_1=g&\text{on }\partial\Omega,
        \end{cases}\qquad
        \begin{cases}
              F(D^2 w_2)=f_2\tilde h(w_2)&\text{in }\Omega,\\
            w_2=g&\text{on }\partial\Omega.
        \end{cases}
    \end{align*}
   If $f_1<f_2$ in $\Omega$ then 
$$F(D^2w_2)=f_2\tilde h(w_2) \geq   f_1\tilde h(w_2) \qquad \hbox{in } \Omega ,$$ and hence  $w_2$ is a subsolution of
$$\begin{cases}
    F(D^2u)=f_1\tilde h(u)&\text{in }\Omega,\\
    u=g&\text{on }\partial\Omega.
\end{cases}$$
But since  $w_1$ is the largest subsolution of this problem, then $w_2 \leq w_1.$

\medskip

\noindent
\underline{Proof of Property \ref{cond-conv}:} The limiting function $w$ satisfies the equation $F(D^2w)=f\tilde h(w)$ in $\Omega\cap\{w>0\}$ due to the stability of viscosity solutions under uniform limits.  
\end{proof}

We conclude this subsection with a coincidence-set result for fully nonlinear systems, obtained as a consequence of Theorem~\ref{thm:support}.

\begin{corollary}\label{thm:FN-coin}
Assume the hypotheses of Theorem~ \ref{thm:FN}, together with \eqref{eq:rhs-add} and suppose that $(u,v)$ is a  solution pair of the system \eqref{system:fullynonlinear}.
If $C_u$ is a connected component of $\{u=0\}^\circ$ and
$C_v$ is a connected component of $\{v=0\}^\circ$, then either
\[
C_u=C_v
\qquad\text{or}\qquad
C_u\cap C_v=\emptyset.
\]
Moreover, for any $k_i$ and $h_i$, there exist $(f_i,g_i)$ such that for a solution $(u,v)$ of \eqref{system:fullynonlinear} , one nonempty connected component of $\{u=0\}^\circ$ and one of $\{v=0\}^\circ$ are disjoint.
\end{corollary}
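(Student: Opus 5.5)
The corollary will follow from Theorem~\ref{thm:support} once the extra properties \ref{cond-Hopf}--\ref{cond-conv-2} are checked for $\cL w=F(D^2w)$ with $F$ satisfying \eqref{eq:assump-fully-nonlinear}. Properties \ref{cond-sol}--\ref{cond-conv} were already verified in the proof of Theorem~\ref{thm:FN}. The dichotomy part of Theorem~\ref{thm:support} (Step 1) uses only the free boundary condition $|\D u(z^0)|=0$, which \eqref{eq:ftn-space} provides, together with Hopf. The construction part (Step 2) uses the barriers $u^*,v^*,u_*,v_*$, nondegeneracy and the regularity \ref{cond-conv-2}. So the plan is to verify the three primed properties in the viscosity framework and then invoke Theorem~\ref{thm:support} verbatim.

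\emph{Hopf (\ref{cond-Hopf}).} Let $w\in C^1(\overline{B_r(x^0)})$ with $F(D^2w)=0$, $w>0$ in the ball, and $w(z^0)=0$. By uniform ellipticity and $F(0)=0$, $w$ is a viscosity supersolution of $\cM^-_{\la,\Lambda}(D^2w)\le 0$. Compare $w$ in the annulus $B_r\setminus \overline{B_{r/2}}$ with the classical barrier $\vp(x)=\delta\,(e^{-\mu|x-x^0|^2}-e^{-\mu r^2})$. For $\mu$ large, $\cM^-_{\la,\Lambda}(D^2\vp)>0$ there, and $\delta$ is chosen so that $\vp\le \min_{\partial B_{r/2}}w$. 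The comparison principle for Pucci operators gives $w\ge\vp$, hence the normal derivative satisfies $\partial_\nu w(z^0)\le \partial_\nu \vp(z^0)<0$ and $|\D w(z^0)|\ne0$.

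\emph{Regularity (\ref{cond-conv-2}).} Solutions of $F(D^2w^*)=0$ with continuous (indeed Lipschitz) data on a $C^{1,1}$ domain are continuous up to the boundary. The modulus depends only on $\Omega,\la,\Lambda$ and the data, via the same boundary barriers used in Step 1 of the proof of \ref{cond-sol}, or via the standard exterior ball barriers. The maximum principle gives $\|w^*\|_\infty\le\|g\|_\infty\to0$.

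\emph{Nondegeneracy (\ref{cond-nondeg}).} This is the step I expect to be the main obstacle. By \eqref{eq:rhs-add}, in $\{w>0\}$ we have $F(D^2w)\ge \inf f\, c^0 w^b$ for $w<M_0$; for the regularized $\tilde h$ this holds as well by the assumed lower bound. I would take $x^0\in\{w>0\}$ (the case $x^0\in\overline{\{w>0\}}$ then follows by continuity) and use the radial comparison function $\psi(x)=c_*|x-x^0|^{\gamma}$ with $\gamma=2/(1-b)$. A direct computation with Pucci bounds gives $\cM^+_{\la,\Lambda}(D^2\psi)\le C\gamma^2 c_*\,|x-x^0|^{\gamma-2}$, so choosing $c_*$ small in terms of $\inf f\, c^0$ makes $\psi$ a strict supersolution of $F(D^2\psi)\le \inf f\,c^0\psi^b$. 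If $\sup_{\partial B_r(x^0)} w<\psi$ on $\partial B_r(x^0)$, then on the component of $\{w>\epsilon\psi\}$ containing $x^0$, perturbed as $\psi+w(x^0)/2$, comparison gives a contradiction. Therefore $\sup_{B_r}w\ge c_*r^{\gamma}$, i.e. $b_0=\gamma$. Since $c_*$ grows with $\inf f$, the requirement $c\to\infty$ as $\inf f\to\infty$ holds. The delicacy is that the comparison must be done on the positivity set with a sublinear zeroth-order term, and with the cutoff $\tilde c$ in $\tilde h$ present. I would handle this by applying the argument only to the limit $w$ with $h$, or by noting that the threshold $\tilde c$ only shifts constants. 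Having established all primed properties, Theorem~\ref{thm:support} applies and gives both assertions of the corollary.
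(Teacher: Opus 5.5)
Your proposal is correct and follows the paper's route: verify \ref{cond-Hopf}--\ref{cond-conv-2} for $F(D^2w)$ and apply Theorem~\ref{thm:support}. The paper cites these properties rather than proving them: the Hopf lemma (Gilbarg--Trudinger, Safonov), boundary continuity plus the maximum principle (Caffarelli--Cabr\'e), and the Jeon--Shahgholian bound $\sup_{B_r}w\ge \hat c\, r^{2/(1-b)}$, which is the same radial-barrier estimate you derive, with constant growing in $\inf_\Omega f$. The step you flag as delicate is routine: on $D=\{w>\psi\}\cap B_r(x^0)$, which contains $x^0$, one has $w>0$ and $F(D^2w)\ge \inf f\,c^0w^b>\inf f\,c^0\psi^b\ge F(D^2\psi)$. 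Hence $w-\psi$ is a strict viscosity subsolution of $\cM^+_{\la,\Lambda}\ge 0$ in $D$ and has no interior maximum, so neither the $\e\psi$ nor the $\psi+w(x^0)/2$ perturbation is needed.
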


\begin{proof}
In view of Theorem \ref{thm:support} it suffices to verify that the operator $\mathcal L(w)=F(D^2w)$
satisfies the properties \ref{cond-Hopf}-\ref{cond-conv-2}.

\medskip

\noindent
\underline{Proof of Property \ref{cond-Hopf}-\ref{cond-nondeg}}: 
The Hopf boundary point lemma for uniformly elliptic fully nonlinear equations  can be found in \cite[P. 27 and Lemma 3.4]{GT2001}
 and \cite[Lemma 1.1 and Lemma 1.2]{Safonov2010}.
For the non-degeneracy estimate, we note that $F(D^2w)\ge (C^0\inf_\Omega f)w^b$ in $\Omega$. Then, from \cite[pages 463-464]{JeoSha24}, we infer $\sup_{B_r(x_0)}w\ge \hat c r^{\frac2{1-b}}$ holds, where $\hat c>0$ is any constant satisfying $(C^0\inf_{\Omega}f)(1-b)-2\hat c\Lambda(n+\frac{2b}{1-b})\ge0$. Thus, by taking $\hat c:=\frac{c^0}{2\Lambda(n+\frac{2b}{1-b})}\inf_\Omega f$, we obtain that $\hat c\to\infty$ as $\inf_\Omega f\to\infty$.

\medskip

\noindent
\underline{Proof of Property  \ref{cond-conv-2}}: Let $w^*$ solve
\[
F(D^2w^*)=0 \quad\text{in }\Omega,\qquad w^*=g \quad\text{on }\partial\Omega,
\]
with $g\in C(\partial\Omega)$. By the maximum  principle,
\[
\|w^*\|_{L^\infty(\Omega)}
\le
\|g\|_{L^\infty(\partial\Omega)}.
\]
Moreover, since $\Omega$ is $C^{1,1}$ and $g\in C(\partial\Omega)$,
standard boundary regularity theory for uniformly elliptic fully nonlinear
equations implies that
$
w^*\in C(\overline\Omega);$ (see, for example, \cite{CC95}).
The conclusion now follows from Theorem~\ref{thm:support}.
\end{proof}


\subsection{The \texorpdfstring{$p$}{p}-Laplacian   case}\label{sec:p-lap-elliptic}
Throughout  this section, we shall assume that $1<p<\infty$ and the boundary data $g_1,g_2$ as well as  $\partial \Omega$ are Lipschitz. 
For $h\in L^1(\R)$, we define
$$
H(t):=\int_{-\infty}^th(s)\,ds,\quad t\in\R.
$$
For this $H$, along with $p\in(1,\infty)$, $f\in L^\infty(\Omega;\R_+)$ and $g\in L^\infty(\partial\Omega;\R_+)$, we define
\begin{align*}
    &J(w,f,p,H,\Omega):=\int_{\Omega}|\D w|^p+pfH(w),\\
    &\cK_{g,p,\Omega}:=\{w\in W^{1,p}(\Omega)\,:\, w=g\,\,\,\text{on }\partial\Omega\}.
\end{align*}

The following regularity result follows from small modifications of \cite[Theorems~5 and ~6]{JeoSha24}:

\begin{theorem}
    \label{thm:p-Lapl-reg}
Assume that $h:\R\to\R$ satisfies $h=0$ on $(-\infty,0]$ and $0\le h(t)\le C^0t^a+2$ on $(0,\infty)$ for some $C^0>0$ and $-1<a<1$. For $f,g\in L^\infty(\Omega;\R_+)$ and $p\in(1,\infty)$, suppose $u$ is a minimizer of $J(\cdot,f,p,H,\Omega)$ over $\cK_{g,p,\Omega}$. Then $u\in C^{1,\al}_{\loc}(\Omega)$ for some constant $\al\in(0,1)$, depending only on $n,p$ and $a$. Moreover, for any $\Omega'\Subset\Omega$, there exists a constant $C>0$, depending only on $\Omega,\Omega',n,p,a,\|f\|_\infty,\|g\|_\infty$, such that
$$
\|u\|_{C^{1,\al}(\Omega')}\le C.
$$
\end{theorem}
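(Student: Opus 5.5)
The plan is to reduce Theorem~\ref{thm:p-Lapl-reg} to the singular-obstacle regularity of \cite[Theorems~5 and~6]{JeoSha24}. Concretely, we need to check that a minimizer $u$ of $J(\cdot,f,p,H,\Omega)$ over $\cK_{g,p,\Omega}$ satisfies the structural hypotheses used there, with constants depending only on the listed quantities. Since $h\ge0$ vanishes on $(-\infty,0]$, the primitive $H$ is non-decreasing and zero on $(-\infty,0]$. It is also finite, because $a>-1$ gives $H(t)\le \frac{C^0}{1+a}t^{1+a}+2t$ for $t>0$. Hence $J$ is well defined and $u$ is a genuine minimizer of a functional of the form $\int |\nabla w|^p + F(x,w)$. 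Its potential is nonnegative, monotone in $w$, and behaves like $w_+^{1+a}$ near $0$, which is the model $\int|\nabla w|^p+\lambda w_+^{\gamma}$ with $\gamma=1+a\in(0,2)$ treated in \cite{JeoSha24}.

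\textbf{Step 1 (uniform bounds).} Using $\min(u,\|g\|_\infty)$ and $\max(u,0)$ as competitors, we get $0\le u\le\|g\|_\infty$. Truncating above at $\|g\|_\infty$ lowers both the gradient term and $H(u)$, and truncating below at $0$ does not increase $H$. Comparing with a fixed extension of $g$ (for instance the $p$-harmonic extension, which exists since $g$ and $\partial\Omega$ are Lipschitz) then gives a bound on $\|\nabla u\|_{L^p(\Omega)}$ depending only on $\Omega,n,p,a,C^0,\|f\|_\infty,\|g\|_\infty$.

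\textbf{Step 2 (Euler--Lagrange and Campanato iteration).} In $\{u>0\}$, $u$ weakly solves $\Delta_p u = f h(u)$. Globally, $\Delta_p u$ is a nonnegative measure dominated by $f h(u)$. Outer variations $u+t\varphi$ with $\varphi\ge0$ give $-\Delta_p u\ge -fh(u)$ against nonnegative test functions, with the obvious reading on $\{u=0\}$. I would then follow the proof in \cite{JeoSha24}. On $B_r(x_0)\Subset\Omega$, compare $u$ with the $p$-harmonic replacement $v$ having the same boundary values. Minimality together with the $L^\infty$ bound and the growth $H(t)\lesssim t^{1+a}+t$ gives
\[
\int_{B_r}|\nabla u-\nabla v|^p\lesssim \|f\|_\infty\int_{B_r}\big(H(v)-H(u)\big)_+\le C r^{n}\sup_{B_r}|u-v|^{1+a}+Cr^n\sup_{B_r}|u-v|.
\]
This controls $\int_{B_r}|V(\nabla u)-V(\nabla v)|^2$. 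Combined with the $C^{1,\alpha_p}$ decay for $p$-harmonic functions (Uhlenbeck, DiBenedetto, Lieberman), it first yields $C^{0,\beta}$ for all $\beta<1$ by a Morrey-type iteration. Feeding that back in gives $C^{1,\alpha}$ with $\alpha$ depending only on $n,p,a$. The extra linear term coming from the ``$+2$'' in the bound on $h$ is of higher order: it behaves like a bounded right-hand side and only enlarges constants. This is the ``small modification'' relative to \cite{JeoSha24}, where $h(t)=t^a$ exactly.

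\textbf{Step 3 (localization).} For $\Omega'\Subset\Omega$, cover $\Omega'$ by balls of radius comparable to $\dist(\Omega',\partial\Omega)$ and patch the local estimates. This yields $\|u\|_{C^{1,\alpha}(\Omega')}\le C$ with the stated dependence.

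The main obstacle is Step 2 in the singular range $-1<a<0$, where $h$ is unbounded near $0$. There the Euler--Lagrange equation cannot be used directly with a bounded right-hand side, so one must work at the level of the energy comparison. The key point is that the potential $H(t)\sim t^{1+a}$ is still H\"older of order $1+a>0$. I expect the delicate part to be tracking that the exponent $\alpha$ obtained from the iteration, roughly $\min\{\alpha_p^-,\ (1+a)/(p-1-a)\ \text{adjusted}\}$, depends only on $n,p,a$. If needed, I would instead cite \cite[Theorem~6]{JeoSha24} verbatim after checking that its proof uses only the upper bound $H(t)\le C t^{1+a}$ for $t\in(0,\|g\|_\infty]$ and the monotonicity of $H$.
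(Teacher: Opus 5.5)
Your route is the paper's: reduce to \cite[Theorems~5 and~6]{JeoSha24}, which treat $f\equiv1$, and check that the places where the potential enters still work when $H$ is replaced by $fH$ with $0\le f\le\|f\|_\infty$. The paper isolates exactly two such places. The first is the truncation inequality $\int_{B_r\cap\{u>j\}}f\,(H(\min\{u,j\})-H(u))\le0$, which follows from monotonicity of $H$ and $f\ge0$. The second is the replacement bound $\int_{B_r}f\,(H(u^*)-H(u))\le C\int_{B_r}|u^*-u|^{1+a}$, or $C\int_{B_r}|u^*-u|$ when $a\ge0$. This second bound is the content of your Step~2, written with integrals instead of suprema.

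Two small corrections are needed.

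First, the global bound on $\|\nabla u\|_{L^p(\Omega)}$ in Step~1 cannot depend only on $\|g\|_\infty$: data with small sup norm can have arbitrarily large trace energy. It is also not needed. Testing minimality with $u-t\varphi$, $\varphi\ge0$, and using that $H$ is nondecreasing shows $\Delta_p u\ge0$ weakly. Caccioppoli together with $0\le u\le\|g\|_\infty$ then gives the interior energy bound with the stated dependence.

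Second, the replacement estimate needs the increment bound $H(s)-H(t)\le\frac{C^0}{1+a}(s-t)^{1+a}+2(s-t)$ for $0\le t\le s$. This comes from $h\le C^0t^a+2$ together with subadditivity of $t\mapsto t^{1+a}$ when $a<0$ (\cite[Lemma~2.5]{LeiDeQTei15}). The growth bound $H(t)\lesssim t^{1+a}+t$ alone does not imply it. So your fallback criterion for citing \cite{JeoSha24} verbatim should be stated in terms of this increment bound, not $H(t)\le Ct^{1+a}$.
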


\begin{proof}
In \cite[Theorems~5 and ~6]{JeoSha24}, the counterpart of Theorem~\ref{thm:p-Lapl-reg} was proved in the simpler case when $f=1$. In the proof of \cite[Theorems~5 and ~6]{JeoSha24} (see also \cite{LeiDeQTei15}), the property of $H$ is used only to obtain the following inequalities: for any $B_r(x_0)\Subset\Omega$ and $j\in\N$,
\begin{align}
    \label{eq:H-property}
    \begin{split}
        &\int_{B_r(x_0)\cap\{u>j\}}(H(u_j)-H(u))\le0,\\
        &\int_{B_r(x_0)}(H(u^*)-H(u))\le C\int_{B_r(x_0)}|u^*-u|^{1+a},
    \end{split}
\end{align}
where $u_j:=\min\{u,j\}$ and $u^*$ is the $p$-harmonic replacement of $u$ in $B_r(x_0)$, i.e., $u^*$ is the solution of the Dirichlet problem
\begin{align*}\begin{cases}
    \Delta_pu^*=0&\text{in }B_r(x_0),\\
    u^*=u&\text{on }\partial B_r(x_0).
\end{cases}\end{align*}
Thus, Theorem~\ref{thm:p-Lapl-reg} with our $f\in L^\infty(\Omega;\R_+)$ follows once we show that \eqref{eq:H-property} still holds when $H$ is replaced with $fH$. Indeed, from the inequality $u_j\le u$ and the monotonicity of $H$, we have $H(u_j)-H(u)\le 0$. Since $f\ge0$, we further have $fH(u_j)-fH(u)=f(H(u_j)-H(u))\le0$, thus
$$
\int_{B_r(x_0)\cap \{u>j\}}(fH(u_j)-fH(u))\le0.
$$
For the second inequality, we observe that $u\le\|g\|_\infty$ and $u^*\le\|g\|_\infty$ by the maximum principle. If $-1<a<0$, then arguing exactly as in \cite{JeoSha24}, we obtain
\begin{align*}
    \int_{B_r(x_0)}(fH(u^*)-fH(u))&=\int_{B_r(x_0)}f(x)\int_{u(x)}^{u^*(x)}h(s)\,dsdx\le \|f\|_\infty \int_{B_r(x_0)\cap\{u^*>u\}}\int_{u(x)}^{u^*(x)}(C^0s^{a}+2)\,dsdx\\
    &=\frac{C\|f\|_\infty}{1+a}\int_{B_r(x_0)\cap\{u^*>u\}}\left[((u^*(x))^{1+a}-((u(x))^{1+a})+(u^*(x)-u(x))\right]\,dx\\
    &\le \frac{C\|f\|_\infty}{1+a}\int_{B_r(x_0)}|u^*-u|^{1+a},
\end{align*}
where we used \cite[Lemma~2.5]{LeiDeQTei15} in the last step. \\
If $0<a<1$, then $h$ is bounded on $[0,\|g\|_\infty],$
and therefore $H$ is Lipschitz on this interval. 
Consequently,
\begin{align*}
    \int_{B_r(x_0)}(fH(u^*)-fH(u))
    &\le
    C(C^0,a,\|g\|_\infty)\,
    \|f\|_\infty
    \int_{B_r(x_0)}
    |u^*-u|.
\end{align*}
and the same comparison argument yields the desired local $C^{1,\alpha}$ estimate. This completes the proof.    
\end{proof}

\begin{theorem}
    \label{thm:p-Lapl}
Let $\Omega$ be a domain in $\R^n$, $n\ge1$. For $i=1,2$ and $C^1>c^1>0$, suppose $p_i\in (1,\infty)$, $g_i\in Lip(\partial\Omega;\R_+)$ with $c^1<g_i<C^1$, $f_i\in L^\infty(\Omega;\R_+)$, $k_i\in \cK$ and $h_i\in \cH$. Then there exists a solution $(u,v)\in C^{1,\al_1}_{\loc}(\Omega;\R_+)\times C^{1,\al_2}_{\loc}(\Omega;\R_+)$ of the system
\begin{align}\label{system:plaplacian}
    \begin{cases}
        \Delta_{p_1}u=f_1k_1(v)h_1(u)&\text{in }\Omega\cap\{u>0\},\\
        \Delta_{p_2}v=f_2k_2(u)h_2(v)&\text{in }\Omega\cap\{v>0\},\\
        u=g_1,v=g_2&\text{on }\partial\Omega,\\
        u=|\D u|=0&\text{on }\Omega\cap\partial\{u>0\},\\
        v=|\D v|=0&\text{on }\Omega\cap\partial\{v>0\},
    \end{cases}
\end{align}
where $\al_i\in (0,1)$ is a constant depending only on $n$, $p_i$ and $a_i$, $i=1,2$.
\end{theorem}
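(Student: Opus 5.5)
The plan is to apply Theorem~\ref{thm:meta} with $\cL_i w=\Delta_{p_i}w$, interpreted variationally: for $\tilde h\in\widetilde\cH$ the solution of \eqref{eq:sol-Dir} is taken to be the minimizer of $J(\cdot,f,p,\tilde H,\Omega)$ over $\cK_{g,p,\Omega}$, where $\tilde H(t)=\int_{-\infty}^t\tilde h$. The class $\cF(\Omega)$ is $C^{1,\al}_{\loc}(\Omega;\R_+)$ functions that are weak solutions of $\Delta_p w=f\tilde h(w)$ in $\{w>0\}$. The task is to verify \ref{cond-sol}--\ref{cond-conv}. The free-boundary gradient condition then comes from \eqref{eq:ftn-space}, as in the proof of Theorem~\ref{thm:meta}.

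\textbf{Solvability.} Existence of a minimizer follows by the direct method. $\tilde H$ is nonnegative, continuous and of linear growth ($\tilde h$ is bounded), and $\cK_{g,p,\Omega}$ is nonempty since $g$ is Lipschitz on the Lipschitz boundary. Truncation, i.e. comparing with $\min\{w,\|g\|_\infty\}$ and $\max\{w,0\}$, shows $0\le w\le\|g\|_\infty$. The minimizer is a weak solution of $\Delta_pw=f\tilde h(w)$, since $\tilde h$ is Lipschitz so $\tilde H\in C^1$. Because $\tilde h\le C^0t^a+2$, Theorem~\ref{thm:p-Lapl-reg} gives a local $C^{1,\al}$ bound depending only on the admissible data, which yields \eqref{eq:mod-conti}. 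For \eqref{eq:ftn-space} I would use that $|\D w|\le C$ on $\Omega'$, together with the growth estimate from \cite{JeoSha24} at free boundary points. There $w$ grows like $r^{p/(p-1-a)}$, or like $r^{1+\al}$ using $C^{1,\al}$ and $\D w=0$ at a minimum point. At other points one argues as in the footnote.

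For the lower barrier $w_*$ I would mimic Step~1 of the fully nonlinear case. Take $\tilde g$ to be the $p$-harmonic extension of $g$, which is Lipschitz up to the boundary by Lipschitz data and domain. Set $w_*=\tilde g\,((r-d)/r)^\be_+$ with $\be$ large, where $d$ is a regularized distance; for a Lipschitz domain I would use a barrier built locally from cones. Then check weakly that $\Delta_pw_*\ge \|f\|_\infty(C^0w_*^a+2)$ in $\Omega_r$ for small $r$. I expect this to be the main technical obstacle, because the $p$-Laplacian degenerates where $\D w_*$ is small and Lipschitz $\partial\Omega$ gives no curvature bound. A cleaner route is to build $w_*$ as the minimizer of the same functional with the worst right-hand side, $\|f\|_\infty(C^0t^a+2)$, so it depends only on $h,\|f\|_\infty,g,\Omega$. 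Comparing it with $w$ would then give $w_*\le w$. Continuity at $\partial\Omega$ would follow from the standard Wiener-type boundary regularity for Lipschitz domains.

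\textbf{Comparison and convergence.} For \ref{cond-comp}, I would test the two weak equations with $(w_2-w_1)_+$, which vanishes on $\partial\Omega$. This gives
\[
\int(|\D w_2|^{p-2}\D w_2-|\D w_1|^{p-2}\D w_1)\cdot\D(w_2-w_1)_+=-\int (f_2\tilde h(w_2)-f_1\tilde h(w_1))(w_2-w_1)_+ .
\]
Monotonicity of $\tilde h$ is not available, and this is where the strict inequality $f_1<f_2$ is used. Rather than a direct test, I would argue with minimality. Since $w_1$ is the minimizer for $f_1$, one compares energies of $\min\{w_1,w_2\}$ and $\max\{w_1,w_2\}$. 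The relation $J_{f_1}(\max)+J_{f_2}(\min)\le J_{f_1}(w_1)+J_{f_2}(w_2)+\int(f_1-f_2)(\tilde H(w_2)-\tilde H(w_1))_+$, combined with $f_1<f_2$, forces $\{w_2>w_1\}$ to be negligible once $\tilde H$ is strictly increasing on $(\tilde c,\infty)$. On $\{w\le\tilde c\}$ both equations are $p$-harmonic, and weak comparison there finishes it. For \ref{cond-conv}, the uniform $C^{1,\al}_{\loc}$ bounds give $\D w_j\to\D w$ locally uniformly. Since $\tilde h^j(w_j)\to\hat h(w)$ locally uniformly on compact subsets of $\{w>0\}$, passing to the limit in the weak formulation with test functions supported in $\{w>0\}$ gives $\Delta_pw=f\hat h(w)$ there. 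Finally, Theorem~\ref{thm:meta} yields $(u,v)$, with $\al_i$ depending on $n,p_i,a_i$.
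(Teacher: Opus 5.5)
Your proposal is correct and follows the paper's route: minimizers of $J(\cdot,f,p,\tilde H,\Omega)$ with Theorem~\ref{thm:p-Lapl-reg} for the estimates, and $w_*$ as the minimizer with the worst right-hand side $\|f\|_\infty(C^0t^a+2)$ (your ``cleaner route''; the cone barrier is not needed). The rest also matches: the Alt--Phillips $\max/\min$ energy comparison for \ref{cond-comp}, and passing to the limit in the weak form for \ref{cond-conv}. The only difference is minor. The paper compares $w$ with $\max\{w,\tilde c\}$ to get $w\ge\tilde c$, which makes $\int_{w_1}^{w_2}\tilde h>0$ on $\{w_1<w_2\}$ automatic. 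You instead handle the region $\{w_2\le\tilde c\}$ by a separate $p$-harmonic weak comparison, which also works and avoids the paper's tacit assumption $g\ge\tilde c$ on $\partial\Omega$.
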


\begin{proof}
Due to the meta-theorem, Theorem~\ref{thm:meta}, it is enough to show that \ref{cond-sol}-\ref{cond-conv} hold with $\cL=\Delta_p$ and $\cF(\Omega)=C^{1,\al}_{\loc}(\Omega)$, where $\al\in(0,1)$ is as in Theorem~\ref{thm:p-Lapl-reg}. 

\medskip

\noindent 
\underline{Proof of Property \ref{cond-sol}:} To find a solution of 
\begin{align*}
    \begin{cases}
        \Delta_pw=f\tilde h(w)&\text{in }\Omega,\\
        w=g&\text{on }\partial\Omega,
    \end{cases}
\end{align*}
let $w$ be a minimizer of $J(\cdot,f,p,\tilde H,\Omega)$ over $\cK_{g,p,\Omega}$, where $\tilde H(t)=\int_{-\infty}^t\tilde h(s)\,ds$. Clearly, $\max\{w,\tilde c\}\in \cK_{g,p,\Omega}$. From $\tilde h\ge0$, we see that $t\longmapsto \tilde H(t)$ is non-decreasing, hence $J(\max\{w,\tilde c\},f,p,\tilde H,\Omega)\le J(w,f,p,\tilde H,\Omega)$ with the equality if and only if $\max\{w,\tilde c\}=w$ in $\Omega$. Thus, $w\ge \tilde c>0$ in $\Omega$. The estimates \eqref{eq:mod-conti} and \eqref{eq:ftn-space} follow from Theorem~\ref{thm:p-Lapl-reg}. We leave the proof of the existence of $w_*$ after \ref{cond-comp} as we need \ref{cond-comp} to prove the existence of $w_*$.

\medskip

\noindent
\underline{Proof of Property \ref{cond-comp}:} To prove it, we follow the idea in \cite[1.7. Comparison Lemma]{AltPhi86}. For $\tilde H(t)=\int_{-\infty}^t\tilde h(s)\,ds$ as above, we write $E(w,f_i,\tilde H):=|\D w|^p+pf\tilde H(w)$ so that $J(w,f_i,\tilde H):=J(w,f_i,p,\tilde H,\Omega)=\int_\Omega E(w,f_i,\tilde H)$. Since $w_1\vee w_2, w_1\wedge w_2\in \cK_{g,p,\Omega}$, 
\begin{align}
    \label{eq:energy-min-p}
    J(w_1,f_1,\tilde H)\le J(w_1\vee w_2,f_1,\tilde H),\qquad J(w_2,f_2,\tilde H)\le J(w_1\wedge w_2,f_2,\tilde H).
\end{align}
Moreover, a direct computation yields
\begin{align*}
    &((E(w_1\vee w_2,f_1,\tilde H)+E(w_1\wedge w_2,f_2,\tilde H))-(E(w_1,f_1,\tilde H)+E(w_2,f_2,\tilde H))\\
    &=\begin{cases}
        p(f_1-f_2)\int_{w_1}^{w_2}\tilde h(s)\,ds  &\text{in }\{w_1<w_2\},\\
        0&\text{in }\{w_1\ge w_2\}.
    \end{cases}
\end{align*}
Thus, if $\{w_1<w_2\}\neq\emptyset$, then 
\begin{align*}
    ((J(w_1\vee w_2,f_1,\tilde H)+J(w_1\wedge w_2,f_2,\tilde H))-(J(w_1,f_1,\tilde H)+J(w_2,f_2,\tilde H))<0,
\end{align*}
which contradicts \eqref{eq:energy-min-p}. Therefore, $w_1\ge w_2$ in $\Omega.$

Existence of $w_*$ in \ref{cond-sol}: Let $w_*$ be a minimizer of $J(\cdot,\|f\|_\infty,p,H_*,\Omega)$ over $\cK_{g,p,\Omega}$, where $H_*(t)=\int_{0}^t(C^0s^a+2) \,ds$. Then we can compute
\begin{align*}
     &((E(w\vee w_*,f,\tilde H)+E(w\wedge w_*,f,H_*))-(E(w,f,\tilde H)+E(w_*,f_2,H_*))\\
    &=\begin{cases}
        p\left(f\int_{w}^{w_*}\tilde h(s)\,ds-\|f\|_\infty\int_w^{w_*}(C^0s^a+2)\,ds  \right)  &\text{in }\{w<w_*\},\\
        0&\text{in }\{w\ge w_*\}.
    \end{cases}
\end{align*}
Since $w\ge \tilde c$ in $\Omega$ and $\tilde h\le (C^0s^a+2)\chi_{(0,\infty)}$ on $\R$, we can use this computation and argue as in the proof of \ref{cond-comp} to obtain $w_*\le w$ in $\Omega$.

\medskip

\noindent 
\underline{Proof of Property \ref{cond-conv}:} Let $\vp\in C_0^\infty(\Omega\cap\{v>0\})$. Due to the uniform convergence $w_j\to w$, we have that $w_j\ge c>0$ in $\supp\vp$ for large $j$. For such $j$, from the equation $\Delta_pw_j=f^j\tilde h^j(w_j)$ in $\Omega\cap\{w_j>0\}$, we have
$$
\int_{\supp\vp}|\D w_j|^{p-2}\D w_j\cdot\D\vp+f^j\tilde h^j(w_j)\vp=0.
$$
Since $\|w_j\|_{C^{1,\al}_{\loc}(\Omega)}\le C$, we can pass to the limit to get
$$
\int_{\supp\vp}|\D w|^{p-2}\D w\cdot\D\vp+f\tilde h(w)\vp=0.
$$
Thus
$$
\Delta_pw=f\tilde h(w)\quad\text{in }\Omega\cap\{w>0\}.
$$
This completes the proof of Theorem \ref{thm:p-Lapl}.
\end{proof}

We conclude this subsection with a coincidence-set result for the
$p$-Laplacian system, obtained as a consequence of
Theorem~\ref{thm:support}.

\begin{corollary}
\label{cor:p-coin}
Assume the hypotheses of Theorem~\ref{thm:p-Lapl}, together with
\eqref{eq:rhs-add}.
Let $(u,v)$ be a solution pair of \eqref{system:plaplacian}. If $C_u$ is a connected component of $\{u=0\}^\circ$ and
$C_v$ is a connected component of $\{v=0\}^\circ$, then either $
C_u=C_v $ or $C_u\cap C_v=\emptyset.$

Moreover, for any $h_i,k_i$ and $p_i$, there exist $(f_i,g_i)$ such that the corresponding
solution $(u,v)$ of \eqref{system:plaplacian} has nonempty connected
components of $\{u=0\}^\circ$ and $\{v=0\}^\circ$ which are disjoint.
\end{corollary}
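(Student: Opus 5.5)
As in Corollary~\ref{thm:FN-coin}, we will deduce the statement from Theorem~\ref{thm:support}. Since Theorem~\ref{thm:p-Lapl} already verifies \ref{cond-sol}--\ref{cond-conv} for $\cL=\Delta_p$ on $\cF(\Omega)=C^{1,\al}_{\loc}(\Omega)$, it remains to verify \ref{cond-Hopf}, \ref{cond-nondeg} and \ref{cond-conv-2} for the $p$-Laplacian. The dichotomy and the existence of disjoint nonempty components then follow directly from Theorem~\ref{thm:support}.

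\emph{Hopf property \ref{cond-Hopf}.} Let $w\in C^1(\overline{B_r(x^0)})$ be $p$-harmonic and positive in $B_r(x^0)$, with $w(z^0)=0$ at some $z^0\in\partial B_r(x^0)$. We will compare $w$ on the annulus $B_r(x^0)\setminus \overline{B_{r/2}(x^0)}$ with a radial barrier
\[
\psi(x)=m\,\frac{|x-x^0|^{-\gamma}-r^{-\gamma}}{(r/2)^{-\gamma}-r^{-\gamma}},\qquad m=\min_{\partial B_{r/2}(x^0)}w>0.
\]
For radial functions, $\Delta_p\psi$ has the sign of $(p-1)\psi''+\frac{n-1}{|x-x^0|}\psi'$. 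Taking $\gamma>\frac{n-p}{p-1}$ (and $\gamma>0$) makes $\psi$ a $p$-subsolution with $|\D\psi|\neq0$ throughout the annulus. The weak comparison principle for $\Delta_p$ gives $w\ge\psi$ on the annulus. Since $w(z^0)=\psi(z^0)=0$, the one-sided derivative in the inward radial direction satisfies $\partial_\nu w(z^0)\ge\partial_\nu\psi(z^0)>0$, so $|\D w(z^0)|\neq0$. This is the classical argument of V\'azquez; we may simply cite it.

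\emph{Nondegeneracy \ref{cond-nondeg}.} Under \eqref{eq:rhs-add}, a solution of \eqref{eq:sol-Dir} satisfies $\Delta_pw=f\tilde h(w)\ge c^0(\inf_\Omega f)\,w^b$ in $\{w>0\}$ (where $w<M_0$). We will first take $x^0\in\{w>0\}$ and use the comparison function
\[
\phi(x)=w(x)-\hat c\,|x-x^0|^{q},\qquad q=\frac{p}{p-1-b},
\]
which is positive since $b<1<p-1$ fails in general; more precisely, $q>0$ is needed, which requires $b<p-1$. If $b\ge p-1$, we will adjust the exponent. One computes $\Delta_p(\hat c|x|^q)=C(n,p,q)\,\hat c^{p-1}|x|^{q(p-1)-p}$, and $q(p-1)-p=qb$. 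Hence, if $\hat c^{p-1}C(n,p,q)\le c^0\inf_\Omega f\cdot\hat c^{\,b}$, the weak comparison principle on $\{w>0\}\cap B_r(x^0)$ (following \cite{JeoSha24} and the standard Karp--Kawohl--Shahgholian argument) yields
\[
\sup_{\partial B_r(x^0)\cap\{w>0\}}w\ \ge\ \hat c\,r^{q}.
\]
This gives the bound with $b_0=q$ and $\hat c\sim(\inf_\Omega f)^{1/(p-1-b)}$, so $\hat c\to\infty$ as $\inf_\Omega f\to\infty$. The case $x^0\in\partial\{w>0\}$ then follows by continuity.

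The main obstacle is this step. The comparison must be carried out in the weak (variational) sense for the nonlinear operator, justifying that $w-\hat c|x-x^0|^q$ cannot attain a nonnegative interior maximum. We also need to handle $b\ge p-1$, since \eqref{eq:rhs-add} only guarantees $b<1$. For $p\le2$ we have $b<1$, but possibly $b\ge p-1$. In that regime we would replace $w^b$ by the weaker lower bound $w^{b'}$ with $b'<p-1$, valid for $w\le M_0$ after adjusting the constant. This is legitimate because the property only requires some $b_0>0$.

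\emph{Regularity \ref{cond-conv-2}.} For $\Delta_pw^*=0$ with continuous $g$, the maximum principle gives $\|w^*\|_\infty\le\|g\|_\infty$, which tends to $0$ as $\|g\|_\infty\to0$. The domain $\Omega$ is Lipschitz and therefore satisfies a uniform exterior cone condition, so it is Wiener-regular for the $p$-Laplacian. Boundary continuity, with a modulus depending only on $\Omega$, $p$ and the data, then follows from the standard barrier and boundary H\"older estimates for $p$-harmonic functions. We will cite these rather than reprove them. Having checked \ref{cond-Hopf}--\ref{cond-conv-2}, we apply Theorem~\ref{thm:support} to conclude.
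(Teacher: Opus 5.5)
Your reduction to Theorem~\ref{thm:support} is the same as the paper's. Your checks of \ref{cond-Hopf} (the V\'azquez barrier, which the paper gets by citing Tolksdorf) and of \ref{cond-conv-2} are fine. Your nondegeneracy argument is also correct when $b<p-1$, provided the last step is read as a weak comparison on $\{w>0\}\cap B_r(x^0)$ between the subsolution $w$ and the supersolution $\hat c|x-x^0|^q$ of $\Delta_p z=\lambda z^b$, with $\lambda=c^0\inf_\Omega f$. It is the increasing zeroth-order term that makes this comparison work, not an interior-maximum argument.

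The genuine gap is your patch for $b\ge p-1$, which \eqref{eq:rhs-add} allows whenever $p<2$. You replace $\Delta_p w\ge\lambda w^b$ by $\Delta_p w\ge\lambda' w^{b'}$ with $b'<p-1\le b$, and call $w^{b'}$ ``weaker''. It is the opposite. For $0<w\le M_0$ and $b'<b$ we have $w^{b'}=w^b\,w^{b'-b}\ge M_0^{b'-b}w^b$, and $w^{b'}/w^b\to\infty$ as $w\to0$. From a lower bound $\lambda w^b$ you can only pass to larger exponents, so the inequality you need fails exactly near the free boundary.

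No other device repairs this, because in this regime the property itself is false. For $b=p-1$, the equation $\Delta_p w=\lambda w^{p-1}$ is invariant under $w\mapsto\e w$. For $b>p-1$ and $\e<1$, the function $\e w$ solves it with the larger coefficient $\lambda\e^{p-1-b}$. In both cases $\sup w$ can be made arbitrarily small while $\inf f$ stays bounded below or grows, contradicting \ref{cond-nondeg}.

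Worse, the second assertion of the corollary itself fails. Take $h_1(t)=t^{b}$ with $p_1-1\le b<1$; then $\int_0 s^{-(b+1)/p_1}\,ds=\infty$, so V\'azquez's strong maximum principle applies. Concretely, run your own Hopf barrier argument with a radial subsolution of $\Delta_{p_1}\psi\ge M\psi^{b}$, where $M=\|f_1\|_\infty k_1(\|v\|_\infty)$, having nonzero slope at the contact point. This contradicts $|\D u|=0$ at any point of $\Omega\cap\partial\{u>0\}$ touched by a ball contained in $\{u>0\}$. Hence $u>0$ in $\Omega$ and $\{u=0\}^\circ=\emptyset$ for every choice of $(f_i,g_i)$.

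What is missing is therefore an extra hypothesis $b_i<p_i-1$, which is automatic for $p_i\ge2$ since $b_i<1$. The paper's proof only cites the literature for nondegeneracy and tacitly needs the same restriction. With that hypothesis added, your argument goes through.
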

\begin{proof}
We verify the three conditions needed to apply Theorem~\ref{thm:support} with
$
\mathcal L(w)=\Delta_{p}w.
$

\medskip
\noindent
\underline{Proof of Properties \ref{cond-Hopf}-\ref{cond-nondeg}:}

The Hopf boundary point lemma for the $p$-Laplacian can be found in \cite[Proposition 3.3.1]{Tol84}.\footnote{The proof of the Hopf lemma for general operators follows in the same way as that in \cite{Hopf1952}, provided that the interior sphere condition is satisfied. We also refer to \cite{Mi-Sh} for $C^{1,\text{Dini}}$-domains and the so-called $H$-harmonic functions (which include the $p$-Laplace operator).}

Non-degeneracy is standard in the literature; see, e.g., \cite[(2.14)]{JeoSha24}, or  \cite[Lemma 3.1]{KKPS}.

\medskip
\noindent
\underline{Proof of Property \ref{cond-conv-2}:}
Let $g_i\in C(\partial\Omega)$ and let $w^*$ solve
\[
\Delta_{p_i} w^* = 0 \quad\text{in }\Omega,\qquad
w^* = g_i \quad\text{on }\partial\Omega.
\]
By the comparison principle,
\[
\|w^*\|_{L^\infty(\Omega)}\le \|g_i\|_{L^\infty(\partial\Omega)}.
\]
Moreover, standard regularity theory for the $p$-Laplacian implies that $w^*\in C(\overline{\Omega})$.
Hence
$$\|w^*\|_{L^\infty(\Omega)}\to 0
\qquad\text{as}\qquad
\|g_i\|_{L^\infty(\partial\Omega)}\to 0,$$
see \cite{Lin19,Tol84}.

Having verified \ref{cond-Hopf}--\ref{cond-conv-2}, the conclusion follows from
Theorem~\ref{thm:support}.
\end{proof}

\section{The Parabolic Case}\label{sec:par}

As in  the elliptic case, we first fix all  necessary notations and definitions. The proof strategy remains the same, following the meta-theorem in Section \ref{sec:par-meta} and hinging on the operator properties established earlier. Since the overall procedure is nearly identical to the elliptic setting, we omit repetitive details where the arguments follow the same lines of reasoning. The main task is to verify the properties required for specific operators in Sections \ref{sec:FN-par} and \ref{sec:p-par}, although the latter presents significantly more technical resistance.
In particular, the $p$-parabolic case involves additional difficulties arising from the
degeneracy of the equation and the corresponding regularity theory. For this reason,
some of the more technical arguments are deferred to the Appendix, and we shall
briefly indicate the relevant points in the $p$-parabolic subsection.

\subsection{A parabolic meta-theorem}\label{sec:par-meta}
For a bounded domain $\Omega \subset \mathbb{R}^n$ ($n \ge 1$) and $T > 0$, we define $\Omega_T := \Omega \times (-T, 0]$, with $\partial_p \Omega_T$ denoting the parabolic boundary.
Space-time points are denoted by capital letters $X= (x,t)$, and the parabolic cylinder of width $r$, height $r^2 $ and base center $X^0 =(x_0,t_0)$ 
by   
$Q^2_r (X^0):= B_r(x_0)\times (t_0-r^2,t_0]$.
We define a "parabolic" distance as 
$$d(X,Y):= \max(|x-y|,|t-s|^{1/2}),\qquad 
\hbox{where } X=(x,t),\ Y=(y,s).$$

We also continue to use the notation  $\nabla  u $ as the space gradient of $u$. 
We denote by $\cF(\Omega_T)$ a class of non-negative continuous functions in $\Omega_T$, which will be specified depending on the operator under consideration.
Given  an operator $\cL:\cF(\Omega_T)\to \cA(\Omega_T;\R_+)$, where $\cA(\Omega_T;\R_+)$ is the space of nonnegative measurable functions in $\Omega_T$, we define the following structural properties on the operator $\cL $:

\medskip


\begin{enumerate}[label={\bf (PP\arabic*)}]
    \item\label{cond-sol-par} {\bf Solvability:}  For any $g\in C(\partial_p\Omega_T;\R_+)$, $f\in L^\infty(\Omega_T;\R_+)$ and $\tilde h\in\widetilde{\mathcal H}$, there exists a solution $w\in \cF(\Omega_T)$ of the Dirichlet problem 
    \begin{align}\label{eq:sol-Dir-par}
        \begin{cases}
            \cL w-\partial_tw=f\tilde h(w)&\text{in }\Omega_T,\\
            w=g&\text{on }\partial_p\Omega_T,
        \end{cases}
    \end{align}
    which satisfies the following regularity estimates: for any $K\Subset\Omega_T$,

    \begin{align}\label{eq:mod-conti-par}
|w(X^1)-w(X^2)|
\le \rho\big(d_2(X^1,X^2)\big)
\quad\text{for all }X^1,X^2\in K. 
\end{align}
    We further require 
    \begin{align}
        \label{eq:ftn-space-par}
        \sup_{Q_r^2(Z)\cap K}w\le C(r^{1+\al}+w(Z))\quad\text{for any } Z \in K\text{ and }r>0,
    \end{align} 
        where $C>0$ and $0<\al<1$ are constants and $\rho$ is a modulus of continuity which depend only on $\Omega_T,K,\cL,a,\|f\|_\infty$ and $\|g\|_\infty$.
        Moreover, there exists a  function $w_*$  in $\Omega_T$, depending only on $h,\|f\|_\infty, g$ and $\Omega_T$, such that
        \begin{align}\label{eq:lower-bound-par}
            \begin{cases}
        w_*\le w&\text{in }\Omega_T,\\
        w_*=g&\text{on }\partial_p\Omega_T.
    \end{cases}
        \end{align}

    \medskip

    \item\label{cond-comp-par} {\bf Comparison:} Let $g\in C(\partial_p\Omega_T;\R_+)$, $f_1,f_2\in L^\infty(\Omega_T;\R_+)$, and $\tilde h\in\widetilde{\mathcal H}$. Suppose that $w_1$ and $w_2$ are solutions of
    \begin{align*}
        \begin{cases}
            \cL w_1-\partial_tw_1=f_1\tilde h(w_1)&\text{in }\Omega_T,\\
            w_1=g&\text{on }\partial_p\Omega_T,
        \end{cases}\qquad
        \begin{cases}
            \cL w_2-\partial_tw_2=f_2\tilde h(w_2)&\text{in }\Omega_T,\\
            w_2=g&\text{on }\partial_p\Omega_T,
        \end{cases}
    \end{align*}
    respectively. 
   If $f_1<f_2$ in $\Omega_T$, then $w_1\ge w_2$ in $\Omega_T$.

   \medskip

    \item\label{cond-conv-par} {\bf Convergence:}
    Let $g\in C(\partial_p\Omega_T;\R_+)$. For each $j\in \N$, let $w_j\in \cF(\Omega_T)$, $\tilde h^j\in\widetilde{\mathcal H},$ and $f^j\in L^\infty(\Omega_T;\R_+)$ with
    $$
    \begin{cases}
        \cL w_j-\partial_tw_j=f^j\tilde h^j(w_j)&\text{in }\Omega_T\cap\{w_j>0\},\\
        w_j=g&\text{on }\partial_p\Omega_T.
    \end{cases}
    $$ 
  and suppose that $
\sup_{j\in\mathbb N}\|f^j\|_{L^\infty(\Omega_T)}<\infty.$ Assume that $f^j\to f$ and $w_j\to w$ locally uniformly in $\Omega_T$ for some $f\in L^\infty(\Omega_T)$ and $w\in C(\Omega_T;\R_+)$ and that $\tilde h^j\to \tilde h\in \cH\cup\widetilde\cH$ locally uniformly on $(0,\infty)$. Then
    \begin{align*}
            w\in \cF(\Omega_T)\quad\text{and}\quad  \cL w-\partial_t w=f\tilde h(w)\,\,\text{ in }\Omega_T\cap\{w>0\}.
    \end{align*}
\end{enumerate}

With these properties at hand one can now derive a similar meta-theorem in the parabolic setting, whose proof follows the same lines of argument as those of the elliptic case, which we do not repeat.
However, the difficulty  in the parabolic setting is the application of the meta-theorem to particular PDEs, which requires the verification of properties \ref{cond-sol-par}-\ref{cond-conv-par} for each PDE. 
These technical hurdles will be  clear in the proof of $p$-parabolic case. Since some of the technical results needed are not standard, we have presented them and their proofs in Appendix \ref{appen:par-p-Lapl-reg}.

\begin{theorem}\label{thm:meta-par}[Parabolic Meta-theorem]
    Let $\Omega_T$ be a bounded cylinder in $\R^{n+1}$. For $i=1,2$ and $C^1>c^1>0$, let $g_i\in C(\partial_p\Omega_T;\R_+)$ with $c^1<g_i<C^1$ and $f_i\in L^\infty(\Omega_T;\R_+)$ and assume that
$h_i\in\mathcal H,$ and $k_i\in\mathcal K$. For $i=1,2$, let $\mathcal F^i(\Omega_T)$ be function classes such that
the operator $\mathcal L_i$ satisfies
\ref{cond-sol-par}--\ref{cond-conv-par} on $\mathcal F^i(\Omega_T)$.
Then there exists a solution $(u,v)\in \cF^1(\Omega_T)\times \cF^2(\Omega_T)$ of the system
    \begin{align}\label{eq:sol-2-par}
        \begin{cases}
            \cL_1 u-\partial_tu=f_1k_1(v)h_1(u)&\text{in }\Omega_T\cap\{u>0\},\\
            \cL_2 v-\partial_tv=f_2k_2(u)h_2(v)&\text{in }\Omega_T\cap\{v>0\},\\
            u=g_1, v=g_2&\text{on }\partial_p\Omega_T,\\
            u=|\D u|=0&\text{on }\Omega_T\cap\partial\{u>0\},\\
            v=|\D v|=0&\text{on }\Omega_T\cap\partial\{v>0\}.
        \end{cases}
    \end{align}
\end{theorem}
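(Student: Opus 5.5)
The plan is to transcribe the elliptic argument (Proposition~\ref{prop:meta} followed by the proof of Theorem~\ref{thm:meta}) to the cylinder $\Omega_T$. Properties \ref{cond-sol}--\ref{cond-conv} are replaced by \ref{cond-sol-par}--\ref{cond-conv-par}, Euclidean balls by the cylinders $Q_r^2$, and Euclidean distance by the parabolic distance $d$.

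\emph{First, the regularized system.} Fix $\tilde h_i\in\widetilde\cH$ and $0<\e<1$. Let $u^*,v^*\in\cF^i(\Omega_T)$ solve $\cL_i w-\partial_t w=0$ with data $g_i$ on $\partial_p\Omega_T$. Choose the same auxiliary sequences $\sigma_j\downarrow 1$ in $(1,2)$ and $\tau_j\uparrow 1$ in $(1/2,1)$. Using \ref{cond-sol-par} alternately, solve the scalar problems
\[
\cL_1 u_j-\partial_t u_j=(f_1k_1(v_{j-1})+\e\sigma_j)\tilde h_1(u_j),\qquad \cL_2 v_j-\partial_t v_j=(f_2k_2(u_j)+\e\tau_j)\tilde h_2(v_j),
\]
with $v_{-1}:=v^*$.

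Next apply \ref{cond-comp-par} inductively, exactly as for \eqref{eq:mon}. The role of the strict monotonicity of $\sigma_j$ and $\tau_j$ is to make the coefficients strictly ordered, as the strict inequality in \ref{cond-comp-par} requires. This yields $u_0\le u_1\le\dots\le u^*$ and $v^*\ge v_0\ge v_1\ge\dots\ge0$. Comparison with $u^*,v^*$ is again \ref{cond-comp-par}, where $u^*$ is read as the solution with zero right-hand side (formally $f=0<f_1k_1+\e\sigma_j$).

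Monotonicity gives pointwise limits. The coefficients are bounded by $\|f_1\|_\infty k_1(\|v^*\|_\infty)+2$, so the modulus \eqref{eq:mod-conti-par} is uniform on each $K\Subset\Omega_T$, and Arzel\`a--Ascoli upgrades the pointwise convergence to local uniform convergence of the full sequences. Continuity of $k_i$ gives local uniform convergence of the coefficients, and \ref{cond-conv-par} then produces $(u^\e,v^\e)$ solving the $\e$-system in $\{u^\e>0\}$ and $\{v^\e>0\}$. Letting $\e\to0$ along a subsequence, with the same uniform bounds and \ref{cond-conv-par} once more, gives the regularized parabolic analogue of Proposition~\ref{prop:meta}.

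\emph{Second, removing the regularization.} Take $\tilde h_i^m\in\widetilde\cH$ with $\tilde h_i^m\to h_i$ locally uniformly on $(0,\infty)$ and $\tilde h_i^m\le C^0t^a+2$. The constants in \eqref{eq:mod-conti-par} and \eqref{eq:ftn-space-par} depend only on $\Omega_T,K,\cL,a,\|f\|_\infty,\|g\|_\infty$, hence are uniform in $m$ and in the iteration. Extract a locally uniformly convergent subsequence and apply \ref{cond-conv-par} with limit $\hat h=h_i\in\cH$. The free boundary condition comes from \eqref{eq:ftn-space-par}. At $Z\in\Omega_T\cap\partial\{u>0\}$ we have $u(Z)=0$, which passes to the limit, so $\sup_{Q_r^2(Z)}u\le Cr^{1+\al}$. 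Since $Q_r^2(Z)\supset B_r(z)\times\{t_Z\}$, the spatial gradient $\nabla u(Z)$ exists and vanishes.

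\emph{Main obstacle: the boundary data on $\partial_p\Omega_T$.} The plan is the sandwich $u_*\le u^m\le u^*$, where $u_*$ is the function from \eqref{eq:lower-bound-par}. This function depends only on $h$, $\|f\|_\infty$, $g$ and $\Omega_T$. We apply it with the uniform coefficient bound $\|f_1\|_\infty k_1(\|v^*\|_\infty)+2$, so one $u_*$ serves every iterate. Here one must check that the lower barrier is indeed uniform once the coefficient $f$ is replaced by its supremum bound, since the iterates have varying $f$. One must also check that $u_*,u^*$ attain $g$ continuously at every point of $\partial_p\Omega_T$, including the bottom $\Omega\times\{-T\}$, so that the pinched limit attains the data there as well. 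With these two checks, $u_*\le u\le u^*$ together with $u_*=u^*=g_1$ on $\partial_p\Omega_T$ forces $u=g_1$, and likewise $v=g_2$.
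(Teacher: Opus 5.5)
Your proposal is correct and follows the paper's route. You build the parabolic analogue of Proposition~\ref{prop:meta} by the monotone iteration and \ref{cond-comp-par}, apply Arzel\`a--Ascoli in the parabolic distance via \eqref{eq:mod-conti-par}, pass to the limit with \ref{cond-conv-par}, and obtain the data on $\partial_p\Omega_T$ from the sandwich $u_*\le u\le u^*$ given by \eqref{eq:lower-bound-par}. The two points you flag are exactly what the paper takes for granted when it invokes \eqref{eq:lower-bound-par}: a barrier depending only on an upper bound for the coefficient, and attainment of $g$ on the initial slice. Inside the iteration, the first can be secured by comparing each iterate, via \ref{cond-comp-par}, with the solution whose constant coefficient is $\|f_1\|_\infty k_1(\|v^*\|_\infty)+3$.
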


 Observe that $g_i, f_i$ above may also depend on $t$.

\begin{proof} To prove this theorem and the next one we would need  a parabolic counterpart of Proposition \ref{prop:meta}, which actually works the same way as in the elliptic case with $\mathcal{L}_i$ replaced by $\mathcal{L}_i - \partial_t$. The only difference is that we would need the initial data to be satisfied in the parabolic case which is taken care of by the assumption \ref{cond-sol-par}, especially \eqref{eq:lower-bound-par}. 
In addition, the Arzel\`{a}--Ascoli theorem is applied with respect to the parabolic distance, using the equicontinuity estimate~\eqref{eq:mod-conti-par}.
From this, the proof of the theorem  follows by repeating the argument in its elliptic counterpart, Theorem~\ref{thm:meta}. 
\end{proof}


As in the elliptic case, see \eqref{eq:rhs-add},
to state our results on the relation between the coincidence sets, additional conditions on $f,k,h,\tilde h$ and $\cL_i$ are necessary:
\begin{align}
    \label{eq:rhs-add-par}
    \begin{cases}
        \text{$\inf_{\Omega_T} f>0$  },\\
        \text{$k>0$ on $(0,\infty)$, }\\
        \text{For some $0<b<1$ and $c^0>0$, we have $h(t),\tilde h(t)\ge c^0t^b$ for $0<t<\infty$.   }
    \end{cases}
\end{align}
\begin{enumerate}[label={\bf (PP\arabic*')}]
    \item\label{cond-Hopf-par} {\bf Hopf's boundary principle:} For any cylinder $Q_r(Z^0)\Subset \Omega_T$, if $w\in C^1(\overline{Q_r(Z^0)})$ satisfies that $\cL w-\partial_tw=0$ and $w>0$ in $Q_r(Z^0)$ and that $w(z^1)=0$ for some $z^1\in \partial B_r(x^0)\times\{t^0\}$, then $|\D w(z^1)|\neq0$.
    \item\label{cond-nondeg-par}{\bf Nondegeneracy:} Let $w$ be a solution of \eqref{eq:sol-Dir-par} in $\Omega_T$. If $Z^0\in \overline{\{w>0\}}$ and $Q_r(Z^0)\subset\Omega_T$, then
    $$
    \sup_{Q_r(Z^0)}w\ge cr^{b_0} \quad\text{for some }b_0>0,
    $$
    where $c>0$ is a constant depending only on $b,c^0,n,\cL,\inf_{\Omega_T} f$. Here, $c\to\infty$ as $\inf_{\Omega_T} f\to\infty$.
\item\label{cond-conv-2-par}{\bf Regularity:} For $g\in C(\partial_p\Omega_T;\R_+)$, let $w^*$ be a solution of 
\begin{align*}
    \begin{cases}
        \cL w^*-\partial_tw^*=0&\text{in }\Omega_T,\\
        w^*=g&\text{on }\partial_p\Omega_T.
    \end{cases}
\end{align*}
    Then $w^*\in C(\overline{\Omega_T})$ with a modulus of continuity depending only on $\Omega,\cL$ and $\|g\|_\infty$. Moreover,  $\|w^*\|_{L^\infty(\Omega)}\to0$ as $\|g\|_{L^\infty(\partial\Omega)}\to0$.
\end{enumerate}

For any set $A\subset \Omega_T$, we write
$$
A^t:=\{x\in\Omega\,:\, (x,t)\in A\}.
$$

\begin{theorem}\label{thm:support-par}
Let $(u,v)$ be a solution of the system \eqref{eq:sol-2-par} given by Theorem \ref{thm:meta-par}.
Assume in addition that for  $i=1,2$, $f_i, k_i,h_i$ satisfy \eqref{eq:rhs-add-par} and $\cL_i$ satisfy the further conditions \ref{cond-Hopf-par}-\ref{cond-conv-2-par}.  Let $C_u$ and $C_v$ be connected components of $\{u=0\}^\circ$ and $\{v=0\}^\circ$, respectively. 
    Then, for each $t\in(-T,0]$,
    \begin{align}\label{eq:dichotomy-par} 
        \text{either\,\,\, $C_u^t=C_v^t$ \,\,\,or \,\,\,$C_u^t\cap C_v^t=\emptyset$.   }
    \end{align}
    Moreover, for any $k_i$, $h_i$ and $\cL_i$, there exist $(f_i,g_i)$ such that for any solution pair  $(u,v)$, there exist a connected component of $\{u=0\}^{\mathrm{o}}$ and one of $\{v=0\}^{\mathrm{o}}$, say $C_u$ and $C_v$, such that
    \begin{equation}\label{eq:disjoint-par} 
        C_u^t \text{ and } C_v^t \text{ are nonempty and disjoint for some } t\in(-T,0].
    \end{equation}
\end{theorem}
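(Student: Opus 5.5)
We follow the proof of Theorem~\ref{thm:support}, working slice by slice in time, and split the argument into the dichotomy \eqref{eq:dichotomy-par} and the construction giving \eqref{eq:disjoint-par}.

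\emph{Step 1 (dichotomy).} Fix $t\in(-T,0]$ and suppose $C_u^t\cap C_v^t\neq\emptyset$ but $C_u^t\neq C_v^t$.

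First we need the analogue of Lemma~\ref{lem:top} for slices. Each $C_u^t$ is open in $\Omega$, and it is a union of connected components of $(\{u(\cdot,t)=0\})^{\mathrm o}$. So we apply Lemma~\ref{lem:top} to the connected components of $C_u^t$ and $C_v^t$ that meet. This produces, without loss of generality, a ball $B_r(x^0)\subset C_v^t\setminus\overline{C_u^t}$ whose closure touches $\overline{C_u^t}$ at a point $z^0$.

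Since $C_v$ is open in space-time, we can shrink $r$ and choose $\delta>0$ so that the cylinder $B_r(x^0)\times(t-\delta,t]$ lies in $C_v$. On this cylinder $v\equiv0$, hence $k_1(v)=0$.

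We also want $u>0$ on $B_r(x^0)\times(t-\delta,t]$. This is the delicate point: we only know $u(\cdot,t)>0$ on $B_r(x^0)$, so a priori $u$ could vanish at earlier times. By continuity of $u$ and compactness, after shrinking $r$ slightly (keeping an interior touching point) and shrinking $\delta$, positivity on the closed ball at time $t$, away from the touching point, propagates backward in time. At the touching point itself we only use $u(z^0,t)=0$.

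On this set $u$ then satisfies $\cL_1u-\partial_tu=0$. The free boundary condition gives $|\nabla u(z^0,t)|=0$, because $(z^0,t)\in\Omega_T\cap\partial\{u>0\}$; this uses $u=g_1>c^1$ on $\partial_p\Omega_T$ to keep the point away from the parabolic boundary. But \ref{cond-Hopf-par} applied to $Q_r(Z^0)$ with $Z^0=(x^0,t)$ and $z^1=(z^0,t)$ gives $|\nabla u(z^0,t)|\neq0$, a contradiction.

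The main obstacle is this local positivity: the cylinder $Q_r(Z^0)$ must keep $u>0$ while still touching the zero set at its top slice. This needs care with the backward-in-time geometry of $\{u>0\}$. If continuity alone does not suffice, the fallback is to choose the ball touching $\overline{C_u^t}$ from within a slightly smaller ball $B_{r'}\subset B_r$, internally tangent at $z^0$, so that positivity is needed only on a compact set off $(z^0,t)$.

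\emph{Step 2 (disjoint example).} We repeat Step 2 of the proof of Theorem~\ref{thm:support}, using the parabolic barriers $u^*,v^*$ (solutions of the homogeneous problems) and $u_*,v_*$ from \eqref{eq:lower-bound-par}.

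Near a lateral boundary point $(x^0,t_0)$ with $t_0$ away from $-T$, we have $v\ge v_*\ge g_2/2$ on a parabolic neighbourhood. There $\cL_1u-\partial_tu\ge \inf f_1\,k_1(g_2/2)\,h_1(u)$. Choosing $\inf f_1$ large, \ref{cond-nondeg-par} combined with $u\le u^*$ forces $u=0$ on an open space-time set where $v>0$. This gives a component $C_u$ with $C_u^t\neq\emptyset$ and $v(\cdot,t)>0$ on $C_u^t$.

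Next we show $\{v=0\}^{\mathrm o}$ has a nonempty slice at the same $t$. If not, \ref{cond-nondeg-par} applied near a lateral point where $u\ge c^1/2$ gives $\sup v\ge\hat c$, independently of $g_2$. Taking $\|g_2\|_\infty$ small, \ref{cond-conv-2-par} makes $\sup v^*<\hat c$, a contradiction. Here we may need to localize the argument to a time window around $t$ to obtain a common time slice.

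Finally the parameters are chosen in the same order as in the elliptic case: fix $g_1,f_2$, then pick $g_2$ small, then $f_1$ large. Then $C_u^t$ and $C_v^t$ are nonempty. They are distinct because $v>0$ on $C_u^t$, so by Step 1 they are disjoint.
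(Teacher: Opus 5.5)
The paper's proof is only the remark that the elliptic argument carries over, so your slice-by-slice route is the intended one. However, two of the details you supply do not work.

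\emph{Backward-in-time positivity.} You correctly flag this as the main obstacle, but your fix does not resolve it. Continuity and compactness give $u>0$ on $K\times(t-\delta_K,t]$ only for compact $K\subset B_r(x^0)$ avoiding $z^0$, and $\delta_K$ may shrink to $0$ as $K$ approaches $z^0$, because $u(z^0,t)=0$. Every cylinder whose top slice has $(z^0,t)$ on its lateral boundary, including your internally tangent one, contains points $(x,s)$ with $x\to z^0$ and $s<t$. Nothing you use prevents $u(x,s)=0$ there: the zero set of $u$ may approach $(z^0,t)$ from below while $u(\cdot,t)>0$ on the ball. Your fallback claims that positivity is needed only on a compact set off $(z^0,t)$. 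This is false, since $Q_{r'}$ accumulates at $(z^0,t)$. Closing the gap needs PDE input you do not supply. For instance, one could show that $u$ solves $\cL_1u-\partial_tu=0$ across $\partial\{u>0\}$ inside $\{v=0\}^{\mathrm o}$, which uses $u=|\nabla u|=0$ there. Combined with a parabolic strong minimum principle, this would force the zero set of $u$ in $B_r(x^0)\times(t-\delta,t]$ to be a bottom layer $B_r(x^0)\times(t-\delta,s_0]$ with $s_0<t$. Such a principle is not among \ref{cond-Hopf-par}--\ref{cond-conv-2-par}, and it fails for degenerate $p$-parabolic operators because of finite speed of propagation.

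\emph{Reduction to slices.} There are two problems here.

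(a) $C_u^t$ need not be a union of components of $\{u(\cdot,t)=0\}^{\mathrm o}$, and Lemma~\ref{lem:top}, whose proof uses $\overline A^{\mathrm o}=A$, is not available for its components. For example, if the positivity set of $u$ shrinks to the point $(0,t)$, then $0\in\{u(\cdot,t)=0\}^{\mathrm o}$, while $C_u^t$ is a punctured neighbourhood of $0$.

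(b) More seriously, $C_u^t$ and $C_v^t$ need not be connected. Showing that intersecting components coincide does not give $C_u^t=C_v^t$. The slice $C_v^t$ may have a further component $D_2$ disjoint from $C_u^t$, joined to the rest of $C_v$ only through other time levels; think of a dead core of $v$ that splits as time increases. At level $t$ your touching-ball construction then has nothing to touch. Excluding $D_2$ requires a genuinely space-time argument. But inside $C_v$, the boundary of $\{u>0\}$ above $D_2$ can be a horizontal level $\{s=s_0\}$ with $s_0<t$, after which $u$ switches on. There $|\nabla u|=0$ holds trivially, and the lateral Hopf property \ref{cond-Hopf-par} gives no contradiction.

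\emph{Step 2.} This step is fine once two adjustments are made. First, run both nondegeneracy arguments at lateral points of the same time level. Second, weaken ``$v>0$ on $C_u^t$'' to ``$v>0$ at some point of $C_u^t$''. Its final conclusion, that the slices are disjoint, still rests on Step 1 and so inherits the gap above.
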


\begin{proof} 
Since the proof of Theorem~\ref{thm:support-par} is rather a straightforward modification of the proof of its elliptic counterpart, Theorem~\ref{thm:support}, we omit the details. We just remark that as mentioned earlier in the proof of Theorem \ref{thm:meta-par} we would need a parabolic version of 
 Proposition \ref{prop:meta}, which again works similarly as in the elliptic case.
\end{proof}


\subsection{The fully nonlinear parabolic case}
\label{sec:FN-par}

Using the parabolic meta-theorem we can derive a parabolic counterpart of Theorem \ref{thm:FN}. 
A key component in the proof is the  scalar parabolic counterpart of the elliptic existence theory, Theorem \ref{thm:exist-FN}. 
As in the elliptic case, we work with viscosity solutions.

\begin{theorem}
    \label{thm:FNpar}
    Let $\Omega_T = \Omega \times (-T,0]$, where 
    $\Omega$ is a bounded $C^{1,1}$ domain in $\R^n$, $n\ge1 $. For $i=1,2$ and the  constants $C^1>c^1>0$, suppose that $g_i\in\text{Lip}(\partial_p\Omega_T)$ with $c^1<g_i<C^1$ and $f_i\in L^\infty(\Omega_T;\R_+)$. Assume that $k_i \in \cK,$ and $h_i\in \cH $. Suppose $F_1$ and $F_2$ satisfy \eqref{eq:assump-fully-nonlinear}. Then there exists a solution $(u,v)\in \cF(\Omega_T;\R_+)$ of the system
    \begin{align}\label{sys:FNpar}
        \begin{cases}
            F_1(D^2u) - \partial_t u= f_1k_1(v)h_1(u)&\text{in }\Omega_T\cap\{u>0\},\\
            F_2(D^2v) - \partial_t v =f_2k_2(u)h_2(v)&\text{in }\Omega_T\cap\{v>0\},\\
            u=g_1,v=g_2&\text{on }\partial_p\Omega_T,\\
            u=|\D u|=0&\text{on }\Omega_T\cap\partial\{u>0\},\\
            v=|\D v|=0&\text{on }\Omega_T\cap\partial\{v>0\}.
        \end{cases}
    \end{align}
\end{theorem}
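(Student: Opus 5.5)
By the parabolic meta-theorem, Theorem~\ref{thm:meta-par}, it suffices to verify \ref{cond-sol-par}--\ref{cond-conv-par} for $\cL w=F(D^2w)$ acting on $w$, with $\cF(\Omega_T)$ the class of continuous nonnegative functions satisfying \eqref{eq:mod-conti-par} and \eqref{eq:ftn-space-par}. The plan mirrors the proof of Theorem~\ref{thm:FN}. We will use a parabolic Perron method, namely the analogue of Theorem~\ref{thm:exist-FN}: given a viscosity subsolution $w_\sharp$ and a supersolution $w^\sharp$ of \eqref{eq:sol-Dir-par} with the same data on $\partial_p\Omega_T$, the supremum of all subsolutions lying between them is a viscosity solution. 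The proof of this is the same as in \cite{RicTei11}, using the comparison principle for $F(D^2w)-\partial_tw$ with a Lipschitz zero-order term; the Lipschitz bound on $\tilde h$ is handled by the usual $e^{\mu t}$ change of variables. The supersolution will be the caloric solution $w^*$ of $F(D^2w^*)-\partial_tw^*=0$ with data $g$. Since $g$ is Lipschitz on $\partial_p\Omega_T$ and $\Omega$ is $C^{1,1}$, $w^*$ is continuous up to $\partial_p\Omega_T$.

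The main new ingredient is the lower barrier $w_*$ in \eqref{eq:lower-bound-par}. On the lateral boundary I will copy the elliptic construction. Let $\tilde g$ solve $\cM^-_{\la,\Lambda}(D^2\tilde g)-\partial_t\tilde g=0$ with data $g$; it is Lipschitz in $x$ with uniform bounds and satisfies $c^1<\tilde g<C^1$. Set $\psi=\tilde g\,((r-d_{\partial\Omega})/r)_+^\be$ with $\be=2/(1-a)$. The computation \eqref{eq:subsol-comp-1}--\eqref{eq:subsol-comp-2} goes through verbatim: the extra term $-\partial_t$ falls on $\tilde g$ and is absorbed by its equation, giving $F(D^2\psi)-\partial_t\psi\ge f(C^0\psi^a+2)$ in $\Omega_r$ for small $r$.

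The initial slice $\Omega\times\{-T\}$ needs a separate barrier. For this I will use $\phi=\tilde g\,((\delta-(t+T))/\delta)_+^{\gamma}$ with $\gamma=1/(1-a)$. Differentiating, $-\partial_t\phi$ contributes $\gamma\tilde g\,\delta^{-\gamma}(\delta-(t+T))^{\gamma-1}$, which behaves like $\delta^{-1}\phi^a$ and so dominates $f(C^0\phi^a+2)$ once $\delta$ is small. If $\gamma<1$ the term blows up, which only helps, and $\phi$ is supported in a thin time layer. The remaining contributions, including the one from $\cM^-(D^2\tilde g)=\partial_t\tilde g$, cancel as in the spatial case.

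We then take $w_*=\max\{\psi,\phi\}$, which is still a viscosity subsolution as a maximum of subsolutions. It equals $g$ on $\partial_p\Omega_T$, up to a minor adjustment at the corner where the two pieces meet, and lies below the Perron solution because that solution is the largest subsolution. \textbf{I expect this barrier construction, and especially the behaviour near the corner $\partial\Omega\times\{-T\}$, to be the main obstacle.} If the simple maximum fails there, I would first try multiplying the two profiles instead.

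For the estimates \eqref{eq:mod-conti-par} and \eqref{eq:ftn-space-par}, I will apply the transformation $\hat w=w^{(1-a_-)/2}$ from the elliptic case. This yields an equation of the form $F(D^2\hat w+c\,\hat w^{-1}\D\hat w\otimes\D\hat w)-\partial_t\hat w=\hat f\hat w^{-1}$ with $\hat f$ bounded, exactly as before. From there I will invoke the parabolic analogue of \cite{AraTei13}, i.e.\ sharp regularity for singular fully nonlinear parabolic equations, to get $C^{1+\al}$ growth in parabolic cylinders at points of small value, with $\al=(1+a)/(1-a)$ when $a<0$.

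Comparison \ref{cond-comp-par} follows as in the elliptic proof. If $f_1<f_2$, then $w_2$ is a subsolution of the $f_1$-problem, and the Perron solution $w_1$ is the maximal subsolution, so $w_2\le w_1$. Convergence \ref{cond-conv-par} follows from the stability of viscosity solutions under locally uniform limits in $\Omega_T\cap\{w>0\}$, where $\tilde h^j(w_j)\to\hat h(w)$ uniformly on compact sets since $w$ is bounded away from zero there. Finally, Theorem~\ref{thm:meta-par} gives the result, and the free-boundary gradient condition follows from \eqref{eq:ftn-space-par}.
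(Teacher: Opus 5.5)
Your route is the paper's. You verify \ref{cond-sol-par}--\ref{cond-conv-par} for $F(D^2w)-\partial_tw$ and build the scalar solution by Perron between a lower barrier and the $F$-caloric extension of $g$. You get \ref{cond-comp-par} from maximality, \ref{cond-conv-par} from stability, and \eqref{eq:mod-conti-par}--\eqref{eq:ftn-space-par} from the singular fully nonlinear parabolic theory. The paper cites that theory directly; your substitution $\hat w=w^{(1-a_-)/2}$ is a correct reduction, since $\partial_t$ transforms exactly like the second-order part.

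The one real difference is the barrier at $t=-T$. The paper uses a separate $F$-caloric $\hat g$ and the linear cutoff $v_*=\hat g(1-\frac{t+T}{t_0})_+$, so $F(D^2v_*)-\partial_tv_*=\hat g/t_0$ is only bounded. That suffices for $a\ge0$. For $a<0$, however, the paper checks the inequality only on $\Omega\times[-T,-T+t_0/2]$, while $v_*^a$ is unbounded near the extinction time $-T+t_0$. Write your profile as $\phi=\tilde g\,\eta(t)$ with $\eta=((\delta-(t+T))/\delta)_+^{1/(1-a)}$. Then $F(D^2\phi)-\partial_t\phi\ge-\tilde g\,\eta'=\frac{1}{(1-a)\delta}\tilde g^{1-a}\phi^a$, which matches the singularity on the whole layer. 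So for $a<0$ your choice is the better one.

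Using the same $\tilde g$ in both pieces also removes your corner worry. Both $\psi,\phi\le\tilde g$, with $\psi=g$ on $\partial\Omega\times[-T,0]$ and $\phi=g$ on $\Omega\times\{-T\}$. Hence $\max\{\psi,\phi\}=g$ on all of $\partial_p\Omega_T$ with no adjustment.

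There is one error to fix, for $a\in(0,1)$. Then your time exponent $1/(1-a)$ exceeds $1$ and $\beta=2/(1-a)>2$. The lower bounds you obtain, of order $\delta^{-1}\phi^a$ and $r^{-2}\psi^a$, therefore tend to $0$ at the edge of the supports. But $\widetilde{\mathcal H}$ only gives $\tilde h(t)\le C^0t^a+2$. Where $\phi$ or $\psi$ is just above an arbitrarily small $\tilde c$, the term $2\|f\|_\infty$ is not dominated, and $w_*$ must not depend on $\tilde h$.

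The fix for $a\ge0$ is to take the time exponent equal to $1$ and $\beta\in(1,2]$. Then $\tilde h(w_*)\le C^0(C^1)^a+2$ is bounded, and the lower bounds $c^1/\delta$ and $c/r^2$ suffice. The paper's lateral barrier, inherited from the elliptic case, has the same slip: the step $((r-d_{\partial\Omega})/r)^{\beta-2}\ge1$ requires $\beta\le2$.
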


\begin{proof}  
The proof of this theorem follows the same lines as its elliptic counterpart, Theorem~\ref{thm:FN}. We thus need to show that properties~\ref{cond-sol-par}--\ref{cond-conv-par} hold for the parabolic operators $F(D^2w) - \partial_t w$. 

Since the starting point of our approach is to work with regularized scalar  problem, we can use standard classical references such as \cite{CrandallIshiiLions1992, Ishii1987}.
The existence of a viscosity solution to
$F(D^2w)-\partial_t w = f\tilde h(w)$ in $\Omega_T$ with
$w=g$ on $\partial_p\Omega_T$, given a sub- and supersolution,
follows in a standard way;  for a recent and partially related problem see \cite[Section 3]{AraSaUrb24} or  \cite[Theorem~3.1]{RTU}.
Property~\ref{cond-comp-par} follows by the same argument as its
elliptic counterpart in the proof of Theorem~\ref{thm:FN}.
Property~\ref{cond-conv-par} follows from \cite[Theorem~5.1]{RTU}.

We shall now explain how property~\ref{cond-sol-par} is established in this case. We need to have the parabolic counterpart of the function $w_*$ (see \eqref{eq:lower-bound}) which plays the subsolution role in the proof. 
Although the parabolic distance would be a natural choice, one may encounter difficulties close to initial state, since the distance function is not smooth when we are switching from the space boundary to time zero. 
To rectify this issue, we modify the proof of the elliptic counterpart, Step 1 in Property \ref{cond-sol} in the proof of Theorem~\ref{thm:FN}. Let $\tilde g$ be the solution of
\begin{align}\label{eq:Pucci-par}
    \begin{cases}
        \cM_{\la,\Lambda}^-(D^2\tilde g)-\partial_t\tilde g=0&\text{in }\Omega_T,\\
        \tilde g=g&\text{on }\partial_p\Omega_T.
    \end{cases}
\end{align}
For $d_{\partial\Omega}$ and $\Omega_r$ as in the elliptic case, we consider
\begin{align*}
    w_*(x,t):=\begin{cases}
        \tilde g(x,t)\left(\frac{r-d_{\partial\Omega}(x)}{r}\right)^\be,& x\in \Omega_r\times(-T,0],\\
        0,&x\in (\Omega\setminus\Omega_r)\times(-T,0].
    \end{cases}
\end{align*} 
By combining \eqref{eq:subsol-comp-1} and the first equation in \eqref{eq:Pucci-par}, we get the parabolic analogue of \eqref{eq:subsol-comp-2}:
\begin{align}
     r^\be \left(F(D^2w_*)-\partial_t w_*\right)\ge&\be(\be-1)\tilde g(r-d_{\partial\Omega})^{\be-2}\cM_{\la,\Lambda}^-(\D d_{\partial\Omega}\otimes\D d_{\partial\Omega})-2\be(r-d_{\partial\Omega})^{\be-1}\cM_{\la,\Lambda}^+(\D \tilde g\otimes \D d_{\partial\Omega})\\
    &-\be\tilde g(r-d_{\partial\Omega})^{\be-1}\cM_{\la,\Lambda}^+(D^2d_{\partial\Omega}). 
\end{align} 
By following the remaining computation in the elliptic case and, if necessary, taking $r$ even smaller so that $w_*\le g$ on $\Omega\times\{-T\}$, we infer that $w_*$ is the desired parabolic barrier, with the equality in the initial state replaced by an inequality.

To address the above issue with the initial state, let $\hat g$ be the solution of $F(D^2\hat g)-\partial_t\hat g=0$
in $\Omega_T$ with initial value $g(x,-T)$ and lateral boundary values 
smaller than $g$. Define $ v_*(x,t) := \hat g(x,t)\left(1-\frac{t+T}{t_0}\right)_+$
for $t_0>0$ small, chosen so that $v_*\le g(x,t)$ on
$\partial\Omega\times(-T,-T+t_0)$. Define $\Omega^{t_0} = \Omega \times [-T,-T+t_0/2] $. Since $\hat g$ solves the
homogeneous equation, a direct calculation gives
$$F(D^2v_*)-\partial_t v_* =  \frac{\hat g}{t_0}
 \geq \frac{\inf_{\Omega^{t_0}}\hat g}{t_0}\ge c/t_0 \quad\text{in }\Omega^{t_0}$$ for
$t_0$ small. For $a<0$,
\[
  \tilde h(v_*)\le Cv_*^a+2\le C\left(\inf_{\Omega^{t_0}} \hat g\!\left(1-\frac{t+T}{t_0}\right)\right)^{\!a}+2
  \le \frac{c}{t_0}
\]
for $t_0$ small enough, so $F(D^2v_*)-\partial_t v_*\ge
f\tilde h(v_*)$. For $a\geq 0$ a similar argument works, where $\inf_{\Omega^{t_0}} \hat g\left(1-\frac{t+T}{t_0}\right)$ is replaced by $\sup_{\Omega^{t_0}} \hat g\left(1-\frac{t+T}{t_0}\right)$ and we obtain a similar conclusion. 
Now, it is easily seen that $\max\{w_*,v_*\}$ satisfies \eqref{eq:lower-bound-par}, thus Property~\ref{cond-sol-par} is proved.

To finalize, we need to show   \eqref{eq:mod-conti-par}-\eqref{eq:ftn-space-par}.  The modulus-of-continuity estimate~\eqref{eq:mod-conti-par} follows from \cite[Section 4.1--4.2]{AraSaUrb24}. The one-sided growth
estimate~\eqref{eq:ftn-space-par} follows by 
\cite[Corollary 1]{AraSaUrb24}.
\end{proof}

We next state  the counterpart of Corollary \ref{thm:FN-coin}, whose proof follows the same lines as that in the proof of Corollary \ref{thm:FN-coin}.  Indeed, one needs boundary Hopf lemma for the parabolic version, which can be found in 
\cite[Section 4]{CaffarelliLiNirenberg2013}.
Regarding the non-degeneracy, suppose that $w$ satisfies $F(D^2w)-\partial_tw\ge (c^0\inf_\Omega f)w^b$ in $\Omega_T$. Then, for $\bar w:=w^{1-b}$, a direct calculation gives that
\begin{align*}
    F(D^2\bar w)-\partial_t\bar w&\ge F\left((1-b)w^{-b}D^2w\right)+\cM_{\la,\Lambda}^-\left(\frac{b}{1-b}\cdot\frac{\D \bar w\otimes\D \bar w}{\bar w}\right)-(1-b)w^{-b}\partial_tw\\
    &\ge (c^0\inf_\Omega f)(1-b)-\frac{b\Lambda}{1-b}\frac{|\D\bar w|^2}{\bar w}.
\end{align*}
This is the parabolic analogue of the argument in lines 12–13 of \cite[page 463]{JeoSha24}. Therefore, by following the computations on \cite[pages 463-464]{JeoSha24}, we obtain the desired non-degeneracy estimate.
Moreover, for a solution $w^*$ of
$$
F(D^2w^*)-\partial_tw^*=0\quad\text{in }\Omega_T,\qquad w^*=g\quad\text{on }\partial_p\Omega_T,
$$
the global continuity $w^*\in C(\overline \Omega)$ follows from the standard existence and regularity for uniformly parabolic fully nonlinear equations. Finally, the comparison principle can be found in \cite[Theorem~14.1]{Lie96}, which implies that $\|w^*\|_{L^\infty(\Omega_T)}\to0$ as $\|g\|_{L^\infty(\partial_p\Omega_T)}\to0$.

    \begin{corollary}\label{cor:FNpar-coin}
  Let $(u,v)$ be the solution of the system \eqref{sys:FNpar} in Theorem~\ref{thm:FNpar}. Assume in addition that for  $i=1,2$, $f_i, k_i,h_i$ satisfy \eqref{eq:rhs-add-par}.  Then the connected components  $C_u$ and $C_v$ of  $\{u=0\}^\circ$ and $\{v=0\}^\circ$ satisfy
  \eqref{eq:dichotomy-par}. Moreover, for any $k_i$ and $h_i$, there exist $(f_i,g_i)$ such that for any solution pair  $(u,v)$, there exist connected components $C_u$ and $C_v$  satisfying \eqref{eq:disjoint-par}.
\end{corollary}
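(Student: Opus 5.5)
The plan is to apply Theorem~\ref{thm:support-par} to the operators $\cL_i w = F_i(D^2 w)$. The solution $(u,v)$ of \eqref{sys:FNpar} comes from Theorem~\ref{thm:FNpar}, which is the parabolic meta-theorem applied once \ref{cond-sol-par}--\ref{cond-conv-par} have been checked. So it suffices to verify the three further properties \ref{cond-Hopf-par}, \ref{cond-nondeg-par} and \ref{cond-conv-2-par} for $F(D^2w)-\partial_t w$ under \eqref{eq:assump-fully-nonlinear}. Both the dichotomy \eqref{eq:dichotomy-par} and the construction of $(f_i,g_i)$ giving \eqref{eq:disjoint-par} then follow from Theorem~\ref{thm:support-par}.

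For \ref{cond-Hopf-par}, I will invoke the parabolic Hopf boundary point lemma of \cite[Section 4]{CaffarelliLiNirenberg2013}. Uniform ellipticity gives $\cM^-_{\la,\Lambda}(D^2w)-\partial_t w\le F(D^2w)-\partial_t w = 0 \le \cM^+_{\la,\Lambda}(D^2w)-\partial_t w$, so $w$ is a positive supersolution of a Pucci-type equation in the cylinder. At a lateral boundary point $z^1\in\partial B_r(x^0)\times\{t^0\}$ where $w$ vanishes, the standard barrier $e^{-\mu|x-x^0|^2}-e^{-\mu r^2}$ (adjusted in time) yields $|\D w(z^1)|\ne 0$.

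For \ref{cond-nondeg-par}, I follow the computation already indicated before the statement. Set $\bar w = w^{1-b}$, so that
\begin{align*}
F(D^2\bar w)-\partial_t\bar w \ge (c^0\inf f)(1-b)-\frac{b\Lambda}{1-b}\frac{|\D\bar w|^2}{\bar w}.
\end{align*}
Then compare $\bar w$ on $Q_r(Z^0)$ with the barrier $\hat c\bigl(|x-x_0|^2+(t_0-t)\bigr)$, following \cite[pages 463--464]{JeoSha24}. At points near $Z^0$ where $w>0$, this yields $\sup_{Q_r} w\ge \hat c\, r^{2/(1-b)}$, where $\hat c$ is proportional to $\inf f$. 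In particular $\hat c\to\infty$ as $\inf f\to\infty$.

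For \ref{cond-conv-2-par}, existence and continuity up to $\partial_p\Omega_T$ of the solution of the homogeneous problem follow from standard theory on $C^{1,1}$ cylinders (Perron's method with barriers at the lateral and initial boundary). The comparison principle \cite[Theorem~14.1]{Lie96} then gives $\|w^*\|_\infty\le\|g\|_\infty\to0$.

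The main obstacle is \ref{cond-nondeg-par}, specifically handling the gradient term in the maximum principle argument. At a point where $\bar w-\phi$ attains its maximum on the parabolic boundary of the cylinder, I will use $\D\bar w=\D\phi$ to control $|\D\bar w|^2/\bar w$ by $|\D \phi|^2/\phi$. Since $|\D\phi|^2/\phi$ is bounded by a multiple of $\hat c$, choosing $\hat c$ as in the elliptic case closes the argument. A secondary point is that Hopf requires $C^1$ regularity near $z^1$; this is supplied by the $C^{1,\al}$ interior regularity of $w$ in $\{v=0\}^\circ$, where $w$ solves the homogeneous equation.
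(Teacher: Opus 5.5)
Your proposal is correct and follows the paper's own route: you verify \ref{cond-Hopf-par}--\ref{cond-conv-2-par} for $F(D^2w)-\partial_tw$ using the parabolic Hopf lemma of \cite{CaffarelliLiNirenberg2013}, the substitution $\bar w=w^{1-b}$ with the barrier argument of \cite[pages 463--464]{JeoSha24}, and the comparison principle \cite[Theorem~14.1]{Lie96}, and then invoke Theorem~\ref{thm:support-par}. One correction to your nondegeneracy step: $\D\bar w=\D\phi$ and $\bar w\ge\phi$ hold at a putative \emph{interior} maximum of $\bar w-\phi$ in $\{w>0\}\cap Q_r(Y)$, which is what gets ruled out, so the maximum must lie on the parabolic boundary where $\phi\ge\hat c r^2$; also, the resulting constant for $w$ is $\hat c^{1/(1-b)}$ rather than $\hat c$, which still tends to infinity as $\inf f\to\infty$.
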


\subsection{The \texorpdfstring{$p$-parabolic}{p-parabolic}  case}\label{sec:p-par}

In this section, 
we only consider the case when $p \neq 2$, as the case $p=2$ is already included in the Fully nonlinear theory, when the operator is standard heat operator.

We now consider the $p$-Laplacian case in the parabolic setting. 
We first define the notion of a solution.  By the solution to
\begin{align}\label{eq:p-sol-par}
    \begin{cases}
        \Delta_pw-\partial_tw=f\tilde h(w)&\text{in }\Omega_T,\\
        w=g&\text{on }\partial_p\Omega_T,
    \end{cases}
\end{align} 
we mean a function $w\in \cK_{g,p,\Omega_T}=\{w\in L^p(-T,0;W^{1,p}(\Omega)) \,:\, w=g\,\,\,\text{on }\partial_p\Omega_T\}$ such that  for any $v\in \cK_{g,p,\Omega_T}$:
\begin{align}
    \label{eq:par-var-ineq}
    \int_{\Omega_T}|\D w|^p+pf\tilde H(w)+p(w-v)\partial_tv\le \int_{\Omega_T}|\D v|^p+pf\tilde H(v);
\end{align} 
 here $\tilde H(t)=\int_{-\infty}^t\tilde h(s)\,ds$.  Similarly, by a solution of 
\begin{align*}
    \begin{cases}
        \Delta_pw-\partial_tw=f\tilde h(w)&\text{in }\Omega_T\cap\{w>0\},\\
        w=g&\text{on }\partial_p\Omega_T,
    \end{cases}
\end{align*}
we mean a function $w\in \cK_{g,p,\Omega_T}$ satisfying \eqref{eq:par-var-ineq} for any $v\in \cK_{g,p,\Omega_T}$ with $\supp(w-v)\Subset\{w>0\}$. Note that by a standard variation argument, these notion of solutions is stronger than that of weak solutions.

\begin{theorem}
    \label{thm:p-par}
    Let $\Omega_T = \Omega \times (-T,0]$, where 
    $\Omega$ is a bounded Lipschitz  domain in $\R^n$, $n\ge1 $. For $i=1,2$ and constants $C^1>c^1>0$, suppose $p_i\in\left(\max\left\{1,\frac{2n}{n+2}\right\},\infty \right)\setminus\{2\}$,
    $g_i\in \text{Lip}(\partial_p\Omega_T)$ with $c^1<g_i<C^1$ and $f_i\in L^\infty(\Omega_T;\R_+)$. Assume that $k_i\in \cK$ , and $h_i\in \cH$ with $-1<a_i<1$. Then there exists a solution $(u,v)\in C^{1+\al_1,\frac{1+\al_1}2}_{\loc}(\Omega_T;\R_+)\times C^{1+\al_2,\frac{1+\al_2}2}_{\loc}(\Omega_T;\R_+)$ of the system
    \begin{align}\label{sys:p-par}
        \begin{cases}
            \Delta_{p_1} u - \partial_t u= f_1k_1(v)h_1(u)&\text{in }\Omega_T\cap\{u>0\},\\
           \Delta_{p_2} v - \partial_t v =f_2k_2(u)h_2(v)&\text{in }\Omega_T\cap\{v>0\},\\
            u=g_1,v=g_2&\text{on }\partial_p\Omega_T,\\
            u=|\D u|=0&\text{on }\Omega_T\cap\partial\{u>0\},\\
            v=|\D v|=0&\text{on }\Omega_T\cap\partial\{v>0\},
        \end{cases}
    \end{align}
    where $\al_i\in(0,1)$ is a constant depending only on $n,p_i$ and $a_i$, $i=1,2$.
\end{theorem}
\begin{proof}
\underline{Proof of Property \ref{cond-sol-par}:} We split the proof of \ref{cond-sol-par} into two steps.

\medskip\noindent\emph{Step 1.} In this step, we prove that there exists a function $w\in \cK_{g,p,\Omega_T}$ satisfying
\begin{align}\label{eq:par-var-ineq-1}
\int_{\Omega_T}|\D w|^p+pf\tilde H(w)+p(w-v)\partial_tw\le \int_{\Omega_T}|\D v|^p+pf\tilde H(v)
\end{align}
for any $v\in \cK_{g,p,\Omega_T}$. We prove it by using the discrete minimizing-movement approximation. 
By a standard approximation argument, we may assume that $f$ is Lipschitz in time. Let $\tau\in(0,1/2)$ with $K:=T/\tau\in \N$. For $k\in \{0,1,2,\ldots,K\}$, we set $t_k:=\tau k-T$, $k\in \{0,1,2,\ldots, K\}$, and define $w_k^\tau:\Omega\to\R$ as follows: When $k=0$, we let $w_0^\tau(x):=g(x,-T)$, $x\in \Omega$. Given $w_{k-1}^\tau$, we let $w_k^\tau\in g(\cdot,t_{k-1})+W^{1,p}_0(\Omega)$ be a minimizer of
$$
J_k^\tau(u):=\int_{\Omega}\left(|\D u|^p+pf(x,t_k)\tilde H(u)+\frac{p}{2\tau}|u-w_{k-1}^\tau|^2 \right)\,dx
$$
over $u\in g(\cdot,t_{k-1})+W^{1,p}_0(\Omega)$. For any $v\in g(\cdot,t_{k-1})+W^{1,p}_0(\Omega)$, we have by the minimality of $w_k^\tau$
\begin{equation*}
    \int_\Omega\left(|\D w_{k}^\tau|^p
    +pf_k\tilde H(w_k^\tau)
    +\frac{p}{2\tau}|w_k^\tau-w_{k-1}^\tau|^2 \right)dx
    \le \int_\Omega\left(|\D v|^p
    +pf_k\tilde H(v)
    +\frac{p}{2\tau}|v-w_{k-1}^\tau|^2 \right)dx. 
\end{equation*}

Due to the identity $|w_k^\tau-w_{k-1}^\tau|^2-2(w_k^\tau-w_{k-1}^\tau)(w_k^\tau-v)=|v-w_{k-1}^\tau|^2-|v-w_k^\tau|^2$, this is equivalent to
\begin{flalign}
    \label{eq:min-move}
  &\int_\Omega\left(|\D w_k^\tau|^p+pf(x,t_k)\tilde H(w_k^\tau)+\frac{p}{\tau}(w_k^\tau-w_{k-1}^\tau)(w_k^\tau-v)  \right)\,dx \\
&\le \int_\Omega\left(|\D v|^p+pf(x,t_k)\tilde H(v)+\frac{p}{2\tau}|v-w_k^\tau|^2  \right)\,dx.
\end{flalign}
By plugging in $v=w_{k-1}^\tau\in g(\cdot,t_{k-1})+W_0^{1,p}(\Omega)$ and using that $f$ is Lipschitz in time and $\tilde H$ is bounded, we obtain
\begin{align*}
    &\int_\Omega \left(|\D w_k^\tau|^p+pf(x,t_k)\tilde H(w_k^\tau)+\frac{p}{2\tau}|w_k^\tau-w_{k-1}^\tau|^2 \right)\,dx\le\int_\Omega \left(|\D w_{k-1}^\tau|^p+pf(x,t_k)\tilde H(w_{k-1}^\tau) \right)\,dx\\
    &\qquad= \int_\Omega \left(|\D w_{k-1}^\tau|^p+pf(x,t_{k-1})\tilde H(w_{k-1}^\tau)+p(f(x,t_k)-f(x,t_{k-1}))\tilde H(w_{k-1}^\tau) \right)\,dx\\
    &\qquad\le\int_\Omega \left(|\D w^\tau_{k-1}|^p+pf(x,t_{k-1})\tilde H(w_{k-1}^\tau)  \right)\,dx+C\tau.
\end{align*}
Then, for 
$$
E_k(u):=\int_\Omega\left(|\D u|^p+pf(x,t_k)\tilde H(u)\right)dx,\quad 0\le k\le K,
$$
we have
\begin{align}
    \label{eq:par-var-ineq-comp}
    E_k(w_k^\tau)+\frac p{2\tau}\int_\Omega|w_k^\tau-w_{k-1}^\tau|^2\le E_{k-1}(w_{k-1}^\tau)+C\tau,\quad 1\le k\le K.
\end{align}
Since $K\tau=T$, \eqref{eq:par-var-ineq-comp} gives
$$
E_k(w_k^\tau)\le C.
$$
This, along with \eqref{eq:par-var-ineq-comp}, yields
\begin{align}
    \label{eq:min-move-est}
    \sup_{1\le k\le K}\int_\Omega|\D w_k^\tau|^p\le C,\qquad \sum_{k=1}^K\int_{\Omega}|w_k^\tau-w_{k-1}^\tau|^2\le C\tau.
\end{align}
Let $w^\tau, \tilde w^\tau:\Omega_T\to\R$ be defined by
$$
w^\tau(x,t):=w^\tau_k(x),\quad t\in (t_{k-1},t_k],\,\,\, x\in\Omega 
$$ 

and
$$
\tilde w^\tau(x,t):=\frac{t-t_{k-1}}\tau w_k^\tau(x)+\frac{t_k-t}\tau w_{k-1}^\tau(x),\quad t\in(t_{k-1},t_k],\,\,\, x\in \Omega.
$$
Note that $\partial_t\tilde w^\tau=\frac{w_k^\tau-w_{k-1}^\tau}\tau$ for $t\in(t_{k-1},t_k)$. From \eqref{eq:min-move-est}, we infer
$$
\int_{\Omega_T}|\D w^\tau|^p\le C,\qquad \int_{\Omega_T}|\partial_t\tilde w^\tau|^2\le C.
$$
From these estimates and Poincaré inequality, we deduce that for some $w\in \cK_{g,p,\Omega_T}$ and $q>1$,
\begin{align*}
    &w^\tau \rightharpoonup w\quad\text{weakly in }L^p(\Omega_T),\\
    &w^\tau\rightarrow w\quad\text{strongly in } L^q(\Omega_T),\\
    &\partial_t\tilde w^\tau\rightharpoonup \partial_tw \quad\text{weakly in }L^2(\Omega_T).
\end{align*}
This, combined with \eqref{eq:min-move} and the trace theorem, implies that $w$ satisfies \eqref{eq:par-var-ineq-1}.

\medskip\noindent\emph{Step 2.} In this step, we verify \ref{cond-sol-par} by using the result in Step 1.
 Since $w=g=v$ on $\partial_p\Omega_T$,
$$
\int_{\Omega_T}(w-v)\partial_tw-\int_{\Omega_T}(w-v)\partial_tv=\int_{\Omega_T}\partial_t\left(\frac12(w-v)^2\right)\ge0.
$$
By combining this and \eqref{eq:par-var-ineq-1}, we obtain that $w$ satisfies \eqref{eq:par-var-ineq}.

To show that $w\ge0$ in $\Omega_T$, let $w_+:=\max\{w,0\}\ge0$. In $\{w\ge0\}$, we have $w-w_+=0$. On the other hand, in $\{w<0\}$, we have $w_+=0$, thus $\partial_tw_+=0$ almost everywhere. Thus, $(w-w_+)\partial_tw_+=0$ in $\Omega_T$ almost everywhere. Then, we have by 
\eqref{eq:par-var-ineq}
$$
\int_{\Omega_T}\left(|\D w|^p+pf\tilde H(w)\right)\le\int_{\Omega_T}\left(|\D w_+|^p+pf\tilde H(w_+)\right).
$$
Clearly, $|\D w|^p\ge |\D w_+|^p$ and $pf\tilde H(w)\ge pf\tilde H(w_+)$ in $\Omega_T$. Moreover, $|\D w|^p=|\D w_+|^p$ if and only if $w=w_+$. Therefore, $w=w_+$ in $\Omega_T$, hence $w\ge0$ in $\Omega_T$.

The regularity estimates \eqref{eq:mod-conti-par} and \eqref{eq:ftn-space-par} follow from the locally uniform parabolic Hölder estimates of the solution and its gradient in Appendix~\ref{appen:par-p-Lapl-reg}; see Theorem~\ref{thm:p-reg-par}.

\medskip

\noindent 
\underline{Proof of Property \ref{cond-comp-par}:}
 As in the elliptic counterpart, we write $E(w,f_i,\tilde H)=|\D w|^p+pf\tilde H(w)$. Since $w_1\vee w_2, w_1\wedge w_2\in \cK_{g,p,\Omega_T}$,
\begin{align*}
    &\int_{\Omega_T}E(w_1,f_1,\tilde H)+p(w_1\vee w_2-w_1)\partial_t(w_1\vee w_2)\le \int_{\Omega_T}E(w_1\vee w_2,f_1,\tilde H),\\
    &\int_{\Omega_T}E(w_2,f_2,\tilde H)+p(w_1\wedge w_2-w_2)\partial_t(w_1\wedge w_2)\le \int_{\Omega_T}E(w_1\wedge w_2,f_2,\tilde H).
\end{align*}
For $\hat w:=w_2-w_1$, since $\hat w=0$ on $\partial_p\Omega_T$, we have
\begin{align*}
    &\int_{\Omega_T}(w_1\vee w_2-w_1)\partial_t(w_1\vee w_2)+(w_1\wedge w_2-w_2)\partial_t(w_1\wedge w_2)\\
    &=\int_{\{w_1<w_2\}}(w_2-w_1)\partial_tw_2+(w_1-w_2)\partial_tw_1=\int_{\{\hat w>0\}}\hat w\partial_t\hat w=\int_{\Omega_T}\partial_t\left(\frac12(\hat w_+)^2\right)\ge0.
\end{align*}
Thus, we get
\begin{align}\label{eq:par-energy-ineq}
    \int_{\Omega_T}E(w_1,f_1,\tilde H)+E(w_2,f_2,\tilde H)\le \int_{\Omega_T}E(w_1\vee w_2,f_1,\tilde H)+E(w_1\wedge w_2,f_2,\tilde H).
\end{align}
In addition, a direct calculation gives
\begin{align*}
    &((E(w_1\vee w_2,f_1,\tilde H)+E(w_1\wedge w_2,f_2,\tilde H))-(E(w_1,f_1,\tilde H)+E(w_2,f_2,\tilde H))\\
    &=\begin{cases}
        p(f_1-f_2)\int_{w_1}^{w_2}\tilde h(s)\,ds  &\text{in }\{w_1<w_2\},\\
        0&\text{in }\{w_1\ge w_2\}.
    \end{cases}
\end{align*}
Thus, if $\{w_1<w_2\}\neq\emptyset$, then \eqref{eq:par-energy-ineq} is violated. Therefore, $w_1\ge w_2$ in $\Omega_T$.

\medskip

\noindent 
\underline{Proof of Property \ref{cond-conv-par}:}
Let $v\in\cK_{g,p,\Omega_T}$ with $\supp(w-v)\Subset\{w>0\}$.
Since $w_j\to w$ locally uniformly in $\Omega_T$, for all
sufficiently large $j$ we have $w_j>0$ on $\supp(w-v)$, so
$\supp(w_j-v)\Subset\{w_j>0\}$. Hence the variational
inequality \eqref{eq:par-var-ineq} holds for $w_j$ with
this $v$:
\begin{align*}
  \int_{\Omega_T}|\nabla w_j|^p+pf^j\tilde H_j(w_j)
  +p(w_j-v)\partial_tv
  \;\leq\;
  \int_{\Omega_T}|\nabla v|^p+pf^j\tilde H_j(v),
\end{align*}
where $\tilde H_j(t)=\int_{-\infty}^t\tilde h^j(s)\,ds$.
Since $\|w_j\|_{C^{1,\al}(\supp(w-v))}\le C$ by
Theorem~\ref{thm:p-reg-par} in the Appendix, with $C$ independent of $j$,
we have $\nabla w_j\to\nabla w$ uniformly on $\supp(w-v)$.
Since $f^j\to f$ and $\tilde h^j\to\tilde h$ locally
uniformly, and $w_j\ge c>0$ on $\supp(w-v)$ for large $j$,
we also have $f^j\tilde H_j(w_j)\to f\tilde H(w)$ and
$f^j\tilde H_j(v)\to f\tilde H(v)$ uniformly on $\supp(w-v)$.
Passing to the limit $j\to\infty$:
\begin{align*}
  \int_{\Omega_T}|\nabla w|^p+pf\tilde H(w)+p(w-v)\partial_tv
  \;\leq\;
  \int_{\Omega_T}|\nabla v|^p+pf\tilde H(v).
\end{align*}
Since this holds for all $v\in\cK_{g,p,\Omega_T}$ with
$\supp(w-v)\Subset\{w>0\}$, $w$ satisfies
\eqref{eq:par-var-ineq} in $\Omega_T\cap\{w>0\}$.
\end{proof}

The proof of the counterpart to Corollary \ref{cor:p-coin} relies on the establishment of a Hopf-type lemma for the solution of the homogeneous equation and the non-degeneracy of solutions for \eqref{eq:sol-Dir-par}.  
Hopf-type lemmas for parabolic $p$-Laplace problems are restricted by the operator's sensitivity to $p$. Indeed, for $p > 2$, the diffusion becomes degenerate, allowing solutions to vanish at the boundary with arbitrary speed, which invalidates the standard Hopf Lemma. However, in the singular range $\frac{2n}{n+1} < p \leq 2$, the boundary behavior is governed by the existence of suitable barriers, as proven in \cite{AvelinGianazzaSalsa2016} and \cite{KuusiMingioneNystrom2014}, which ensure linear growth near the contact set. 

Regarding non-degeneracy, as in the fully nonlinear case, the proof of the non-degeneracy estimate for the $p$-Laplace case in \cite[(2.14)]{JeoSha24} extends to the parabolic setting in a straightforward manner.
In addition, we refer to \cite[Theorem~1.2 in Chapter {\rm III}]{DiB93} for the boundary continuity of the $p$-caloric function and to \cite[Theorem~3.1 in Chapter {\rm VI}]{DiB93} for the comparison principle for the parabolic $p$-Laplace equation.
Therefore, by confining $p$ to the interval $(\frac{2n}{n+1}, 2]$, we balance the singular and degenerate characteristics of the operator, ensuring that the solutions remain non-degenerate and satisfy the necessary boundary growth conditions. This allows the argument to proceed via a standard geometric analysis of the connected components $C_u$ and $C_v$.

    \begin{corollary}\label{cor:p-par-coin}
  Let $(u,v)$ be the solution of the system \eqref{sys:p-par}, with $ 2n/(n+1)<p\leq 2 $.
 Assume also that for  $i=1,2$, $f_i, k_i,h_i$ satisfy \eqref{eq:rhs-add-par}.  Then the connected components  $C_u$ and $C_v$ of  $\{u=0\}^\circ$ and $\{v=0\}^\circ$ satisfy
  \eqref{eq:dichotomy-par}. Moreover, for any $k_i$ and $h_i$, there exist $(f_i,g_i)$ such that for any solution pair  $(u,v)$, there exist connected components $C_u$ and $C_v$  satisfying \eqref{eq:disjoint-par}.
\end{corollary}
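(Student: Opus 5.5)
The plan is to reduce Corollary~\ref{cor:p-par-coin} to Theorem~\ref{thm:support-par}, taking $\cL_i=\Delta_{p_i}$ with the variational notion of solution from \eqref{eq:p-sol-par}. The solution pair $(u,v)$ comes from Theorem~\ref{thm:p-par}, which was itself obtained via Theorem~\ref{thm:meta-par}. So it suffices to verify \ref{cond-Hopf-par}--\ref{cond-conv-2-par} for $\Delta_p-\partial_t$ with $2n/(n+1)<p\le 2$. The case $p=2$ is the heat operator and is already covered by Corollary~\ref{cor:FNpar-coin}, so we may assume $p<2$. Note that $2n/(n+1)>2n/(n+2)$, so Theorem~\ref{thm:p-par} does apply in this range.

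\textbf{Regularity \ref{cond-conv-2-par}.} Let $w^*$ be the $p$-caloric function with data $g$ on $\partial_p\Omega_T$. Continuity up to $\partial_p\Omega_T$ follows from the boundary continuity result \cite[Theorem~1.2 in Chapter III]{DiB93}; here we use that $\Omega$ is Lipschitz, hence satisfies a uniform exterior cone condition. The modulus depends only on $\Omega$, $p$ and $\|g\|_\infty$. The comparison principle \cite[Theorem~3.1 in Chapter VI]{DiB93}, applied against the constants $0$ and $\|g\|_\infty$, gives $0\le w^*\le\|g\|_\infty$, so $\|w^*\|_\infty\to 0$ as $\|g\|_\infty\to 0$.

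\textbf{Nondegeneracy \ref{cond-nondeg-par}.} I would adapt \cite[(2.14)]{JeoSha24} to the parabolic setting. Fix $Z^0=(x_0,t_0)\in\{w>0\}$, set $\kappa=p/(p-1-b)$, and consider
\[
\phi(x,t)=w(x,t)-c\bigl(|x-x_0|^{\kappa}+(t_0-t)^{\kappa/\theta}\bigr)
\]
on an intrinsic cylinder $Q^\theta_r(Z^0)$. Since $p<2$, the exponent $\kappa$ exceeds $p/(p-1)>2$. The key computation is that for small $c$, depending on $c^0\inf f$,
\[
\Delta_p\bigl(c|x-x_0|^\kappa\bigr)+\partial_t(\cdots)\le c^0\inf f\,(\cdots)^{b},
\]
so the subtracted function is a subsolution. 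By the variational comparison in Theorem~\ref{thm:p-par}, or the weak comparison principle, $\phi$ cannot attain a positive maximum in the interior. Hence $\sup_{\partial_pQ}\phi\ge\phi(Z^0)>0$, which gives $\sup_Q w\ge cr^{b_0}$, and the case $Z^0\in\overline{\{w>0\}}$ follows by continuity. The constant $c$ scales like a positive power of $\inf f$ and therefore tends to infinity with $\inf f$. The time scaling has to be handled carefully, since $p\neq 2$ breaks the $r^2$ scaling. I would use $\theta$ with $t$-term exponent $\kappa/\theta$ and choose $\theta$ so that the $\partial_t$ contribution is dominated by the elliptic term, which is possible because $w$ is bounded.

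\textbf{Hopf \ref{cond-Hopf-par}.} This is the step I expect to be the main obstacle. Let $w>0$ be $p$-caloric in $Q_r(Z^0)$, with $w\in C^1$ and $w(z^1)=0$ at a lateral point $z^1=(y,t^0)$. I would build a barrier in an annular cylinder $\bigl(B_r(x^0)\setminus B_{r/2}(x^0)\bigr)\times(t^0-\delta,t^0]$ of the form
\[
\psi=\varepsilon\bigl(e^{-\mu|x-x^0|^2}-e^{-\mu r^2}\bigr)\eta(t),
\]
with $\eta$ a time cutoff, or use the self-similar barriers of \cite{AvelinGianazzaSalsa2016,KuusiMingioneNystrom2014} valid for $2n/(n+1)<p<2$. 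The difficulty is that $\Delta_p\psi$ scales like $\varepsilon^{p-1}$ while $\partial_t\psi$ scales like $\varepsilon$. For $p<2$ and small $\varepsilon$ the elliptic part dominates, and this is exactly why the singular range is required. Positivity of $w$ on the inner boundary and on the bottom, together with compactness, lets us place $\psi$ below $w$ on the parabolic boundary. Comparison then gives $w\ge\psi$, so $w$ grows at least linearly at $z^1$, and hence $|\D w(z^1)|\neq0$.

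\textbf{Conclusion.} Once \ref{cond-Hopf-par}--\ref{cond-conv-2-par} hold, Theorem~\ref{thm:support-par} yields \eqref{eq:dichotomy-par} for each time slice. The disjointness statement \eqref{eq:disjoint-par} follows from the parabolic version of Step~2 of Theorem~\ref{thm:support}. First fix $g_1$ and $f_2$, then take $\|g_2\|_\infty$ small, and finally take $\inf f_1$ large, using $u_*,v_*$ from \eqref{eq:lower-bound-par} near a lateral boundary point.
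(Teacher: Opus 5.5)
You follow the paper's route: verify \ref{cond-Hopf-par}--\ref{cond-conv-2-par} for $\Delta_p-\partial_t$ using the same DiBenedetto and Jeon--Shahgholian ingredients, then invoke Theorem~\ref{thm:support-par}. However, your nondegeneracy step has a genuine gap. The exponent $\kappa=p/(p-1-b)$ is positive only if $b<p-1$, and your remark that $\kappa>p/(p-1)>2$ ``since $p<2$'' silently assumes this. The hypotheses do not supply it: \eqref{eq:rhs-add-par} allows any $b\in(0,1)$, while $p-1<1$ throughout the range of the corollary (e.g.\ $n=1$, $p=6/5$, $b=1/2$). For $b\ge p-1$ the step cannot be repaired, because the estimate \ref{cond-nondeg-par} itself fails. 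Let $W$ be the radial solution of $\Delta_pW=W^b$ with $W(0)=1$, $\nabla W(0)=0$, on its ball of existence $B_{R^*}$. For $0<\varepsilon\le 1$ set $w_\varepsilon(x):=\varepsilon W(x/L_\varepsilon)$ with $L_\varepsilon^p=\varepsilon^{p-1-b}\ge 1$. Then $w_\varepsilon$ is a time-independent solution of $\Delta_pw-\partial_tw=w^b$ with $w_\varepsilon(0)=\varepsilon>0$, yet $\sup_{B_r}w_\varepsilon\le\varepsilon\sup_{B_r}W\to0$ for every fixed $r<R^*$. So no bound $\sup_{Q_r}w\ge cr^{b_0}$ with $c$ depending only on $b,c^0,n,p,\inf f$ can hold for solutions of $\Delta_pw-\partial_tw\ge c^0(\inf f)\,w^b$. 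Consequently the ``Moreover'' part is not reached unless you add the hypothesis $b<p_i-1$. The paper's one-line appeal to \cite[(2.14)]{JeoSha24} carries exactly the same unstated restriction, so this is as much a defect of the statement as of your proposal. The dichotomy \eqref{eq:dichotomy-par} is unaffected, since it uses only the Hopf property.

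When $b<p-1$ your barrier does work, but it must be set up as a \emph{super}solution: $\Delta_p\Psi-\partial_t\Psi\le c_1\Psi^b$ with $c_1=c^0\inf f$. Both $\Delta_p\Psi\ge0$ and $-\partial_t\Psi\ge0$ have to be absorbed by $c_1\Psi^b$; it is not the time term being dominated by the elliptic one. The choice $\theta=\kappa(1-b)$, together with $c$ of the order of a positive power of $c_1$ (so $c\to\infty$ as $\inf f\to\infty$), achieves this. Since $\Delta_p$ is nonlinear, you should compare $w$ with $\Psi$ on $\{w>\Psi\}\cap Q^\theta_r(Z^0)$, rather than appeal to a maximum principle for $w-\Psi$. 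Your Hopf barrier is a genuine, more elementary alternative to the Barenblatt-type barriers of \cite{AvelinGianazzaSalsa2016,KuusiMingioneNystrom2014} that the paper cites, and it does not need $p>2n/(n+1)$. Do make the cutoff explicit, e.g.\ $\eta(t)=\bigl((t-t^0+\delta)/\delta\bigr)^k$ with $k\ge 1/(2-p)$. Smallness of $\varepsilon$ alone does not give $\varepsilon^{p-1}\eta^{p-1}\Delta_p\phi\ge\varepsilon\phi\,\eta'$ where $\eta$ is small.
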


\section{Mixing operators and some open questions}\label{sec:mix}

Recall that, in the proof of Theorem~\ref{thm:FN} for the existence of solutions to the system with fully nonlinear elliptic operators, we verified that fully nonlinear elliptic operators satisfy \ref{cond-sol}-\ref{cond-conv}, so that Theorem~\ref{thm:FN} follows from the meta-theorem, Theorem~\ref{thm:meta}, by taking $\cL=F_1$ and $\cL=F_2$. The same argument applies to the $p$-Laplacian case. Thus, by taking $\cL_1$ to be a fully nonlinear operator and $\cL_2$ to be the $p$-Laplacian, we obtain the corresponding existence result for these mixed operator. The same argument also extends to the coincidence-set result and to the parabolic setting.

\begin{theorem}\label{eq:mixed}[Mixed Operators: Elliptic and Parabolic Cases]
    The conclusions of Theorems \ref{thm:FN} and \ref{thm:FNpar} and corollaries \ref{thm:FN-coin} and \ref{cor:FNpar-coin} hold for mixed operators, where $\cL_1$ is a fully nonlinear operator and $\cL_2$ is the $p$-Laplacian.
\end{theorem}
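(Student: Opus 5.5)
The meta-theorems are formulated component by component, so the plan is to observe that nothing in them couples the structural hypotheses of $\cL_1$ and $\cL_2$. In Theorem~\ref{thm:meta}, each $\cL_i$ is only required to satisfy \ref{cond-sol}--\ref{cond-conv} in its own class $\cF^i(\Omega)$. The two classes may differ. Take $\cF^1(\Omega)=\cF_{\al_1}(\Omega;\R_+)$ (viscosity solutions with \eqref{eq:mod-conti}, \eqref{eq:ftn-space}) for $\cL_1=F_1(D^2\cdot)$, and $\cF^2(\Omega)=C^{1,\al_2}_{\loc}(\Omega)$ (minimizers/weak solutions) for $\cL_2=\Delta_{p}$. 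Then \ref{cond-sol}--\ref{cond-conv} for $\cL_1$ were verified in the proof of Theorem~\ref{thm:FN}, and for $\cL_2$ in the proof of Theorem~\ref{thm:p-Lapl}.

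The step to check carefully is the iteration in Proposition~\ref{prop:meta}. The only interaction between the two components there is through the coefficients $f_1k_1(v_{j-1})+\e\sigma_j$ and $f_2k_2(u_j)+\e\tau_j$. These are nonnegative, bounded and continuous, hence in $L^\infty$, so \ref{cond-sol} and \ref{cond-comp} apply to each scalar problem regardless of which operator is involved. Monotonicity \eqref{eq:mon}, the Arzelà–Ascoli compactness from the respective estimates \eqref{eq:mod-conti}, and the passage to the limit via each operator's own \ref{cond-conv} then go through verbatim. The boundary data need the strongest hypotheses of either case: $\Omega$ of class $C^{1,1}$ and $g_i$ Lipschitz, which cover both the distance-function barrier for $F_1$ and the variational barrier for $\Delta_p$. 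With these, both the lower barriers $u_*,v_*$ and the upper barriers $u^*,v^*$ exist. This gives existence in the elliptic case. The coincidence-set statement then follows from Theorem~\ref{thm:support}, since \ref{cond-Hopf}--\ref{cond-conv-2} were verified separately for $F$ in Corollary~\ref{thm:FN-coin} and for $\Delta_p$ in Corollary~\ref{cor:p-coin}. The proof of Theorem~\ref{thm:support} also uses these properties only one operator at a time: Hopf's principle for $\cL_1$ on a ball where $k_1(v)=0$, and nondegeneracy and regularity for each component.

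The parabolic case is handled the same way through Theorem~\ref{thm:meta-par} and Theorem~\ref{thm:support-par}. Properties \ref{cond-sol-par}--\ref{cond-conv-par} for $F_1(D^2\cdot)-\partial_t$ come from the proof of Theorem~\ref{thm:FNpar}, and for $\Delta_p-\partial_t$ from the proof of Theorem~\ref{thm:p-par}; the latter requires $p\in(\max\{1,\tfrac{2n}{n+2}\},\infty)$.

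The main obstacle is compatibility of hypotheses in the parabolic coincidence-set part, since the Hopf property \ref{cond-Hopf-par} for $\Delta_p-\partial_t$ is only available for $\tfrac{2n}{n+1}<p\le 2$. I would state the mixed version of Corollary~\ref{cor:FNpar-coin} under that restriction on the $p$-parabolic component, as in Corollary~\ref{cor:p-par-coin}. I would also check that in Step~2 of Theorem~\ref{thm:support}, where the $f_i,g_i$ are chosen, the nondegeneracy constants for the two different operators still tend to infinity as $\inf f_i\to\infty$. They do, by the two verifications already given.
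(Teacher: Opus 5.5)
Your proposal is correct and follows the paper's argument: Theorems~\ref{thm:meta}, \ref{thm:meta-par} and \ref{thm:support} impose their hypotheses on each $\cL_i$ separately, in its own class $\cF^i$, so the separate verifications already carried out for $F(D^2\cdot)$ and for $\Delta_p$ (elliptic and parabolic) combine directly under the stronger joint assumptions ($C^{1,1}$ domain, Lipschitz data). Your added caveat is a sensible precision that the paper's statement leaves implicit: in the parabolic coincidence-set part, the $p$-parabolic component must satisfy $\tfrac{2n}{n+1}<p\le 2$, as in Corollary~\ref{cor:p-par-coin}.
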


Regarding the limitations of this work, several important directions remain open. 

First, the case of systems with more than two components remains 
unresolved. Our approach relies on the monotonicity of the iteration 
sequence (see \eqref{eq:mon}), which does not readily generalize to systems beyond 
two components. 

Second, a complete optimality analysis—covering both optimal regularity and 
optimal non-degeneracy—remains an open problem. These properties are highly 
sensitive to the specific nature of the differential operators involved, and 
the behavior of solutions under mixed operators is currently not well 
understood. Finally, we do not address the regularity of the free boundary 
(e.g., the $C^{1,\alpha}$ regularity of $\partial\{u>0\}$). Since proving such 
regularity typically requires both optimal regularity and optimal 
non-degeneracy, we leave this as a significant challenge for future 
research.

Finally, we can try to mix operators in each equation, namely we can allow $\cL_j$ itself be combination of operators, e.g. 
$\cL_j =  \cL_{1,j} + \cL_{2,j}$. A result in this direction for spherical case, and in the spirit of \cite{ELSH25} is under investigation by \cite{Khademloo}.


\appendix

\section{Regularity of solutions for \texorpdfstring{$p$-parabolic}{p-parabolic}  case}\label{appen:par-p-Lapl-reg}
In this section, we establish the following local Hölder estimates for solutions of \eqref{eq:p-sol-par} and for their spatial gradients:

\begin{theorem}
    \label{thm:p-reg-par}
    Suppose $p\in\left(\max\left\{1, \frac{2n}{n+2}\right\},\infty\right)\setminus\{2\}$ and $w$ is a solution of \eqref{eq:p-sol-par}. Then there exists $\be=\be(n,p,a)\in(0,1)$ such that $w,\D w\in C_{\loc}^{\be,\be/2}(\Omega_T)$. Moreover, for any $K\Subset\Omega_T$,
    $$
    \|w\|_{C^{\be,\be/2}(K)}\le C\quad\text{and}\quad \|\D w\|_{C^{\be,\be/2}(K)}\le C,
    $$
    where $C>0$ is a constant depending only on $n,p,C^0,a,\|g\|_\infty,\|f\|_\infty, \Omega_T$ and $K$.
\end{theorem}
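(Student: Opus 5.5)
The plan is to regard the solution $w$ of \eqref{eq:p-sol-par} as a weak solution of the $p$-parabolic equation with a bounded right-hand side, and then invoke DiBenedetto's intrinsic-scaling theory. The point is that $\tilde h\in\widetilde{\mathcal H}$ vanishes on $(-\infty,\tilde c]$, is bounded, and satisfies $\tilde h(t)\le C^0t^a+2$ on $[\tilde c,\infty)$. What we need are bounds that do not depend on $\tilde c$ or on the Lipschitz constant of $\tilde h$; they should depend only on $C^0,a,\|f\|_\infty,\|g\|_\infty$.

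First, the variational inequality \eqref{eq:par-var-ineq} with test functions $v=w\pm\epsilon\varphi$ yields the weak formulation
\[
\int_{\Omega_T}|\D w|^{p-2}\D w\cdot\D\varphi-w\,\partial_t\varphi+f\tilde h(w)\varphi=0 .
\]
Comparison with the constant supersolution $\|g\|_\infty$ (via the argument of \ref{cond-comp-par}, since $f\tilde h\ge0$) gives $0\le w\le\|g\|_\infty$.

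The second step is to control the source. For $a\ge0$ we have $f\tilde h(w)\le \|f\|_\infty(C^0\|g\|_\infty^a+2)$, which is a uniform $L^\infty$ bound. For $a<0$ the bound $\tilde h(w)\le C^0\tilde c^{a}+2$ is not uniform, and this is the \emph{main obstacle}. I would treat it as in the elliptic case by an energy/obstacle-type argument. The right-hand side is nonnegative, so $w$ is a weak subsolution and local boundedness holds. Testing with cutoffs times truncations $(w-k)_\pm$ and using $\int_{w_1}^{w_2}\tilde h\le C|w_2-w_1|^{1+a}$ (cf. \eqref{eq:H-property} and \cite[Lemma~2.5]{LeiDeQTei15}) gives Caccioppoli inequalities with an extra term of size $\|f\|_\infty|(w-k)_\pm|^{1+a}$. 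These are exactly DiBenedetto's parabolic De Giorgi classes with a lower-order perturbation. I would run the intrinsic-scaling proof of \cite[Chapters III--IV]{DiB93} for $p>2$ and $p<2$. The restriction $p>\frac{2n}{n+2}$ is needed in the singular range for local boundedness and Hölder continuity of $w$. This gives $w\in C^{\be_0,\be_0/2}_{\loc}$ with constants depending only on the listed data.

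For the gradient, I would compare on small intrinsic cylinders $Q$ with the $p$-caloric replacement $w^*$, which solves $\Delta_pw^*-\partial_tw^*=0$ in $Q$ with $w^*=w$ on $\partial_pQ$. The known estimates for $\D w^*$ apply (\cite[Chapter IX]{DiB93}: an excess decay in intrinsic cylinders). We then estimate $\int_Q|\D w-\D w^*|^p$ by testing the difference of the equations with $w-w^*$; the source term contributes at most $C\int_Q|w-w^*|^{1+a}$ through the same $H$-inequality. Combined with the Hölder bound on $w$ (so $|w-w^*|\lesssim r^{\be_0}$), this gives an error of order $r^{n+2+\gamma}$ for some $\gamma>0$. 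For $p\ge2$ the monotonicity $|\xi-\eta|^p\lesssim(|\xi|^{p-2}\xi-|\eta|^{p-2}\eta)\cdot(\xi-\eta)$ is used directly. For $p<2$ one uses the standard $V$-function version. A Campanato iteration in intrinsic geometry, separating the degenerate alternative $|\D w|\lesssim r^{\be}$ from the non-degenerate one as in DiBenedetto's proof, then yields $\D w\in C^{\be,\be/2}_{\loc}$ with uniform bounds. Finally, the $C^{\be/2}$ time regularity of $w$ itself is upgraded using the gradient bound and the equation, by testing against a spatial cutoff.

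The delicate point throughout is that for $a<0$ the source is genuinely unbounded in the limit $\tilde c\to0$. Everything must therefore be written in terms of the integrated quantity $\tilde H$ (the $(1+a)$-Hölder continuity of $\tilde H$), never in terms of $\sup\tilde h$. Checking that DiBenedetto's intrinsic iterations tolerate this perturbation is where I expect the real work.
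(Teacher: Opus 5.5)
There is a genuine gap. One step fails as written, and the ingredient the whole argument hinges on is missing.

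\textbf{The comparison step.} After reducing \eqref{eq:par-var-ineq} to the weak formulation, you bound $\int_Q|\nabla(w-w^*)|^p$ by testing the difference of the equations with $w-w^*$. That test produces the source term $\int_Q f\tilde h(w)(w^*-w)$. Since $w$ is a subsolution of the $p$-heat equation, $w\le w^*$, so this term is nonnegative and sits on the bad side. It is \emph{not} controlled by $\int_Q f(\tilde H(w^*)-\tilde H(w))\le C\int_Q|w-w^*|^{1+a}$. That comparison would need $\tilde H$ convex. For the singular model $h(t)=t^a$, $a<0$, $\tilde h$ decreases away from $\tilde c$, so in fact $\tilde h(w)(w^*-w)\ge \tilde H(w^*)-\tilde H(w)$, the wrong direction. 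Where $w\approx\tilde c$ the integrand is of size $\tilde c^{\,a}(w^*-w)$, which is not uniform as $\tilde c\to0$.

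The $(1+a)$-Hölder continuity of $\tilde H$ can only be used through the variational inequality itself. You take the competitor $v=w^*$ in \eqref{eq:par-var-ineq} and combine it with the weak equation of $w^*$ tested by $w-w^*$. This is how the paper gets \eqref{eq:u-v-est-diff-sing} and the bound $\int_Q(|\nabla w|^p-|\nabla v|^p+p(w-v)\partial_tv)\le C\int_Q|w-v|^{\gamma}$, with $\gamma=1+a$. Your $(w-k)_-$ Caccioppoli inequality in the De Giorgi step has the same problem if you derive it from the weak form: you get $\tilde h(w)(k-w)_+$, not $(k-w)_+^{1+a}$. It would have to come from \eqref{eq:par-var-ineq} with a truncated competitor, handling the term $(w-v)\partial_tv$. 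Passing to the weak formulation discards exactly the structure that makes the estimates uniform in $\tilde c$.

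\textbf{The gradient estimate.} Even with that repaired, the gradient part has no mechanism. DiBenedetto's estimates for $\nabla w^*$ (Lemma~\ref{lem:p-caloric-reg}(2)--(3)) are useful only if $\|\nabla w^*\|_{L^\infty}$ on the comparison cylinder is controlled compatibly with the cylinder's proportions. Through the sup bound, this requires control of $\dashint_{Q_r}|\nabla w|^p$ on small cylinders. Hölder continuity of $w$ with exponent $\beta_0$ only gives $\dashint_{Q_r}|\nabla w|^p\lesssim r^{p(\beta_0-1)}$, which is far too weak. For $p<2$ the same quantity is needed to pass from $\int|\nabla(w-w^*)|^2(|\nabla w|+|\nabla w^*|)^{p-2}$ (or the $V$-function) back to $\int|\nabla(w-w^*)|^p$. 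DiBenedetto's degenerate/non-degenerate alternative presupposes $\nabla w\in L^\infty_{\mathrm{loc}}$, and that is not available uniformly in $\tilde c$.

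The paper's technical core supplies exactly this. It proves \eqref{eq:key-grad}, namely $\dashint_{Q_r^\theta}|\nabla w|^p\lesssim r^{p(\sigma-1)}$ for every $\sigma<1$ with $\theta\to2$. The proof starts at $\theta=p$ with small $\sigma$ (Lemma~\ref{lem:par-u-est-initial}), then raises $\sigma$ while moving $\theta$ toward $2$ in steps much smaller than $\varepsilon$. This is Lemmas~\ref{lem:iter-sing-1}, \ref{lem:iter-sing-2}, \ref{lem:par-u-v-est} for $p<2$, and Lemmas~\ref{lem:iter-deg-1}--\ref{lem:par-u-v-est-deg} for $p>2$. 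The result is $\|\nabla v\|_{L^\infty(Q^\theta_{r/2})}\le Cr^{-\nu}$ with $\nu$ small, which is what closes the Campanato step from $Q^\theta_r$ to $Q^\theta_{r^{1+\varepsilon}}$. You flag this as ``the real work'' but give no way to do it, so the gradient estimate is not established.

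A minor point: the restriction $p>\frac{2n}{n+2}$ is not needed for boundedness or Hölder continuity of $w$, which is bounded by the maximum principle. It enters through the exponent $\frac{2}{n(p-2)+2p}$ in the gradient sup bound.
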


The elliptic counterpart of Theorem~\ref{thm:p-reg-par} was proved in \cite{LeiDeQTei15} for $p\ge2$ and \cite{JeoSha24} for $1<p<2$. These results were obtained by employing Campanato's method and comparing with $p$-harmonic replacements. In our parabolic setting, we also use Campanato's method and comparison with $p$-caloric replacements. However, our case is substantially more technical, mainly because $p$-caloric functions do not satisfy estimates as nice as those available for $p$-harmonic functions; see Lemmas~\ref{lem:p-caloric-reg} and \ref{lem:p-caloric-reg-deg}.

We establish the regularity for solutions of $\Delta_pw -\partial_t w=f\tilde h(w)$ for the purpose of applications. However, it will be clear from the proofs below that the same argument works for solutions of $\Delta_pw -\partial_t w = fh(w)$, where $h \in \cH$ which need not be either bounded or Lipschitz.

Although we do not provide a formal proof of Theorem~\ref{thm:p-reg-par}, it follows from the combination of Theorem~\ref{thm:par-holder-sing} - \ref{thm:par-grad-holder}. In Theorem~\ref{thm:par-holder-deg} and \ref{thm:par-grad-holder}, we prove $C^{\be,\be/p}$-type regularity rather than $C^{\be,\be/2}$. This distinction is inessential, since the exponent $\be$ is only a constant depending on $n,p,a$. In fact, the optimal regularity is not known even for $p$-caloric functions.

Due to algebraic differences, we divide the proof of Theorem~\ref{thm:p-reg-par} into the singular case $p<2$, treated in Section~\ref{subsec:sing}, and the degenerate case $p>2$, treated in Section~\ref{subsec:deg}. Since the case $p=2$ is covered by the parabolic fully nonlinear case, we only provide regularity estimates when $p\neq2$. We expect that the same results hold for $p=2$ with a simpler argument. In what follows we define  the parabolic cylinder of width $r$, height $r^\theta $ and base center $X^0 =(x_0,t_0)$ 
by   
$Q^\theta_r (X^0):= B_r(x_0)\times (t_0-r^\theta,t_0]$.

\subsection{Singular case}\label{subsec:sing}

In this section, we consider the singular case $p<2$. The main regularity estimates rely on the bound 
\begin{align}
    \label{eq:key-grad}
    \left(\dashint_{Q_r^\theta(X^0)}|\D w|^p\right)^{1/p}\lesssim r^{\sigma-1}\quad\text{for any $\sigma<1$},
\end{align}
which is the key estimate in this section. We establish \eqref{eq:key-grad} in Lemma~\ref{lem:par-u-v-est}, which is the technical core of the argument. It turns out that to achieve it, we need to first consider the case $\theta=p$ and a small $\sigma>0$, and then carefully increase $\sigma$ and $\theta$ simultaneously.

We begin by recalling the known regularity of $p$-caloric functions, which has been studied widely, e.g., \cite{CheDib88,DiB85,DiB93,DiBFri84,DiBFri85,Wie86}. In particular, we state the results from \cite{DiB93}.

\begin{lemma}
    \label{lem:p-caloric-reg}
    For $\max\{1,\frac{2n}{n+2}\}<p<2$, let $v$ be a nonnegative and bounded $p$-caloric function in $\Omega_T$. Then there are constants $C>0$ and $\al\in(0,1)$, depending only on $n$ and $p$, such that the following hold:
   
    \begin{enumerate}
    \item (Theorem~1.1 in Chapter {\rm IV}  in \cite{DiB93}) $v\in C^{\al,\al/p}_{\loc}(\Omega_T)$, and for any $K\Subset\Omega_T$
    $$
    |v(X^1)-v(X^2)|\le C\|v\|_{L^\infty(\Omega_T)}\left(\frac{\|v\|_{L^\infty(\Omega_T)}^{\frac{2-p}p}|x_1-x_2|+|t_1-t_2|^{1/p} }{p-\dist(K,\partial_p\Omega_T) }  \right)^{\al}\quad\text{for any }X^1,X^2\in K,
    $$
    where $p-\dist(K,\partial_p\Omega_T):=\inf\left\{\|v\|_{L^\infty(\Omega_T)}^{\frac{2-p}p}|x-y|+|t-s|^{1/p}\,:\, (x,t)\in K,\, (y,s)\in \partial_p\Omega_T \right\}$ is the intrinsic parabolic distance from $K$ to $\partial_p\Omega_T$.

\item (Theorem~5.2' and Remark 5.4 in Chapter {\rm VIII} in \cite{DiB93}) Whenever $B_\rho(x_0)\times(t_0-\tau,t_0]\Subset \Omega_T$,
    \begin{align*}
        \sup_{B_{\rho/2}(x_0)\times(t_0-\tau/2,t_0]}|\D v|\le C(\rho^2/\tau)^{\frac{n}{n(p-2)+2p}}\left(\dashint_{B_{\rho}(x_0)\times(t_0-\tau,t_0]}|\D v|^pdz\right)^{\frac{2}{n(p-2)+2p}}+C(\tau/\rho^2)^{\frac{1}{2-p}}.
    \end{align*}
    
    \item (Theorem~1.1'' in Chapter {\rm IX} in \cite{DiB93}) If $\D v$ is bounded in $\Omega_T$, then $\D v\in C^{\al,\al/2}_{\loc}(\Omega_T)$, and for any $K\Subset\Omega_T$
    \begin{align*}
    |\D v(X^1)-\D v(X^2)|\le C\|\D v\|_{L^\infty(\Omega_T)}\left(\frac{\max\left\{1,\|\D v\|_{L^\infty(\Omega_T)}^{\frac{2-p}2}\right\}|x_1-x_2|+|t_1-t_2|^{1/2} }{\dist(K,\partial_p\Omega_T)} \right)^\al
    \end{align*}    
   for any $X^1,X^2\in K.$
    \end{enumerate}
\end{lemma}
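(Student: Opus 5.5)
The lemma is a compilation of known results from \cite{DiB93}, so the plan is not to reprove them. Instead, for each item I would check that the hypotheses there are met and that the estimates as stated here follow by scaling and normalization. Throughout, I work with $\max\{1,\frac{2n}{n+2}\}<p<2$. In this range the singular theory of \cite{DiB93} applies: local boundedness of weak solutions, the intrinsic Harnack and Hölder theory, and gradient bounds requiring $\frac{2n}{n+2}<p$ for the $L^p\to L^\infty$ gradient estimate. Since $v$ is assumed bounded and nonnegative, the $L^\infty$ prerequisite is automatic. This is the reason for the lower bound on $p$.

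For item (1), I would invoke the intrinsic Hölder continuity theorem (Chapter IV, Theorem 1.1). In intrinsic cylinders $B_\rho\times(t_0-\omega^{2-p}\rho^p,t_0]$, with $\omega$ the oscillation, the oscillation decays geometrically. Iterating over a nested family of such cylinders gives a decay $\operatorname{osc}\le C\omega(\rho/R)^\al$. The stated form with the intrinsic distance $\|v\|_\infty^{\frac{2-p}{p}}|x_1-x_2|+|t_1-t_2|^{1/p}$ then follows by choosing $\omega=\|v\|_{L^\infty(\Omega_T)}$ as the starting oscillation. The radius $R$ is comparable to the intrinsic distance from $K$ to $\partial_p\Omega_T$, and I would rescale $x\mapsto \omega^{(2-p)/p}x$ so that the cylinders become standard $p$-cylinders. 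The only care needed is to check that the rescaled cylinders stay inside $\Omega_T$; this is exactly what the $p$-distance in the denominator encodes.

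For item (2), I would quote Chapter VIII, Theorem 5.2' together with Remark 5.4. These give the local $L^\infty$ bound for $|\D v|$ over $B_{\rho/2}\times(t_0-\tau/2,t_0]$ in terms of the $L^p$ average of $|\D v|$ over the larger cylinder. Both the scale factor $(\rho^2/\tau)^{n/(n(p-2)+2p)}$ and the additive term $(\tau/\rho^2)^{1/(2-p)}$ come from the non-intrinsic version of the estimate. Here I would only verify the exponent $\lambda=n(p-2)+2p>0$, which is equivalent to $p>\frac{2n}{n+2}$, and match the powers after the standard rescaling $x\mapsto x/\rho$, $t\mapsto t/\tau$.

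For item (3), I would use Chapter IX, Theorem 1.1'', which gives Hölder continuity of $\D v$ when $\D v$ is bounded. The intrinsic cylinders there are $B_\rho\times(t_0-M^{p-2}\rho^2,t_0]$, where $M$ bounds $|\D v|$. Since $p<2$, one has $M^{p-2}\rho^2\ge\rho^2$ when $M\le1$. The factor $\max\{1,\|\D v\|_\infty^{\frac{2-p}{2}}\}$ accounts for both regimes and makes the time scale comparable to $\rho^2$ in the parabolic distance. This yields the displayed estimate with the ordinary distance $\dist(K,\partial_p\Omega_T)$.

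I expect the main obstacle to be bookkeeping in items (1) and (3): translating the intrinsic-cylinder statements of \cite{DiB93} into the explicit global forms above, and making sure the constants depend only on $n$ and $p$ once the $L^\infty$ norms are factored out. I would do this by a single scaling argument for each item, reducing to the normalized case $\|v\|_\infty=1$, respectively $\|\D v\|_\infty=1$, and then undoing the scaling.
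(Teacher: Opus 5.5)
Your overall route, which is to check the hypotheses and quote \cite{DiB93} item by item, is the same as the paper's; the paper offers nothing beyond the citations. Your scaling checks for (1) and (2) go through. For (1), with $\omega=\|v\|_{L^\infty(\Omega_T)}$, the function $v(\omega^{(p-2)/p}y,t)/\omega$ is $p$-caloric with sup norm $1$, and undoing this change of variables produces exactly the intrinsic $p$-distance. For (2), the normalized case $\rho=\tau=1$ yields the displayed estimate, provided you also rescale the amplitude: $v=A\tilde v\big((x-x_0)/\rho,(t-t_0)/\tau\big)$ with $A^{p-2}=\rho^p/\tau$, so that $A/\rho=(\tau/\rho^2)^{1/(2-p)}$. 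A bare change of variables $x\mapsto x/\rho$, $t\mapsto t/\tau$ does not preserve $p$-caloricity.

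The bookkeeping you propose for (3), however, fails. When $|\D v|\sim M$, the intrinsic cylinders are $B_\rho\times(t_0-M^{2-p}\rho^2,t_0]$, not $M^{p-2}\rho^2$: it is $\tilde v(x,s)=v(x,M^{2-p}s)/M$ that is $p$-caloric with $\|\D\tilde v\|_\infty=1$. So your remark that $M^{p-2}\rho^2\ge\rho^2$ for $M\le1$ points the wrong way; for $M<1$ the intrinsic time scale is \emph{shorter} than $\rho^2$. Normalizing with $M=\|\D v\|_{L^\infty(\Omega_T)}$ and scaling back gives
\[
|\D v(X^1)-\D v(X^2)|\le CM\left(\frac{M^{\frac{2-p}2}|x_1-x_2|+|t_1-t_2|^{1/2}}{\inf\max\big\{M^{\frac{2-p}2}|x-y|,\,|t-s|^{1/2}\big\}}\right)^{\al},
\]
with the infimum over $(x,t)\in K$, $(y,s)\in\partial_p\Omega_T$.

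For $M\ge1$ the denominator dominates $\dist(K,\partial_p\Omega_T)$, and the displayed form of (3) follows. For $M<1$ you only get a factor $M^{-(2-p)/2}$ in front of $|t_1-t_2|^{1/2}$. No argument can remove it, because (3) with prefactor $\|\D v\|_\infty$ is false for small gradients. Take $\kappa=\frac{2-p}{p-1}$ and the traveling wave $V(x,t)=\kappa^{-\frac{p-1}{2-p}}-\big(\kappa(x_1-t+2)\big)^{-\frac{p-1}{2-p}}$. It is a bounded, nonnegative $p$-caloric function on $B_1\times(-\infty,0]$ with $\partial_{x_1}V=\big(\kappa(x_1-t+2)\big)^{-\frac1{2-p}}$. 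Set $V_M(x,t):=MV(x,M^{p-2}t)$ on $B_1\times(-1,0]$. Then $\|\D V_M\|_\infty=M\kappa^{-\frac1{2-p}}$ and $|\D V_M(0,0)-\D V_M(0,-M^{2-p})|=M\kappa^{-\frac1{2-p}}\big(2^{-\frac1{2-p}}-3^{-\frac1{2-p}}\big)$. The right-hand side of (3), by contrast, is $O(M^{1+\al(2-p)/2})$ as $M\to0$.

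So (3) must either be restricted to $\|\D v\|_{L^\infty(\Omega_T)}\ge1$, or have its prefactor replaced by $\max\{1,\|\D v\|_{L^\infty(\Omega_T)}\}$. The intrinsic bound above does give the latter, since $M^{1-\al(2-p)/2}\le1$ for $M<1$. This corrected version is all the paper needs later, where $\max\{1,\|\D v\|_{L^\infty}\}$ is in any case bounded by $Cr^{-\nu}$.
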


The following is the Caccioppoli inequality for $p$-subcaloric functions.

\begin{lemma}
    \label{lem:Caccio}
    For $Q^\theta_r(X^0)\Subset \Omega_T$, let $v$ be a nonnegative weak subsolution of $\Delta_pv-\partial_tv=0$ in $Q^\theta_r(X^0)$, i.e., for any nonnegative smooth test function $\vp$ in $Q^\theta_r(X^0)$ with $\vp=0$ on $\partial_pQ^\theta_r(X^0)$,
    $$
    \int_{Q_r^\theta(X^0)}\left(|\D v|^{p-2}\D v\cdot\D\vp-v\partial_t\vp\right)\le0.
    $$
    Then
    $$
    \left(\dashint_{Q^\theta_{r/2}(X^0)}|\D v|^{p}\right)^{1/p}\le \frac{C\|v\|_{L^\infty(Q^\theta_r(X^0))} }{r}.
    $$
\end{lemma}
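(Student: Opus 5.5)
We test the weak subsolution inequality with $\vp=v\,\eta^p$, where $\eta$ is a cutoff in $Q^\theta_r(X^0)$. Take $0\le\eta\le1$, $\eta\equiv1$ on $Q^\theta_{r/2}(X^0)$, and $\eta$ vanishing near the lateral boundary $\partial B_r(x_0)\times(t_0-r^\theta,t_0]$ and near the bottom $t=t_0-r^\theta$. We require $|\D\eta|\le C/r$ and $0\le\partial_t\eta\le C/r^\theta$. Since $v$ is nonnegative, $\vp\ge0$ is admissible. To handle the lack of time regularity of $v$, we use a Steklov average (or mollification in time) in the standard way, with $\eta$ independent of the final time slice so that the top is not cut off. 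A further truncation in time $\chi_{(t_0-r^\theta,s)}$ lets us retain a $\sup_s\int v^2\eta^p$ term, although we do not need it.

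Inserting this test function, the elliptic part gives
\[
\int|\D v|^p\eta^p+p\int|\D v|^{p-2}v\eta^{p-1}\D v\cdot\D\eta .
\]
The time part, after the usual integration by parts, gives $\frac12\int_{B_r}v^2\eta^p(\cdot,s)-\frac p2\int v^2\eta^{p-1}\partial_t\eta$. Hence
\[
\int|\D v|^p\eta^p\le p\int|\D v|^{p-1}\eta^{p-1}v|\D\eta|+\frac p2\int v^2\eta^{p-1}|\partial_t\eta| .
\]
Applying Young's inequality to the first term on the right, $|\D v|^{p-1}\eta^{p-1}\cdot v|\D\eta|\le\frac1{2p}|\D v|^p\eta^p+C v^p|\D\eta|^p$, and absorbing the gradient piece, we obtain
\[
\int_{Q^\theta_{r/2}}|\D v|^p\le C\int_{Q^\theta_r}\Big(\frac{v^p}{r^p}+\frac{v^2}{r^\theta}\Big).
\]

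Dividing by $|Q^\theta_{r/2}|\simeq r^{n+\theta}$ gives
\[
\dashint_{Q^\theta_{r/2}}|\D v|^p\le C\Big(\frac{M^p}{r^p}+\frac{M^2}{r^\theta}\Big),\qquad M:=\|v\|_{L^\infty(Q^\theta_r(X^0))}.
\]
The main obstacle is that the stated bound $CM/r$ is homogeneous of degree one in $v$, whereas the time term scales like $M^2r^{-\theta}$; the two balance only in the intrinsic geometry $r^\theta\sim M^{2-p}r^p$. I would resolve this as the statement implicitly intends. The constant $C$ is allowed to depend on the data; in the application $\theta\ge p$ and $r\le1$ (since $\theta$ is later increased from $p$), and $M\le\|g\|_\infty$ by the maximum principle. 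Then $M^2r^{-\theta}\le M^2r^{-p}\cdot r^{p-\theta}$, and the factor is controlled as follows. If $\theta=p$, we have $M^2r^{-p}\le (M/r)^p\cdot M^{2-p}\le C(M/r)^p$, with $C$ depending on $\|g\|_\infty$. For general $\theta$, I would choose the cutoff with $|\partial_t\eta|\le C/r^\theta$ and check that the dependence is recorded in $C$, adjusting the constants (or restricting to intrinsic cylinders) as needed. Taking $p$-th roots then yields the claim.
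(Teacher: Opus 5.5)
Your test function $v\eta^p$ (times a time cutoff) and your Young's-inequality absorption are exactly the ``standard argument'' the paper invokes with $\vp=v\eta^p(x)\xi(t)$, $|\xi'|\le Cr^{-\theta}$. They give the bound
\[
\dashint_{Q^\theta_{r/2}}|\D v|^p\le C(n,p)\Big(\frac{M^p}{r^p}+\frac{M^2}{r^\theta}\Big),\qquad M:=\|v\|_{L^\infty(Q^\theta_r(X^0))}.
\]
The paper stops at this point and never addresses the non-homogeneity you correctly noticed.

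The gap is in your final paragraph. For $\theta>p$ and $r\le1$, the factor $r^{p-\theta}$ is $\ge1$ and unbounded as $r\to0$, so it is not controlled. No choice of cutoff helps, because the term $M^2r^{-\theta}$ is sharp. To see this, let $\Phi_1$ be a periodic solution of $(|\Phi_1'|^{p-2}\Phi_1')'=-\Phi_1$, set $\lambda:=A^{p-2}\delta^{-p}$, and let $S'=-\lambda S^{p-1}$ with $S(t_0-r^\theta)=1$. Then $v=c+S(t)A\Phi_1(x_1/\delta)$ is $p$-caloric, since constants may be added. It is nonnegative for $c=A\max|\Phi_1|$, and $S\simeq1$ on the cylinder provided $\lambda r^\theta$ is below a small constant.

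For $p<2$, take $A=1$ and $\delta\simeq r^{\theta/p}\ll r$. This gives $\dashint_{Q^\theta_{r/2}}|\D v|^p\gtrsim r^{-\theta}$ while $M\lesssim1$, so the stated inequality fails for every $\theta>p$ with an $r$-independent $C$.

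Likewise, at $\theta=p$ your step $M^{2-p}\le C(\|g\|_\infty)$ needs $p\le2$. For $p>2$, take small $A$ and $\delta\simeq A^{(p-2)/p}r$. Then $\dashint|\D v|^p\simeq A^2r^{-p}\gg(M/r)^p$. So ``adjusting the constants or restricting to intrinsic cylinders'' changes the statement rather than proving it.

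What your computation does prove is all the paper actually uses. The lemma is only invoked with $\theta=p$, for $w$ and for its $p$-caloric replacement, both bounded by $\sup g$, in both the singular and degenerate cases. In that setting you get
\[
\Big(\dashint_{Q^p_{r/2}(X^0)}|\D v|^p\Big)^{1/p}\le\frac{C(n,p)\,(M^p+M^2)^{1/p}}{r}\le\frac{C(n,p,\|g\|_\infty)}{r}.
\]
You also get the stated form $CM/r$ with $C=C(n,p,\|g\|_\infty)$ when $p<2$, $\theta\le p$, $r\le1$ and $M\le\|g\|_\infty$. You should close your proof with that statement instead of asserting the lemma for general $\theta$ and $p$.
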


\begin{proof}
Without loss of generality, we may assume $X^0=0$. Then the lemma follows from a standard argument by taking a test function $\vp(x,t)=v(x,t)\eta^p(x)\xi(t)$, where $\eta\in C^\infty_0(B_r)$ with $\eta\equiv1$ in $B_{r/2}$ and $\eta\ge0$, $|\D \eta|\le C/r$ in $B_r$, and $\xi\in C^\infty(-r^\theta,0)$ with $\xi(-r^\theta)=0$, $\xi\equiv 1$ in $(-\frac34r^\theta,0)$ and $\xi\ge0$, $|\xi'|\le \frac{C}{r^\theta}$ in $(-r^\theta,0)$. 
\end{proof}

In the remainder of this paper, we assume without loss of generality $a\in(-1,0)$ so that
$$
\gamma:=1+a\in(0,1).
$$
In the rest of this section, we assume that 
$$
\max\left\{1,\frac{2n}{n+2}\right\}<p<2
$$ 
and that $w$ is a solution of \eqref{eq:p-sol-par}, i.e., $w$ belongs to $\cK_{g,p,\Omega_T}$ and satisfies \eqref{eq:par-var-ineq}. We also fix a subset $K\Subset \Omega_T$. A universal constant depends only on $n,p,C^0,a,\|g\|_\infty,\|f\|_\infty,\Omega_T$ and $K$.

Unlike $p$-harmonic functions, $p$-caloric functions do not minimize the $p$-energy functional $\int|\D v|^p$. Nevertheless, in the following lemma, we show that their energy is still controlled by the energy of solutions to \eqref{eq:p-sol-par}.
We first recall that the $p$-parabolic replacement $v$, of a function $w$ in $Q_r^\theta(X^0)$, is a solution to the homogeneous  $p$-parabolic equation with boundary values $w$.

\begin{lemma}
    \label{lem:u-v-p-est-comp}
    If $Q_r^\theta(X^0)\Subset\Omega_T$ and $v$ is the $p$-caloric replacement of $w$ in $Q_r^\theta(X^0)$, then
    $$
    \dashint_{Q_r^\theta(X^0)}|\D v|^p\le C\left(1+\dashint_{Q_r^\theta(X^0)}|\D w|^p\right),
    $$
    where $C>0$ is a constant depending only on $n$ and $p$.
\end{lemma}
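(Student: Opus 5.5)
The plan is to test the weak formulation of the $p$-caloric equation for $v$ with $\varphi = v - w$. This function vanishes on the lateral boundary and on the bottom of $Q:=Q_r^\theta(X^0)$, because $v=w$ on $\partial_p Q$. Whenever a time derivative of $v$ or $w$ appears, we use Steklov averages in time.

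The weak equation $\int_Q |\D v|^{p-2}\D v\cdot\D\varphi+\partial_t v\,\varphi=0$ with $\varphi=v-w$ gives
$$
\int_Q |\D v|^p=\int_Q|\D v|^{p-2}\D v\cdot\D w-\int_Q \partial_t v\,(v-w).
$$
We rewrite the last term as
$$
-\int_Q\partial_t v\,(v-w)=-\int_Q \partial_t(v-w)(v-w)-\int_Q\partial_t w\,(v-w)=-\tfrac12\int_{B_r}(v-w)^2(\cdot,t_0)-\int_Q \partial_t w\,(v-w).
$$
The first piece is nonpositive and can be dropped. For the gradient term we use Young's inequality, $|\D v|^{p-1}|\D w|\le \frac{p-1}{p}|\D v|^p+\frac1p|\D w|^p$, and absorb the $|\D v|^p$ part into the left-hand side. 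This leaves
$$
\int_Q|\D v|^p\le \int_Q|\D w|^p+p\Big|\int_Q \partial_t w\,(v-w)\Big|.
$$

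The remaining term involving $\partial_t w$ is where the structure of \eqref{eq:par-var-ineq} must be used, and this is the main obstacle. Two routes are available.

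\emph{First route.} Take the competitor in \eqref{eq:par-var-ineq} to be $w$ modified inside $Q$, namely equal to $v$ in $Q$ and to $w$ outside. This competitor belongs to $\cK_{g,p,\Omega_T}$. The variational inequality then yields
$$
\int_Q|\D w|^p+pf\tilde H(w)+p(w-v)\partial_t v\le\int_Q|\D v|^p+pf\tilde H(v).
$$
This is an inequality in the wrong direction for our purpose, but combined with the identity above it shows that the time terms pair up into $\int_Q\partial_t\frac12(v-w)^2\ge0$.

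\emph{Second route (the one I would use).} Go back to the Euler--Lagrange form, which the paper notes holds for solutions in this sense:
$$
\int_Q |\D w|^{p-2}\D w\cdot\D\psi+\partial_t w\,\psi+f\tilde h(w)\psi=0,\qquad \psi=v-w.
$$
Solving for the time term gives
$$
-\int_Q \partial_t w\,(v-w)=\int_Q |\D w|^{p-2}\D w\cdot\D(v-w)+\int_Q f\tilde h(w)(v-w).
$$
Substituting this into the energy identity and again applying Young's inequality to $|\D w|^{p-1}|\D v|$, all gradient terms are controlled by $\epsilon\int_Q|\D v|^p+C_\epsilon\int_Q|\D w|^p$.

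It remains to bound the reaction term, and here the nonnegativity of $\tilde h(w)$ and the bounds on $w$ enter. Both $v$ and $w$ lie between $0$ and $\|g\|_\infty$ by the comparison principle, so $|v-w|\le 2\|g\|_\infty$. If $\tilde h$ were bounded, the reaction term would simply be at most $C|Q|$. However, the constant must depend only on $n$ and $p$, so that term should instead be handled through its sign:
$$
\int_Q f\tilde h(w)(v-w)\le \int_Q f\tilde h(w)v,
$$
and $v\le\sup_{\partial_pQ} w$. This bound must still be made dimensionally consistent with the averaged estimate. I expect the honest version to absorb the reaction contribution into the additive constant $1$, using $|v-w|\le C$ and $f\tilde h(w)\le C$. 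That would make $C$ universal in the paper's sense rather than depending only on $(n,p)$.

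Dividing by $|Q|$ then gives the claimed averaged estimate. Throughout, the time derivatives are justified via Steklov averages, and the boundary term at the initial time vanishes because $v=w$ on the bottom of $Q$.
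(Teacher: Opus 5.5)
The route you commit to (the second one) fails at the reaction term. After substituting the Euler--Lagrange form you are left with $\int_Q f\tilde h(w)(v-w)$, which needs a pointwise bound on $\tilde h(w)$. In the relevant regime $a\in(-1,0)$, which the appendix assumes throughout, the only information on $\tilde h\in\widetilde{\mathcal H}$ that is uniform in the regularization is $\tilde h(t)\le C^0t^a+2$. But $\tilde h$ may be as large as $C^0\tilde c^{\,a}+2$ just above $\tilde c$, and this blows up as $\tilde c\to0$; along any regularization of a genuinely singular $h$, $\sup\tilde h^i\to\infty$. So your claim that $f\tilde h(w)\le C$ with a universal $C$ is false. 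The sign trick $\int_Q f\tilde h(w)(v-w)\le\int_Q f\tilde h(w)v$ does not help either, since it still requires $\int_Q\tilde h(w)\lesssim|Q|$.

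A constant depending on $\tilde c$ is useless here. These appendix estimates are what verify \ref{cond-sol-par} uniformly along $\tilde h^i\to h$, and the paper even asserts they hold for $h\in\mathcal H$ itself. The Euler--Lagrange form is intrinsically lossy. When $\tilde h$ behaves like $t^a$ with $a<0$ it is decreasing, so on $\{v>w\}$ one has $\tilde h(w)(v-w)\ge\int_w^v\tilde h(s)\,ds=\tilde H(v)-\tilde H(w)$, and only the right-hand side is uniformly controlled.

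The fix is your first route, which is not in the wrong direction; it is what the paper uses. The variational inequality with the competitor equal to $v$ in $Q$ gives $\int_Q(|\nabla w|^p-|\nabla v|^p+p(w-v)\partial_tv)\le p\int_Q f(\tilde H(v)-\tilde H(w))\le Cr^{n+\theta}$. Here $C$ depends only on $C^0,a,\|f\|_\infty,\|g\|_\infty$, because only the primitive enters: $\tilde H(w)\ge0$, and $\tilde H(v)\le\frac{C^0}{1+a}\|g\|_\infty^{1+a}+2\|g\|_\infty$ since $t^a$ is integrable at $0$. Do not trade $\partial_tv$ for $\partial_tw$. Testing the equation of $v$ with $w-v$ gives $p\int_Q(w-v)\partial_tv=p\int_Q|\nabla v|^p-p\int_Q|\nabla v|^{p-2}\nabla v\cdot\nabla w$. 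The time terms then cancel exactly and
\[
(p-1)\int_Q|\nabla v|^p+\int_Q|\nabla w|^p\le p\int_Q|\nabla v|^{p-1}|\nabla w|+Cr^{n+\theta},
\]
after which Young's inequality gives the lemma, in fact for every $p>1$.

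The paper starts from the same inequality but proceeds differently. It rewrites the left-hand side through $w^s=sw+(1-s)v$ and applies the monotonicity inequality \eqref{eq:ineq-sing} and H\"older to get $\dashint_Q|\nabla(w-v)|^p\le C\big(\dashint_Q(|\nabla w|^p+|\nabla v|^p)\big)^{1-p/2}$. It then absorbs via $A_v\le C_0A_v^{1-p/2}+C(1+A_w)$. That detour has the advantage of producing the bound \eqref{eq:par-u-v-diff-est} on $\nabla(w-v)$, which is reused in later lemmas.

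You are right that the constant cannot depend only on $(n,p)$; the paper's also depends on $C^0,a,\|f\|_\infty,\|g\|_\infty$. But it must not depend on $\tilde h$ beyond $C^0$ and $a$.
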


\begin{proof}
Without loss of generality, we may assume $X^0=0$. Recall $w\ge0$ in $\Omega_T$. Since $\Delta_pw-\partial_tw\ge0$ in $\Omega_T$, $w\le\sup g$ in $\Omega_T$ by the maximum principle. Thus $0\le v\le \sup g$ in $Q_r$ by the maximum/minimum principle. Then 
$$
\tilde H(v)\le \int_0^v(C^0t^a+2)\,dt\le \frac{C^0}{1+a}v^{a+1}+2v\le \frac{C^0}{1+a}\|g\|_\infty^{a+1}+2\|g\|_\infty,
$$
which along with $H(w)\ge0$, gives
\begin{align}\label{eq:u-v-est-0}
    \int_{Q_r^\theta}\left(|\D w|^p-|\D v|^p+p(w-v)\partial_tv\right)\le \int_{Q_r^\theta}pf(H(v)-H(w))\le C\int_{Q_r^\theta}H(v)\le Cr^{n+\theta}.
\end{align}
Moreover, since $v$ is $p$-caloric in $Q_r^\theta$,
$$
\int_{Q_r^\theta}\left(|\D v|^{p-2}\D v\cdot\D(w-v)+(w-v)\partial_tv  \right)=0. 
$$
Thus, for $w^s:=sw+(1-s)v$, $0\le s\le 1$, we have
\begin{align*}
    \int_{Q_r^\theta}\left(|\D w|^p-|\D v|^p\right)
    &=p\int_0^1\int_{Q_r^\theta}|\D w^s|^{p-2}\D w^s\cdot\D(w-v)\,dzds\\
    &=p\int_0^1\int_{Q_r^\theta}\left(|\D w^s|^{p-2}\D w^s\cdot\D(w-v)-|\D v|^{p-2}\D v\cdot\D(w-v)-(w-v)\partial_tv\right)dzds\\
    &=p\int_0^1\int_{Q_r^\theta}\left(|\D w^s|^{p-2}\D w^s-|\D v|^{p-2}\D v\right)\cdot \D(w-v)\,dzds-p\int_{Q_r^\theta}(w-v)\partial_tv\,dz,
\end{align*}
and hence
\begin{align}
    \label{eq:u-v-est-diff-sing}
    \begin{split}
        \int_{Q_r^\theta}\left(|\D w|^p-|\D v|^p+p(w-v)\partial_tv\right)\,dz&=p\int_0^1\int_{Q_r^\theta}\left(|\D w^s|^{p-2}\D w^s-|\D v|^{p-2}\D v\right)\cdot\D(w-v)\,dzds\\
    &=p\int_0^1\int_{Q_r^\theta}\left(|\D w^s|^{p-2}\D w^s-|\D v|^{p-2}\D v\right)\cdot\D(w^s-v)\,dz\frac{ds}s.
    \end{split}
\end{align}
By using the well-known inequality
\begin{align}\label{eq:ineq-sing}
\left(|\xi|^{p-2}\xi-|\eta|^{p-2}\eta\right)\cdot(\xi-\eta)\ge c_0|\xi-\eta|^2(|\xi|+|\eta|)^{p-2}
\end{align}
for any nonzero $\xi,\eta\in\R^n$ and a constant $c_0=c_0(n,p)>0$, we further have
\begin{align*}
    &\int_{Q_r^\theta}\left(|\D w|^p-|\D v|^p+p(w-v)\partial_tv\right)\,dz\ge c\int_0^1\int_{Q_r^\theta}|\D(w^s-v)|^2\left(|\D w^s|+|\D v|\right)^{p-2}\,dz\frac{ds}s\\
    &\ge c\int_0^1\int_{Q_r^\theta}|\D(w-v)|^2(|\D w|+|\D v|)^{p-2}\,dz\,s\,ds\ge c\int_{Q_r^\theta}|\D(w-v)|^2(|\D w|+|\D v|)^{p-2}\,dz.
\end{align*}
On the other hand, by using the Hölder's inequality, we have for any $0<\rho\le r$
\begin{align*}
    \int_{Q_\rho^\theta}|\D(w-v)|^p\le \left(\int_{Q_r^\theta}|\D(w-v)|^2(|\D w|+|\D v|)^{p-2}\right)^{p/2}\left(\int_{Q_\rho^\theta}(|\D w|+|\D v|)^p\right)^{1-p/2}.
\end{align*}
By combining the preceding two estimates and \eqref{eq:u-v-est-0}, we obtain
\begin{align}
    \label{eq:par-u-v-diff-est}
    \begin{split}
        \int_{Q_\rho^\theta}|\D(w-v)|^p&\le C\left(\int_{Q_r^\theta}(|\D w|^p-|\D v|^p+p(w-v)\partial_tv)\right)^{p/2}\left(\int_{Q_\rho^\theta}(|\D w|^p+|\D v|^p)\right)^{1-p/2}\\
        &\le Cr^{(n+\theta)p/2}\left(\int_{Q_\rho^\theta}(|\D w|^p+|\D v|^p)\right)^{1-p/2}.
    \end{split}
\end{align}
By letting $\rho=r$, this gives
\begin{align*}
    \dashint_{Q_{r}^\theta}|\D(w-v)|^p\le C\left(\dashint_{Q_{r}^\theta}(|\D w|^p+|\D v|^p)\right)^{1-p/2}.
\end{align*}
Then
\begin{align*}
    \dashint_{Q_r^\theta}|\D v|^p\le C\dashint_{Q_r^\theta}|\D w|^p+C\dashint_{Q_r^\theta}|\D(w-v)|^p\le C\dashint_{Q_r^\theta}|\D w|^p+C\left(\dashint_{Q_r^\theta}|\D w|^p\right)^{1-p/2}+C_0\left(\dashint_{Q_r^\theta}|\D v|^p\right)^{1-p/2}.
\end{align*}
For simplicity, we write $A_v:=\dashint_{Q_r^\theta}|\D v|^p$ and $A_w:=\dashint_{Q_r^\theta}|\D w|^p$. Then, the above estimate can be rewritten as
$$
A_v\le C_0A_v^{1-p/2}+C\left(A_w+A_w^{1-p/2}\right)\le C_0A_v^{1-p/2}+C(1+A_w).
$$
If $A_v-C_0A_v^{1-p/2}>\frac{A_v}2$. then we have by the previous inequality $A_v\le C(1+A_w)$. On the other hand, if $A_v-C_0A_v^{1-p/2}\le \frac{A_v}{2}$, then $A_v\le 2C_0A_v^{1-p/2}$, thus $A_v\le (2C_0)^{2/p}\le (2C_0)^{2/p}(1+A_w)$. This completes the proof.    
\end{proof}

\begin{lemma}\label{lem:iter-sing-1}
    For $X^0\in K$, suppose that there are universal constants $C_0>0$, $r_0>0$ and $\theta\in[p,2)$ such that
    $$
    \dashint_{Q_r^\theta(X^0)}|\D w|^p\le C_0r^{p(\de-1)}\quad\text{whenever $X^0\in K$ and $0<r<r_0$}.
    $$
    Then, there are universal constants $C_1>0$ and $r_1>0$ such that for any $0<\e<1$, $0<r<r_1$ and $\theta'\in[\theta,2)$ with $Q_r^{\theta'}(X^0)\Subset \Omega_T$,
    $$
    \dashint_{Q_r^{\theta'}(X^0)}|\D w|^p\le Cr^{p(\tilde\de-1)}.
    $$
    Here,
    \begin{align}\label{eq:degree}
        \tilde\de:=\min\{\tilde\de_1,\tilde\de_2,\tilde\de_3\}, 
    \end{align}
    where
    \begin{align*}
        &\tilde\de_1:=\frac{2\de+p(1-\de)-\frac{(2-p)(\theta'-\theta)}{p} }{2-\gamma}-\frac{\e}{1+\e}\left(n+\theta'+\frac{\gamma-(2-p)(1-\de+\frac{\theta'-\theta}{p}) }{2-\gamma}  \right),\\
        &\tilde\de_2:=\frac{2p\de-2(\theta'-\theta)-n(\theta'-p)}{n(p-2)+2p}-\frac\e{1+\e}\cdot\frac{2[n+\theta+p(\de-1)]-(n+2)\theta'}{n(p-2)+2p},\\
        &\tilde\de_3:=\frac{\theta'-p}{2-p}+\frac{\e}{1+\e}\cdot\frac{2-\theta'}{2-p}.
    \end{align*}
\end{lemma}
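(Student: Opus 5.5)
The plan is to compare $w$ with its $p$-caloric replacement $v$ in a cylinder $Q_\rho^{\theta'}(X^0)$. We bound $\dashint_{Q_r^{\theta'}}|\D w|^p$ by splitting $|\D w|^p\lesssim|\D v|^p+|\D(w-v)|^p$. Then we optimize the scale $\rho=r^{1/(1+\e)}$, i.e.\ $r=\rho^{1+\e}$; this choice is what produces the $\frac{\e}{1+\e}$ corrections in $\tilde\de_1,\tilde\de_2,\tilde\de_3$.

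\textbf{Step 1: transfer the hypothesis.} Since $\theta'\ge\theta$, the cylinder $Q_\rho^{\theta'}$ is covered by roughly $\rho^{\theta-\theta'}$ translated cylinders $Q_\rho^{\theta}$. Each of these has center in a slightly enlarged compact set, where the hypothesis still holds after shrinking $r_0$. This gives
\[
\int_{Q_\rho^{\theta'}}|\D w|^p\le C\rho^{n+\theta'+p(\de-1)}.
\]
By Lemma~\ref{lem:u-v-p-est-comp}, the same bound (up to $+\rho^{n+\theta'}$) holds for $v$.

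\textbf{Step 2: estimate $w-v$.} Here I would sharpen \eqref{eq:u-v-est-0}. Instead of bounding $\int f(\tilde H(v)-\tilde H(w))$ by $C\rho^{n+\theta'}$, I would use the H\"older-type bound $\tilde H(v)-\tilde H(w)\lesssim|v-w|^{\gamma}$ from \cite[Lemma~2.5]{LeiDeQTei15}. Then $\|w-v\|_\infty$ is controlled via an oscillation/comparison argument in terms of $\rho^{\de}$, with a loss of $\rho^{-(\theta'-\theta)/p}$ coming from the intrinsic scaling. This is then inserted into \eqref{eq:par-u-v-diff-est}.

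I expect this to give
\[
\int_{Q_\rho^{\theta'}}|\D(w-v)|^p\lesssim \rho^{(n+\theta')\frac p2}\,\|w-v\|_\infty^{\gamma p/2}\,\Big(\int(|\D w|^p+|\D v|^p)\Big)^{1-p/2}.
\]
Solving the resulting algebra (a self-improving step dividing by $2-\gamma$) should yield the exponent $\tilde\de_1$. The small cylinder $Q_r$ enters only through the trivial inclusion, costing the volume ratio $(\rho/r)^{n+\theta'}$.

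\textbf{Step 3: estimate $v$ on the small cylinder.} Apply Lemma~\ref{lem:p-caloric-reg}(2) with $\tau=\rho^{\theta'}$:
\[
\sup_{Q_{\rho/2}^{\theta'}}|\D v|\le C\rho^{(2-\theta')\frac{n}{n(p-2)+2p}}\Big(\dashint|\D v|^p\Big)^{\frac{2}{n(p-2)+2p}}+C\rho^{\frac{\theta'-2}{2-p}}.
\]
Inserting Step 1, the first term gives the exponent $\tilde\de_2$ and the second gives $\tilde\de_3$, after passing from $\rho$ to $r=\rho^{1+\e}$. Note that $\theta'<2$ makes the second term a genuine power of $\rho$, and it is bounded on $Q_r$. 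Summing the three contributions and taking the worst exponent gives $\tilde\de=\min\{\tilde\de_1,\tilde\de_2,\tilde\de_3\}$.

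\textbf{Main obstacle.} I expect the difficulty to lie in Step 2: extracting the decay of $\|w-v\|$ (or of $\int|w-v|^\gamma$) with the precise power $1-\de+\frac{\theta'-\theta}{p}$. I would first try a parabolic Poincar\'e inequality on $Q_\rho^{\theta'}$, using the time-derivative control given by the variational inequality, to bound $\dashint|w-v|$ by $\rho\,(\dashint|\D(w-v)|^p)^{1/p}$. I would then close the estimate by absorption.
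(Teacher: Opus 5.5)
Your overall architecture matches the paper's: $p$-caloric replacement $v$ on the larger cylinder, the splitting $|\D w|^p\lesssim|\D v|^p+|\D(w-v)|^p$, Lemma~\ref{lem:p-caloric-reg}(2) for $\tilde\de_2$ and $\tilde\de_3$, the bound $\tilde H(v)-\tilde H(w)\lesssim|v-w|^\gamma$ plus absorption for $\tilde\de_1$, and the rescaling $r=\rho^{1+\e}$.

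However, Step~1 is wrong, and it is exactly the step that produces the $\theta'-\theta$ terms. For $\rho<1$ and $\theta'\ge\theta$ one has $\rho^{\theta'}\le\rho^{\theta}$. So $Q_\rho^{\theta'}\subset Q_\rho^{\theta}$ is the \emph{thinner} cylinder, not a taller one to be covered. An average bound on $Q_\rho^\theta$ gives no comparable average bound on a thin time slab inside it, because the energy may concentrate there. Even covering $Q_\rho^{\theta'}$ by spatial translates of $Q_s^\theta$ with $s^\theta=\rho^{\theta'}$ only yields the average $\rho^{\frac{\theta'}{\theta}p(\de-1)}$. The paper uses just the inclusion together with Lemma~\ref{lem:u-v-p-est-comp}:
\[
\int_{Q_\rho^{\theta'}}\bigl(|\D w|^p+|\D v|^p\bigr)\le C\rho^{n+\theta'}+C\int_{Q_\rho^{\theta}}|\D w|^p\le C\rho^{n+\theta+p(\de-1)},
\]
that is, a loss of $\rho^{\theta-\theta'}$ in the average. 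Inserted into Lemma~\ref{lem:p-caloric-reg}(2), this gives exactly $\tilde\de_2$. Inserted into the factor $\bigl(\int(|\D w|^p+|\D v|^p)\bigr)^{1-p/2}$ of \eqref{eq:par-u-v-diff-est}, it gives the term $-\frac{(2-p)(\theta'-\theta)}{p(2-\gamma)}$ in $\tilde\de_1$. With your Step~1, $\theta$ would drop out of both exponents, which is why you had to posit a separate loss in Step~2.

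That ``intrinsic scaling'' loss does not exist, and the $L^\infty$ route of Step~2 is not available. Nothing at this stage gives $\|w-v\|_\infty\lesssim\rho^{\de}$: the right-hand side $f\tilde h(w)$ is not uniformly bounded, and the H\"older estimate for $w$ (Theorem~\ref{thm:par-holder-sing}) is derived \emph{from} this chain of lemmas. Moreover, an inequality with $\|w-v\|_\infty^{\gamma p/2}$ on the right cannot self-improve into a division by $2-\gamma$.

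Your fallback is the correct argument, and it needs no control of $\partial_t(w-v)$. Since $w-v$ vanishes on $\partial B_\rho\times(-\rho^{\theta'},0]$, the spatial Poincar\'e inequality on each time slice and H\"older's inequality give, with $A:=\int_{Q_\rho^{\theta'}}|\D(w-v)|^p$,
\[
\int_{Q_\rho^{\theta'}}|w-v|^\gamma\le C\rho^{(n+\theta')(1-\gamma/p)+\gamma}A^{\gamma/p}.
\]
Combining this with \eqref{eq:par-u-v-diff-est} and the previous display,
\[
A\le C\rho^{\frac p2[(n+\theta')(1-\gamma/p)+\gamma]+(1-\frac p2)[n+\theta+p(\de-1)]}A^{\gamma/2}
\quad\Longrightarrow\quad
A\le C\rho^{\,n+\theta'+p\frac{\gamma-(2-p)(1-\de+\frac{\theta'-\theta}{p})}{2-\gamma}}.
\]
Averaging over $Q^{\theta'}_{\rho^{1+\e}}$ then yields an exponent at least $\tilde\de_1$. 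With these two corrections your argument coincides with the paper's proof.
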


\begin{proof}
Without loss of generality, we may assume $X^0=0$. Let $v$ be the $p$-caloric replacement of $w$ in $Q_r^{\theta'}$. We use the first inequality in \eqref{eq:par-u-v-diff-est} with $\rho=r$ to get
\begin{align}
    \label{eq:par-u-v-diff-est-2}
    \int_{Q_r^{\theta'}}|\D(w-v)|^p\le C\left(\int_{Q_r^{\theta'}}\left(|\D w|^p-|\D v|^p+p(w-v)\partial_tv\right)\right)^{p/2}\left(\int_{Q_r^{\theta'}}\left(|\D w|^p+|\D v|^p\right)\right)^{1-p/2}.
\end{align}
Concerning the right-hand side in this inequality, we have by Lemma~\ref{lem:u-v-p-est-comp}
\begin{align}
    \label{eq:par-u-v-est-2}
    \begin{split}
        \int_{Q_r^{\theta'}}(|\D w|^p+|\D v|^p)&\le Cr^{n+\theta'}+C\int_{Q_r^{\theta'}}|\D w|^p\le Cr^{n+\theta'}+C\int_{Q_r^\theta}|\D w|^p\\
        &\le Cr^{n+\theta'}+Cr^{n+\theta+p(\de-1)}\le Cr^{n+\theta+p(\de-1)}. 
    \end{split}
\end{align}
Moreover, by recalling $\gamma=1+a\in(0,1)$ and using \cite[Lemma~2.5]{LeiDeQTei15}, we also have

\begin{align*}
    \int_{Q_r^{\theta'}}\left(|\D w|^p-|\D v|^p+p(w-v)\partial_tv\right)&\le \int_{Q_r^{\theta'}}pf(\tilde H(v)-\tilde H(w))=\int_{Q_r^{\theta'}}pf\int_{w(z)}^{v(z)}\tilde h(s)\,ds\,dz\\
    &\le \int_{Q_r^{\theta'}\cap\{v>w\}}pf\int_{w(z)}^{v(z)}\left(C_0s^a+2\right)\,ds\,dz\\
    &\le C\int_{Q_r^{\theta'}\cap\{v>w\}}(v^\gamma-w^{\gamma})+(v-w)\\
    &\le C\int_{Q_r^{\theta'}\cap\{v>w\}}|w-v|^\gamma+|v-w|\le C\int_{Q_r^{\theta'}}|w-v|^\gamma.
\end{align*}
Note that by Hölder's inequality and Poincaré inequality,
$$
\left(\dashint_{Q_r^{\theta'}}|w-v|^\gamma\right)^{1/\gamma}\le Cr\left(\dashint_{Q_r^{\theta'}}|\D(w-v)|^p\right)^{1/p},
$$
thus we have
\begin{align}\label{eq:u-v-min-ineq}
    \int_{Q_r^{\theta'}}\left(|\D w|^p-|\D v|^p+p(w-v)\partial_tv\right)\le Cr^{(n+\theta')(1-\gamma/p)+\gamma}\left(\int_{Q_r^{\theta'}}|\D(w-v)|^p\right)^{\gamma/p}. 
\end{align}
Combining this inequality with \eqref{eq:par-u-v-diff-est-2} and \eqref{eq:par-u-v-est-2} gives that for $A:=\int_{Q_r^{\theta'}}|\D(w-v)|^p$,
$$
A\le C\left(r^{(n+\theta')(1-\gamma/p)+\gamma}A^{\gamma/p}\right)^{p/2}r^{[n+\theta+p(\de-1)](1-p/2)}.
$$
Thus
\begin{align}
    \label{eq:par-u-v-diff-est-3}
    \int_{Q_r^{\theta'}}|\D(w-v)|^p=A\le Cr^{n+\theta'+p\frac{\gamma-(2-p)(1-\de+\frac{\theta'-\theta}{p})}{2-\gamma}}.
\end{align}

Next, we choose $r_1\in(0,1)$ small so that $r_1^\e\le 1/4$. If $0<r<r_1$, then we have by applying Lemma~\ref{lem:p-caloric-reg} with $\rho=r$ and $\tau=r^{\theta'}$ and \eqref{eq:par-u-v-est-2} that
\begin{align*}
    \int_{Q_{r^{1+\e}}^{\theta'}}|\D v|^p&\le Cr^{(n+\theta')(1+\e)}\|\D v\|_{L^\infty(Q_{r/2}^{\theta'})}^p\\
    &\le Cr^{(n+\theta')(1+\e)}\left[\left(r^{2-\theta'}\right)^{\frac{n}{n(p-2)+2p}}\left(\dashint_{Q_r^{\theta'}}|\D v|^p\right)^{\frac{2}{n(p-2)+2p}}+r^{\frac{\theta'-2}{2-p}}\right]^p\\
    &\le Cr^{(n+\theta')(1+\e)}\left[r^{\frac{n(2-\theta')-2(n+\theta')+2[n+\theta+p(\de-1)]}{n(p-2)+2p}}+r^{\frac{\theta'-2}{2-p}} \right]^p\\
    &\le Cr^{(n+\theta')(1+\e)+p\frac{2[n+\theta+p(\de-1)]-(n+2)\theta' }{n(p-2)+2p  }  }+Cr^{(n+\theta')(1+\e)+p\frac{\theta'-2}{2-p} }.
\end{align*}
By combining this with \eqref{eq:par-u-v-diff-est-3}, we infer
\begin{align*}
    \int_{Q_{r^{1+\e}}^{\theta'}}|\D w|^p&\le C\int_{Q^{\theta'}_{r^{1+\e}}}|\D(w-v)|^p+C\int_{Q^{\theta'}_{r^{1+\e}}}|\D v|^p\\
    &\le Cr^{n+\theta'+p\frac{\gamma-(2-p)(1-\delta+\frac{\theta'-\theta}{p}) }{2-\gamma} }+Cr^{(n+\theta')(1+\e)+p\frac{2[n+\theta+p(\de-1)]-(n+2)\theta'}{n(p-2)+2p}}+Cr^{(n+\theta')(1+\e)+p\frac{\theta'-2}{2-p} }.
\end{align*}
Taking $\rho=r^{1+\e}$ yields
$$
\dashint_{Q_{\rho}^{\theta'}}|\D w|^p\le C\rho^{p(\tilde\de-1)},
$$
where $\tilde\de$ is as in \eqref{eq:degree}.
\end{proof}

We use Lemma~\ref{lem:iter-sing-1} to show that the exponent $\sigma$ in \eqref{eq:key-grad} can be improved from $\de$ to $\de'$; see \eqref{eq:del}.

\begin{lemma}
    \label{lem:iter-sing-2}
    For $X^0\in K$, suppose $p\le\theta<\theta'<2$ and $0<\e<1$. We let
    \begin{align}\label{eq:del}
    \de:=\frac{\theta-p}{2-p}+\frac{\e}{1+\e}\cdot\frac{2-\theta'}{2-p},\qquad \de':=\de+\frac{\theta'-\theta}{(2-p)(1+\e)}.
    \end{align}
    Assume that $\theta'-\theta$ and $\e$ are small with $\theta'-\theta\ll \e$ so that
    \begin{align}
        \label{eq:iter-cond}
            \frac{4}{2-p}(\theta'-\theta)+7n\e\le \gamma,\qquad
            \de'<1,\qquad
            \theta'-\theta\le\frac{(2-\theta')^2}{14}\e.
    \end{align}
If 
$$
\dashint_{Q_r^\theta(X^0)}|\D w|^p\le C_0r^{p(\de-1)}
$$
for any $0<r<r_0$, where $C_0>0$ and $r_0>0$ are universal constants, then there exist universal constants $C_1>0$ and $r_1>0$ such that
$$
\dashint_{Q_r^{\theta'}(X^0)}|\D w|^p\le C_1r^{p(\de'-1)}
$$
whenever $0<r< r_1$.
\end{lemma}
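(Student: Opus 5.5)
The plan is to deduce the lemma directly from Lemma~\ref{lem:iter-sing-1}, applied with the given $\theta$, $\theta'$, $\e$ and with $\de$ as in \eqref{eq:del}. That lemma gives, for $0<r<r_1$,
\[
\dashint_{Q_r^{\theta'}(X^0)}|\D w|^p\le C_1 r^{p(\tilde\de-1)},
\]
with $\tilde\de=\min\{\tilde\de_1,\tilde\de_2,\tilde\de_3\}$. Since $r<1$, it then suffices to check the purely algebraic inequality $\tilde\de_i\ge\de'$ for $i=1,2,3$, using the smallness conditions \eqref{eq:iter-cond}. So the whole proof reduces to three elementary comparisons, which I would treat one at a time.

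\emph{The term $\tilde\de_3$.} This is an exact computation. The $\e$-terms in $\tilde\de_3$ and $\de$ coincide, so
\[
\tilde\de_3-\de'=\frac{\theta'-\theta}{2-p}\Big(1-\frac1{1+\e}\Big)\ge0 .
\]
This explains the choice of $\de$ in \eqref{eq:del}: it is precisely the $\tilde\de_3$ of the previous step.

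\emph{The term $\tilde\de_1$.} Here there is a definite gain. The main part satisfies
\[
\frac{p+\de(2-p)}{2-\gamma}-\de=\frac{p-\de(p-\gamma)}{2-\gamma}\ge\frac{\gamma}{2-\gamma}\ge\frac\gamma2,
\]
because $\de<1$ and $p>1>\gamma$. The remaining terms must then be absorbed into this margin. They are $\frac{(2-p)(\theta'-\theta)}{p(2-\gamma)}$, the $\e$-correction (which is at most of order $\e(n+2+\gamma)$), and the increment $\de'-\de\le\frac{\theta'-\theta}{2-p}$. All of these are controlled by $\frac{4}{2-p}(\theta'-\theta)+7n\e\le\gamma$, which is the first condition in \eqref{eq:iter-cond}. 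I would just bound the bracket crudely, using $1-\de\le1$ and $\theta'<2$.

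\emph{The term $\tilde\de_2$.} This is the delicate step and the main obstacle, because its leading part gives no margin at all. Set $D:=n(p-2)+2p>0$, which is positive since $p>\frac{2n}{n+2}$. I would substitute $\theta-p=(2-p)\de-\frac{\e}{1+\e}(2-\theta')$, which comes from \eqref{eq:del}. Then the $n(2-\theta')$ contributions cancel, and I expect
\[
D(\tilde\de_2-\de)=-(n+2)(\theta'-\theta)+\frac{\e}{1+\e}\big[2(\theta'-\theta)+2p(1-\de)\big].
\]
So the only gain comes from $\frac{2p\e}{1+\e}(1-\de)$, and it has to beat $(n+2)(\theta'-\theta)+D(\de'-\de)$, which is at most $C(n,p)(\theta'-\theta)$. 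Next I would bound $1-\de$ from below using the definition:
\[
1-\de=\frac{2-\theta}{2-p}-\frac{\e(2-\theta')}{(1+\e)(2-p)}\ge\frac{2-\theta'}{(1+\e)(2-p)} .
\]
The required inequality then becomes $(\theta'-\theta)\lesssim \e(2-\theta')$. This should follow from the third condition in \eqref{eq:iter-cond}, namely $\theta'-\theta\le\frac{(2-\theta')^2}{14}\e$, together with $2-\theta'<1$. The condition $\de'<1$ is used to keep the hypothesis of Lemma~\ref{lem:iter-sing-1} consistent and to keep $1-\de>0$. I would carefully track the constants in this last comparison, since they depend on $n$ and $p$ and the constant $14$ may need to be checked against them.
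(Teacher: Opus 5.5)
Your proposal is correct and has the same skeleton as the paper: apply Lemma~\ref{lem:iter-sing-1} with the given $\theta,\theta',\e$, then verify $\de'\le\tilde\de_i$ for $i=1,2,3$. Your $\tilde\de_3$ computation is the paper's Step~3. Your $\tilde\de_1$ margin argument is the paper's Step~1 rearranged: the paper multiplies through by $2-\gamma$ and shows that the error terms total at most $\gamma$.

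One bookkeeping caution in the $\tilde\de_1$ step. Compare the error terms with the margin $\frac{\gamma}{2-\gamma}$ itself, keeping the factor $\frac1{2-\gamma}$ on the error terms as well. If you weaken the margin to $\frac\gamma2$ and bound the $\e$-term crudely by $\e(n+2+\gamma)$, the chain fails. For example, take $n=1$, $\e=\gamma/7$ and $\theta'-\theta\to0$: the errors tend to $\frac{\gamma(3+\gamma)}7$, which exceeds $\frac\gamma2$ once $\gamma>\frac12$. The first condition in \eqref{eq:iter-cond} then no longer suffices.

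For $\tilde\de_2$ your route differs from the paper's. The paper writes $\tilde\de_2=1+\frac{\mu}{1+\e}$ and disposes of $\mu\ge0$ using $\de'<1$. For $\mu<0$ it proves the stronger inequality $\de'\le1+\mu$, absorbing $\frac1{2-p}\le\frac1{2-\theta'}$; this is why its third condition is quadratic in $2-\theta'$.

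Your identity
\[
D(\tilde\de_2-\de)=-(n+2)(\theta'-\theta)+\tfrac{\e}{1+\e}\big[2(\theta'-\theta)+2p(1-\de)\big]
\]
is correct and avoids the case split. It never uses $\de'<1$, since your lower bound $1-\de\ge\frac{2-\theta'}{(1+\e)(2-p)}$ already gives $1-\de>0$.

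The constant you left open is universal, because $(n+2)(2-p)+D=4$. Hence
\[
(n+2)(\theta'-\theta)+D(\de'-\de)\le\frac{4(\theta'-\theta)}{2-p},
\]
while your gain is at least $\frac{2p\e(2-\theta')}{(1+\e)^2(2-p)}$. So it suffices that $\theta'-\theta\le\frac{p\e(2-\theta')}{2(1+\e)^2}$. This follows from the third condition, since
\[
\frac{(2-\theta')^2}{14}<\frac{2-\theta'}{14}<\frac{p(2-\theta')}{2(1+\e)^2},
\]
using $2-\theta'<1$, $(1+\e)^2<4$ and $p>1$. Your version therefore needs only a smallness condition linear in $2-\theta'$ and no case analysis, at the cost of a slightly longer exact computation.
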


\begin{proof}
In view of Lemma~\ref{lem:iter-sing-1}, it is sufficient to prove
$$
\de'\le\tilde\de,
$$
where $\tilde\de=\min\{\tilde\de_1,\tilde\de_2,\tilde\de_3\}$ is as in \eqref{eq:degree}. We divide its proof into three steps.

\medskip\noindent\emph{Step 1.} In this step, we show that $\de'\le\tilde\de_1$, i.e.,
$$
\de+\frac{\theta'-\theta}{(2-p)(1+\e)}\le \frac{2\de+p(1-\de)-\frac{(2-p)(\theta'-\theta)}p }{2-\gamma }-\frac{\e}{1+\e}\left[n+\theta'+\frac{\gamma-(2-p)(1-\de+\frac{\theta'-\theta}p) }{2-\gamma} \right].
$$
Indeed, a direct computation gives that this is equivalent to
\begin{align*}
&(p-\gamma)\de\\
&\le p-\frac{(\theta'-\theta)(2-\gamma)}{(2-p)(1+\e)}-\frac{(2-p)(\theta'-\theta)}p-\frac{\e}{1+\e}\left[(n+\theta')(2-\gamma)+\gamma-(2-p)(1-\de+\frac{\theta'-\theta}p) \right].\\
\end{align*}
This holds since
\begin{align*}
    &\frac{(\theta'-\theta)(2-\gamma)}{(2-p)(1+\e)}+\frac{(2-p)(\theta'-\theta)}p+\frac{\e}{1+\e}\left[(n+\theta')(2-\gamma)+\gamma-(2-p)(1-\de+\frac{\theta'-\theta}p)\right]\\
    &\le \frac{2}{2-p}(\theta'-\theta)+(2-p)(\theta'-\theta)+\e[(n+2)2+1]\le\frac4{2-p}(\theta'-\theta)+7n\e\le \gamma.
\end{align*}

\medskip\noindent\emph{Step 2.} In this step, we prove $\de'\le \tilde\de_2$.  We first  observe that
$$
\tilde\de_2=1+\frac{\mu}{1+\e}=1+\mu-\frac{\e}{1+\e}\mu,\quad\text{where }\mu=\frac{n(2-\theta')-2[\theta'-\theta+p(1-\de)] }{n(p-2)+2p }.
$$
If $\mu\ge0$, then $\tilde\de_2\ge1$, which along with the condition $\de'<1$ gives $\de'<\tilde\de_2$. 
Thus we may assume $\mu<0$, which implies $\tilde\de_2>1+\mu$. Then it suffices to show $\de'\le 1+\mu$, which is equivalent to
$$
\de+\frac{\theta'-\theta }{(2-p)(1+\e) }\le\frac{2p-n(\theta'-p)-2[\theta'-\theta+p(1-\de)] }{n(p-2)+2p }.
$$
A direct calculation gives that it is equivalent to
$$
\de\ge\frac{\theta-p}{2-p}+\left[\frac{n(p-2)+2p}{n(2-p)(1+\e)}+\frac2n+1 \right]\frac{\theta'-\theta}{2-p}.
$$
Since $\de=\frac{\theta-p}{2-p}+\frac{\e}{1+\e}\cdot\frac{2-\theta'}{2-p}$, it also equals to
$$
\frac{\e}{1+\e}(2-\theta')\ge\left[\frac{n(p-2)+2p}{n(2-p)(1+\e)}+\frac2n+1 \right](\theta'-\theta).
$$
This inequality holds thanks to the last condition on \eqref{eq:iter-cond} since
\begin{align*}
    \left[\frac{n(p-2)+2p}{n(2-p)(1+\e)}+\frac2n+1 \right](\theta'-\theta)&\le \left[\frac{2p}{n(2-p)(1+\e)}+3 \right](\theta'-\theta)\\
    &\le \frac7{2-p}(\theta'-\theta)\le\frac{7}{2-\theta'}(\theta'-\theta)
\end{align*}
and $
\frac{2-\theta'}{1+\e}\ge\frac{2-\theta'}2.
$

\medskip\noindent\emph{Step 3.} In this last step, we prove $\de'\le \tilde\de_3$. Indeed, we simply have
\begin{align*}
    \de'&=\frac{\theta-p}{2-p}+\frac{\e}{1+\e}\cdot\frac{2-\theta'}{2-p}+\frac{\theta'-\theta}{(2-p)(1+\e)}\\
    &\le\frac{\theta-p}{2-p}+\frac{\e}{1+\e}\cdot\frac{2-\theta'}{2-p}+\frac{\theta'-\theta}{2-p}=\frac{\theta'-p}{2-p}+\frac{\e}{1+\e}\cdot\frac{2-\theta'}{2-p}=\tilde\de_3.
\end{align*}
This completes the proof.   
\end{proof}

Next, we prove that \eqref{eq:key-grad} holds for small $\sigma>0$ and $\theta=p$.

\begin{lemma}
    \label{lem:par-u-est-initial}
    There exist universal constants $C>0$ and $r_0>0$ such that
    $$
    \dashint_{Q_r^p(X^0)}|\D w|^p\le Cr^{p\left(\frac{p}{n+2+p}-1 \right)}
    $$
    whenever $X^0\in K$ and $0<r< r_0$.
\end{lemma}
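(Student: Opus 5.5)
We will prove the estimate with the same comparison mechanism as in Lemma~\ref{lem:iter-sing-1}, now with $\theta=\theta'=p$. The difference is that no prior decay of $\dashint|\D w|^p$ is available, so the starting point is a crude scale-invariant bound. The exponent $\frac{p}{n+2+p}$ should come from balancing the size of the comparison error against a bounded-energy term when we shrink from $Q_r^p$ to a smaller cylinder $Q_\rho^p$, $\rho=r^{1+\kappa}$.

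\emph{Step 1 (crude energy bound).} Since $0\le w\le\|g\|_\infty$, the solution $w$ is a subsolution of the $p$-parabolic equation in the weak sense, because the right-hand side $f\tilde h(w)\ge0$. Testing \eqref{eq:par-var-ineq} with $v=w-\vp$ for a suitable cutoff $\vp=w\eta^p\xi$ gives a Caccioppoli-type estimate as in Lemma~\ref{lem:Caccio}. This yields, for $Q^p_{2r}(X^0)\Subset\Omega_T$,
$$
\dashint_{Q_r^p(X^0)}|\D w|^p\le Cr^{-p}.
$$
In other words, the hypothesis of the iteration holds with $\de=0$. Since $K\Subset\Omega_T$, this holds uniformly for $X^0\in K$ and $r<r_0$.

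\emph{Step 2 (comparison).} Let $v$ be the $p$-caloric replacement of $w$ in $Q_r^p$. By \eqref{eq:par-u-v-diff-est-2}, \eqref{eq:u-v-min-ineq} and Step~1 combined with Lemma~\ref{lem:u-v-p-est-comp}, we get \eqref{eq:par-u-v-diff-est-3} with $\theta'=\theta=p$, $\de=0$:
$$
\int_{Q_r^p}|\D(w-v)|^p\le Cr^{n+p+p\frac{\gamma-(2-p)}{2-\gamma}}.
$$
If this exponent is not good enough, we fall back on the cruder estimate $\int|\D(w-v)|^p\le C\int_{Q_r^p}|w-v|^\gamma$-type bounds together with $|w-v|\le\|g\|_\infty$. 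These still give $\int_{Q_r^p}|\D(w-v)|^p\le Cr^{n+p}\cdot r^{-p(1-p/2)}$ up to the Hölder factor.

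For $v$ we use Lemma~\ref{lem:p-caloric-reg}(2) with $\rho=r$ and $\tau=r^p$. Because $\tau/\rho^2=r^{p-2}$ and $\dashint|\D v|^p\lesssim r^{-p}$, the right-hand side is a power $\|\D v\|_{L^\infty(Q_{r/2}^p)}\le Cr^{-1}$; the intrinsic scaling is consistent precisely when $\theta=p$. Thus $\int_{Q_\rho^p}|\D v|^p\le C\rho^{n+p}r^{-p}$.

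\emph{Step 3 (balancing).} Combining the two bounds gives
$$
\int_{Q_\rho^p}|\D w|^p\le C\big(r^{n+p+p\lambda}+\rho^{n+p}r^{-p}\big),\qquad \lambda:=\tfrac{\gamma-(2-p)}{2-\gamma}.
$$
We then choose $\rho=r^{1+\kappa}$ with $\kappa$ chosen so that both terms are comparable to $\rho^{n+p+p(\sigma-1)}$ with $\sigma=\frac{p}{n+2+p}$. Solving the two linear constraints in $\kappa$ gives the stated exponent. The value $\frac{p}{n+2+p}$ is not sharp; any small positive $\sigma$ suffices for the subsequent iteration of Lemma~\ref{lem:iter-sing-2}. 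So if the algebra gives a different positive exponent, we take the minimum and adjust.

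\emph{Main obstacle.} The main difficulty is that $\lambda$ can be negative when $\gamma<2-p$, so Step~2 alone gives no gain. In that case we plan to replace the comparison error by a bound that does not depend on an energy decay. First, we use $\int|w-v|^\gamma\le Cr^{n+p}$, which holds by boundedness. This gives $\int_{Q_r^p}(|\D w|^p-|\D v|^p+p(w-v)\partial_tv)\le Cr^{n+p}$. Feeding this into \eqref{eq:par-u-v-diff-est} yields $\int|\D(w-v)|^p\le Cr^{(n+p)p/2}r^{(n)(1-p/2)}=Cr^{n+p^2/2}$, which beats $r^{n}$ by the positive power $p^2/2$. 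Balancing this bound against $\rho^{n+p}r^{-p}$ produces a positive $\sigma$. Tracking that arithmetic to land exactly on $\frac{p}{n+2+p}$, with the $n+2$ arising from a Poincaré/Sobolev step in space-time, is where I expect to need care.
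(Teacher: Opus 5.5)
\textbf{Gap: the argument as written proves a weaker exponent than the one stated.} Your architecture matches the paper's: a Caccioppoli bound on $w$, the $p$-caloric replacement $v$, $\|\D v\|_{L^\infty(Q^p_{r/2})}\le Cr^{-1}$ from Lemma~\ref{lem:p-caloric-reg}(2), and balancing at $\rho=r^{1+\kappa}$. You concede the shortfall yourself when you plan to ``take the minimum and adjust''.

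The cause is that you evaluate the second factor of the H\"older inequality \eqref{eq:par-u-v-diff-est} on the large cylinder $Q_r^p$, bounding it by $r^{n(1-p/2)}$. This gives $\int_{Q_\rho^p}|\D(w-v)|^p\le Cr^{n+p^2/2}$. Balancing against $r^{(n+p)(1+\kappa)-p}$ gives $\kappa=\frac{p^2}{2(n+p)}$ and $\sigma=\frac{p^2}{2(n+p)+p^2}$. This is strictly smaller than $\frac{p}{n+2+p}$, since $pn<2n$. Because $r<1$, a smaller $\sigma$ is a weaker bound, so the lemma as stated is not proved.

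Your $\lambda$-route does no better in general. Formula \eqref{eq:par-u-v-diff-est-3} with $\de=0$ gives $\int_{Q_r^p}|\D(w-v)|^p\le Cr^{n+p^2/(2-\gamma)}$, hence $\sigma=\frac{p^2}{(2-\gamma)(n+p)+p^2}$. This falls below $\frac{p}{n+2+p}$ whenever $\gamma<\frac{n(2-p)}{n+p}$. Also, the sign of $\lambda$ was never the obstacle. The gain over the crude bound $r^n$ is $p(1+\lambda)=\frac{p^2}{2-\gamma}>0$, so your ``fallback'' is actually weaker than the route it replaces.

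\textbf{The missing step} is to use the freedom $\rho\le r$ in \eqref{eq:par-u-v-diff-est}. Keep the energy-excess factor on $Q_r^p$, where \eqref{eq:u-v-est-0} (boundedness of $\tilde H$) gives $Cr^{n+p}$. Evaluate the other factor $\big(\int_{Q_\rho^p}(|\D w|^p+|\D v|^p)\big)^{1-p/2}$ on the small cylinder $\rho=r^{1+\e}$. There, Lemma~\ref{lem:Caccio} at scale $\rho$ gives $\int_{Q^p_\rho}|\D w|^p\le C\rho^{n}=Cr^{n(1+\e)}$, and you already have $\int_{Q^p_\rho}|\D v|^p\le Cr^{(n+p)(1+\e)-p}\le Cr^{n(1+\e)}$. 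Together these yield
\[
\int_{Q^p_\rho}|\D w|^p\le C\Big(\rho^{\,n+\frac{p(p-\e n)}{2(1+\e)}}+\rho^{\,n+\frac{\e p}{1+\e}}\Big).
\]
Choosing $\e=\frac{p}{n+2}$, from $p-\e n=2\e$, gives exactly $\rho^{\,n+\frac{p^2}{n+2+p}}$, that is, $\de=\frac{p}{n+2+p}$. In particular, the $2$ in $n+2$ comes from the exponents $p/2$ and $1-p/2$ in this H\"older step, not from a Poincar\'e or Sobolev inequality.

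Your remark that any positive $\sigma$ would feed Lemma~\ref{lem:iter-sing-2} is true only after changing the definition of $\e$ in Lemma~\ref{lem:par-u-v-est}. That changes the statement; it does not prove it.
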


\begin{proof}Without loss of generality, we may assume $X^0=0$. We take a small constant $\e\in(0,1)$ to be chosen later, and then choose $r_0\in(0,1)$ small so that $r_0^\e\le 1/4$. For $0<r<r_0$, since $\Delta_pw-\partial_tw=f\tilde h(w)\ge 0$ and $0\le w\le \sup g$, we have by Lemma~\ref{lem:Caccio},
\begin{align}
    \label{eq:par-grad-u-p-est}
    \int_{Q^p_{r^{1+\e}}}|\D w|^p\le Cr^{n(1+\e)}.
\end{align}
To improve it, we apply Lemma~\ref{lem:Caccio} and use $0\le v\le \sup g$ in $Q_r^p$ to get
\begin{align}
    \label{eq:par-grad-v-sup-est}
    \left(\dashint_{Q_{r/2}^p}|\D v|^p\right)^{1/p}\le \frac{C\|v\|_{L^\infty(Q_{r}^p)}}{r}\le  \frac Cr,
\end{align}
which, along with Lemma~\ref{lem:p-caloric-reg}, yields

\begin{align}\label{eq:grad-v-p-est}
        \int_{Q^p_{r^{1+\e}}}|\D v|^p &\le Cr^{(n+p)(1+\e)}\|\D v\|_{L^\infty(Q^p_{r/4})}^p\\&\le Cr^{(n+p)(1+\e)+\frac{(2-p)np}{n(p-2)+2p}}\left(\dashint_{Q^p_{r/2}}|\D v|^p\right)^{\frac{2p}{n(p-2)+2p}}+Cr^{(n+p)(1+\e)-p}\\
        &\le Cr^{(n+p)(1+\e)+\frac{(2-p)np}{n(p-2)+2p}-\frac{2p^2}{n(p-2)+2p}}+Cr^{(n+p)(1+\e)-p}\le Cr^{(n+p)(1+\e)-p}.
\end{align}

By using \eqref{eq:par-u-v-diff-est}, \eqref{eq:par-grad-u-p-est} and \eqref{eq:grad-v-p-est}, we get
\begin{align}
    \label{eq:par-grad-u-ep-est}\begin{split}
    \int_{Q^p_{r^{1+\e}}}|\D w|^p&\le C\int_{Q^p_{r^{1+\e}}}|\D(w-v)|^p+C\int_{Q^p_{r^{1+\e}}}|\D v|^p\\
    &\le Cr^{(n+p)p/2}\left(\int_{Q^p_{r^{1+\e}}}(|\D w|^p+|\D v|^p)\right)^{1-p/2}+Cr^{(1+\e)(n+p)-p }\\
    &\le Cr^{(n+p)p/2}\left(r^{n(1+\e)}+r^{(n+p)(1+\e)-p}\right)^{1-p/2}+Cr^{(1+\e)(n+p)-p}.
    \end{split}
\end{align}
By letting $\rho=r^{1+\e}$, we obtain
$$
\int_{Q^p_\rho}|\D w|^p\le C\left(\rho^{n+\frac{p(p-\e n)}{2(1+\e)}}+\rho^{n+\frac{\e p}{1+\e}}\right).
$$
so that $\frac{p(p-\e n)}{2(1+\e)}=\frac{\e p}{1+\e}$ to obtain that for $\de:=\frac{p}{n+2+p}\in(0,1)$
\begin{align}
    \label{eq:par-grad-u-est-1}
    \int_{Q^p_\rho}|\D w|^p\le C\rho^{n+p\de}.
\end{align}
This completes the proof.
\end{proof}

We now derive \eqref{eq:key-grad} by using Lemma~\ref{lem:par-u-est-initial} and repeatedly applying Lemma~\ref{lem:iter-sing-2}.

\begin{lemma}\label{lem:par-u-v-est}
    For any $\sigma\in (0,1)$, there are $\theta\in(p,2)$, $C>0$ and $r_0>0$, depending only on $n,p,C^0,a,\|g\|_\infty,\|f\|_\infty, \Omega_T,K, \sigma$, such that if $X^0\in K$ and $0<r<r_0$, then
    \begin{align*}
        \dashint_{Q_r^\theta(X^0)}|\D w|^p\le Cr^{p(\sigma-1)}.
    \end{align*}
    Here, for each $\sigma\in(0,1)$, $\theta\in(p,2)$ can be chosen so that $\theta\to2$ as $\sigma\to1$.
\end{lemma}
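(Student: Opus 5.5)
The plan is to start from Lemma~\ref{lem:par-u-est-initial}, which gives \eqref{eq:key-grad} with $\theta=p$ and the small exponent $\de_0=\frac{p}{n+2+p}$. I would then apply Lemma~\ref{lem:iter-sing-2} finitely many times along an arithmetic progression of time-scaling exponents $\theta_0=p<\theta_1<\dots<\theta_N<2$ with common step $\Delta:=\theta_{k+1}-\theta_k$. Since $\sigma<1$ is fixed, only finitely many steps are needed. Hence $C$ and $r_0$, which can deteriorate at each step, remain universal constants depending also on $\sigma$.

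\emph{Parameter choice.} Given $\sigma\in(0,1)$, first fix $\theta_*\in(p,2)$ with $\frac{\theta_*-p}{2-p}>\sigma$, and set $d:=2-\theta_*>0$. Next fix $\e\in(0,1)$ so small that $7n\e\le\gamma/2$ and
\[
\frac{\e}{1+\e}\cdot\frac{2-p}{2-p}\le\de_0 .
\]
Finally fix $\Delta$ so small that $\frac{4}{2-p}\Delta\le\gamma/2$ and $\Delta\le\frac{d^2}{14}\e$, with $N:=(\theta_*-p)/\Delta\in\N$ after adjusting $\Delta$. These choices make \eqref{eq:iter-cond} hold at every step with $\theta'\le\theta_*$. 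The only remaining condition is $\de'<1$, which I check below.

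\emph{Induction.} The claim at step $k$ is that
\[
\dashint_{Q_r^{\theta_k}}|\D w|^p\le C_kr^{p(\de_k-1)},\qquad \de_k:=\frac{\theta_k-p}{2-p}+\frac{\e}{1+\e}\cdot\frac{2-\theta_{k+1}}{2-p}.
\]
For $k=0$ we have $\de_0'\,{:=}\,\frac{\e}{1+\e}\frac{2-\theta_1}{2-p}\le\frac{p}{n+2+p}$. Since $r<1$, the bound of Lemma~\ref{lem:par-u-est-initial} therefore implies the bound with this smaller exponent, because a smaller $\de$ gives a weaker bound.

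For the inductive step, Lemma~\ref{lem:iter-sing-2} with $(\theta,\theta')=(\theta_k,\theta_{k+1})$ yields the exponent $\de_k'=\de_k+\frac{\Delta}{(2-p)(1+\e)}$ on $Q^{\theta_{k+1}}_r$. A direct computation shows
\[
\de_{k+1}-\de_k'=\frac{\e\,(2\theta_{k+1}-\theta_k-\theta_{k+2})}{(1+\e)(2-p)}=0
\]
for equal steps. So the output of step $k$ is exactly the hypothesis needed for step $k+1$. (If the step sizes were nondecreasing one would instead get $\de_{k+1}\le\de_k'$, which again suffices by the same weakening argument.)

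The condition $\de_k'<1$ holds because
\[
\de_k'\le\frac{\theta_{k+1}-p}{2-p}+\frac{\e\,d}{2-p}<1
\]
as long as $\theta_{k+1}\le\theta_*$ and $\e$ is small compared with $d$. After $N$ steps we reach $\theta=\theta_*$ with exponent $\de_N\ge\frac{\theta_*-p}{2-p}>\sigma$. Weakening once more, again using $r<1$, gives the lemma with $\theta=\theta_*$. Because $\theta_*$ was chosen with $\frac{\theta_*-p}{2-p}>\sigma$, we have $\theta_*\to2$ as $\sigma\to1$, which gives the final remark.

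The main obstacle is the bookkeeping needed to make Lemma~\ref{lem:iter-sing-2} self-reproducing. Its hypothesis is stated for a very specific exponent $\de$, coupled to the \emph{next} $\theta'$. So one must check that the output exponent dominates the prescribed form for the following step, which is the telescoping identity above. At the same time the three smallness conditions in \eqref{eq:iter-cond} must hold uniformly in $k$. The uniformity is what forces the order of choices: first $\theta_*$ (hence $d$), then $\e$, then $\Delta$. It is also why the argument stops at a $\theta_*<2$ depending on $\sigma$ rather than reaching $\theta=2$.
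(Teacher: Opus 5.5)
Your proposal is correct and is essentially the paper's proof. Your $\de_k$ is exactly the paper's affine function $\psi(\theta_k)$ with step $\nu=\Delta$, the base case comes from Lemma~\ref{lem:par-u-est-initial} by weakening the exponent, and each step is Lemma~\ref{lem:iter-sing-2} with the same telescoping $\de_k'=\de_{k+1}$ and the same uniform verification of \eqref{eq:iter-cond}.

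There is one harmless slip in your check of $\de_k'<1$. The intermediate bound $\de_k'\le\frac{\theta_{k+1}-p}{2-p}+\frac{\e d}{2-p}$ fails for early $k$, because there $2-\theta_{k+2}$ is close to $2-p>d$. The conclusion $\de_k'<1$ still holds, since $\de_k'=\de_{k+1}<1$ is equivalent to $\theta_{k+1}-\e\Delta<2$, which is the paper's $\psi(\theta_{k+1})<\psi(2)<1$.
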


\begin{proof}
We let
\begin{align*}
&\e:=\min\left\{\frac{p}{n+2+p}, \frac{\gamma}{14n},\sigma, 1-\sigma\right\}\in(0,1/14),\\
&\theta^\sharp:=\frac{(2-p)\sigma+p}2+1=1+\sigma+\frac{p}2(1-\sigma)\in(p,2),\\
&\nu:=\min\left\{\frac{2-\theta^\sharp}2, \frac{\left(2-\frac{\theta^\sharp+2}{2} \right)^2}{14}\e, \frac{2-p}8\gamma \right\}\in(0,1).
\end{align*}
We then consider an affine function
$$
\psi(\theta):=\frac{\theta-p+\e(2-p-\nu) }{(1+\e)(2-p) }=\frac{\theta-p}{2-p}+\frac{\e}{1+\e}\cdot\frac{2-\theta-\nu}{2-p}.
$$
A direct computation gives
$$
\psi^{-1}(\sigma)=p+\sigma(2-p)-\e[(2-p)(1-\sigma)-\nu].
$$
Since $\nu<\frac{2-\theta^\sharp}2=\frac{(2-p)(1-\sigma)}4<(2-p)(1-\sigma)$ and $(2-p)\sigma+p<(2-p)+p=2$,
$$
\psi^{-1}(\sigma)\le p+\sigma(2-p)<\frac{p+\sigma(2-p)}2+1=\theta^\sharp.
$$
Moreover, since $\psi(p)=\frac{\e(2-p-\nu)}{(1+\e)(2-p)}<\frac{\e}{1+\e}<\sigma$ and $\psi$ is increasing, $p<\psi^{-1}(\sigma)<\theta^\sharp<2$. We consider sequences $\theta_j:=p+\nu j$ and $\de_j:=\psi(p+\nu j)$, $j\in \mathbb{N}\cup\{0\}$. Since $\nu\le\frac{2-\theta^\sharp}2$, there exists $J\in \mathbb{N}$ such that $\theta^\sharp\le\theta_J<\frac{\theta^\sharp+2}2$. 

We claim that there exist universal constants $C>0$ and $r_0>0$ such that
\begin{align}
    \label{eq:par-grad-u-est-seq}
    \dashint_{Q_r^{\theta_j}(X^0)}|\D w|^p\le Cr^{p(\de_j-1)}
\end{align}
whenever $X^0\in K$, $0<r< r_0$ and $0\le j\le J$. We prove it by induction on $j$. Since $\theta_0=p$ and $\de_0=\psi(p)=\frac{\e(2-p-\nu)}{(1+\e)(2-p)}<\e\le \frac{p}{n+2+p}$, \eqref{eq:par-grad-u-est-seq} holds for $j=0$ by Lemma~\ref{lem:par-u-est-initial}. Now we assume that \eqref{eq:par-grad-u-est-seq} holds for $j\in \{0,1,2,\ldots,J-1\}$ and prove it for $j+1$. Now we apply Lemma~\ref{lem:iter-sing-2} with $\de=\de_j$, $\de'=\de_{j+1}$, $\theta=\theta_j$ and $\theta'=\theta_{j+1}$. To prove that it is applicable, we need to verify \eqref{eq:del} and \eqref{eq:iter-cond}. For \eqref{eq:del}, we use the definition of $\psi$ and $\theta'=\theta_{j+1}=\theta_j+\nu$ to have
\begin{align*}
    &\de_j=\psi(\theta_j)=\frac{\theta_j-p}{2-p}+\frac{\e}{1+\e}\cdot\frac{2-\theta_j-\nu}{2-p}=\frac{\theta-p}{2-p}+\frac{\e}{1+\e}\cdot\frac{2-\theta'}{2-p}=\de,\\
    &\de_{j+1}=\psi(\theta_{j+1})=\psi(\theta_j)+\frac{\theta_{j+1}-\theta_j}{(1+\e)(2-p)}=\de+\frac{\theta'-\theta}{(1+\e)(2-p)}=\de'.
\end{align*}
Concerning \eqref{eq:iter-cond}, the first condition follows from $\e\le \frac{\gamma}{14n}$  and $\nu\le \frac{2-p}8\gamma$. The second one follows from 
$$
\de'=\de_{j+1}\le \de_J=\psi(\theta_J)<\psi(2)=1-\frac{\e}{1+\e}\cdot\frac{\nu}{2-p}<1.
$$
Regarding the last one, we use $\nu\le\frac{\left(2-\frac{\theta^\sharp+2}2 \right)^2}{14}\e$ and $\theta'=\theta_{j+1}\le \theta_J<\frac{\theta^\sharp+2}2$ to get
$$
\theta'-\theta=\nu\le \frac{\left(2-\frac{\theta^\sharp+2}2 \right)^2}{14}\e<\frac{(2-\theta')^2}{14}\e.
$$
This completes the proof of the claim \eqref{eq:par-grad-u-est-seq}.

Now, by \eqref{eq:par-grad-u-est-seq}, we have
$$
\dashint_{Q_r^{\theta_J}(X^0)}|\D w|^p\le Cr^{p(\de_J-1)}.
$$
Since $\de_J=\psi(\theta_J)\ge \psi(\theta^\sharp)\ge \psi(\psi^{-1}(\sigma))=\sigma$, we conclude
$$
\dashint_{Q_r^{\theta_J}(X^0)}|\D w|^p\le Cr^{p(\sigma-1)}.
$$

 It remains to show that $\theta\to 2$ as $\sigma\to1$. For this aim, we recall $\psi^{-1}(\sigma)=p+\sigma(2-p)-\e[(2-p)(1-\sigma)-\nu]$. From $0<\e\le 1-\sigma$, we see that $\e\to0$ as $\sigma\to1$. Therefore, $\psi^{-1}(\sigma)\to2$ as $\sigma\to1$. 
\end{proof}

We are now ready to prove the Hölder estimates for $w$ and $\D w$.

\begin{theorem}\label{thm:par-holder-sing}
$w\in C^{\be,\be/\theta}(K)$ for some $\theta\in[p,2)$, where $\be:=\frac{\al p}{n+(3+\al)p}$. Moreover, 
 $$
 \|w\|_{C^{\be,\be/\theta}(K)}\le C $$
 for some universal constant $C>0$.
\end{theorem}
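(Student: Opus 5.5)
We derive the Hölder continuity of $w$ from a Campanato-type bound on the oscillation of $w$ in intrinsic cylinders $Q_r^\theta$. The gradient bound of Lemma~\ref{lem:par-u-v-est} alone would suffice, except that it controls only the spatial variation. The time variation has to come from the equation, or equivalently from comparison with the $p$-caloric replacement together with the Hölder estimate of Lemma~\ref{lem:p-caloric-reg}(1). The constant $\al$ in the definition of $\be$ is the Hölder exponent of $p$-caloric functions from Lemma~\ref{lem:p-caloric-reg}(1). The form $\be=\frac{\al p}{n+(3+\al)p}$ suggests the following: a scale splitting $\rho=r^{1+\e}$, an $L^p$ comparison of $w$ with $v$ costing a power of $r$ in measure $\sim r^{n+\theta}$, and an optimization of $\e$.

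\emph{Step 1 (choice of $\theta$ and gradient decay).} Fix $\sigma\in(0,1)$ close to $1$, for instance $\sigma$ with $p(1-\sigma)$ small relative to $\al p/(n+3p)$. Lemma~\ref{lem:par-u-v-est} then gives $\theta\in(p,2)$ with $\dashint_{Q_r^\theta(X^0)}|\D w|^p\le Cr^{p(\sigma-1)}$ for $X^0\in K$ and $r<r_0$. In particular $\int_{Q_r^\theta}|\D w|^p\le Cr^{n+\theta+p(\sigma-1)}$, and by Lemma~\ref{lem:u-v-p-est-comp} the same bound holds for the replacement $v$.

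\emph{Step 2 (comparison with $v$).} Let $v$ be the $p$-caloric replacement of $w$ in $Q_r^\theta(X^0)$. By \eqref{eq:par-u-v-diff-est-3}, applied with $\theta'=\theta$ and $\de=\sigma$, we get $\int_{Q_r^\theta}|\D(w-v)|^p\le Cr^{n+\theta+p\kappa}$ with $\kappa=\frac{\gamma-(2-p)(1-\sigma)}{2-\gamma}>0$ once $\sigma$ is close to $1$. Since $w-v$ vanishes on the lateral boundary, Poincaré's inequality in $x$ for each time slice gives $\int_{Q_r^\theta}|w-v|^p\le Cr^{n+\theta+p(1+\kappa)}$. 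Next, $0\le v\le\sup g$, and Lemma~\ref{lem:p-caloric-reg}(1) applies in a cylinder whose intrinsic size is comparable to $r$, because $\|v\|_\infty$ is bounded and $\theta\ge p$. It gives $\operatorname{osc}_{Q_{\rho}^\theta}v\le C(\rho/r)^{\al}$ for $\rho=r^{1+\e}$, with the time scale controlled since $|t|^{1/p}\le\rho^{\theta/p}\le\rho$.

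\emph{Step 3 (Campanato decay and conclusion).} We estimate
\[
\dashint_{Q_\rho^\theta}|w-(w)_{Q_\rho^\theta}|^p\le C\dashint_{Q_\rho^\theta}|w-v|^p+C(\operatorname{osc}_{Q_\rho^\theta}v)^p\le C r^{(n+\theta)(1-(1+\e))+p(1+\kappa)}\cdot r^{-0}+Cr^{\e\al p}.
\]
More precisely, the first term is $r^{n+\theta+p(1+\kappa)}/\rho^{n+\theta}$. We choose $\e$ to balance the two exponents $p(1+\kappa)-\e(n+\theta)$ and $\e\al p$. Using $\theta<2$, $\kappa\ge 0$ and the crude bounds $n+\theta\le n+2p$, we obtain a decay $\rho^{p\be}$ with $\be\ge\frac{\al p}{n+(3+\al)p}$; the stated value follows after expressing the decay in $\rho=r^{1+\e}$. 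Campanato's characterization in the quasi-metric space with cylinders $Q_\rho^\theta$ (Da Prato's parabolic version) then yields $w\in C^{\be,\be/\theta}(K)$ with a universal bound. The main obstacle is Step 2: we must apply Lemma~\ref{lem:p-caloric-reg}(1) with the intrinsic $p$-distance uniformly in $X^0\in K$, and we must check that the $L^p$ control of $w-v$ via Poincaré holds for cylinders $Q_r^\theta$ with $\theta\neq 2$. The exponent bookkeeping in Step 3 is where the precise value of $\be$ is fixed.
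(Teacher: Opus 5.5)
Your route is the paper's: the $p$-caloric replacement $v$ in $Q_r^\theta$, slice-wise Poincar\'e for $w-v$, interior H\"older continuity of $v$ from Lemma~\ref{lem:p-caloric-reg}(1) on $Q^\theta_{r^{1+\e}}$, a choice of $\e$, then Campanato. The one variation is the comparison term. You use \eqref{eq:par-u-v-diff-est-3} with $\theta'=\theta$, which is legitimate and is exactly what the paper does in the subsequent gradient estimate. The paper instead uses only $Cr^p\int_{Q_r^\theta}(|\D w|^p+|\D v|^p)\le Cr^{n+\theta+p(1-\e)}$ (Lemma~\ref{lem:par-u-v-est} with $\sigma=1-\e$) and fixes $\e=\frac{p}{n+(2+\al)p}$, which produces the displayed $\be$. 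Granting your Step~2, your bookkeeping does give an exponent $\ge\be$.

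There is a genuine gap in Step~2, and the reason you give for it points the wrong way. For $p<2$, Lemma~\ref{lem:p-caloric-reg}(1) measures distance by $\|v\|_\infty^{\frac{2-p}p}|x-y|+|t-s|^{1/p}$. The bottom of $Q_r^\theta$ lies at $p$-distance $(r^\theta-\rho^\theta)^{1/p}\approx r^{\theta/p}$ from $Q_\rho^\theta$, and for $\theta>p$ this is much \emph{smaller} than $r$. The lemma therefore gives only $\operatorname{osc}_{Q^\theta_\rho}v\lesssim(\rho/r^{\theta/p})^\al=r^{(1+\e-\theta/p)\al}$, not $(\rho/r)^\al$. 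Both error terms then decay only if $\theta/p-1<\e<\frac{p(1+\kappa)}{n+\theta}<\frac{2p}{n+\theta}$. With $\sigma$ near $1$, Lemma~\ref{lem:par-u-v-est} only delivers $\theta$ near $2$. The two constraints are then incompatible whenever $(2-p)(n+2)\ge 2p^2$, for example $n=3$, $p=5/4$.

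This is not slack in the lemma. In time $r^\theta$ the flow smooths only at spatial scale about $r^{\theta/p}$, so oscillations of the bottom datum $w(\cdot,-r^\theta)$ at scales between $r^{\theta/p}$ and $\rho$ survive. Interior estimates for $v$ cannot supply the regularity of $w$ that you are trying to prove. The paper's proof writes the same bound $Cr^{(1+\e)(n+\theta)+p\e\al}$ without checking the intrinsic distance, so the issue is shared; it is still a gap in your argument as written.

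A repair is to work in intrinsic cylinders, $\theta=p$.
\begin{itemize}
\item Lemma~\ref{lem:par-u-est-initial}, Lemma~\ref{lem:u-v-p-est-comp} and Poincar\'e give $\int_{Q_r^p}|w-v|^p\le Cr^{n+p+p\de_0}$ with $\de_0=\frac p{n+2+p}$.
\item The $p$-distance from $Q_\rho^p$ to $\partial_pQ_r^p$ is now $\ge c\min\{\|v\|_\infty^{\frac{2-p}p},1\}\,r$.
\item Since $0\le v\le\sup g$ and $\al<1<\frac p{2-p}$, this yields $\operatorname{osc}_{Q_\rho^p}v\le C(\rho/r)^\al$ uniformly, even when $\|v\|_\infty$ is small.
\item Taking $\e=\frac{p\de_0}{n+p+\al p}$ balances the two terms and gives $w\in C^{\be',\be'/p}(K)$ with $\be'=\frac{\e\al}{1+\e}>0$.
\end{itemize}
This is enough for the applications, though $\be'$ need not equal the displayed $\be$.
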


\begin{proof}
It suffices to show that for a fixed $X^0\in K$,
$$
\left(\dashint_{Q^\theta_\rho(X^0)}|w-\mean{w}_{Q^\theta_\rho(X^0)}|^p\right)^{1/p}\le C\rho^\al \quad\text{for any }0<\rho<\rho_0,
$$
where $C>0$ and $\rho_0>0$ are universal constants.

Without loss of generality, we may assume $X^0=0$. For $0<r<1/2$, let $v$ be the $p$-caloric replacement of $w$ in $Q_r^\theta$. For $\e\in(0,1)$ to be chosen later, we apply Poincaré inequality and Lemmas~\ref{lem:u-v-p-est-comp} and \ref{lem:par-u-v-est} to obtain that for some $\theta\in[p,2)$, if $r\le 2^{-1/\e}$, then
\begin{align*}
    \int_{Q_r^\theta}|w-v|^p&=\int_{-r^\theta}^0\int_{B_r}|w(x,t)-v(x,t)|^p\,dxdt\le Cr^p\int_{-r^\theta}^0\int_{B_r}|\D (w-v)|^p\,dxdt\\
    &=Cr^p\int_{Q_r^\theta}|\D(w-v)|^p\,dz\le Cr^p\int_{Q_r^\theta}(|\D w|^p+|\D v|^p) \le Cr^{n+\theta+p(1-\e)}.
\end{align*}
By using this estimate, together with Jensen's inequality and Lemma~\ref{lem:p-caloric-reg}, we have
\begin{align}\label{eq:par-holder-ineq-sing}\begin{split}
    \int_{Q^\theta_{r^{1+\e}}}|w-\mean{w}_{Q^\theta_{r^{1+\e}}}|^p
    &\le C\int_{Q^\theta_{r^{1+\e}}}|v-\mean{v}_{Q^\theta_{r^{1+\e}}}|^p+C\int_{Q^\theta_{r^{1+\e}}}|w-v|^p\\
    &\le Cr^{(1+\e)(n+\theta)+p\e\al}+Cr^{n+\theta+p(1-\e)}.\end{split}
\end{align}
Taking $\rho=r^{1+\e}$, we further have
$$
\dashint_{Q_\rho^\theta}|w-\mean{w}_{Q^\theta_\rho}|^p\le C\rho^{\frac{p\e\al }{1+\e }}+C\rho^{\frac{(1-\e) p-\e(n+p)}{1+\e}}.
$$
We take $\e=\frac{p}{n+(2+\al)p}$ so that $p\e\al=(1-\e)p-\e(n+p)$, and get
$$
\left(\dashint_{Q_\rho^\theta}|w-\mean{w}_{Q^\theta_\rho}|^p\right)^{1/p}\le C\rho^{\frac{\al}{n+(3+\al)p}}.
$$
This completes the proof.    
\end{proof}

\begin{theorem}
   $\D w\in C^{\hat\be,\hat\be/2}(K)$ for some $\hat\be=\hat\be(n,p,a)\in(0,1)$. Moreover, 
  $$
  \|\D w\|_{C^{\hat\be,\hat\be/2}(K)}\le C
  $$
  for some universal constant $C>0$.
\end{theorem}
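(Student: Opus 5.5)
We run a Campanato-type argument for $\D w$, comparing with the $p$-caloric replacement $v$ of $w$ in $Q_r^\theta(X^0)$. The idea is that $\D v$ is Hölder by Lemma~\ref{lem:p-caloric-reg}(3), and $\D(w-v)$ is small in $L^p$ by the energy comparison. Throughout, fix $\theta\in(p,2)$ close to $2$ and $\sigma<1$ close to $1$ via Lemma~\ref{lem:par-u-v-est}. The target is the decay
\begin{align*}
\dashint_{Q_\rho^{\theta}(X^0)}|\D w-\mean{\D w}_{Q_\rho^\theta(X^0)}|^p\le C\rho^{p\hat\be},\qquad 0<\rho<\rho_0,
\end{align*}
uniformly in $X^0\in K$. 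Campanato's characterization on the quasi-metric space with cylinders $Q^\theta_\rho$ then gives Hölder continuity of $\D w$. Since the cylinders $Q^\theta_\rho$ differ from $Q^2_\rho$ only by a power of $\rho$, we can convert to $C^{\hat\be,\hat\be/2}$ after shrinking $\hat\be$, as remarked after Theorem~\ref{thm:p-reg-par}.

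\emph{Step 1 (smallness of $\D(w-v)$).} Combining \eqref{eq:par-u-v-diff-est-3} with $\theta'=\theta$ and $\de=\sigma$ from Lemma~\ref{lem:par-u-v-est} gives
\begin{align*}
\int_{Q_r^\theta}|\D(w-v)|^p\le Cr^{n+\theta+p\frac{\gamma-(2-p)(1-\sigma)}{2-\gamma}}.
\end{align*}
Choosing $\sigma$ close to $1$ makes the extra exponent $\kappa:=p\frac{\gamma-(2-p)(1-\sigma)}{2-\gamma}$ positive, which is the desired smallness.

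\emph{Step 2 (oscillation of $\D v$).} By Lemma~\ref{lem:u-v-p-est-comp} and Lemma~\ref{lem:par-u-v-est}, $\dashint_{Q_r^\theta}|\D v|^p\le Cr^{p(\sigma-1)}$. Lemma~\ref{lem:p-caloric-reg}(2) with $\rho=r$ and $\tau=r^\theta$ then bounds $\|\D v\|_{L^\infty(Q^\theta_{r/2})}\le Cr^{-\eta}$, where $\eta\to0$ as $\sigma\to1$ and $\theta\to2$. Indeed, the exponents $\frac{n(2-\theta)}{n(p-2)+2p}$, $\frac{2p(\sigma-1)}{n(p-2)+2p}$ and $\frac{\theta-2}{2-p}$ all tend to $0$. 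Next apply Lemma~\ref{lem:p-caloric-reg}(3) on $Q^\theta_{r/2}$, noting that the intrinsic factor $\max\{1,\|\D v\|_\infty^{(2-p)/2}\}$ and the parabolic distance of $Q^\theta_{r/4}$ to the boundary of $Q^\theta_{r/2}$ (which is comparable to $r^{\theta/2}$ in time) each cost only small powers of $r$. This yields, on $Q^\theta_{r^{1+\e}}$,
\begin{align*}
\operatorname{osc}_{Q^\theta_{r^{1+\e}}}\D v\le Cr^{-\eta}\left(r^{-\eta'}\frac{r^{1+\e}+r^{(1+\e)\theta/2}}{r^{\theta/2}}\right)^{\al}\le Cr^{\al(\e-\eta'')}.
\end{align*}
Here $\eta''\to0$ as $\theta\to2$ and $\sigma\to1$.

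\emph{Step 3 (combine and choose parameters).} As in \eqref{eq:par-holder-ineq-sing},
\begin{align*}
\int_{Q^\theta_{r^{1+\e}}}|\D w-\mean{\D w}|^p\le Cr^{(1+\e)(n+\theta)}r^{p\al(\e-\eta'')-p\eta}+Cr^{n+\theta+\kappa}.
\end{align*}
Set $\rho=r^{1+\e}$. First choose $\e$ small with $\e(n+\theta)<\kappa$, say $\e$ comparable to $\gamma/(n+2)$. Then take $\sigma$ close enough to $1$ (which forces $\theta$ close to $2$ by Lemma~\ref{lem:par-u-v-est}) so that $\eta,\eta''\ll\al\e$ while $\kappa$ stays bounded below by roughly $p\gamma/(2-\gamma)$. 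Both terms are then $\le C\rho^{n+\theta+p\hat\be}$ for some $\hat\be>0$. Note that $\e$ must be fixed before $\sigma$; this is consistent because $\kappa$ only improves as $\sigma\to1$. The uniform bound $\|\D w\|_{L^\infty(K)}\le C$ follows from the same decay together with Lemma~\ref{lem:par-u-v-est}.

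The main obstacle is Step 2: controlling the intrinsic geometry. Lemma~\ref{lem:p-caloric-reg}(3) measures Hölder continuity in cylinders scaled by $\|\D v\|_\infty^{(2-p)/2}$, and our cylinders have time height $r^\theta$ with $\theta<2$, not $r^2$. Every mismatch produces a loss $r^{-c(2-\theta)-c(1-\sigma)}$. The whole argument hinges on showing that these losses vanish as $\theta\to2$ and $\sigma\to1$. That is exactly why Lemma~\ref{lem:par-u-v-est} was proved with $\theta\to2$, and I would track these exponents explicitly first.
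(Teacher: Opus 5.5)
Your proposal is correct and follows essentially the same route as the paper. Both use the $p$-caloric replacement in $Q^\theta_r$, smallness of $\D(w-v)$ from \eqref{eq:par-u-v-diff-est-3} with $\de$ near $1$, and the bound $\|\D v\|_{L^\infty(Q^\theta_{r/2})}\le Cr^{-\nu}$ from Lemma~\ref{lem:p-caloric-reg}(2), Lemma~\ref{lem:u-v-p-est-comp} and Lemma~\ref{lem:par-u-v-est}; then the oscillation bound from Lemma~\ref{lem:p-caloric-reg}(3) on $Q^\theta_{r^{1+\e}}$, and the rescaling $\rho=r^{1+\e}$ with $\e$ fixed before $\sigma$ and $\theta$.

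One small correction: the distance from $Q^\theta_{r^{1+\e}}$ to $\partial_pQ^\theta_{r/2}$ is of order $r$ (the lateral part is the smaller one), not $r^{\theta/2}$. So the time contribution is $r^{(1+\e)\theta/2-1}$ rather than $r^{\e\theta/2}$, which is why the paper imposes $\theta\ge 2(1-\e^2)$. The resulting loss $(1+\e)(2-\theta)/2$ still vanishes as $\theta\to2$, so it is absorbed into your $\eta''$ with your ordering of parameters.
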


\begin{proof}
Throughout this proof, we write
\begin{align*}
    \e:=\frac{\gamma p}{4(2-\gamma)(n+2)}\in (0,1),\quad \nu:=\frac{\e\al}4\in (0,1).
\end{align*}
We also consider $\theta=\theta(n,p,a)\in[p,2)$ close to $2$ to be determined later. In particular, we ask
\begin{align}
    \label{eq:theta-cond}
    \theta\ge 2(1-\e^2),\quad \frac{\theta-2}{2-p}\ge-\nu.
\end{align}
We also take $r_0\in(0,1)$ small so that $r_0^\e\le 1/4$. For a parabolic cylinder $Q_r^\theta(X^0)\Subset \Omega_T$ with $X^0\in K$ and $0<r<r_0$, let $v$ be the $p$-caloric replacement of $w$ in $Q_r^\theta(X^0)$. For simplicity, we write $X^0=0$. Thanks to Lemma~\ref{lem:par-u-v-est}, we can apply \eqref{eq:par-u-v-diff-est-3} with $\de=\de(p,\gamma)\in(0,1)$ close to $1$ and $\theta'=\theta$ to get
\begin{align}
    \label{eq:par-u-v-diff-est-4}
    \int_{Q_r^\theta}|\D(w-v)|^p\le Cr^{n+\theta+p\frac{\gamma}{2(2-\gamma)}}.
\end{align}
In addition, we have by Lemmas \ref{lem:p-caloric-reg} and \ref{lem:u-v-p-est-comp} that
\begin{align*}
    \|\D v\|_{L^\infty(Q^\theta_{r/2})}&\le Cr^{\frac{n(2-\theta)}{n(p-2)+2p}}\left(\dashint_{Q_r^\theta}|\D v|^p\right)^{\frac2{n(p-2)+2p}}+Cr^{\frac{\theta-2}{2-p}}\le C\left(1+\dashint_{Q_r^\theta}|\D w|^p \right)^{\frac2{n(p-2)+2p}}+Cr^{\frac{\theta-2}{2-p}}.
\end{align*}
We recall $\frac{\theta-2}{2-p}\ge -\nu$ and apply Lemma~\ref{lem:par-u-v-est} with $\sigma\in(0,1)$ close to $1$ so that $\frac{2p(\sigma-1)}{n(p-2)+2p}>-\nu$ and $\theta$ satisfies \eqref{eq:theta-cond}. This is possible since $\theta\to 2$ as $\sigma\to1$. Then we have
$$
\|\D v\|_{L^\infty(Q^\theta_{r/2})}\le Cr^{-\nu}.
$$
We then apply Lemma~\ref{lem:p-caloric-reg} to obtain that for any $X^1,X^2\in Q^\theta_{r^{1+\e}}$,
\begin{align*}
    |\D v(X^1)-\D v(X^2)|&\le C\|\D v\|_{L^\infty(Q_{r/2}^\theta)}\left(\frac{\max\left\{1,\|\D v\|_{L^\infty(Q_{r/2}^\theta)}\right\}^{\frac{2-p}2}|x_1-x_2|+|t_1-t_2|^{1/2} }{\dist(Q^\theta_{r^{1+\e}},\partial_pQ_{r/2}^\theta) } \right)^\al\\
    &\le Cr^{-\nu}\left(r^{\e-\frac{2-p}2\nu}+r^{\frac{\theta(1+\e)}{2}-1} \right)^{\al}\le Cr^{\al\e-\left(1+\frac{2-p}2\al\right)\nu }+Cr^{\left(\frac{\theta(1+\e)}2-1\right)\al-\nu}. 
\end{align*}
Here, from $\nu=\frac{\e\al}4$, we see that $\al\e-\left(1+\frac{2-p}2\al\right)\nu\ge \al\e-2\nu\ge\frac{\al\e}4$. Moreover, since $\theta\ge2(1-\e^2)$ and $\e=\frac{\gamma p}{4(2-\gamma)(n+2)}\le 1/6$, we have $\frac{\theta(1+\e)}2-1\ge (1-\e^2)(1+\e)-1=\e(1-\e-\e^2)\ge\e/2$, thus $\left(\frac{\theta(1+\e)}2-1\right)\al-\nu\ge\frac{\al\e}2-\frac{\al\e}4=\frac{\al\e}4$. Thus,
$$
|\D v(X^1)-\D v(X^2)|\le Cr^{\frac{\al\e}4}.
$$
Now, by this estimate, together with Jensen's inequality and \eqref{eq:par-u-v-diff-est-4}, 
\begin{align*}
    \int_{Q^\theta_{r^{1+\e}}}|\D w-\mean{\D w}_{Q^\theta_{r^{1+\e}}}|^p\le C\int_{Q_{r^{1+\e}}^\theta}|\D v-\mean{\D v}_{Q^\theta_{r^{1+\e}}}|^p+C\int_{Q_{r^{1+\e}}^\theta}|\D(w-v)|^p\le Cr^{(n+\theta)(1+\e)+p\frac{\al\e}{4}}+Cr^{n+\theta+p\frac{\gamma}{2(2-\gamma)}}.
\end{align*}
Letting $\rho=r^{1+\e}$, we get
\begin{align*}
    \dashint_{Q_\rho^\theta}|\D w-\mean{\D w}_{Q_\rho^\theta}|^p\le C\rho^{p\frac{\al\e}{4(1+\e)}}+C\rho^{p\frac{\frac{\gamma}{2(2-\gamma)}-\frac{\e}p(n+\theta) }{1+\e}}.
\end{align*}
It is easily seen from $\e=\frac{\gamma p}{4(2-\gamma)(n+2)}$ that $\frac{\gamma}{2(2-\gamma)}-\frac{\e}p(n+\theta)\ge\frac{\gamma}{4(2-\gamma)}$. Thus,
\begin{align*}
    \left(\dashint_{Q_\rho^\theta}|\D w-\mean{\D w}_{Q^\theta_\rho}|^p\right)^{1/p}\le C\rho^{\hat\be},
\end{align*}
where $\hat\be=\frac1{1+\e}\min\left\{\frac{\al\e}4,\frac{\gamma}{4(2-\gamma)} \right\}\in(0,1)$. This completes the proof.
\end{proof}

\subsection{Degenerate case}\label{subsec:deg}
In this section, we establish regularity estimates when $p>2$. This degenerate case is algebraically simpler than the singular case treated in Section~\ref{subsec:sing}. Since the arguments are largely analogous, we will sometimes omit the repetitive details and focus on the algebraic differences.

\begin{lemma}
    \label{lem:p-caloric-reg-deg}
    For $p>2$, let $v$ be a nonnegative and bounded $p$-caloric function in $\Omega_T$. Then there exist constants $C>0$ and $\al\in(0,1)$, depending only on $n$ and $p$, such that the following hold:
    \begin{enumerate}    
    \item (Theorem~1.1 in Chapter {\rm III}  in \cite{DiB93}) $v\in C^{\al,\al/p}_{\loc}(\Omega_T)$, and for any $K\Subset\Omega_T$
    $$
    |v(X^1)-v(X^2)|\le C\|v\|_{L^\infty(\Omega_T)}\left(\frac{|x_1-x_2|+\|v\|_{L^\infty(\Omega_T)}^{\frac{p-2}p}|t_1-t_2|^{1/p} }{p-\dist(K,\partial_p\Omega_T) }  \right)^{\al}\quad\text{for any }X^1,X^2\in K,
    $$
    where $p-\dist(K,\partial_p\Omega_T):=\inf\left\{|x-y|+\|v\|_{L^\infty(\Omega_T)}^{\frac{p-2}p}|t-s|^{1/p}\,:\, (x,t)\in K,\, (y,s)\in \partial_p\Omega_T \right\}$ is the intrinsic parabolic distance from $K$ to $\partial_p\Omega_T$.

    \item (Theorem~5.1' in Chapter {\rm VIII} in \cite{DiB93}) Whenever $B_\rho(x_0)\times(t_0-\tau,t_0]\Subset \Omega_T$,
    \begin{align*}
        \sup_{B_{\rho/2}(x_0)\times(t_0-\tau/2,t_0]}|\D v|\le C(\tau/\rho^2)^{1/2}\left(\dashint_{B_{\rho}(x_0)\times(t_0-\tau,t_0]}|\D v|^pdz\right)^{1/2}+C(\rho^2/\tau)^{\frac1{p-2}}.
        \end{align*}
        
    \item (Theorem~1.1' in Chapter {\rm IX} in \cite{DiB93}) If $\D v$ is bounded in $\Omega_T$, then $\D v\in C^{\al,\al/2}_{\loc}(\Omega_T)$, and for any $K\Subset\Omega_T$,
    \begin{align*}
    |\D v(X^1)-\D v(X^2)|\le C\|\D v\|_{L^\infty(\Omega_T)}\left(\frac{|x_1-x_2|+\max\left\{1,\|\D v\|_{L^\infty(\Omega_T)}^{\frac{p-2}2}\right\}|t_1-t_2|^{1/2} }{\dist(K,\partial_p\Omega_T)} \right)^\al\quad\text{for any } X^1,X^2\in K.
    \end{align*}
    \end{enumerate}
\end{lemma}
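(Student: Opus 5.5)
This lemma collects known results for $p$-caloric functions from DiBenedetto's monograph \cite{DiB93}, in the degenerate range $p>2$. The plan is therefore to check that the hypotheses there are met and to rewrite each conclusion in the normalized form stated, not to reprove anything. It mirrors Lemma~\ref{lem:p-caloric-reg}, with the intrinsic scaling moved from space to time.

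For item (1) I would use the intrinsic geometry of Chapter III. Set $\omega:=\|v\|_{L^\infty(\Omega_T)}$; since $v\ge0$, this bounds the oscillation of $v$ on any subcylinder. The cylinders are $B_\rho\times(t-\omega^{2-p}\rho^p,t]$, so that $|x-y|+\omega^{(p-2)/p}|t-s|^{1/p}$ is the natural quasi-distance. Theorem~1.1 of Chapter III gives the reduction of oscillation $\operatorname{osc}_{Q(\rho)}v\le C\omega(\rho/R)^\al$ whenever $Q(R)$ fits inside $\Omega_T$ in this metric. Taking $R$ comparable to the intrinsic $p$-distance from $K$ to $\partial_p\Omega_T$ then yields the displayed estimate.

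For item (2) I would apply Theorem~5.1' of Chapter VIII on $B_\rho\times(t_0-\tau,t_0]$. This is the $L^\infty$ bound for $|\D v|$ in terms of its $L^p$ average, derived via De Giorgi iteration on $|\D v|^2$ with intrinsic cutoffs. I need to verify that the rescaling $x\mapsto x/\rho$, $t\mapsto t/\tau$ produces exactly the factors $(\tau/\rho^2)^{1/2}$ and $(\rho^2/\tau)^{1/(p-2)}$. This amounts to tracking how the equation $\Delta_p v=\partial_t v$ transforms: after rescaling, $v$ solves $\Delta_p\tilde v=(\rho^p/\tau)\partial_t\tilde v$. This coefficient is absorbed by multiplying $\tilde v$ by a constant, and the two terms of the bound correspond to the two intrinsic regimes.

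For item (3) I would use Theorem~1.1' of Chapter IX. Given a bound $M:=\max\{1,\|\D v\|_\infty\}$, the gradient oscillation decays in cylinders $B_\rho\times(t-M^{2-p}\rho^2,t]$. Converting to the stated form with $\max\{1,\|\D v\|_\infty^{(p-2)/2}\}|t_1-t_2|^{1/2}$ and standard distance is a covering argument.

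The main obstacle is bookkeeping: making sure the exponents and the roles of $\omega$ and $M$ match the cited statements after scaling. No new analysis is required, and $\al$ and $C$ are taken as the minimum and maximum, respectively, of the constants from the three theorems.
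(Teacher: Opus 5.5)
Your route is the paper's own: the lemma is not proved there but attributed to the three cited theorems of \cite{DiB93}. Your checks of (1) and (2) are sound. In particular, with $\tilde v(y,s)=\lambda v(x_0+\rho y,t_0+\tau s)$ and $\lambda^{p-2}=\tau/\rho^p$ one has $\lambda\rho=(\tau/\rho^2)^{1/(p-2)}$, which gives exactly the factors $(\tau/\rho^2)^{1/2}$ and $(\rho^2/\tau)^{1/(p-2)}$.

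The step that is not mere bookkeeping is (3), and as you sketch it, it fails when $\|\D v\|_\infty<1$.

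\textbf{What your normalization gives.} Normalizing by $M=\max\{1,\|\D v\|_\infty\}$ yields decay $\operatorname{osc}\D v\le CM(\rho/R)^\al$. So the prefactor comes out as $M$, not $\|\D v\|_\infty$, and no covering argument changes that.

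\textbf{The displayed form is false in that regime.} Read $\dist$ as the ordinary parabolic distance, as you do. The equation is invariant under $v\mapsto\lambda v(x,\lambda^{p-2}t)$. Take a positive $p$-caloric $w$ in $\Omega\times(-1,0]$ with $\|\D w\|_\infty\le1$, such that $\D w(\cdot,s)$ has no uniform H\"older bound on some $B\Subset\Omega$ as $s\downarrow-1$. For example, use periodic Lipschitz initial data with a kink; the gradient bound persists by comparison with translates. Then $v=\lambda w(x,\lambda^{p-2}t)$ is $p$-caloric in $\Omega\times(-\lambda^{2-p},0]$ with $\|\D v\|_\infty\le\lambda<1$. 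For $K=\overline B\times\{\lambda^{2-p}s\}$, the distance to the parabolic boundary equals $\dist(B,\partial\Omega)$ once $\lambda$ is small. Applying (3) at equal times and dividing by $\lambda$ would give $|\D w(x_1,s)-\D w(x_2,s)|\le C(|x_1-x_2|/\dist(B,\partial\Omega))^\al$ uniformly in $s$. That contradicts the choice of $w$.

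\textbf{What does follow.} DiBenedetto's intrinsic estimate uses the weight $\|\D v\|_\infty^{(p-2)/2}$ both in the numerator and in the $p$-distance. It does give (3) with prefactor $\max\{1,\|\D v\|_\infty\}$. If $\|\D v\|_\infty<1$, use that the intrinsic $p$-distance is at least $\|\D v\|_\infty^{(p-2)/2}\dist(K,\partial_p\Omega_T)$, and shrink $\al$ so that $\al(p-2)\le2$. Shrinking $\al$ is harmless because the trivial bound $2\|\D v\|_\infty$ covers ratios larger than $1$. The same remark is what justifies your taking the minimum of the three exponents, which you did not state.

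The corrected form is all that is used later, where $\|\D v\|_{L^\infty(Q^\theta_{r/2})}\le Cr^{-\nu}$ with $r<1$. The paper's bare citation glosses over this point as well.
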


In the remainder of this section, we assume that 
$$
p>2
$$ 
and that $w$ is a solution of \eqref{eq:p-sol-par}, i.e., it belongs to $\cK_{g,p,\Omega_T}$ and satisfies \eqref{eq:par-var-ineq}. As in the previous section, we fix a subset $K\Subset \Omega_T$, and a universal constant depends only on $n,p,C^0,a,\|g\|_\infty,\|f\|_\infty,\Omega_T$ and $K$.

\begin{lemma}
    \label{lem:u-v-p-est-comp-deg}
    If $Q_r^\theta(X^0)\Subset\Omega_T$ and $v$ is the $p$-caloric replacement of $w$ in $Q_r^\theta(X^0)$, then
    \begin{align}
    &\left(\dashint_{Q_r^\theta(X^0)}|\D(w-v)|^p\right)^{1/p}\le Cr^{\frac{\gamma}{p-\gamma}}, \label{eq:u-v-est-0-deg}\\
    &\left(\dashint_{Q_r^\theta(X^0)}|\D v|^p\right)^{1/p}\le C\left(\dashint_{Q_r^\theta(X^0)}|\D w|^p\right)^{1/p}+Cr^{\frac\gamma{p-\gamma}},\label{eq:v-est-0-deg}
    \end{align}
    where $C>0$ are constants depending only on $n$ and $p$.
\end{lemma}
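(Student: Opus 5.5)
Assume $X^0=0$ and let $v$ be the $p$-caloric replacement of $w$ in $Q_r^\theta$. The argument is the degenerate-case analogue of Lemma~\ref{lem:u-v-p-est-comp} together with the estimate \eqref{eq:u-v-min-ineq}. The structure is simpler here because for $p>2$ the monotonicity inequality gives control of $|\D(w-v)|^p$ directly, with no weight.

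\emph{Step 1: lower bound by monotonicity.} The computation leading to \eqref{eq:u-v-est-diff-sing} uses only that $v$ is $p$-caloric with $v=w$ on $\partial_pQ_r^\theta$. It therefore still gives
$$
\int_{Q_r^\theta}\left(|\D w|^p-|\D v|^p+p(w-v)\partial_tv\right)=p\int_0^1\int_{Q_r^\theta}\left(|\D w^s|^{p-2}\D w^s-|\D v|^{p-2}\D v\right)\cdot\D(w^s-v)\,dz\frac{ds}{s}.
$$
For $p\ge2$ we use the standard inequality $(|\xi|^{p-2}\xi-|\eta|^{p-2}\eta)\cdot(\xi-\eta)\ge c_0|\xi-\eta|^p$. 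Since $w^s-v=s(w-v)$, this yields
$$
\int_{Q_r^\theta}\left(|\D w|^p-|\D v|^p+p(w-v)\partial_tv\right)\ge c\int_0^1 s^{p-1}ds\int_{Q_r^\theta}|\D(w-v)|^p=c\int_{Q_r^\theta}|\D(w-v)|^p.
$$

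\emph{Step 2: upper bound via the variational inequality.} We test \eqref{eq:par-var-ineq} with $v$, extended by $w$ outside $Q_r^\theta$, which is admissible. Exactly as in the proof of Lemma~\ref{lem:iter-sing-1}, and using $0\le v,w\le\sup g$, $\gamma=1+a$, and \cite[Lemma~2.5]{LeiDeQTei15}, the left-hand side of Step 1 is at most $C\int_{Q_r^\theta}|w-v|^\gamma$. By Hölder's inequality and the spatial Poincaré inequality on each time slice ($w-v=0$ on $\partial B_r$), this is bounded by
$$
Cr^{(n+\theta)(1-\gamma/p)+\gamma}\Big(\int_{Q_r^\theta}|\D(w-v)|^p\Big)^{\gamma/p}.
$$

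\emph{Step 3: absorb.} Set $A:=\int_{Q_r^\theta}|\D(w-v)|^p$. Steps 1 and 2 give
$$
A\le Cr^{(n+\theta)(1-\gamma/p)+\gamma}A^{\gamma/p}.
$$
Since $\gamma<1<p$, we get
$$
A^{(p-\gamma)/p}\le Cr^{(n+\theta)(p-\gamma)/p+\gamma},\qquad\text{that is,}\qquad A\le Cr^{n+\theta+\frac{p\gamma}{p-\gamma}}.
$$
Dividing by $|Q_r^\theta|\sim r^{n+\theta}$ and taking $p$-th roots gives \eqref{eq:u-v-est-0-deg}. Then \eqref{eq:v-est-0-deg} follows from Minkowski's inequality, $\|\D v\|\le\|\D w\|+\|\D(w-v)\|$ in averaged $L^p$.

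\emph{Main obstacle.} The delicate point is Step 2: justifying that the $p$-caloric replacement is an admissible competitor in \eqref{eq:par-var-ineq}, and that the time-derivative terms are handled correctly. In particular, $\partial_tv$ must be available in the sense needed to integrate by parts. I would handle this as in the singular case, through a Steklov-average regularization of $v$. Note also that the constant $C$ actually depends on $\|f\|_\infty$, $C^0$, $a$ and $\|g\|_\infty$ through the bound on $\tilde H$, so it is universal in the paper's sense rather than depending only on $n$ and $p$.
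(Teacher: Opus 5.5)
Your proposal is correct and follows the paper's proof essentially step for step. The paper likewise reuses the identity \eqref{eq:u-v-est-diff-sing}, applies the $p\ge2$ monotonicity inequality $(|\xi|^{p-2}\xi-|\eta|^{p-2}\eta)\cdot(\xi-\eta)\ge c_0|\xi-\eta|^p$ with the factor $s^{p-1}$, invokes \eqref{eq:u-v-min-ineq} (noting it remains valid for $p>2$), absorbs $A^{\gamma/p}$, and gets the second estimate from the triangle inequality. Your side remark is also accurate: the constant depends on the universal data ($C^0$, $a$, $\|f\|_\infty$, $\|g\|_\infty$) and not only on $n$ and $p$.
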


\begin{proof}
We proceed as in the proof of Lemma~\ref{lem:u-v-p-est-comp}, the corresponding result for the singular case, up to \eqref{eq:u-v-est-diff-sing}. In the degenerate case $p\ge2$, instead of \eqref{eq:ineq-sing}, we have
\begin{align*}
    \left(|\xi|^{p-2}\xi-|\eta|^{p-2}\eta\right)\cdot(\xi-\eta)\ge c_0|\xi-\eta|^p
\end{align*}
for any nonzero $\xi,\eta\in\R^n$ and a constant $c_0=c_0(n,p)>0$. By combining this with \eqref{eq:ineq-sing} and recalling $w^s=sw+(1-s)v$, $0\le s\le 1$, we get
\begin{align*}
    \int_{Q_r^\theta}\left(|\D w|^p-|\D v|^p+p(w-v)\partial_tv\right)&\ge c\int_0^1\int_{Q_r^\theta}|\D(w^s-v)|^pdz\frac{ds}s=c\int_0^1\int_{Q_r^\theta}|\D(w-v)|^pdz\,s^{p-1}ds\\
    &\ge c\int_{Q_r^\theta}|\D(w-v)|^pdz.
\end{align*}
By applying \eqref{eq:u-v-min-ineq} which is also valid for $p>2$, we further have
$$
\int_{Q_r^\theta}|\D(w-v)|^p\le Cr^{(n+\theta)(1-\gamma/p)+\gamma}\left(\int_{Q_r^\theta}|\D(w-v)|^p\right)^{\gamma/p}.
$$
Writing $A:=\int_{Q_r^\theta}|\D(w-v)|^p$, this can be rewritten as
$$
A\le Cr^{(n+\theta)(1-\gamma/p)+\gamma}A^{\gamma/p}.
$$
This implies the first inequality in Lemma~\ref{lem:u-v-p-est-comp-deg}. The second one follows from the first one and the triangle inequality.
\end{proof}

\begin{theorem}\label{thm:par-holder-deg}
 $w\in C^{\be,\be/p}(K)$, where $\be:=\frac{\al p^2}{p^2+(p-\gamma)(n+p+\al p) }$. Moreover, 
 $$
 \|w\|_{C^{\be,\be/p}(K)}\le C $$
 for some universal constant $C>0$.
\end{theorem}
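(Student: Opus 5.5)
The plan is to run the same Campanato-type argument as in Theorem~\ref{thm:par-holder-sing}, this time in the intrinsic cylinders $Q_r^p(X^0)$ natural to the degenerate regime. The degenerate case should be easier because Lemma~\ref{lem:u-v-p-est-comp-deg} already controls $\nabla(w-v)$ directly. By the Campanato characterization of parabolic H\"older spaces with respect to the metric $|x-y|+|t-s|^{1/p}$, it suffices to show that for fixed $X^0\in K$ (translated to $0$) and all $0<\rho<\rho_0$,
\[
\left(\dashint_{Q_\rho^p}|w-\mean{w}_{Q_\rho^p}|^p\right)^{1/p}\le C\rho^{\be}.
\]

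First step: estimate $w-v$. Fix $\e\in(0,1)$, to be chosen later, and take $r\le r_0$ with $r_0^\e\le1/4$. Let $v$ be the $p$-caloric replacement of $w$ in $Q_r^p$. Applying the spatial Poincar\'e inequality on each time slice (since $w-v=0$ on $\partial B_r$) together with \eqref{eq:u-v-est-0-deg} gives
\[
\int_{Q_r^p}|w-v|^p\le Cr^p\int_{Q_r^p}|\nabla(w-v)|^p\le C r^{n+p+p+\frac{p\gamma}{p-\gamma}} .
\]
This bound needs no preliminary gradient bootstrapping, in contrast to the singular case.

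Second step: oscillation of $v$. By the maximum principle, $0\le v\le\sup g$. Lemma~\ref{lem:p-caloric-reg-deg}(1), applied in $Q_r^p$ with the universal bound $\|v\|_\infty\le\|g\|_\infty$, then bounds the oscillation of $v$ on $Q^p_{r^{1+\e}}$. The intrinsic $p$-distance from $Q^p_{r^{1+\e}}$ to $\partial_pQ^p_{r}$ is $\gtrsim r$ up to a constant depending on $\|g\|_\infty$; one should check that the time scale $r^{p}$ compared with $\|v\|_\infty^{(p-2)/p}$ only affects constants. The oscillation is therefore $\le C r^{\e\al}$, which gives
\[
\int_{Q^p_{r^{1+\e}}}|v-\mean{v}|^p\le C r^{(1+\e)(n+p)+p\e\al}.
\]

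Third step: combine and optimize. Using Jensen's inequality and setting $\rho=r^{1+\e}$,
\[
\dashint_{Q_\rho^p}|w-\mean{w}_{Q_\rho^p}|^p\le C\rho^{\frac{p\e\al}{1+\e}}+C\rho^{\frac{p+\frac{p\gamma}{p-\gamma}-\e(n+p)}{1+\e}} .
\]
Choose $\e$ to balance the two exponents, $p\e\al=\frac{p^2}{p-\gamma}-\e(n+p)$, that is, $\e=\frac{p^2}{(p-\gamma)(n+p+\al p)}$. Substituting yields the exponent $\frac{\e\al}{1+\e}=\frac{\al p^2}{p^2+(p-\gamma)(n+p+\al p)}=\be$, which matches the stated $\be$. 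If this $\e$ turns out to be $\ge1$, one simply takes a smaller $\e$ at the cost of a smaller exponent, but the stated formula indicates that it is admissible.

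Main obstacle: the second step, namely making sure that the intrinsic geometry in Lemma~\ref{lem:p-caloric-reg-deg}(1) is compatible with the standard cylinders $Q_r^p$. Since $v$ is bounded by $\|g\|_\infty$ and the factor $\|v\|_\infty^{(p-2)/p}$ appears only as a multiplicative constant in front of $|t-s|^{1/p}$, the intrinsic distance is comparable to $r$ with universal constants. Once this is verified, the remaining steps are routine exponent bookkeeping.
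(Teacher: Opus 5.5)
Your proposal is correct and follows the paper's proof essentially line by line. Both arguments take the $p$-caloric replacement in $Q^p_r$, and use slice-wise Poincar\'e together with \eqref{eq:u-v-est-0-deg} to get $\int_{Q_r^p}|w-v|^p\le Cr^{n+2p+\frac{p\gamma}{p-\gamma}}$. Both then bound the oscillation of $v$ via Lemma~\ref{lem:p-caloric-reg-deg}(1), and both make the same balancing choice $\e=\frac{p^2}{(p-\gamma)(n+p+\al p)}$.

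One correction to your ``main obstacle'' paragraph. Set $M:=\|v\|_{L^\infty(Q^p_r)}$. The upper bound $M\le\|g\|_\infty$ does \emph{not} make the intrinsic distance comparable to $r$. The time part $M^{(p-2)/p}|t-s|^{1/p}$ degenerates as $M\to0$, and this does happen, for instance near the free boundary where $w$ is small on $\partial_pQ^p_r$. So you only get $p\text{-dist}(Q^p_{r^{1+\e}},\partial_pQ^p_r)\ge c\min\{1,M^{(p-2)/p}\}\,r$.

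The step still goes through, because the prefactor $M$ in Lemma~\ref{lem:p-caloric-reg-deg}(1) compensates. When $M\le1$, you get $\operatorname{osc}_{Q^p_{r^{1+\e}}}v\le CM^{1-\al\frac{p-2}{p}}r^{\e\al}\le Cr^{\e\al}$, since $\al\frac{p-2}{p}<1$. When $M>1$, the bound is $\le C(\|g\|_\infty)r^{\e\al}$.

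Also, nothing in the argument needs $\e<1$; only $r_0^\e\le 1/4$ is used. Your fallback of shrinking $\e$, which would lose the stated $\be$, is therefore unnecessary.
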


\begin{proof}
By using Lemma~\ref{lem:p-caloric-reg-deg} and \eqref{eq:u-v-est-0-deg}, we can argue as in the proof of Theorem~\ref{thm:par-holder-sing} with trivial modifications to get an analogues of \eqref{eq:par-holder-ineq-sing}:
\begin{align*}
    \int_{Q^p_{r^{1+\e}}}|w-\mean{w}_{Q^p_{r^{1+\e}}}|^p&\le C\int_{Q^p_{r^{1+\e}}}|v-\mean{v}_{Q^p_{r^{1+\e}}}|^p+C\int_{Q^p_{r^{1+\e}}}|w-v|^p\le Cr^{(1+\e)(n+p)+p\e\al}+Cr^{n+2p+\frac{\gamma p}{p-\gamma}}.
\end{align*}
We then take $\rho=r^{1+\e}$ and $\e=\frac{p^2}{(p-\gamma)(n+p+\al p)}$ to get
$$
\left(\dashint_{Q_\rho^p}|w-\mean{w}_{Q_\rho^p}|^p\right)^{1/p}\le C\rho^{\frac{\al p^2}{p^2+(p-\gamma)(n+p+\al p) }}.
$$
This completes the proof.   
\end{proof}

\begin{lemma}
    \label{lem:par-u-est-initial-deg}
    There exist universal constants $C>0$ and $r_0>0$ such that
    $$
    \dashint_{Q_r^p(X^0)}|\D w|^p\le Cr^{p\left(\frac{p^2}{p^2+(p+n)(p-\gamma)}-1 \right)}
    $$
    whenever $X^0\in K$ and $0<r\le r_0$.
\end{lemma}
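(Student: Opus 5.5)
We compare $w$ with its $p$-caloric replacement $v$ on a cylinder $Q_r^p(X^0)$, following the proof of the singular-case Lemma~\ref{lem:par-u-est-initial}. The comparison error is controlled by Lemma~\ref{lem:u-v-p-est-comp-deg}, and $v$ is controlled through an $L^\infty$ gradient bound that scales like $1/r$. We then shrink to $Q^p_{r^{1+\e}}(X^0)$ and choose $\e$ to balance the two contributions; this balancing produces exactly the stated exponent.

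Fix $X^0\in K$ and assume without loss of generality $X^0=0$. Take $r_0$ small enough that $Q^p_{r_0}\Subset\Omega_T$ and $r_0^\e\le 1/4$, where $\e>0$ is fixed below. Let $v$ be the $p$-caloric replacement of $w$ in $Q_r^p$.

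\emph{Comparison error.} By \eqref{eq:u-v-est-0-deg},
\begin{align*}
\int_{Q^p_{r^{1+\e}}}|\D(w-v)|^p\le\int_{Q^p_r}|\D(w-v)|^p\le Cr^{\,n+p+\frac{p\gamma}{p-\gamma}}.
\end{align*}

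\emph{Bound for $v$.} By the maximum principle, $0\le v\le\sup g$. Lemma~\ref{lem:Caccio} with $\theta=p$ then gives $\big(\dashint_{Q^p_{r/2}}|\D v|^p\big)^{1/p}\le C/r$. Next apply Lemma~\ref{lem:p-caloric-reg-deg}(2) with $\rho=r/2$ and $\tau=(r/2)^p$. The first term is $C(r^{p-2})^{1/2}(r^{-p})^{1/2}=Cr^{-1}$. The second term is $C(r^{2-p})^{1/(p-2)}=Cr^{-1}$. Hence $\|\D v\|_{L^\infty(Q^p_{r/4})}\le C/r$.

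Since $r^\e\le1/4$, we have $r^{1+\e}\le r/4$ and $r^{(1+\e)p}\le(r/4)^p$, so $Q^p_{r^{1+\e}}\subset Q^p_{r/4}$. Therefore
\begin{align*}
\int_{Q^p_{r^{1+\e}}}|\D v|^p\le Cr^{(n+p)(1+\e)-p}.
\end{align*}

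\emph{Combining and balancing.} Adding the two bounds and setting $\rho=r^{1+\e}$ gives
\begin{align*}
\dashint_{Q^p_\rho}|\D w|^p\le C\rho^{-\frac{p}{1+\e}}+C\rho^{\frac{p\gamma/(p-\gamma)-\e(n+p)}{1+\e}}.
\end{align*}
Choose $\e$ so that the two exponents coincide, that is, $-p=\frac{p\gamma}{p-\gamma}-\e(n+p)$. This gives $\e=\frac{p^2}{(p-\gamma)(n+p)}$. The common exponent is then
\begin{align*}
-\frac{p}{1+\e}=p\Big(\frac{\e}{1+\e}-1\Big),\qquad \frac{\e}{1+\e}=\frac{p^2}{p^2+(p+n)(p-\gamma)},
\end{align*}
which is exactly the stated bound. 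Since every $\rho\in(0,r_0^{1+\e})$ has the form $r^{1+\e}$ with $r<r_0$, the estimate holds for all small $\rho$. All constants depend only on universal data, so they are uniform in $X^0\in K$.

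\emph{Expected obstacle.} The delicate step is the sup bound for $\D v$. One must check that, with the cylinder $Q^p_{r/2}$ of height $(r/2)^p$, both terms of DiBenedetto's intrinsic estimate scale as $r^{-1}$ rather than worse; the computation above shows they do. A further point is that $\e$ may exceed $1$. This is harmless, because only $r^\e\le 1/4$ is used to guarantee the cylinder inclusion.
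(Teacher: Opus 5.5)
Your proposal is correct and follows essentially the same route as the paper: a $p$-caloric replacement on $Q^p_r$, the Caccioppoli bound \eqref{eq:par-grad-v-sup-est} fed into DiBenedetto's sup-gradient estimate to get $\|\D v\|_{L^\infty(Q^p_{r/4})}\le C/r$, the comparison error from \eqref{eq:u-v-est-0-deg}, shrinking to $Q^p_{r^{1+\e}}$, and the same balancing choice $\e=\frac{p^2}{(p+n)(p-\gamma)}$. Your explicit check that both terms of the intrinsic estimate scale like $r^{-1}$ is accurate; it is cleaner than the paper's display, whose intermediate exponents are garbled. Your remark that $\e$ may exceed $1$ (possible when $n=1$), which is harmless since only $r_0^\e\le 1/4$ is used, also corrects the paper's claim that $\e\in(0,1)$.
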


\begin{proof}
Without loss of generality, we may assume $X^0=0$. For a constant $\e\in(0,1)$ to be determined later, we choose $r_0\in(0,1)$ such that $r_0^\e\le 1/4$. By applying \eqref{eq:par-grad-v-sup-est}, which is valid in our degenerate case as well, and Lemma~\ref{lem:p-caloric-reg-deg}, we get
\begin{align*}
    \int_{Q^p_{r^{1+\e}}}|\D v|^p&\le Cr^{(n+p)(1+\e)}\|\D v\|_{L^\infty(Q^p_{r/4})}^p\le Cr^{(n+p)(1+\e)+\frac{p-2}p}\left(\dashint_{Q_{r/2}^p}|\D v|^p\right)^{p/2}+Cr^{(n+p)(1+\e)-p}\\
    &\le Cr^{(n+p)(1+\e)+\frac{(p-2)p}2-\frac p2}+Cr^{(n+p)(1+\e)-p}\le Cr^{(n+p)(1+\e)-p}.
\end{align*}
This, along with \eqref{eq:u-v-est-0-deg}, yields
$$
\int_{Q^p_{r^{1+\e}}}|\D w|^p\le C\int_{Q^p_{r^{1+\e}}}|\D(w-v)|^p+C\int_{Q^p_{r^{1+\e}}}|\D v|^p\le Cr^{n+p+\frac{\gamma p}{p-\gamma}}+Cr^{(n+p)(1+\e)-p}.
$$
Letting $\rho=r^{1+\e}$, we infer
$$
\int_{Q_\rho^p}|\D w|^p\le C\left(\rho^{n+p+\frac{\frac{\gamma p}{p-\gamma}-(n+p)\e }{1+\e}}+\rho^{n+p-\frac{p}{1+\e}}\right).
$$
By taking $\e=\frac{p^2}{(p+n)(p-\gamma)}$ so that $\frac{\gamma p}{p-\gamma}-(n+p)\e=-p$, we conclude that for $\de:=\frac{p^2}{p^2+(p+n)(p-\gamma)}\in(0,1)$,
\begin{equation*}
    \dashint_{Q_\rho^p}|\D w|^p\le C\rho^{p(\de-1) }.\qedhere
\end{equation*}
\end{proof}

\begin{lemma}
    \label{lem:iter-deg-1}
    For $X^0\in K$, assume that there exist universal constants $C_0>0$ and $r_0>0$ such that for any $X^0\in K$ and $0<r<r_0$
    $$
    \dashint_{Q_r^\theta(X^0)}|\D w|^p\le C_0r^{p(\de-1)}.
    $$
    Then there exist universal constants $C_1>0$ and $r_1>0$ such that for any $0<\e<1$, $0<r<r_1$ and $\theta'\in(2,\theta]$,
    \begin{align}
        \label{eq:improved-est-deg}
        \dashint_{Q_r^{\theta'}(X^0)}|\D w|^p\le Cr^{p(\tilde\de-1)}.
    \end{align}
Here, 
\begin{align}
    \label{eq:degree-deg}
    \tilde\de:=\min\{\tilde\de_1,\tilde\de_2,\tilde\de_3\},
\end{align}
where
\begin{align*}
    \tilde\de_1&:=\frac{p}{p-\gamma}-\frac{\e}{1+\e}\left(\frac{\gamma}{p-\gamma}+\frac{n+\theta'}{p}\right),\\
    \tilde\de_2&:=\frac{p\de-(p-\theta')-\frac{n}{\theta}(\theta-\theta') }2+\frac{\e}{2(1+\e)}\left[\frac n\theta(\theta-\theta')+p(1-\de)-(\theta'-2) \right],\\
    \tilde\de_3&:=\frac{p-\theta'}{p-2}+\frac{\e}{1+\e}\cdot\frac{\theta'-2}{p-2}.
\end{align*}
\end{lemma}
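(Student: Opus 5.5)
The plan mirrors the proof of Lemma~\ref{lem:iter-sing-1}, now in the degenerate range, where the time scale $\theta'\in(2,\theta]$ shrinks the cylinder instead of enlarging it. Take $X^0=0$ and let $v$ be the $p$-caloric replacement of $w$ in $Q_r^{\theta'}$. We split
\[
\int_{Q^{\theta'}_{r^{1+\e}}}|\D w|^p\le C\int_{Q^{\theta'}_{r^{1+\e}}}|\D(w-v)|^p+C\int_{Q^{\theta'}_{r^{1+\e}}}|\D v|^p ,
\]
bound each term by a power of $r$, and at the end set $\rho=r^{1+\e}$. Each of the three exponents $\tilde\de_1,\tilde\de_2,\tilde\de_3$ should come from exactly one of the three resulting powers.

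\emph{Step 1 (the $w-v$ term, giving $\tilde\de_1$).} By \eqref{eq:u-v-est-0-deg} on $Q_r^{\theta'}$, $\int_{Q_r^{\theta'}}|\D(w-v)|^p\le Cr^{n+\theta'+\frac{\gamma p}{p-\gamma}}$. The integral over the smaller cylinder is bounded by the same quantity. Dividing by $|Q_\rho^{\theta'}|\sim r^{(n+\theta')(1+\e)}$ and rewriting in terms of $\rho$ gives exactly $\rho^{p(\tilde\de_1-1)}$, where
\[
\tilde\de_1=\frac{p}{p-\gamma}-\frac{\e}{1+\e}\Big(\frac{\gamma}{p-\gamma}+\frac{n+\theta'}{p}\Big).
\]

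\emph{Step 2 (the $v$ term, giving $\tilde\de_2$ and $\tilde\de_3$).} Since $\theta'\le\theta$, we have $Q_r^{\theta'}\subset Q_r^\theta$, so
\[
\dashint_{Q_r^{\theta'}}|\D w|^p\le r^{\theta-\theta'}\dashint_{Q_r^\theta}|\D w|^p\le Cr^{\theta-\theta'+p(\de-1)}.
\]
By \eqref{eq:v-est-0-deg} the same bound holds for $\dashint_{Q_r^{\theta'}}|\D v|^p$, up to a harmless term $Cr^{\frac{\gamma p}{p-\gamma}}$. Next we apply Lemma~\ref{lem:p-caloric-reg-deg}(2) with $\rho=r$ and $\tau=r^{\theta'}$:
\[
\|\D v\|_{L^\infty(Q^{\theta'}_{r/2})}\le Cr^{\frac{\theta'-2}{2}}\Big(\dashint_{Q_r^{\theta'}}|\D v|^p\Big)^{1/2}+Cr^{\frac{2-\theta'}{p-2}}.
\]
Then $\int_{Q^{\theta'}_{r^{1+\e}}}|\D v|^p\le Cr^{(n+\theta')(1+\e)}\|\D v\|^p_{L^\infty(Q^{\theta'}_{r/2})}$, which requires $r^{1+\e}\le r/4$, ensured by $r_1^\e\le1/4$. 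The second summand produces $\tilde\de_3$ directly: $\rho^{p(\tilde\de_3-1)}$ with $\tilde\de_3-1=\frac{2-\theta'}{(p-2)(1+\e)}$, which matches the stated formula. The first summand gives a power $r^{\frac{p}{2}(\theta'-2+\theta-\theta'+p(\de-1))}$. Converting to $\rho$ should yield the $\tilde\de_2$ term.

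\emph{Main obstacle.} The hard part is matching $\tilde\de_2$ exactly. Its factor $\frac n\theta(\theta-\theta')$ indicates the intended bound is not the crude volume inclusion above. Instead, one should cover $Q_r^{\theta'}$ by cylinders of type $Q^\theta_{s}$ with $s^\theta=r^{\theta'}$, i.e.\ $s=r^{\theta'/\theta}$; the spatial-volume ratio $(r/s)^n=r^{\frac n\theta(\theta-\theta')}$ then produces that factor. Concretely, apply the hypothesis on $Q^\theta_{s}(Y)$ for points $Y$ in $K$, which is why the hypothesis is required uniformly in $X^0\in K$. Sum over about $(r/s)^n$ translates and keep track of the exponent bookkeeping. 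I would first carry out this covering with the $\frac12$-power from Lemma~\ref{lem:p-caloric-reg-deg}(2), then check that the algebra reproduces the stated $\tilde\de_2$. If it does not match exactly, I would take the minimum with the crude bound, since only $\tilde\de\le\min$ is needed. Finally, taking the minimum of the three exponents gives \eqref{eq:improved-est-deg}.
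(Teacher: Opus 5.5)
There is a genuine gap in Step~2, exactly at the point you flag as the main obstacle.

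\textbf{The inclusion is reversed.} For $0<r<1$ and $\theta'\le\theta$ one has $r^{\theta'}\ge r^{\theta}$. Hence $Q_r^{\theta}\subset Q_r^{\theta'}$, and not the other way round. In the degenerate range, passing from $\theta$ to $\theta'$ makes the cylinder \emph{taller}. It is in Lemma~\ref{lem:iter-sing-1}, where $\theta'\ge\theta$, that the cylinder shrinks and the plain inclusion is legitimate. Your own volume factor $|Q_r^\theta|/|Q_r^{\theta'}|=r^{\theta-\theta'}\le1$ already shows that $Q_r^{\theta'}$ is the larger set. So the bound $\dashint_{Q_r^{\theta'}}|\nabla w|^p\le r^{\theta-\theta'}\dashint_{Q_r^\theta}|\nabla w|^p$ is unfounded; it would even improve on the hypothesis by the factor $r^{\theta-\theta'}$, which is a warning sign. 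The fallback of ``taking the minimum with the crude bound'' is therefore not available either.

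\textbf{The covering heuristic is mis-oriented.} With $s=r^{\theta'/\theta}$ one has $s\ge r$, so $(r/s)^n=r^{\frac n\theta(\theta-\theta')}\le1$ cannot be a number of translates. Multiplying by it would again produce a gain, whereas in $\tilde\de_2$ the contribution $-\frac n\theta(\theta-\theta')$ is a loss.

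\textbf{The missing step.} You need a single inclusion into a $\theta$-cylinder with the same time height and a larger ball:
\[
Q_r^{\theta'}(X^0)=B_r(x_0)\times(t_0-r^{\theta'},t_0]\subset B_s(x_0)\times(t_0-s^{\theta},t_0]=Q_s^{\theta}(X^0),\qquad s:=r^{\theta'/\theta}\ge r .
\]
This is admissible in the hypothesis once $r<r_0^{\theta/\theta'}$. Since $|Q_s^\theta|/|Q_r^{\theta'}|=(s/r)^n=r^{\frac n\theta(\theta'-\theta)}$, and $s^{p(\de-1)}\le r^{p(\de-1)}$ for $\de\le1$, you get
\[
\dashint_{Q_r^{\theta'}}|\nabla w|^p\le \Big(\frac sr\Big)^n\dashint_{Q_s^{\theta}}|\nabla w|^p\le Cr^{\frac n\theta(\theta'-\theta)+p(\de-1)} .
\]
This is precisely the paper's step, and the factor comes from the volume ratio $(s/r)^n$.

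Feed this bound through \eqref{eq:v-est-0-deg} into Lemma~\ref{lem:p-caloric-reg-deg}(2), as you planned. This yields the term $Cr^{(n+\theta')(1+\e)+\frac p2(\theta'-2+\frac n\theta(\theta'-\theta)+p(\de-1))}$. With $\rho=r^{1+\e}$ it becomes $\rho^{p(\tilde\de_2-1)}$ with $\tilde\de_2-1=\frac{\theta'-2-p(1-\de)-\frac n\theta(\theta-\theta')}{2(1+\e)}$, which is the stated $\tilde\de_2$.

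Your covering instinct does work in the \emph{time} direction. Stacking about $r^{\theta'-\theta}$ time-translates of $Q_r^\theta$ would even give $Cr^{p(\de-1)}$. However, it needs the hypothesis at the centers $(x_0,t_0-kr^\theta)$, which may leave $K$; the paper's single-cylinder inclusion avoids this.

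With this replacement, the rest of your plan coincides with the paper's proof: Step~1, the $\tilde\de_3$ term, and the choice $\rho=r^{1+\e}$ with $r_1^\e\le1/4$.
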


\begin{proof}
Without loss of generality, we may assume $X^0=0$. Let $v$ be the $p$-caloric replacement of $w$ in $Q_r^{\theta'}$. Note that by the condition of the lemma,
$$
\int_{Q_r^{\theta'}}|\D w|^p\le \int_{Q^\theta_{r^{\theta'/\theta}}}|\D w|^p\le Cr^{(n+\theta)\frac{\theta'}\theta+p(\de-1) },
$$
which combined with \eqref{eq:v-est-0-deg} yields
$$
\dashint_{Q_r^{\theta'}}|\D v|^p\le C\dashint_{Q_r^{\theta'}}|\D w|^p+Cr^{\frac{p\gamma}{p-\gamma}}\le Cr^{\frac{n}\theta(\theta'-\theta)+p(\de-1)}+Cr^{\frac{p\gamma}{p-\gamma}}.
$$
By using this estimate and applying Lemma~\ref{lem:p-caloric-reg-deg}, we get
\begin{align*}
    \int_{Q_{r^{1+\e}}^{\theta'}}|\D v|^p&\le Cr^{(n+\theta')(1+\e)}\|\D v\|_{L^\infty(Q_{r/2}^{\theta'} )}^p\le Cr^{(n+\theta')(1+\e)}\left[r^{\frac{\theta'-2}2}\left(\dashint_{Q_r^{\theta'}}|\D v|^p\right)^{1/2}+r^{\frac{2-\theta'}{p-2}} \right]^p\\
    &\le Cr^{(n+\theta')(1+\e)+p\frac{\theta'-2+\frac n\theta(\theta'-\theta)+p(\de-1) }2 }+Cr^{(n+\theta')(1+\e)+p\frac{2-\theta'}{p-2} }.
\end{align*}
On the other hand, we have by \eqref{eq:u-v-est-0-deg}
$$
\int_{Q_{r^{1+\e}}^{\theta'}}|\D(w-v)|^p\le C\int_{Q_r^{\theta'}}|\D(w-v)|^p\le Cr^{n+\theta'+\frac{p\gamma}{p-\gamma}}.
$$
By combining the previous two estimates, using the trivial inequality $|\D w|^p\le C|\D(w-v)|^p+C|\D v|^p$ and letting $\rho=r^{1+\e}$, we conclude \eqref{eq:improved-est-deg}.    
\end{proof}

\begin{lemma}
    \label{lem:iter-deg-2}
     Suppose $2<\theta'<\theta\le p$ and $0<\e<1$. We let
    \begin{align}
        \label{eq:del-deg}
        \de:=\frac{p-\theta}{p-2}+\frac{\e}{1+\e}\cdot\frac{\theta'-2}{p-2},\quad \de':=\de+\frac{\theta-\theta'}{(1+\e)(p-2)}.
    \end{align}
    Assume that $\theta-\theta'$ and $\e$ are small with $\theta-\theta'\ll\e$ so that
    \begin{align}
        \label{eq:iter-cond-deg}
        \frac{p}{p-2}(\theta-\theta')+(n+p+1)\e\le\gamma,\quad\de'<1,\quad\theta-\theta'\le \frac{\theta'-2}{4\left(\frac{1}{\theta'-2}+n \right)}\e.
    \end{align}
    If
    $$
    \dashint_{Q_r^\theta(X^0)}|\D w|^p\le C_0r^{p(\de-1)}
    $$
    for any $X^0\in K$ and $0<r<r_0$, where $C_0$ and $r_0>0$ are universal constants, then there exist universal constants $C_1>0$ and $r_1>0$ such that
    $$
    \dashint_{Q_r^{\theta'}(X^0)}|\D w|^p\le C_1r^{p(\de'-1)}
    $$
    whenever $0<r<r_1$.
\end{lemma}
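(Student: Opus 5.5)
The argument mirrors the proof of Lemma~\ref{lem:iter-sing-2}. By Lemma~\ref{lem:iter-deg-1}, applied with the given $\theta,\de$ and with $\theta'\in(2,\theta]$, we already have
\[
\dashint_{Q_r^{\theta'}(X^0)}|\D w|^p\le Cr^{p(\tilde\de-1)},\qquad \tilde\de=\min\{\tilde\de_1,\tilde\de_2,\tilde\de_3\}.
\]
Since $r<1$, a smaller exponent gives a weaker bound. It is therefore enough to show $\de'\le\tilde\de_i$ for $i=1,2,3$, using only the algebraic definitions \eqref{eq:del-deg} and the smallness conditions \eqref{eq:iter-cond-deg}. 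I would check the three inequalities in the order $3,1,2$, from easiest to hardest.

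\emph{Step 1 ($\tilde\de_3$).} Insert the definition of $\de$ into $\de'$:
\[
\de'=\frac{p-\theta}{p-2}+\frac{\e}{1+\e}\cdot\frac{\theta'-2}{p-2}+\frac{\theta-\theta'}{(1+\e)(p-2)}.
\]
Bound $\frac{1}{1+\e}\le1$. The first and third terms then combine to at most $\frac{p-\theta'}{p-2}$, and the result is exactly $\tilde\de_3$.

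\emph{Step 2 ($\tilde\de_1$).} Here $\tilde\de_1\ge\frac{p}{p-\gamma}-\e\bigl(\frac{\gamma}{p-\gamma}+\frac{n+p}{p}\bigr)$, since $\theta'\le p$. The second condition in \eqref{eq:iter-cond-deg} gives $\de'<1$. It therefore suffices that
\[
\e\Bigl(\frac{\gamma}{p-\gamma}+\frac{n+p}{p}\Bigr)\le \frac{\gamma}{p-\gamma}.
\]
Multiplying by $p-\gamma<p$, this reduces to something like $(n+p+1)\e\le\gamma$, which the first condition of \eqref{eq:iter-cond-deg} supplies.

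\emph{Step 3 ($\tilde\de_2$, the main obstacle).} Set $\eta:=\theta-\theta'$ and rewrite
\[
\tilde\de_2=\frac{p\de-(p-\theta')-\frac n\theta\eta}{2}+\frac{\e}{2(1+\e)}\Bigl[\frac n\theta\eta+p(1-\de)-(\theta'-2)\Bigr].
\]
The plan is to substitute $\de$ and compare term by term with $\de'$. The main parts, without the $\e$ and $\eta$ corrections, are
\[
\frac{p\frac{p-\theta}{p-2}-(p-\theta)}{2}=\frac{p-\theta}{p-2}.
\]
This agrees with the main part of $\de'$, so everything hinges on the lower-order terms. The $\e$-terms give, up to harmless factors, a positive contribution of order $\frac{\e}{1+\e}(\theta'-2)\bigl(\frac{p}{2(p-2)}-\frac12\bigr)=\frac{\e}{1+\e}\cdot\frac{\theta'-2}{p-2}$. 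I also expect a further positive contribution coming from $p(1-\de)$, namely $\frac{\e}{2(1+\e)}\cdot\frac{p(\theta-2)}{p-2}$ minus smaller terms.

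On the other side, the $\eta$-terms appear in three places: $\de'$ carries $\frac{\eta}{(1+\e)(p-2)}$, the term $p-\theta'=p-\theta+\eta$ contributes $-\frac{\eta}{2}$, and $\tilde\de_2$ carries $-\frac n{2\theta}\eta$. All of these losses are of size $O\bigl(\eta(\frac1{p-2}+n)\bigr)$. The inequality therefore reduces to showing that this $\eta$-loss is at most the $\e$-gain, roughly
\[
\eta\Bigl(\frac{1}{\theta'-2}+n\Bigr)\lesssim \e(\theta'-2).
\]
This is exactly what the third condition $\theta-\theta'\le\frac{\theta'-2}{4(\frac1{\theta'-2}+n)}\e$ of \eqref{eq:iter-cond-deg} is designed to give. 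I would use $\frac1{p-2}\le\frac1{\theta'-2}$ and $\frac1{1+\e}\ge\frac12$ to absorb the constants.

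The delicate point is bookkeeping of signs: the gain must come from the $\e$-bracket, and it must not be cancelled by the $-\frac{\e}{1+\e}p\de$ term hidden in $p(1-\de)$. If a direct expansion turns out messy, I would instead split into two cases, as in Step 2 of Lemma~\ref{lem:iter-sing-2}. If the bracket is nonnegative, compare $\de'$ with the $\e$-free part of $\tilde\de_2$ directly. Otherwise, use $\de'<1$ together with the explicit form of the bracket.
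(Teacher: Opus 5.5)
Your outline is the paper's proof. You reduce via Lemma~\ref{lem:iter-deg-1} to $\de'\le\tilde\de_i$, and the $\tilde\de_3$ step is identical to the paper's. Your $\tilde\de_1$ step (comparing with $1>\de'$ and using $(n+p+1)\e\le\gamma$) is a slightly leaner variant of the paper's. Your fallback for $\tilde\de_2$ is exactly the paper's case split, because your bracket $B$ satisfies $\tilde\de_2=1-\frac{B}{2(1+\e)}$, i.e.\ $B=-2\mu$ in the paper's notation.

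One correction to the direct expansion. The $\e$-piece $\frac{\e}{1+\e}\cdot\frac{\theta'-2}{p-2}$ is not a gain: it cancels exactly against the identical term in $\de'$, which is missing from your list of terms on the other side. So the whole margin comes from $\frac{\e}{2(1+\e)}p(1-\de)$. Done carefully, the expansion gives
\[
\tilde\de_2-\de'=\frac{1}{1+\e}\Bigl[\frac{\e p(\theta'-2)}{2(1+\e)(p-2)}-(\theta-\theta')\Bigl(\frac{p-2\e}{2(p-2)}+\frac{n}{2\theta}\Bigr)\Bigr].
\]
This is nonnegative by the third condition in \eqref{eq:iter-cond-deg}, using $\frac{1}{1+\e}\ge\frac12$ and $\frac1{p-2}\le\frac1{\theta'-2}$ as you intended.
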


\begin{proof}
In view of Lemma~\ref{lem:iter-deg-1}, it suffices to show that
$$
\de'\le \tilde\de,
$$
where $\tilde\de=\min\{\tilde\de_1,\tilde\de_2,\tilde\de_3\}$ is as in \eqref{eq:degree-deg}. We split its proof into three steps.

\medskip\noindent\emph{Step 1.} In this step, we show that $\de'\le \tilde\de_1$, i.e.,
$$
\de+\frac{\theta-\theta'}{(1+\e)(p-2)}\le\frac{p}{p-\gamma}-\frac{\e}{1+\e}\left(\frac{\gamma}{p-\gamma}+\frac{n+\theta'}p\right),
$$
which is equivalent to
$$
(p-\gamma)\de\le p-\frac{(\theta-\theta')(p-\gamma)}{(1+\e)(p-2)}-\frac{\e}{1+\e}\left[\gamma+\frac{(p-\gamma)(n+\theta')}p \right].
$$
This is true since $\de\le1$ and
$$
\frac{(\theta-\theta')(p-\gamma)}{(1+\e)(p-2)}+\frac{\e}{1+\e}\left(\frac{\gamma}{p-\gamma}+\frac{n+\theta'}p\right)\le \frac{p}{p-2}(\theta-\theta')+(1+n+p)\e\le\gamma.
$$

\medskip\noindent\emph{Step 2.} In this step, we prove $\de'\le\tilde\de_2$.  We have  
$$
\tilde\de_2=1+\frac{\mu}{1+\e}=1+\mu-\frac{\e}{1+\e}\mu,\quad\text{where }\mu=\frac{\theta'-2-p(1-\de)-\frac{n}{\theta}(\theta-\theta') }2.
$$
If $\mu\ge0$, then we readily have $\tilde\de_2\ge1>\de'$. Thus we may assume $\mu<0$. Then we have $\tilde\de_2\ge 1+\mu$, thus it is enough to show $\de'\le 1+\mu$. A direct computation gives that this is equivalent to
$$
\frac{\e}{1+\e}(\theta'-2)\ge\left[\frac2{(1+\e)(p-2)}+\frac{n}\theta+1 \right](\theta-\theta').
$$
This is true by the last condition on \eqref{eq:iter-cond-deg}:
$$
\left[\frac2{(1+\e)(p-2)}+\frac{n}\theta+1 \right](\theta-\theta')\le\left(\frac{2}{\theta'-2}+2n\right)(\theta-\theta')\le\frac{\e}2(\theta'-2)\le\frac{\e}{1+\e}(\theta'-2).
$$

\medskip\noindent\emph{Step 3.} It remains to prove $\de'\le \tilde\de_3$. This simply follows from
\begin{align*}
\de'&=\frac{p-\theta}{p-2}+\frac{\e}{1+\e}\cdot\frac{\theta'-2}{p-2}+\frac{\theta-\theta'}{(1+\e)(p-2)}\\&\le\frac{p-\theta}{p-2}+\frac{\e}{1+\e}\cdot\frac{\theta'-2}{p-2}+\frac{\theta-\theta'}{p-2}=\frac{p-\theta'}{p-2}+\frac{\e}{1+\e}\cdot\frac{\theta'-2}{p-2}=\tilde\de_3.
\end{align*}
This completes the proof.    
\end{proof}

\begin{lemma}
    \label{lem:par-u-v-est-deg}
    For any $\sigma\in(0,1)$, there are $\theta\in(2,p)$, $C>0$ and $r_0>0$, depending only on $n,p,C^0,a,\|g\|_\infty,\|f\|_\infty,\Omega_T,K,\sigma$, such that if $X^0\in K$ and $0<r<r_0$, then
    $$
    \dashint_{Q_r^\theta(X^0)}|\D w|^p\le Cr^{p(\sigma-1)}.
    $$
    Here, for each $\sigma\in(0,1)$, $\theta\in(2,p)$ can be chosen so that $\theta\to2$ as $\sigma\to1$.
\end{lemma}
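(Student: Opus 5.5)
The proof will mirror Lemma~\ref{lem:par-u-v-est}, with the roles of $\theta$ reversed: start at $\theta_0=p$ and decrease $\theta$ toward $2$ in small steps, applying Lemma~\ref{lem:iter-deg-2} at each step. The starting point is Lemma~\ref{lem:par-u-est-initial-deg}, which gives the bound at $\theta=p$ with exponent $\de^{(0)}:=\frac{p^2}{p^2+(p+n)(p-\gamma)}\in(0,1)$.

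\emph{Choice of parameters.} Given $\sigma\in(0,1)$, first choose $\e>0$ small with
\[
\e\le \min\Bigl\{\de^{(0)},\,\tfrac{\gamma}{2(n+p+1)},\,\sigma,\,1-\sigma\Bigr\}.
\]
Next choose a target $\theta^\sharp\in(2,p)$ close to $2$. Define the affine, decreasing function
\[
\psi(\theta):=\frac{p-\theta}{p-2}+\frac{\e}{1+\e}\cdot\frac{\theta-\nu-2}{p-2},
\]
which matches \eqref{eq:del-deg} with $\theta'=\theta-\nu$. Then $\psi(p)<\frac{\e}{1+\e}<\de^{(0)}$, so the base case follows from Lemma~\ref{lem:par-u-est-initial-deg}, since a smaller exponent only weakens the bound. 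At the other end, $\psi(2)=1-\frac{\e}{1+\e}\cdot\frac{\nu}{p-2}<1$. Take $\theta^\sharp$ slightly larger than $\psi^{-1}(\sigma)$ and also below $p$; one can compute $\psi^{-1}(\sigma)=p-\sigma(p-2)+O(\e)$, so this choice is possible.

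The step size $\nu$ should satisfy
\[
\nu\le \min\Bigl\{\tfrac{\theta^\sharp-2}{2},\ \tfrac{(p-2)\gamma}{2p},\ \tfrac{(\theta^\sharp_-{}-2)}{4\left(\frac1{\theta^\sharp_- -2}+n\right)}\e\Bigr\},
\]
where $\theta^\sharp_-:=\frac{\theta^\sharp+2}{2}$ is a lower floor that keeps $\theta'-2$ bounded away from $0$. Set $\theta_j:=p-j\nu$ and $\de_j:=\psi(\theta_j)$, and stop at the first $J$ with $\theta_J\le\theta^\sharp$. By the choice of $\nu$, this gives $\theta_J\ge\theta^\sharp_-$.

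\emph{Induction.} Suppose the bound holds at step $j$ with exponent $\de_j$. To apply Lemma~\ref{lem:iter-deg-2} with $\theta=\theta_j$ and $\theta'=\theta_{j+1}$, check the hypotheses:
\begin{itemize}
\item $\de_j$ and $\de_{j+1}=\de_j+\frac{\nu}{(1+\e)(p-2)}$ agree with \eqref{eq:del-deg}, by construction of $\psi$.
\item The first condition in \eqref{eq:iter-cond-deg} holds because $\frac{p}{p-2}\nu\le\gamma/2$ and $(n+p+1)\e\le\gamma/2$.
\item $\de_{j+1}\le\psi(\theta_J)\le\psi(2)<1$.
\item The last condition holds since $\theta_{j+1}\ge\theta^\sharp_-$ and the function $t\mapsto (t-2)/(4(\frac1{t-2}+n))$ is increasing.
\end{itemize}
At the end, $\theta_J\le\theta^\sharp$, and since $\psi$ is decreasing, $\de_J\ge\psi(\theta^\sharp)$. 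This is $\ge\sigma$ provided $\theta^\sharp\le\psi^{-1}(\sigma)$, so I will in fact choose $\theta^\sharp:=\psi^{-1}(\sigma)$, after checking that this value lies in $(2,p)$. The bound with exponent $\sigma$ on $Q^{\theta_J}_r$ then follows. As $\sigma\to1$ we have $\e\to0$, so $\psi^{-1}(\sigma)\to 2$ and hence $\theta\to2$.

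\emph{Main obstacle.} The delicate part is the circularity: $\nu$ must depend on $\theta^\sharp$, while $\psi$ and hence $\theta^\sharp=\psi^{-1}(\sigma)$ depend on $\nu$. I plan to break this as in the singular case. First fix a provisional $\hat\theta:=p-\sigma(p-2)+\frac{(1-\sigma)(p-2)}{2}$, define $\nu$ in terms of $\hat\theta$, and then verify that $2<\hat\theta_- \le\psi^{-1}(\sigma)\le\hat\theta<p$, using $\nu\ll(1-\sigma)(p-2)$ and $\e\le 1-\sigma$.

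A second point needs attention: the radius constants $r_0$ shrink at each application of Lemma~\ref{lem:iter-deg-2}. This is harmless because $J$ is finite and depends only on $\sigma,n,p,\gamma$.
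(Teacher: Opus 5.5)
Your proposal is correct and follows the paper's proof step for step. It uses the same $\e$, the same affine $\psi$, and the same downward sequence $\theta_j=p-j\nu$ started from Lemma~\ref{lem:par-u-est-initial-deg}, with \eqref{eq:del-deg}--\eqref{eq:iter-cond-deg} checked at each step via Lemma~\ref{lem:iter-deg-2}. The only difference is cosmetic: the paper avoids your circularity by fixing the explicit target $\theta^\sharp=1+\sigma+\frac p2(1-\sigma)$, which lies strictly below $p-\sigma(p-2)\le\psi^{-1}(\sigma)$ once $\nu\le\frac{\theta^\sharp-2}{2}$. It therefore defines $\nu$ from $\theta^\sharp$ and stops at $\theta_J\le\theta^\sharp$, with no provisional $\hat\theta$. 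Your $\hat\theta$ is in fact $\ge p$ when $\sigma\le 1/3$, though this does no harm, since only $\psi^{-1}(\sigma)\in(2,p)$ is actually used.
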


\begin{proof}
We let
\begin{align*}
&\e:=\min\left\{\frac{p^2}{p^2+(p+n)(p-\gamma)}, \frac{\gamma}{2(n+p+1)},\sigma, 1-\sigma\right\}\in(0,1/8),\\
&\theta^\sharp:=\frac{p-\sigma(p-2)}2+1=1+\sigma+\frac{p}2(1-\sigma)\in(2,p),\\
&\nu:=\min\left\{\frac{\theta^\sharp-2}2, \frac{\theta^\sharp-2  }{8\left(\frac2{\theta^\sharp-2}+n \right)  }\e, \frac{p-2}{2p}\gamma \right\}\in(0,1).
\end{align*}
We then consider an affine function
$$
\psi(\theta):=\frac{-\theta+p+\e(p-\nu-2) }{(1+\e)(p-2) }=\frac{-\theta+p}{p-2}+\frac{\e}{1+\e}\cdot\frac{\theta-\nu-2}{p-2}.
$$
A direct computation gives
$$
\psi^{-1}(\sigma)=p-\sigma(p-2)-\e[(p-2)(1-\sigma)-\nu].
$$
Since $\nu<\frac{\theta^\sharp-2}2=\frac{(p-2)(1-\sigma)}4<(p-2)(1-\sigma)$ and $p-\sigma(p-2)>p-(p-2)=2$,
$$
\psi^{-1}(\sigma)\ge p-\sigma(p-2)>\frac{p-\sigma(p-2)}2+1=\theta^\sharp.
$$
Moreover, since $\psi(p)=\frac{\e(p-\nu-2)}{(1+\e)(p-2)}<\frac{\e}{1+\e}<\sigma$ and $\psi$ is decreasing, $2<\theta^\sharp<\psi^{-1}(\sigma)<p$. We consider sequences $\theta_j:=p-\nu j$ and $\de_j:=\psi(p-\nu j)$, $j\in \mathbb{N}\cup\{0\}$. Since $\nu\le\frac{\theta^\sharp-2}2$, there exists $J\in \mathbb{N}$ such that $\frac{\theta^\sharp+2}2<\theta_J\le \theta^\sharp$. 

We claim that there exist universal constants $C>0$ and $r_0>0$ such that
\begin{align}
    \label{eq:par-grad-u-est-seq-deg}
    \dashint_{Q_r^{\theta_j}(X^0)}|\D w|^p\le Cr^{p(\de_j-1)}
\end{align}
whenever $X^0\in K$, $0<r<r_0$ and $0\le j\le J$. We prove it by induction on $j$. Since $\theta_0=p$ and $\de_0=\psi(p)=\frac{\e(p-\nu-2)}{(1+\e)(p-2)}<\e\le \frac{p^2}{p^2+(p+n)(p-\gamma) }$, \eqref{eq:par-grad-u-est-seq-deg} holds for $j=0$ by Lemma~\ref{lem:par-u-est-initial-deg}. Now we assume that \eqref{eq:par-grad-u-est-seq-deg} holds for $j\in \{0,1,2,\ldots,J-1\}$ and prove it for $j+1$. To this end, we apply Lemma~\ref{lem:iter-deg-2} with $\de=\de_j$, $\de'=\de_{j+1}$, $\theta=\theta_j$ and $\theta'=\theta_{j+1}$. To prove that it is applicable, we need to verify \eqref{eq:del-deg} and \eqref{eq:iter-cond-deg}. For \eqref{eq:del-deg}, we use the definition of $\psi$ and $\theta'=\theta_{j+1}=\theta_j+\nu$ to have
\begin{align*}
    &\de_j=\psi(\theta_j)=\frac{-\theta_j+p}{p-2}+\frac{\e}{1+\e}\cdot\frac{\theta_j-\nu-2}{p-2}=\frac{-\theta+p}{p-2}+\frac{\e}{1+\e}\cdot\frac{\theta'-2}{p-2}=\de,\\
    &\de_{j+1}=\psi(\theta_{j+1})=\psi(\theta_j)+\frac{\theta_{j}-\theta_{j+1}}{(1+\e)(p-2)}=\de+\frac{\theta-\theta'}{(1+\e)(p-2)}=\de'.
\end{align*}
Concerning \eqref{eq:iter-cond-deg}, the first condition follows from $\e\le \frac{\gamma}{2(n+p+1)}$  and $\nu\le \frac{p-2}{2p}\gamma$. The second one follows from 
$$
\de'=\de_{j+1}\le \de_J=\psi(\theta_J)<\psi(2)=1-\frac{\e}{1+\e}\cdot\frac{\nu}{p-2}<1.
$$
Regarding the last one, we use $\nu\le\frac{\theta^\sharp-2 }{8\left(\frac{2}{\theta^\sharp-2}+n \right) }\e$ and $\theta'=\theta_{j+1}\ge \theta_J>\frac{\theta^\sharp+2}2$ to get
$$
\theta'-\theta=\nu\le \frac{\theta^\sharp-2 }{8\left(\frac{2}{\theta^\sharp-2}+n \right) }\e=\frac{\frac{\theta^\sharp +2}2-2 }{4\left(\frac1{\frac{\theta^\sharp+2}{2}-2 }+n \right) }\e\le \frac{\theta'-2 }{4\left(\frac1{\theta'-2}+n \right) }\e .
$$
This completes the proof of the claim \eqref{eq:par-grad-u-est-seq-deg}.

Now, by \eqref{eq:par-grad-u-est-seq-deg}, we have
$$
\dashint_{Q_r^{\theta_J}(X^0)}|\D w|^p\le Cr^{p(\de_J-1)}.
$$
Since $\de_J=\psi(\theta_J)\ge \psi(\theta^\sharp)\ge \psi(\psi^{-1}(\sigma))=\sigma$, we conclude
$$
\dashint_{Q_r^{\theta_J}(X^0)}|\D w|^p\le Cr^{p(\sigma-1)}.
$$

 It remains to show that $\theta\to 2$ as $\sigma\to1$. For this aim, we recall $\psi^{-1}(\sigma)=p-\sigma(p-2)-\e[(p-2)(1-\sigma)-\nu]$. From $0<\e\le 1-\sigma$, we see that $\e\to0$ as $\sigma\to1$. Therefore, $\psi^{-1}(\sigma)\to2$ as $\sigma\to1$. 
\end{proof}

\begin{theorem}
    \label{thm:par-grad-holder}
   $\D w\in C^{\hat\be,\hat\be/p}(K)$ for some $\hat\be=\hat\be(n,p,a)\in (0,1)$. Moreover, 
    $$
    \|\D w\|_{C^{\hat\be,\hat\be/p }(K)}\le C
    $$
    for some universal constant $C>0$.
\end{theorem}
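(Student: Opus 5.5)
We follow the scheme of the gradient estimate in the singular case, replacing the singular-case inputs by their degenerate counterparts: Lemma~\ref{lem:p-caloric-reg-deg} for $p$-caloric functions, \eqref{eq:u-v-est-0-deg} for the comparison, and Lemma~\ref{lem:par-u-v-est-deg} for the energy decay. Fix $X^0\in K$, translate so that $X^0=0$, and let $v$ be the $p$-caloric replacement of $w$ in $Q_r^\theta$.

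\emph{Choice of parameters.} Pick $\e=\e(n,p,a)\in(0,1)$ small and set $\nu:=\e\al/4$. Choose $\theta\in(2,p)$ close to $2$ so that $\theta\le 2(1+\e^2)$ and $\frac{2-\theta}{p-2}\ge -\nu$. This is possible by Lemma~\ref{lem:par-u-v-est-deg}, which gives such a $\theta$ for $\sigma$ close to $1$ together with
\[
\dashint_{Q_r^\theta}|\D w|^p\le Cr^{p(\sigma-1)},
\]
where we take $\sigma$ so close to $1$ that $\frac{\theta-2}{2}+(\sigma-1)\ge -\nu$ as well.

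\emph{Step 1 (gradient bound for $v$).} By \eqref{eq:v-est-0-deg},
\[
\dashint_{Q_r^\theta}|\D v|^p\le C r^{p(\sigma-1)}+Cr^{p\gamma/(p-\gamma)}.
\]
Lemma~\ref{lem:p-caloric-reg-deg}(2) with $\rho=r$ and $\tau=r^\theta$ then gives
\[
\|\D v\|_{L^\infty(Q^\theta_{r/2})}\le Cr^{\frac{\theta-2}{2}}r^{\sigma-1}+Cr^{\frac{2-\theta}{p-2}}\le Cr^{-\nu}.
\]

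\emph{Step 2 (oscillation of $\D v$).} Apply Lemma~\ref{lem:p-caloric-reg-deg}(3) on $Q^\theta_{r^{1+\e}}$ inside $Q^\theta_{r/2}$. The distance to $\partial_p Q^\theta_{r/2}$ is comparable to $r$ in space, but in the time direction the relevant quantity is $r^{\theta/2}$. Time increments are at most $r^{\theta(1+\e)}$, so their contribution is $r^{\theta(1+\e)/2}\|\D v\|_\infty^{(p-2)/2}/r^{\theta/2}\lesssim r^{\theta\e/2-\nu(p-2)/2}$. Spatial increments contribute $r^{\e}$. Hence
\[
|\D v(X^1)-\D v(X^2)|\le Cr^{-\nu}\bigl(r^{\e}+r^{\theta\e/2-\frac{p-2}{2}\nu}\bigr)^\al\le Cr^{\al\e/4},
\]
once $\nu=\e\al/4$ and $\e$ is small relative to $1/p$. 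This step is the main obstacle. The degenerate intrinsic scaling puts the factor $\|\D v\|^{(p-2)/2}$ on the time increment, and we must check that the small loss $r^{-\nu(1+(p-2)\al/2)}$ is absorbed. Shrinking $\nu$ to $\e\al/(4p)$ if needed should handle it.

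\emph{Step 3 (Campanato estimate).} By \eqref{eq:u-v-est-0-deg},
\[
\int_{Q^\theta_r}|\D(w-v)|^p\le Cr^{n+\theta+p\gamma/(p-\gamma)}.
\]
Combining this with Jensen's inequality and Step 2 gives
\[
\int_{Q^\theta_{r^{1+\e}}}|\D w-\mean{\D w}|^p\le Cr^{(n+\theta)(1+\e)+p\al\e/4}+Cr^{n+\theta+p\gamma/(p-\gamma)}.
\]
Set $\rho=r^{1+\e}$ and take $\e\le \frac{\gamma p}{2(p-\gamma)(n+p)}$. Then both terms are bounded by $C\rho^{n+\theta+p\hat\be}$ with
\[
\hat\be=\frac{1}{1+\e}\min\Bigl\{\frac{\al\e}4,\frac{\gamma}{2(p-\gamma)}\Bigr\}.
\]
Campanato's characterization with respect to the cylinders $Q^\theta_\rho$ yields $\D w\in C^{\hat\be,\hat\be/\theta}(K)$. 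Since $\theta<p$ and the cylinders are small, this implies the stated $C^{\hat\be,\hat\be/p}$ bound, possibly after reducing $\hat\be$. All constants are universal.
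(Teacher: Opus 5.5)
Your proposal is correct and follows the paper's proof essentially step by step. You use the same $p$-caloric replacement in $Q_r^\theta$, the bound $\|\D v\|_{L^\infty(Q^\theta_{r/2})}\le Cr^{-\nu}$ via Lemma~\ref{lem:par-u-v-est-deg} with $\sigma$ close to $1$, the oscillation estimate from Lemma~\ref{lem:p-caloric-reg-deg}(3) with the paper's choice $\nu=\e\al/(4p)$, and the Campanato conclusion with $\e\le\frac{\gamma p}{2(p-\gamma)(n+p)}$.

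The only slip is in Step 1. Lemma~\ref{lem:p-caloric-reg-deg}(2) involves $(\dashint_{Q_r^\theta}|\D v|^p)^{1/2}\lesssim r^{p(\sigma-1)/2}$, not $r^{\sigma-1}$. This is harmless: you just take $\sigma$ closer to $1$ so that $\frac{p(\sigma-1)}{2}\ge-\nu$, exactly as the paper does.
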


\begin{proof}
Throughout this proof, we write
$$
\e:=\frac{\gamma p}{2(p-\gamma)(n+p)}\in (0,1),\quad \nu:=\frac{\e\al}{4p}\in(0,1).
$$
We also consider $\theta=\theta(n,p,a,\al)\in(2,p)$ close to $2$ to be chosen later. In particular, we require
\begin{align}
    \label{eq:theta-cond-deg}
    \frac{\theta-2}{p-2}>-\nu.
\end{align}
We take $r_0\in(0,1)$ small so that $r_0^\e\le 1/4$. For a parabolic cylinder $Q_r^\theta(X^0)\Subset \Omega_T$ with  $X^0\in K$ and $0<r<r_0$, let $v$ be the $p$-caloric replacement of $u$ in $Q_r^\theta(X^0)$. For the sake of simplicity, we write $X^0=0$. Then, by Lemmas~\ref{lem:p-caloric-reg-deg} and \ref{lem:u-v-p-est-comp-deg},
\begin{align*}
    \|\D v\|_{L^\infty(Q_{r/2}^\theta)}\le Cr^{\frac{\theta-2}2}\left(\dashint_{Q_r^\theta}|\D v|^p\right)^{1/2}+Cr^{\frac{2-\theta}{p-2}}\le C\left(\dashint_{Q_r^\theta}|\D w|^p+Cr^{\frac{p\gamma}{p-\gamma}}\right)^{1/2}+Cr^{\frac{2-\theta}{p-2}}.
\end{align*}
We apply Lemma~\ref{lem:par-u-v-est-deg} with $\sigma\in(0,1)$ close to $1$ so that $\frac{p(\sigma-1)}2>-\nu$ and $\theta$ satisfies \eqref{eq:theta-cond-deg}. This is possible since $\theta\to2$ as $\sigma\to1$. It follows that
$$
\|\D v\|_{L^\infty(Q^\theta_{r/2})}\le Cr^{-\nu}.
$$
This, along with Lemma~\ref{lem:p-caloric-reg-deg}, gives that for any $X^1,X^2\in Q^\theta_{r^{1+\e}}$,
\begin{align*}
    |\D v(X^1)-\D v(X^2)|&\le C\|\D v\|_{L^\infty(Q_{r/2}^\theta)}\left(\frac{|x_1-x_2|+\max\{1,\|\D v\|_{L^\infty(Q_{r/2}^\theta)}\}^{\frac{p-2}2}|t_1-t_2|^{1/2} }{\dist(Q_{r^{1+\e}}^\theta,\partial_pQ_{r/2}^\theta) } \right)^\al\\
    &\le Cr^{-\nu}\left(r^\e+r^{(1+\e)\frac\theta2-\frac{p-2}2\nu }\right)^\al\le Cr^{\e\al-\nu}+Cr^{(1+\e)\frac{\theta\al}2-\left(1+\frac{p-2}2\al\right)\nu }.
\end{align*}
Here, from $\nu=\frac{\e\al}{4p}$, we find $\e\al-\nu\ge\frac{\e\al}2$ and $(1+\e)\frac{\theta\al}2-\left(1+\frac{p-2}2\al\right)\nu\ge (1+\e)\al-\frac{p}2\nu\ge\al$. Thus, we have
$$
|\D v(X^1)-\D v(X^2)|\le Cr^{\frac{\e\al}2}.
$$
By combining this with Jensen's inequality and \eqref{eq:u-v-est-0-deg}, we infer
\begin{align*}
    \int_{Q^\theta_{r^{1+\e}}}|\D w-\mean{\D w}_{Q^\theta_{r^{1+\e}}}|^p&\le C\int_{Q^\theta_{r^{1+\e}}}|\D v-\mean{\D v}_{Q^\theta_{r^{1+\e}}}|^p+C\int_{Q^\theta_{r^{1+\e}}}|\D(w-v)|^p \\
    &\le Cr^{(n+\theta)(1+\e)+p\frac{\e\al}2}+Cr^{n+\theta+\frac{p\gamma}{p-\gamma}}.
\end{align*}
Letting $\rho=r^{1+\e}$, we get
$$
\dashint_{Q_\rho^\theta}|\D w-\mean{\D w}_{Q_\rho^\theta}|^p\le C\rho^{p\frac{\al\e}{2(1+\e)}}+C\rho^{p\frac{\frac{\gamma}{p-\gamma}-\frac\e p(n+\theta) }{1+\e}}.
$$
From $\e=\frac{p\gamma}{2(p-\gamma)(n+p)}$, we have $\frac{\gamma}{p-\gamma}-\frac{\e}p(n+\theta)\ge\frac{\gamma}{2(p-\gamma)}$, thus
$$
\left(\dashint_{Q_\rho^\theta}|\D w-\mean{\D w}_{Q_\rho^\theta}|^p\right)^{1/p}\le C\rho^{\hat\be},
$$
where $\hat\be:=\frac1{1+\e}\min\left\{\frac{\al\e}2,\frac{\gamma}{2(p-\gamma)}\right\}\in(0,1)$. This completes the proof.    
\end{proof}

\section*{Declarations}

\noindent {\bf  Disclosure statement:}  The authors report there are no competing interests to declare.

\medskip
\noindent {\bf  Declaration of generative AI use:}  The authors report generative AI was not used in their research or preparation of this manuscript.


\end{document}